\def\bR {\mathbb{R}}
\def\bS {\mathbb{S}}
\def\cK {\mathcal{K}}
\def\cZ {\mathcal{Z}}
\newcommand{\tx}[1]{\mathrm{#1}}
\newcommand{\supp}{\operatorname{supp}}
\newcommand{\eee}{\mathrm e}
\newcommand{\vd}{\mathrm{d}}
\newcommand{\udr}{\,r\vd r}
\newcommand{\uln}[1]{{\underline{ #1 }}}
\definecolor{deepgreen}{cmyk}{1,0,1,0.5}
\newcommand{\A}{\mathcal{A}}
\newcommand{\E}{\mathcal{E}}
\newcommand{\HH}{\mathcal{H}}
\newcommand{\LL}{\mathcal{L}}
\newcommand{\NN}{\mathcal{N}}
\newcommand{\M}{\mathcal{M}}
\newcommand{\QQ}{\mathcal{Q}}
\newcommand{\ZZ}{\mathcal{Z}}
\newcommand{\C}{\mathbb{C}}
\newcommand{\N}{\mathbb{N}}
\newcommand{\R}{\mathbb{R}}
\newcommand{\Sp}{\mathbb{S}}
\newcommand{\Z}{\mathbb{Z}}
\newcommand{\g}{\mathbf{g}}
\newcommand{\m}{\mathbf{m}}
\newcommand{\al}{\alpha}
\newcommand{\be}{\beta}
\newcommand{\ga}{\gamma}
\newcommand{\de}{\delta}
\newcommand{\e}{\varepsilon}
\newcommand{\fy}{\varphi}
\newcommand{\om}{\omega}
\newcommand{\la}{\lambda}
\newcommand{\te}{\theta}
\newcommand{\s}{\sigma}
\newcommand{\ka}{\kappa}
\newcommand{\si}{\varsigma}
\newcommand{\De}{\Delta}
\newcommand{\Om}{\Omega}
\newcommand{\La}{\Lambda}
\newcommand{\p}{\partial}
\newcommand{\na}{\nabla}
\newcommand{\Rmnum}[1]{\expandafter\@slowromancap\romannumeral #1@}
\newcommand{\I}{\infty}
\newcommand{\ti}{\widetilde}
\newcommand{\U}{\underline}
\newcommand{\ang}[1]{\left\langle{#1}\right\rangle}
\newcommand{\abs}[1]{\left\lvert{#1}\right\rvert}
\newcommand{\ant}[1]{\begin{align*}\begin{split} #1 \end{split}\end{align*}}
\newcommand{\EQ}[1]{\begin{equation}\begin{split} #1 \end{split}\end{equation}}
\newcommand{\pmat}[1]{\begin{pmatrix} #1 \end{pmatrix}}
\newcommand{\Del}[1]{}
\numberwithin{equation}{section}
\newtheorem{thm}{Theorem}[section]
\newtheorem{cor}[thm]{Corollary}
\newtheorem{lem}[thm]{Lemma}
\newtheorem{prop}[thm]{Proposition}
\newtheorem{claim}[thm]{Claim}
\theoremstyle{remark}
\newtheorem{rem}[thm]{Remark}
\newtheorem{defn}[thm]{Definition}
\newcommand{\mwhere}{{\ \ \text{where} \ \ }}
\newcommand{\mand}{{\ \ \text{and} \ \  }}
\newcommand{\mor}{{\ \ \text{or} \ \ }}
\newcommand{\mif}{{\ \ \text{if} \ \ }}
\newcommand{\mas}{{\ \ \text{as} \ \ }}
\newcommand{\dd}[1]{\frac{\ud}{\ud{#1}}}
\newcommand{\rdr}{ \, r \, \mathrm{d}r}
\newcommand{\dr}{\, \mathrm{d}r}
\newcommand{\rest}{\!\!\restriction}
\newcommand{\ula}{\underline{\lambda}}
\newcommand{\umu}{\underline{\mu}}
\definecolor{green}{rgb}{0,0.8,0} % Redefines the color green.
\newcommand{\ud}{\mathrm{d}}
\newcommand{\eps}{\epsilon}
\newcommand{\bfd}{{\bf d}}
\newcommand{\calK}{\mathcal K}
\begin{document}

\title[Two-bubble dynamics for wave maps]{Two-bubble dynamics for threshold solutions \\ to the  wave maps equation}
%\author{Raphael Cote}
\author{Jacek Jendrej}
\author{Andrew Lawrie}
%\author{Sung-Jin Oh}
%\author{Sohrab Shahshahani} 

\begin{abstract}
We consider the energy-critical wave maps equation $\bR^{1+2} \to \bS^2$
in the equivariant case, with equivariance degree $k \geq 2$.
It is known that initial data of energy $< 8\pi k$ and topological degree zero
leads to global solutions that scatter in both time directions.
We consider the threshold case of energy $8 \pi k $.
We prove that the solution is defined for all time
and either scatters in both time directions,
or converges to a superposition of two harmonic maps in one time direction
and scatters in the other time direction. In the latter case, we describe
the asymptotic behavior of the scales of the two harmonic maps.

The proof combines the classical concentration-compactness techniques of Kenig-Merle
with a modulation analysis of interactions of two harmonic maps in the absence of excess radiation.

%\red{This is maybe enough?}
%
%The technical core of the paper is a delicate analysis of two-bubble interactions in the absence of excess radiation, which we believe to be the first of its nature for an energy critical nonlinear wave equation \Red{(awkward attempt at a last sentence)}. 
\end{abstract}

\maketitle

\section{Introduction}

This paper concerns energy critical wave maps
$
\Psi: (\R^{1+2}_{t, x}, \m) \to (\mathcal{M}, \g),
$
where  $\m$ is the Minkowski metric and $\M$ is a Riemannian manifold with a metric $\g$. Wave maps arise in the physics literature as examples of nonlinear $\sigma$-models. A particularly  interesting case is when the target manifold admits nontrivial finite energy stationary wave maps, or harmonic maps, as these give simple examples of topological (albeit unstable) solitons.  Mathematically, wave maps simultaneously generalize the classical harmonic maps equation to Lorenztian domains as well as the free wave equation to manifold-valued maps.

%Before discussing our results, we first  formulate the Cauchy problem for wave maps. We work in the  extrinsic setting by viewing $(\M, \g)$ as an isometrically embedded sub-manifold of Euclidean space $(\R^N, \ang{ \cdot, \cdot}_{\R^N})$. In this case, a~\emph{wave map} is a formal critical point of the Lagrangian action
Viewing $(\M, \g)$ as an isometrically embedded sub-manifold of Euclidean space $(\R^N, \ang{ \cdot, \cdot}_{\R^N})$,  a~\emph{wave map} is defined as a formal critical point of the Lagrangian action
 \EQ{
\LL(\Psi)  =  \frac{1}{2} \int_{\R^{1+2}}  \m^{\al \be} \ang{ \p_\al \Psi , \,  \p_\be \Psi }_{\R^N} \,  \ud x\, \ud t.
}
The Euler-Lagrange equations are given by 
\EQ{
\Box \Psi \perp T_{\Psi} \M,
}
which can be rewritten as 
\EQ{ \label{eq:wm}
\Box \Psi = \mathcal{S}(\Psi)(\p \Psi, \p \Psi),
}
where $\mathcal{S}$ denotes the second fundamental form of the embedding $(\M, \g) \hookrightarrow (\R^N, \ang{ \cdot, \cdot}).$
The conserved energy is given by 
\EQ{ \label{eq:en1} 
\E(\Psi, \p_t \Psi)(t)  =  \frac{1}{2} \int_{\R^2}  \abs{\p_t \Psi(t)}^2  +  \abs{\na \Psi(t)}^2 \, \ud x  = \textrm{constant}.
}
Smooth finite energy initial data  for~\eqref{eq:wm} consist of a pair  $\vec \Psi(0) = (\Psi_0, \Psi_1)$, where 
\EQ{ \label{eq:data} 
\Psi_0(x) \in \M   \subset \R^N , \quad \Psi_1(x)  \in  T_{\Psi_0(x)}\M, \quad  \forall  \, \, x \in \R^2.
} 
We assume here that  we can find a fixed vector $\Psi_\infty \in \M$ so that 
\EQ{\label{eq:infpoint}
\Psi_0(x) \to \Psi_\infty \mas \abs{x} \to \infty. 
}
Wave maps on $\R^{1+2}_{t, x}$ are called~\emph{energy critical} because the conserved energy and the equation are invariant under the same scaling: If $\vec \Psi(t)$ solves~\eqref{eq:wm} then so does 
\EQ{ \label{eq:scale}
\vec \Psi_{\la}(t, x) := (\Psi_\la(t, x), \p_t \Psi_\la(t, x)) := \Big(\Psi(t/\la,  x/\la), \, \frac{1}{\la} \p_t \Psi(  t/ \la , x/ \la)\Big)
}
and it also holds that $\E(\vec \Psi_\la) = \E(\vec\Psi)$.

The geometry of the target manifold, and in particular the existence of non-constant finite energy harmonic maps $\Psi: \R^2 \to \M$,   plays a crucial role in determining the possible dynamics of solutions to the wave maps equation. Here we'll focus on a special case when the target manifold is the $2$-sphere, $\M = \Sp^2 \subset \R^3$ with the round metric $\g$. One advantage is that here the harmonic maps are explicit: by a classical theorem of Eells and Wood~\cite{EW} they are either holomorphic or anti-holomorphic with respect to the complex structure on $\Sp^2$, and  by~\eqref{eq:infpoint} with the removable singularity theorem~\cite{SU} they can thus be identified with the rational functions $\rho: \C_{\infty} \to \C_{\infty}$.  It follows that each harmonic map $\R^2 \to \Sp^2$ has a topological degree given by the degree of the corresponding rational map. 

In fact, the condition~\eqref{eq:infpoint} allows us to assign a topological degree to each smooth finite energy data. Given data  $(\Psi_0, \Psi_1)$, we can  identify $\Psi_0$ with a map $\ti \Psi_0: \Sp^2 \to \Sp^2$ by assigning the point at $\infty$ to the vector $\Psi_\infty:= \lim_{\abs{x} \to \infty} \Psi(x)$. %under the one-point compactification of the plane $\R^2  \cup \{\infty\}$.  
Abusing notation slightly by writing $\ti \Psi_0 = \Psi_0$, the degree of the map $\Psi_0$ is defined by
\EQ{
\deg(\Psi_0) :=   \frac{1}{ \textrm{Area}(\Sp^2)} \int_{\Sp^2}  \Psi^*_0( \om) = \frac{1}{ 4\pi} \int_{\R^2}   \Psi_0^*( \om)\in \Z 
}
where $\om$ is the area element  of $\Sp^2 \subset \R^3$. The degree $\deg(\Psi_0)$ is preserved by the smooth wave map flow on its maximal interval of existence $I_{\max}$,  that is, 
\EQ{
\deg( \Psi_0) = \deg(\Psi(t)) \quad \forall t \in I_{\max}. 
}
Importantly, any harmonic map  $\QQ_k$ of degree $k$ minimizes the energy amongst all degree $k$ wave maps, and in fact 
\EQ{
\E(\QQ_k) = 4 \pi \abs{ \deg(\QQ_k) }= 4 \pi \abs{k}. 
}

%As a holomorphic map $\C_\infty \to \C_{\infty}$, $\QQ_k$ is simply represented up to scaling, rotation, translation, and inversion by the rational function $z \mapsto z^k$. }

\subsection{$k$-equivariant wave maps} 
To simplify the analysis we'll take advantage of a symmetry reduction and study a restricted class of maps  $\Psi$ satisfying the equivariance relation $\Psi \circ \rho^k = \rho^k \circ \Psi$ for all rotations $\rho \in SO(2)$.
%Because the domain  $\R^2$ and the target $\M = \Sp^2$ are rotationally symmetric we can consider a restricted class of maps $\Psi$ satisfying the equivariance relation $\Psi \circ \rho^k = \rho^k \circ \Psi$ for all rotations $\rho \in SO(2)$. 
We consider a subclass of such maps known as $k$-equivariant, or $k$-corotational, which correspond to equivariant maps that in local coordinates take the form
\ant{
\Psi(t, r, \theta) = (\psi(t, r), k \te ) \hookrightarrow ( \sin \psi \cos k\theta , \sin \psi \sin  k \theta, \cos \psi) \in \Sp^2 \subset \R^3,
}
where $\psi$ is the colatitude measured from the north pole of the sphere and the metric on $\Sp^2$ is given by $ds^2 = d \psi^2+ \sin^2 \psi\,  d \om^2$.  
The Euler-Lagrange equations~\eqref{eq:wm} reduce to an equation for $\psi$ and we are led to the Cauchy problem:
\EQ{ \label{eq:wmk}
\Bigg\{\begin{aligned}
&\psi_{tt} -  \psi_{rr}  - \frac{1}{r} \psi_r + k^2  \frac{\sin 2 \psi}{2r^2} = 0, \\ 
&(\psi(0), \partial_t \psi(0)) = (\psi_0, \psi_1).
\end{aligned}
}
We'll often use the notation $\vec \psi(t)$ to denote the pair 
\EQ{
\vec \psi(t, r):= ( \psi(t, r), \psi_t(t, r))
}
and we remark that the scaling~\eqref{eq:scale} can be expressed as follows:   If $\vec \psi(t, r)$ is a solution to~\eqref{eq:wmk} then so is 
 \EQ{
 \vec \psi_\la(t, r)  = \Big(\psi( t/ \la, r/ \la), \frac{1}{\la} \psi_t(t/ \la, r/ \la)\Big)
 }
for each fixed $\la>0$. 
%\red{something about the general equation, mention Lawrie-Oh ?}
%
%Consider a solution $\vec \psi(t) := (\psi(t), \p_t \psi(t))$ to the $k$-equivariant (or $k$-corotational) wave maps equation: 
%\EQ{ \label{eq:wmk} 
%\psi_{tt} -  \psi_{rr}  - \frac{1}{r} \psi_r + k^2  \frac{\sin 2 \psi}{2r^2} = 0
%}
%with initial data at time $t = 0$ given by 
%\EQ{
% \vec \psi(0) :=  \vec \psi(t) \rst_{t=0} = ( \psi_0, \psi_1)
% }
% We'll use the notation $$\vec \psi( t, r) = ( \psi(t, r), \partial_t \psi(t, r)).$$ 

 The conserved energy from~\eqref{eq:en1} takes the form 
\EQ{ \label{eq:en} 
 \E( \vec \psi(t) ) = 2 \pi \frac{1}{2}\int_0^\I  \left( (\p_t \psi (t, r))^2  +  ( \p_r \psi(t, r))^2 + k^2\frac{\sin^2\psi(t, r)}{r^2} \, \right)\,\rdr  .
 }
 From the above it's clear that any $k$-equivariant data  $\vec \psi(0,r)$  of finite energy  must satisfy $\lim_{r \to 0}\psi(0, r) = m\pi$ and  $\lim_{r \to \infty}\psi(0,\infty)=n\pi$  for some $m,n \in \Z$. Since the smooth wave map flow depends continuously on the initial data these integers are fixed over any time interval  $t\in I$ on which the solution is defined. This splits the energy space into disjoint classes according to this topological condition and it is natural to  consider the Cauchy  problem \eqref{eq:wmk} within a fixed class 
\EQ{\label{eq:Hnm}
 \HH_{m \pi, n \pi} := \{ (\psi_0, \psi_1) \mid \E(\psi_0, \psi_1) < \infty \mand \lim_{r \to 0}\psi_0(r) =m\pi, \, \lim_{r \to \infty}\psi_0(r) = n\pi\}. 
}
We can restrict to $\HH_{0, n\pi}$ and we'll denote these by $\HH_{n \pi} := \HH_{0, n \pi}$. We also define $\HH= \bigcup_{n\in \Z} \HH_{n \pi}$ to be the full energy space.

The equivariant reduction introduces a good deal of rigidity into the problem, but still allows us access to the family of harmonic maps. Indeed the degree $k$ harmonic map $\QQ_k$ corresponding to $z\mapsto z^k$ as a holomorphic map $\C_\infty \to \C_{\infty}$, can be expressed uniquely (up to scaling) as the $\abs{k}$-equivariant map 
\EQ{
\QQ_k(r, \te) = (Q_k(r),  k \te) \hookrightarrow ( \sin Q_k \cos k\theta , \sin Q_k \sin  k \theta, \cos Q_k) \in \Sp^2 \subset \R^3,
}
where $Q_k$ is the explicit finite energy stationary solution to~\eqref{eq:wmk} given by 
 \EQ{
 Q_k(r) := 2 \arctan r^k.
 }
 Note that $Q_k(r)$ satisfies 
 \EQ{\label{eq:degQ}
 Q_k( 0) = 0, \quad  \lim_{r \to \infty} Q_k(r) =  \pi. 
 }
 We often write $\vec Q_k := (Q_k, 0)$.  We see that  $\E(\vec Q_k) = 4\pi k$,  which is minimal amongst all $k$-equivariant maps in the energy class $\HH_\pi$; see Section~\ref{s:hm} for a direct argument. 
% We note that up to applying fixed symmetries these are in fact all finite energy harmonic maps from $\R^2 \to \Sp^2$, i.e., every degree $k$ harmonic map is $k$-equivariant with respect to specific axes of symmetry. 

 Here we consider $k$-equivariant maps $\vec \psi = (\psi_0, \psi_1)$ in the class $\HH_0$, i.e., that satisfy
 \EQ{ \label{eq:deg0} 
 \lim_{r \to 0} \psi_0( r) = 0 \mand   \lim_{r \to \infty} \psi_0(r) =  0,  
 }
 so that $\psi_0$ is the polar angle of a finite energy map $\Psi_0$ into $\Sp^2$ with $\deg(\Psi_0) = 0$.  %We also consider only those maps that satisfy $\E(\vec \psi) = 2 \E(\vec Q_k)$. 
 Before stating our mains results let us first motivate this restriction with a brief summary of recent developments.

\subsection{Threshold Theorems and Bubbling} 
The energy critical wave maps equation~\eqref{eq:wm} has been extensively studied over the past several decades; \cite{CTZcpam, CTZduke, STZ92, STZ94, KlaMac93, KlaMac94, KlaMac95, KlaMac97, KlaSel97, KlaSel02, Tat98, Tao1, Tao2, Tat01, Kri04}. In recent years the focus has centered on understanding the nonlinear dynamics of solutions with large energy.  At the end of the last decade, the following remarkable~\emph{sub-threshold conjecture} was established~\cite{ST1, ST2, KS, Tao37}: Every wave map with energy less than that of the first nontrivial harmonic map is globally regular on $\R^{1+2}$ and scatters to a constant map.  The role of the least energy harmonic map in the statement of the sub-threshold conjecture is based on fundamental work of Struwe~\cite{Struwe}, who showed that the smooth equivariant wave map flow can only develop a singularity by concentrating energy at the tip of a light cone by bubbling off at least one non-trivial finite energy harmonic map.  In breakthrough works, Krieger, Schlag, Tataru~\cite{KST}, Rodnianski, Sterbenz~\cite{RS}, and Rapha\"el, Rodnianski~\cite{RR}, constructed examples solutions of such blow-up by bubbling, with the latter two works yielding a stable blow-up regime; see also the recent stability analysis of Krieger~\cite{Krieger17} for type-II blow ups solutions to the energy critical NLW, which suggests that the solutions from~\cite{KST} should also exhibit stability properties. 

The starting point for the present work  is the following natural question: Can one give a satisfactory description of the possible dynamics for arbitrary initial data? In dispersive models such as~\eqref{eq:wm} this is typically referred to as the \emph{soliton resolution conjecture}, which states roughly that any smooth solution asymptotically decouples into weakly interacting (possibly concentrating) solitons plus free radiation. The wave maps equation~\eqref{eq:wm} with $\NN = \Sp^2$  is an intriguing model in which to study this question: all stationary solutions (the harmonic maps) are known explicitly, the conserved topological degree of the solution introduces additional rigidity, and the equivariant reduction~\eqref{eq:wmk} greatly simplifies certain aspects of the analysis without destroying the essential mechanisms of truly nonlinear behavior, e.g., solitons, blow-up. There has been exciting recent progress in this direction for the general equation \eqref{eq:wm}, see~\cite{Gri, DJKM16}. Here we focus on the equivariant model~\eqref{eq:wmk} where more is known. 

Our analysis is motivated by several results proved in the last few years; we will in fact use some of them explicitly. First, note that, by continuity, $\lim_{r\to 0}\psi(t, r)$ and $\lim_{r\to\infty}\psi(t, r)$ are independent of $t$. Hence scattering to a constant map is only possible if $\lim_{r\to 0}\psi_0(r) = \lim_{r\to\infty}\psi_0(r)$. We can assume without loss of generality
that both these limits equal $0$, i.e. the initial data $(\psi_0, \psi_1)$ is in~$\HH_0$. For such maps, the following refined threshold theorem was proved in~\cite{CKLS1}. 

\begin{thm}[$2\E(\vec Q)$ Threshold Theorem] \emph{\cite[Theorem $1.1$]{CKLS1}\label{t:2EQ}}  For any smooth initial data $\vec\psi(0) \in \HH_0$ with 
\EQ{
\E(\vec\psi(0)) < 2\E(\vec Q_k) = 8 \pi k, 
}
there exists a unique global evolution $\vec \psi \in C^0(\R; \HH_0)$. Moreover, $\vec\psi(t)$ scatters to zero in  both time directions, i.e.,  there exist solutions $\vec \fy_L^\pm$ to the linearized equation~\eqref{eq:2dlin} such that %the sense that the energy of $\vec \psi(t)$ on any arbitrary but fixed compact region vanishes as $t \to \infty$. 
%In other words, one has 
\EQ{ \label{scat}
 \vec{\psi}(t) = \vec \fy_L^\pm(t) + o_{\HH_0}(1) \mas t \to  \pm\infty. 
}
% where $\vec \fy \in \HH$ solves the linearized version of \eqref{eq:wmk}, i.e., 
% \EQ{\label{eq:2dlin}
% \fy_{tt} - \fy_{rr} - \frac{1}{r} \fy_r + \frac{k^2}{r^2} \fy = 0.
%}
%Furthermore, this result is sharp in $\HH_0$ in the following sense;  for all $\de>0$ there exists data $\vec\psi(0) \in \HH_0$ with $\E(\vec\psi) \le 2\E(Q) + \de$, such that $\vec \psi$ blows up in finite time.  \qed
\end{thm}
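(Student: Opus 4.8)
The plan is to run the Kenig--Merle concentration--compactness/rigidity scheme, with the variational characterization of $\vec Q_k$ and Struwe's bubbling theorem pinning the threshold at $2\E(\vec Q_k)$. First I would recall the Cauchy theory for \eqref{eq:wmk} in $\HH_0$: local well-posedness, a scattering (``Strichartz-type'') norm $\|\cdot\|_{S(I)}$ such that a solution bounded in it on $I_{\max}$ is global and scatters to a solution of the linearized equation \eqref{eq:2dlin} in the corresponding time direction, a small-data statement ($\E(\vec\psi(0))<\de_0 \Rightarrow$ global and scattering both ways), and a long-time perturbation/stability lemma; persistence of regularity then reduces the smooth-data claim to the energy-space claim. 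Next, for a bounded sequence in $\HH_0$ I would produce, along a subsequence, a Bahouri--Gérard profile decomposition whose concentrating profiles carry only scaling and time-translation parameters (no spatial translations survive under equivariance), with small remainder and the Pythagorean energy expansion; replacing linear profiles by the associated nonlinear solutions of \eqref{eq:wmk}, the superposition approximates the true solution as long as each profile is global with finite scattering norm.

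Arguing by contradiction, set
\[
E_c := \sup\{E : \text{every } \vec\psi(0)\in\HH_0 \text{ with } \E(\vec\psi(0))\le E \text{ is global and scatters in both time directions}\},
\]
so $E_c\ge\de_0>0$, and suppose $E_c<8\pi k$. Feeding into the profile decomposition a sequence of solutions with energies $\downarrow E_c$ and diverging scattering norm, minimality of $E_c$ forces a single profile carrying the full energy $E_c$ with all others vanishing; the associated solution $\vec\psi_*$ then satisfies $\E(\vec\psi_*)=E_c$, fails to scatter in (say) forward time, and has a forward trajectory that is precompact in $\HH_0$ modulo scaling, i.e.\ there is $\la:[0,T_+)\to(0,\I)$ with $\{(\psi_*(t,\la(t)\,\cdot),\,\la(t)\,\partial_t\psi_*(t,\la(t)\,\cdot)):t\in[0,T_+)\}$ precompact in $\HH_0$.

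It remains to show $\vec\psi_*\equiv0$, which scatters, yielding the contradiction and hence $E_c\ge 8\pi k$. If $T_+<\I$, then along $t_n\uparrow T_+$, compactness together with Struwe's theorem gives, after rescaling, convergence to a nonconstant $k$-equivariant harmonic map, i.e.\ to $\pm\vec Q_k$ up to scaling; the asymptotic energy-orthogonality of the bubble and the exterior, combined with the boundary conditions \eqref{eq:deg0} (which force the exterior profile to connect $\pm\pi$ back to $0$, hence to carry energy $\ge\E(\vec Q_k)$), yields $E_c=\E(\vec\psi_*)\ge 2\E(\vec Q_k)=8\pi k$ --- impossible; so $\vec\psi_*$ is forward-global. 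Using compactness and global existence I would then control $\la(t)$ (ruling out self-similar concentration, producing a lower bound, etc.) and run a truncated virial/Morawetz argument: the localized functional $\sim\int_0^\I\chi_R(r)\,r\,\partial_r\psi_*\,\partial_t\psi_*\rdr$ obeys a differential inequality whose leading term is a positive multiple of a localized version of the energy \eqref{eq:en}; integrating over a long time interval, using precompactness to discard the cutoff errors and the control on $\la(t)$, forces the energy of $\vec\psi_*$ to vanish, so $\vec\psi_*\equiv0$.

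The main obstacle is the rigidity step: obtaining the virial identity with a favorable sign for \eqref{eq:wmk} despite the $\tfrac{k^2}{2r^2}\sin 2\psi$ nonlinearity and the singular $1/r^2$ potential, controlling the modulation scale $\la(t)$ (including a priori bounds preventing it from escaping to $0$ or $\I$ too fast), and --- in the finite-time case --- making Struwe's bubbling dichotomy quantitative enough to extract the sharp lower bound $2\E(\vec Q_k)$ from the degree-zero boundary conditions. The profile decomposition and perturbation theory are by now fairly standard in this setting, so the real work is concentrated in these rigidity estimates.
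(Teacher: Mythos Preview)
The paper does not prove this theorem; it is quoted verbatim from \cite[Theorem~1.1]{CKLS1} and used as a black box throughout. Your outline is essentially the Kenig--Merle concentration--compactness/rigidity scheme that \cite{CKLS1} carries out, so in that sense you are on the right track and aligned with the cited source.

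One small correction to your rigidity sketch: the virial identity for \eqref{eq:wmk} (cf.\ Lemma~\ref{l:vir} in this paper) produces $\int \psi_t^2\,r\,\ud r$ as the main term, not a localized version of the full energy. The conclusion of the global rigidity step is therefore that the critical element has $\partial_t\psi_*\equiv 0$, hence is a finite-energy harmonic map in $\HH_0$, hence identically zero. Your finite-time case is fine: Struwe's bubbling plus the degree-zero boundary condition (the exterior must unwind from $\pm\pi$ back to $0$, costing at least $\E(\vec Q_k)$) gives the $2\E(\vec Q_k)$ lower bound and the contradiction.
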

The analogous result for the full model without symmetries was obtained by the second author and Oh in~\cite{LO1}, as a consequence of the bubbling analysis in~\cite{ST2}. 
The heuristic reasoning behind the threshold $2 \E(\vec Q)$ is as follows. The topological degree counts (with orientation) the number of times a map `wraps around' $\Sp^2$. If a harmonic map of degree $k$ bubbles off from a wave map $\vec \psi(t)$,  then, in order for $\vec \psi(t)$ to satisfy $\deg(\psi) = 0$, it must `unwrap' precisely $k$ times away from the bubble. The minimum energy cost for wrapping and unwrapping is $4 \pi k$, which is also the energy of $Q_k$. The total energy cost is at least $8 \pi k   = 2\E(\vec Q_k)$. %The case of $k = 1$ is the first nontrivial harmonic map (i.e., the ground state $Q$), and thus we are led to the threshold $2 \E[Q, 0]$.

Similar intuition motivated the works~\cite{CKLS1, CKLS2}, which established soliton resolution for $1$-equivariant maps with energies that only allow for one concentrating bubble, namely for data in $\HH_{\pi}$ with energy below $3\E(Q)$. These works showed that for any such solution there exists a regular map $\vec \fy \in \HH_0$ (free radiation if the solution is global) and a continuous dynamical scale  $\la(t)   \in [0, \infty)$ such that 
\EQ{ \label{eq:sr1} 
 \vec \psi(t) = \vec Q_{\la(t)}  +  \vec \fy(t) + o_{\HH_0}(1) \mas t \to T_+. 
} 
 Cote~\cite{Cote15} and later Jia, Kenig~\cite{JK} extended the theory to handle arbitrary energies, the latter work in all equivariance classes. % it was shown in~\cite{Cote15, JK} 
It was shown that in  this case the decomposition~\eqref{eq:sr1} holds with possibly many concentrating harmonic maps, but only along at least one~\emph{sequence of times}  $t_n \to T_+$. 
The proofs of~\cite{CKLS1, CKLS2, Cote15, JK} rely heavily on concentration compactness techniques and were all inspired by the remarkable series of papers by Duyckaerts, Kenig, and Merle~\cite{DKM1, DKM2, DKM3, DKM4} on the focusing quintic nonlinear wave equation in $3$ space dimensions. We'll discuss these latter works more below; see Remark~\ref{r:dkm}. 

\begin{thm}[Sequential Decomposition]\emph{ \cite{Cote15, JK}}\label{t:cjk} Let $\vec \psi(t)\in \HH_{\ell \pi}$ be a smooth solution to~\eqref{eq:wmk} on $[0, T_+)$. Then there exists a sequence of times $t_n \to T_+$, an integer $J \in \N$, a regular map $\vec \fy \in \HH_0$, sequences of scales $\la_{n, j}$ and signs $\iota_j \in \{-1, 1\}$ for $j \in \{1,  \dots, J\}$,  so that 
\EQ{ \label{eq:seq} 
 \vec \psi(t_n)  = \sum_{j =1}^J \iota_j \vec Q_{\la_{n,j}}  + \vec \fy(t_n) + o_{\HH_0}(1) \mas n \to \infty
}
In the case of finite time blow-up at least one scale $\la_{n, 1} \to 0$ as $n \to \infty$ and $\vec \fy(t) \to \vec \fy(1)$ is a finite energy  map  with $\E(\vec \fy(1)) = \E(\vec \psi) - J\E(\vec Q)$. In the case of a global solution,  $\vec \fy(t)$ can be taken to be a solution to the linear wave equation~\eqref{eq:2dlin} and 
 signs $\iota_j$ are required to match up so that $$\lim_{r \to \infty} \vec \psi(0, r) = \ell \pi  =  \lim_{r \to \infty} \sum_{j =1}^J \iota_j \vec Q_{\la_{n,j}}(r).$$ 
\end{thm}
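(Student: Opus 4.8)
\emph{Overall strategy.} The plan is to adapt the concentration--compactness/rigidity scheme of Duyckaerts--Kenig--Merle to the $k$-equivariant wave maps flow, driven by two engines: Struwe's bubbling theorem in the finite-time case, and exterior-energy (``channels of energy'') estimates for the $k$-equivariant free wave operator in the global case. In both regimes the scheme is the same: (i) peel off a \emph{regular part} $\vec\fy(t)$ carrying all the energy that is not concentrating near $r=0$ --- a finite-energy map in the blow-up case, a free wave in the global case; (ii) show that the remainder $\vec u(t):=\vec\psi(t)-\vec\fy(t)$, which asymptotically lives in a shrinking (blow-up) or sublinear (global) neighborhood of the origin, decomposes along a well-chosen sequence $t_n\to T_+$ into a pure superposition of rescaled harmonic maps with no leftover dispersive energy.

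\emph{Finite-time blow-up.} Normalize $T_+=1$. First I would extract the regular part: by finite speed of propagation together with the fact (a consequence of Struwe's theorem) that energy concentrates strictly inside the backward light cone at a rate $o(1-t)$ --- which in turn uses the non-existence of non-constant self-similar finite-energy equivariant wave maps --- the restriction of $\vec\psi(t)$ to $\{r\ge 1-t\}$ converges in $\Hloc(\{r>0\})$ as $t\to1$ to a finite-energy map; solving \eqref{eq:wmk} backwards from time $1$ with this limit as data gives a regular solution $\vec\fy(t)$ with $\vec\fy(t)\to\vec\fy(1)$ in $\HH_0$. Then $\vec u(t)=\vec\psi(t)-\vec\fy(t)$ has energy tending to $\E(\vec\psi)-\E(\vec\fy(1))$ and, modulo $o_{\HH_0}(1)$, is supported in $\{r\le 1-t\}$. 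The crux is to show $\vec u(t_n)$ is, along some $t_n\to1$, a clean sum of bubbles. I would argue by induction on the energy: along a suitable sequence, pass to a weak limit of $\vec u$ in rescaled coordinates, which by Struwe's theorem is a non-trivial rescaled harmonic map $\iota_1\vec Q$; subtract it, verify a Pythagorean expansion so that the leftover carries strictly less energy and is again concentrated in the cone with no surviving dispersion (the latter from a virial/Morawetz estimate inside a blow-up cone); repeat. Since each bubble costs $\E(\vec Q)=4\pi k$, the process terminates after $J<\infty$ steps, and a final leftover with positive energy concentrated in a shrinking cone would either bubble again (contradiction) or extend regularly (forcing its energy to vanish), so it is $o_{\HH_0}(1)$. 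This yields \eqref{eq:seq} with $\E(\vec\fy(1))=\E(\vec\psi)-J\E(\vec Q)$ and at least one $\la_{n,1}\to0$ because $\vec\psi$ genuinely blows up at $t=1$.

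\emph{Global solutions.} Now $T_+=\infty$ and the regular part is the radiation: exterior-energy estimates for the $k$-equivariant free wave equation show that outside every cone $\{r\ge\varepsilon t\}$ the solution is asymptotically a linear wave $\vec\fy_L(t)$, so with $\vec\fy:=\vec\fy_L$ the remainder $\vec u(t)=\vec\psi(t)-\vec\fy_L(t)$ is asymptotically concentrated in $\{r\lesssim t\}$ and is \emph{non-radiative}. One then runs the Kenig--Merle rigidity argument: a non-zero non-radiative remainder of least energy would have a flow precompact modulo the scaling symmetry, and the classification of such compact-modulo-scaling equivariant wave maps identifies it, along a sequence $t_n\to\infty$ chosen so that a suitable virial functional is asymptotically stationary, with a superposition $\sum_j\iota_j\vec Q_{\la_{n,j}}$; the virial choice is what upgrades weak to strong convergence. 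This gives \eqref{eq:seq} with $\vec\fy=\vec\fy_L$ solving the free wave equation, and $J=0$ precisely when $\vec\psi$ scatters. Finally, letting $r\to\infty$ in \eqref{eq:seq}: since $\vec\fy_L$ and the $o_{\HH_0}(1)$ error vanish at spatial infinity while $\vec Q_\la(r)\to\pi$ as $r\to\infty$, the partial sums of the signs $\iota_j$ ordered by the scales $\la_{n,j}$ must telescope to $\ell$, which is the stated sign-matching condition.

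\emph{Main obstacle.} The hard part in both regimes is the rigidity step: proving that no energy stays trapped in a dispersive-but-non-scattering mode, equivalently that the only objects compact modulo symmetry are harmonic maps. This rests on exterior-energy lower bounds for the $k$-equivariant wave operator (genuinely delicate in two space dimensions) and on carefully controlling, inside the profile/bubble decomposition, the interactions between bubbles at widely separated scales and the regular or radiation part; extracting the precise subsequence $t_n$ along which the remainder is truly $o_{\HH_0}(1)$ rather than merely weakly small --- via monotonicity of the virial quantity --- is the subtle final point.
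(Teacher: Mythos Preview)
The paper does not prove this theorem; it is quoted from \cite{Cote15, JK} and used as a black box (indeed only the special case $J\le 2$, $\ell=0$ is ever invoked, as Proposition~\ref{p:cjk}). So there is no ``paper's own proof'' to compare against. That said, your sketch is a reasonable high-level summary of how those references actually proceed, and I will comment on where it is accurate and where it drifts.

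In the blow-up case your outline is essentially correct and follows the route of C\^ote: Struwe's bubbling plus the self-similar exclusion gives the regular part $\vec\fy(1)$, and one iteratively extracts harmonic-map profiles from the concentrating remainder, with energy quantization terminating the process. The point you correctly flag as subtle --- upgrading from weak to strong convergence of the residual along a \emph{specific} sequence $t_n$ --- is handled in \cite{Cote15} via a virial/Morawetz inequality that forces $\int\|\partial_t\psi\|_{L^2}^2\,dt<\infty$ inside the cone and hence gives a sequence along which the time derivative vanishes; your mention of ``virial/Morawetz inside a blow-up cone'' is on target.

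In the global case your sketch leans on exterior-energy (channels of energy) estimates for the $k$-equivariant linear operator. Here you should be careful: as the paper notes in Remark~\ref{r:dkm}, the strong channels-of-energy estimates used in \cite{DKM4} fail for the free wave equation in even dimensions. The proofs in \cite{Cote15, JK} therefore do \emph{not} follow the DKM4 template verbatim. C\^ote's argument in the global case again uses a virial identity to produce a good time sequence and then a profile decomposition, rather than a full exterior-energy lower bound; Jia--Kenig use a weaker, one-sided channel estimate adapted to equivariant data, together with the profile machinery. Your description of a ``Kenig--Merle rigidity argument'' classifying compact-modulo-scaling objects as harmonic maps is morally right but compresses a lot: the actual mechanism is to show, via the profile decomposition and the Threshold Theorem, that each nonlinear profile is either scattering or a bubble, and that the scattering profiles and the error are ruled out along the chosen $t_n$.

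In short: your proposal captures the architecture of the cited proofs, but overstates the role of exterior-energy estimates in the global case and is somewhat imprecise about the rigidity step. Since the present paper only quotes the result, there is nothing further to reconcile.
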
  
\begin{rem}
A decomposition into bubbles for a sequence of times for the full non-equivariant model was obtained by Grinis \cite{Gri} up to an error that vanishes in a weaker Besov-type norm. 
Duyckaerts, Jia, Kenig and Merle \cite{DJKM16} proved that for energies slightly above
$\E(\vec Q)$ a one-bubble decomposition holds for continuous time.
The same authors obtained in \cite{DJKM1} a sequential decomposition into bubbles
in the case of the focusing energy critical power-type nonlinear wave equation (NLW).  
\end{rem}

Theorem~\ref{t:cjk} raises two natural questions: 
\begin{itemize} 
\item Are there any solutions to~\eqref{eq:wmk} with $J \ge 2$ in~\eqref{eq:seq}, i.e., are there any solutions that form more than one bubble? 
\item And, if so, does the decomposition~\eqref{eq:seq} hold continuously in time, i.e., does soliton resolution hold for~\eqref{eq:wmk}? 
\end{itemize}

In view of Theorems~\ref{t:2EQ} and~\ref{t:cjk} it is natural to ask both questions at the minimal possible energy level where multiple bubble dynamics can occur, namely for solutions $\vec\psi(t) \in \HH_0$ having \emph{threshold energy}, that is such that
 \EQ{\label{eq:energy}
 \E( \vec \psi) =  2 \E(\vec Q).
  }
In \cite{JJ-AJM} the first author obtained  an affirmative answer to the first question, proving the following result. 
\begin{thm}\emph{\cite[Theorem $2$]{JJ-AJM}}
  \label{thm:deux-bulles-wmap}
Let $k > 2$. There exists a solution $\vec\psi: (-\infty, T_0] \to \HH_0$ of \eqref{eq:wmk}
such that
  \begin{equation}
    \label{eq:mainthm-wmap}
    \lim_{t\to -\infty}\big\|\vec\psi(t) - \big({-}\vec Q + \vec Q_{\gamma_k |t|^{-\frac{2}{k-2}}}\big)\big\|_{\HH_0} = 0,
  \end{equation}
  where $\gamma_k > 0$ is an explicit constant depending on $k$. \qed
\end{thm}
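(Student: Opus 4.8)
The construction proceeds by a backward-in-time compactness (shooting) scheme: for a sequence $T_n\to-\infty$ one produces solutions $\vec\psi_n$ of \eqref{eq:wmk} on $[T_n,T_0]$ that remain in a shrinking tube around the two-bubble configuration, proves estimates uniform in $n$, and extracts a limit. The three ingredients are an accurate approximate solution together with the formal modulation law, uniform energy estimates closed by a bootstrap, and a topological argument neutralizing the single unstable direction of the modulation dynamics.

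\emph{Ansatz and modulation law.} I would write
\[
\vec\psi(t) \;=\; -\vec Q_{\mu(t)} + \vec Q_{\lambda(t)} + \vec W(t) + \vec g(t),
\]
with $\mu(t),\lambda(t)>0$ modulation parameters, $\vec W(t)$ a finite sum of explicit correction profiles (obtained by inverting the linearized operators against the interaction terms, each formally of lower order than the last), and $\vec g(t)$ a remainder subject to two orthogonality conditions, against $\Lambda Q_\mu$ and against $(\Lambda Q)_\lambda$, where $\Lambda Q:=r\partial_r Q$ spans the kernel of the spatial operator obtained by linearizing \eqref{eq:wmk} at one harmonic map. (Here $k\ge2$ enters: $\Lambda Q\in L^2$ precisely for $k\ge2$, which is also why the $1$-equivariant case behaves differently.) Inserting the ansatz into \eqref{eq:wmk} and projecting onto the kernel directions yields the modulation system; the crucial input is the leading interaction error, which at the inner scale equals $\sim\lambda^{k-2}s^{k-2}\sin^2Q(s)$ with $s=r/\lambda$, so that its pairing with $(\Lambda Q)_\lambda$ produces a force of size $\sim\lambda^{k}$ with the sign that admits a solution shrinking to $0^+$; balancing this against the inertial term $\lambda\ddot\lambda\,\norm{\Lambda Q}_{L^2}^2$ gives
\[
\ddot\lambda \;=\; c_k\,\lambda^{k-1}\bigl(1+o(1)\bigr),\qquad \mu(t)\to1,\quad c_k>0,
\]
whose relevant solution, for $k>2$, is $\lambda_*(t)=\gamma_k|t|^{-\frac{2}{k-2}}$ with $\gamma_k$ read off from $c_k$ and the exponent (the superpolynomial decay forced at $k=2$ is why one assumes $k>2$). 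One checks along the way that the kinetic energy $\sim\dot\lambda^2$ of the moving inner bubble cancels the negative interaction energy $\sim-\lambda^k$, consistently with the threshold energy $2\E(\vec Q)$.

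\emph{Estimates for the remainder.} The function $\vec g$ solves a wave equation with a time-dependent potential — the linearization around the two-bubble profile, a small perturbation of the direct sum of the two one-bubble linearizations, hence nonnegative with a two-dimensional near-kernel — forced by (minus) the refined error plus quadratic-and-higher terms in $\vec g$. Using the orthogonality conditions and coercivity of the linearized energy modulo the kernel (valid since each one-bubble operator is nonnegative and the two bubbles live at well-separated scales), one runs an energy/virial bootstrap: the time derivative of a modified energy — the energy of $\vec g$ corrected by lower-order terms cancelling the contributions of the time-dependence of the profile — is controlled by the residual error, the modulation parameters, and $\norm{\vec g}_{\HH_0}$. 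The delicacy is that the error decays only polynomially in $|t|$, so $\vec W$ must carry enough correction terms that the residual error, hence the natural size of $\vec g$, sits at a power of $|t|$ strictly beyond the interaction scale $\lambda^k\sim|t|^{-2k/(k-2)}$, and the ODE for $\lambda$ must be closed sharply enough to conclude $\lambda\sim\gamma_k|t|^{-2/(k-2)}$ rather than merely $\lambda\to0$.

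\emph{Main obstacle, and conclusion.} The modulation dynamics is unstable: linearizing $\ddot\lambda=c_k\lambda^{k-1}$ about $\lambda_*$ gives $\ddot\delta=\beta\,t^{-2}\delta$, one solution of which grows like a positive power of $|t|$ (the decaying solution being the harmless time translation). Hence one cannot take $\vec\psi_n$ with data equal to the approximate solution at $T_n$ and expect the trapping to persist up to $T_0$; instead the component of $\vec g_n(T_n)$ along this unstable direction is chosen from a short interval, and an intermediate-value/Brouwer-type argument selects a value for which $\vec\psi_n$ stays in the tube on all of $[T_n,T_0]$ with the uniform bounds. I expect this coupling of the topological selection with the bootstrap — rather than any single estimate — to be the technical heart of the argument. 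With the $n$-independent estimates in hand, a standard compactness argument (weak limits in the energy space together with local well-posedness of \eqref{eq:wmk}) produces a solution $\vec\psi$ on $(-\infty,T_0]$ with $\norm{\vec\psi(t)-(-\vec Q_{\mu(t)}+\vec Q_{\lambda(t)})}_{\HH_0}\to0$, and the modulation bounds give $\mu(t)\to1$, $\lambda(t)=\gamma_k|t|^{-2/(k-2)}(1+o(1))$; absorbing $\mu-1$ and the $o(1)$ into the remainder and fixing the time-translation freedom via the normalization of $\gamma_k$ yields \eqref{eq:mainthm-wmap}.
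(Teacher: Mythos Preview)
The paper does not prove this theorem: it is quoted from \cite{JJ-AJM} and closed with a \qed, so there is no proof in the present paper to compare against. Your task description asked you to compare with ``the paper's own proof,'' but here that proof is simply the citation.

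That said, your sketch is a faithful outline of the strategy actually used in \cite{JJ-AJM}. The backward-in-time compactness scheme, the ansatz $-Q_\mu + Q_\lambda$ plus correction profiles plus remainder with orthogonality against $\Lambda Q$ at each scale, the modulation law $\ddot\lambda \sim c_k \lambda^{k-1}$ yielding $\lambda_*(t)=\gamma_k|t|^{-2/(k-2)}$, and the coercivity-based energy estimates are all correct ingredients; the present paper's Section~\ref{s:mod} in fact recycles several of the same objects (the operators $\A(\lambda)$, $\A_0(\lambda)$ of Lemma~\ref{lem:op-A-wm}, and the computation of the leading interaction force in Claim~\ref{c:lead}) directly from \cite{JJ-AJM}. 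One refinement: in \cite{JJ-AJM} the instability of the modulation dynamics is handled not by a Brouwer-type topological argument but by a direct shooting on a single real parameter (essentially the initial value of $b$, the virial-corrected $\lambda'$), which suffices because the unstable manifold is one-dimensional. Your identification of this instability as the main obstacle is correct, but the resolution is slightly simpler than a full Brouwer argument.
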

\begin{rem}
Similar solutions could be obtained for $k = 2$ by the same method. 
\end{rem}
 \subsection{Main result}
In this paper, we address the problem of \emph{classification} of solutions
at threshold energy level, in the spirit of the works of Duyckaerts and Merle~\cite{DM, DM-NLS}.
The major difficulty in the analysis is that in our case the threshold solutions
contain two bubbles, which leads to significantly more complicated dynamics. 

 Let $\vec \psi(t) : (T_-, T_+) \to \HH_0$ be a solution to~\eqref{eq:wmk} with $\E(\vec \psi) = 2 \E(\vec Q)$. We will say that $\vec\psi(t)$ is a \emph{two-bubble in
 the forward time direction} if there exist $\iota \in \{1, -1\}$
 and continuous functions $\lambda(t), \mu(t) > 0$ such that
 \EQ{
\lim_{t \to T_+} \| (\psi(t) - \iota(Q_{\la(t)} - Q_{\mu(t)}), \psi_t(t))\|_{\HH_0} = 0, \quad \lambda(t) \ll \mu(t)\text{ as }t \to T_+.
}
The notion of a \emph{two-bubble in the backward time direction} is defined similarly. We prove the following result. 
 \begin{thm}[Main Theorem] \label{t:main} 
Fix any equivariance class $k \ge 2$.  Let $\vec\psi(t) :(T_-, T_+) \to \HH_0$ be a solution to~\eqref{eq:wmk} such that
\EQ{
\E(\vec \psi) = 2 \E(\vec Q) = 8\pi k.
}
Then $T_- = -\infty$, $T_+ = +\infty$ and one the following alternatives holds:
 \begin{itemize}[leftmargin=0.5cm]
\item $\vec \psi(t)$ scatters in both time directions,
\item $\vec\psi(t)$ scatters in one time direction and is a two-bubble in the other time direction with the scales of the bubbles $\lambda(t), \mu(t)$ satisfying
\EQ{
\mu(t) \to \mu_0 \in (0, +\infty),\qquad \lambda(t) \to 0.
}
\end{itemize} 
 \end{thm}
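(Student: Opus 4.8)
The plan is to combine concentration--compactness of Kenig--Merle type (already packaged for us in Theorem~\ref{t:cjk}, together with the perturbation/rigidity arguments it rests on) with a modulation analysis of two interacting harmonic maps in the absence of excess radiation, in the spirit of Duyckaerts--Merle. The structural starting point is an \emph{energy quantization} at level $8\pi k$ inside $\HH_0$: applying Theorem~\ref{t:cjk} to $\vec\psi$ near an endpoint of its lifespan produces a decomposition \eqref{eq:seq} into $J$ bubbles; since each $\vec Q_{\lambda_{n,j}}$ carries energy $4\pi k$ and energies add, $4\pi k J \le \E(\vec\psi) = 8\pi k$, so $J \le 2$, and because $\vec\psi(t_n)\in\HH_0$ the signs $\iota_j$ must telescope, forcing $J$ even. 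Hence $J\in\{0,2\}$, and $J = 2$ forces the radiation to carry energy $\E(\vec\psi) - 2\E(\vec Q) = 0$; along the sequence, $\vec\psi(t_n)\to\iota(\vec Q_{\lambda_{n,1}} - \vec Q_{\lambda_{n,2}})$ in $\HH_0$ with $\lambda_{n,1}\ll\lambda_{n,2}$.

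First I would eliminate the case $J = 0$. If $\|\vec\psi(t_n) - \vec\varphi_L(t_n)\|_{\HH_0}\to 0$ along $t_n\to T_+$ for a finite-energy solution $\vec\varphi_L$ of the linearized equation, then the free evolution of $\vec\varphi_L$, hence of $\vec\psi(t_n)$, has vanishing Strichartz norm on $[t_n,+\infty)$ for $n$ large, and the small-data/stability theory for~\eqref{eq:wmk} makes $\vec\psi$ global and forward-scattering. Consequently, if $\vec\psi$ does not scatter forward, then every sequential decomposition has $J\ge 1$, hence $J = 2$ with zero radiation; and if $T_+ < \infty$, Theorem~\ref{t:cjk} again gives $J\ge 1$, hence $J = 2$, zero radiation, and $\lambda_{n,1}\to 0$. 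So in all non-scattering scenarios we are reduced to a solution approaching a two-bubble along a sequence.

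The core of the proof is to upgrade this \emph{sequential} information to a \emph{continuous-in-$t$} statement and to extract the scale asymptotics. Write $\vec\psi(t) = \iota\big(\vec Q_{\lambda(t)} - \vec Q_{\mu(t)}\big) + \vec g(t)$, with $\lambda(t)\ll\mu(t)$ and the parameters fixed by orthogonality conditions, so that $\vec g(t)$ is small. The ingredients are: (i) sharp coercivity of the second variation of the energy about a two-bubble on the orthogonal complement, controlling $\|\vec g(t)\|_{\HH_0}$; (ii) the modulation system for $(\lambda,\mu)$, a perturbed ODE whose leading terms encode the weak interaction force between the two harmonic maps (polynomial in the scale ratio $\lambda/\mu$), with error controlled by $\|\vec g\|$; (iii) a ``no-return'' trapping estimate, namely that once $\vec\psi$ lies sufficiently close to the two-bubble family at one time it cannot escape a neighborhood of it, so sequential closeness propagates to $\|\vec\psi(t) - \iota(\vec Q_{\lambda(t)} - \vec Q_{\mu(t)})\|_{\HH_0}\to 0$ as $t\to T_+$. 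Integrating the modulation ODE, the definite sign of the interaction force forces $\lambda(t)/\mu(t)\to 0$ and $\mu(t)\to\mu_0$, and shows the collapse of $\lambda(t)$ is too slow to occur in finite time, so $T_+ = +\infty$ and $\mu_0\in(0,+\infty)$ (both $\mu_0 = 0$ and $\mu_0 = +\infty$ being incompatible with the modulation equations and energy conservation). Run backwards, this also excludes finite-time blow-up at $T_-$, so $T_- = -\infty$, and shows a non-scattering backward direction is a backward two-bubble with the analogous asymptotics. In particular, in each time direction $\vec\psi$ is either scattering or a two-bubble, exclusively.

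It remains to preclude failure of scattering in \emph{both} directions, which by the above would make $\vec\psi$ a two-bubble as $t\to+\infty$ and as $t\to-\infty$, with the inner scale collapsing at both ends. Here one invokes a global virial/Morawetz-type monotonicity: a suitably truncated dilation (or momentum) functional is monotone along the flow and its total variation over $\mathbb{R}$ is bounded by $\E(\vec\psi)$, while each two-bubble regime pins down the sign of its derivative --- and the two regimes impose incompatible signs, a contradiction. Hence at least one direction scatters; combined with the trichotomy above, this yields exactly the two stated alternatives. I expect the main obstacle to be the modulation analysis underlying step (iii): establishing coercivity of the linearized operator about a two-bubble in spite of the nearly degenerate scaling direction of \emph{each} bubble, computing the interaction force with the correct sign (so the scales separate rather than collide), and closing the bootstrap that traps the solution near the two-bubble manifold. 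This is precisely the point where the concentration--compactness input, which only controls the solution along sequences of times, must be supplemented by genuinely new dynamical estimates.
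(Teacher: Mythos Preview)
Your overall architecture is right: sequential two-bubble from Theorem~\ref{t:cjk}, modulation analysis near the two-bubble, virial/monotonicity to close. But the mechanism you describe in step (iii) is not what actually happens, and getting this wrong would prevent the argument from closing.

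You write that once $\vec\psi$ is close to the two-bubble family ``it cannot escape a neighborhood of it.'' In fact the modulation equations force the \emph{opposite}: near a two-bubble the interaction term drives $\lambda/\mu$ to grow, so the solution is \emph{ejected} from any small neighborhood (this is the content of Proposition~\ref{prop:modulation}). There is no Lyapunov functional trapping the trajectory. The actual argument is indirect: assume $\limsup_{t\to T_+}\bfd(\vec\psi(t))>0$. Then the time axis splits into ``bad'' intervals where $\bfd$ is small and ``good'' intervals where $\bfd\ge\eps'$. On good intervals a concentration-compactness argument (Lemma~\ref{l:1profile}) gives pre-compactness of the trajectory up to scaling, yielding $\int_{I_m}\|\psi_t\|_{L^2}^2\gtrsim |I_m|$. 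On bad intervals the modulation estimates give the quantitative ejection bound $\int\sqrt{\bfd(\vec\psi(t))}\,\ud t\le C_k$, uniformly in how deep the solution went. Feeding both into the truncated virial identity \eqref{eq:virR} between two times $c_{m_1},c_{m_2}$ where $\bfd$ is small produces $\frac45\int\|\psi_t\|^2\ge 2\int\|\psi_t\|^2$, a contradiction. So the ``no-return'' is really ``once you leave you cannot come back,'' and establishing it requires the good/bad splitting and the compactness lemma, neither of which appears in your outline.

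Your argument for excluding a two-bubble in both time directions via ``incompatible signs'' of a monotone functional is also not how it works. The paper's argument is that if $\vec\psi$ is a two-bubble as $t\to\pm\infty$ then the decay rates \eqref{eq:lambda-asym-2}--\eqref{eq:lambda-asym-k} make $\int_{\bbR}\sqrt{\bfd(\vec\psi(t))}\,\ud t<\infty$, so the virial error $\Omega_R$ is integrable uniformly in $R$ near $\pm\infty$; choosing $R$ large on the remaining compact interval and letting $\tau_1\to-\infty$, $\tau_2\to+\infty$ in \eqref{eq:virR} (the boundary terms vanish since $\bfd\to 0$) gives $\int_{\bbR}\|\psi_t\|_{L^2}^2\le\delta$ for every $\delta>0$, forcing $\vec\psi$ to be stationary --- impossible in $\HH_0$ at energy $8\pi k$. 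There is no sign obstruction; the point is that the total kinetic energy vanishes.
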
  
 \begin{rem}
 As a by-product of the proof, we will determine the rate of decay of $\lambda(t)$ in the two-bubble case. Suppose a two-bubble solution forms as $t \to \infty$: If $k \ge 3$ there exists a constant $C_k > 0$ such that  $\frac{1}{C_k}\mu_0 t^{-\frac{2}{k-2}} \leq \la(t) \leq C_k\mu_0 t^{-\frac{2}{k-2}}$
 for $t$ large enough, see \eqref{eq:lambda-asym-k}.
 In the case $k=2$ there exists a constant $C > 0$ such that we have $\exp( -C  t) \le \la(t) \le \exp(- t/C)$
 for $t$ large enough, see \eqref{eq:lambda-asym-2}. 
 \end{rem}

 \begin{rem}
   In particular, the two-bubble solutions from Theorem~\ref{thm:deux-bulles-wmap}
   scatter in forward time, which provides an example of an orbit
   connecting different types of dynamical behavior for positive and negative times.
\end{rem}
\begin{rem}
   Non-existence of solutions which form a pure two-bubble in both time directions
   is reminiscent of the work of Martel and Merle \cite{MM11, MM11-2}.
   This seems to be a typical feature of models which are not completely integrable.

   One of the main points of our paper is an analysis of what we could call a \emph{collision
   of bubbles} in the simplest possible case of threshold energy. 
 \end{rem}
\begin{rem}
Recall that in \cite{DM} a \emph{complete classification} at the threshold energy
was obtained. It is tempting to believe that the solutions from Theorem~\ref{thm:deux-bulles-wmap} should play a similar role as the solution $W^-$ from \cite{DM},
in which case they would be unique non-dispersive solutions up to rescaling. This remains an open question.
\end{rem}

\begin{rem}
We conjecture that for $k = 1$ a similar result holds,
but in the two-bubble case $\lambda(t) \to 0$ in finite time.
The slower decay of $Q$ would be a source of additional
technical difficulties in Section 3, but the general scheme could be applied
without major changes.
\end{rem}
\begin{rem}
Our method establishes the exact analog of Theorem~\ref{t:main} in the case of the equivariant Yang-Mills equation, by making the usual analogy between equivariant Yang-Mills and $k=2$-equivariant wave maps, see for example~\cite[Appendix]{CKLS1} for the analog of the Threshold Theorem~\ref{t:2EQ} and~\cite{JJ-AJM} for the analog of Theorem~\ref{thm:deux-bulles-wmap}. There the harmonic map $Q$ is replaced by the first instanton, the notion of topological degree is replaced by the second Chern number, and the threshold energy is exactly twice the energy of the first instanton. 
\end{rem}

\begin{rem}\label{r:dkm} 
The full soliton resolution conjecture was established for the radial solutions of focusing energy critical NLW by Duyckaerts, Kenig, and Merle in the landmark work~\cite{DKM4}. This result is the only known case of a complete \emph{continuous-in-time} classification for a model
that is not completely integrable.   The proof relies on a particularly strong  form of the ``channels of energy" method introduced by the same authors. However, proving channel of energy estimates in other settings  is a delicate issue, and the strong form of these estimates used in~\cite{DKM4} is known to fail for the linear wave equation in even dimensions, see~\cite{CKS}.

Aside from~\cite{DKM4}, Theorem~\ref{t:main} is the only other classification result for a dispersive equation that holds for continuous times in the presence of more than one non-trivial elliptic profile. Upgrading sequential decompositions such as Theorem~\ref{t:cjk} or the one in~\cite{DJKM1} to hold for continuous times is regarded as a major open problem. %One can view our result as a first step in this direction.  %Indeed, in such  energy-critical settings the soliton profiles have large polynomial tails, and interact strongly with each other even in the radial setting. Aside from~\cite{DKM4} our result is the first classification result  since in general should require detailed 

%
%  A powerful method of establishing convergence to a multi-soliton for continuous time
%  are the \emph{channels of energy} of Duyckaerts, Kenig and Merle~\cite{DKM4}.
%  However, proving channel of energy estimates is a very delicate issue,
%  in particular the strong form of these estimates used in~\cite{DKM4} is known to fail for the linear wave equation in even dimensions, see~\cite{CKS}.
 \end{rem}

 \subsection{Structure of the proof}
 Inspired by the work of Duyckaerts and Merle~\cite{DM},
 we merge the concentration-compactness techniques
 with a careful analysis of the \emph{modulation equations} governing the evolution
 of the scales $\lambda(t)$ and $\mu(t)$.  As mentioned above,
 the main difference with respect to \cite{DM} consists in the fact that
 our threshold solutions contain two bubbles, one of which is concentrating,
 whereas in \cite{DM} the modulation happens essentially around one stationary bubble. Thus our analysis requires substantially new technique. 
 Our proof can be summarized as follows.
 
 \noindent
 \textbf{Step 1.} If the solution does not scatter, then,
 by a special case of Theorem~\ref{t:cjk},
 it approaches a two-bubble configuration for a sequence of times.
 
 \noindent
 \textbf{Step 2.} We divide the time axis into regions where the solution is close
 to a two-bubble configuration, which we can call the \emph{bad intervals} $[a_m, b_m]$,
  and regions where it is not, which are the \emph{good intervals} $[b_{m}, a_{m+1}]$.
 
 \noindent
 \textbf{Step 3.} On a bad interval $[a_m, b_m]$, we decompose the solution as follows:
 \EQ{
   \vec\psi(t) = \big({-}Q_{\mu(t)} + Q_{\lambda(t)} + g(t), \partial_t \psi(t)\big).
 }
 In order to specify the values of $\lambda(t)$ and $\mu(t)$, we use suitable
 orthogonality conditions, see Lemma~\ref{l:modeq}.
 For technical reasons, we introduce a parameter $\zeta(t)$ such that $|\zeta(t) - \lambda(t)| \ll \lambda(t)$.
 We consider $c_m \in (a_m, b_m)$
 where the quantity $\zeta(t) / \mu(t)$ attains its global minimum on $[a_m, b_m]$
 (we make sure that the minimum is not attained at one of the endpoints).
 
 The orthogonality conditions yield modulation equations for the evolution
 of $\zeta(t)$ and $\mu(t)$.
 From these equations we can deduce crucial information about the behavior
 of $\mu(t)$ and $\zeta(t)$ for $t \geq c_m$ and $t \leq c_m$.
 We refer to the beginning of Section~\ref{s:mod} for a short description of the method.
 The main conclusion can be intuitively phrased as follows:
 $\mu(t)$ does not change much on a bad interval, whereas $\zeta(t)$ grows in a controlled
 way both for $t \geq c_m$ and $t \leq c_m$. The decisive point is that the bad interval
 $[a_m, b_m]$ can be long if $\zeta(c_m)/\mu(c_m)$ is small, but
 \begin{equation}
 		\int_{a_m}^{b_m} \Big(\frac{\zeta(t)}{\mu(t)}\Big)^\frac k2 \ud t \leq C_k, \qquad C_k\text{ depending only on }k. \label{eq:bad-error-contr}
 \end{equation}
 Note that the only information about the solution which is used in this process
 is the fact that $\E(\vec\psi(t)) = 2\E(\vec Q)$, $\vec\psi(c_m)$ is close to a two-bubble configuration
 and
 \EQ{
 \dd t\Big\vert_{t = c_m} \big(\zeta(t)/\mu(t)\big) = 0.
 }
 
 \noindent
 \textbf{Step 4.} Using concentration-compactness arguments and Theorem~\ref{t:2EQ}
 we obtain that the solution has the \emph{compactness property} on the union of the good intervals.
 Now the idea is to run a convexity argument based on a monotonicity formula between
 two times where $\vec\psi(t)$ is close to a two-bubble. It is as this stage that we reach a contradiction --  if the solution has exited a neighborhood of two-bubble configurations during the interim, the total cost in terms of time derivative is too great to allow it to return. This is a type of \emph{no-return} result and one can draw parallels here to the ignition and ejection lemmas from the work of Krieger, Nakanishi, Schlag~\cite{KNS13, KNS15} concerning near ground-state dynamics for the energy critical NLW.  
 
There can potentially be many good and bad intervals between the two times where $\vec \psi(t)$ is close to a two-bubble. 
 It is well-known that one needs to use a cut-off in the monotonicity formula,
 which introduces an error in the estimates. On the good intervals, this error
 is controlled thanks to the compactness property. On the bad intervals,
 the bound \eqref{eq:bad-error-contr} comes into play. More precisely,
 we obtain that the error on a bad interval is absorbed by positive terms
 obtained on intervals preceding and following the bad interval.

 \noindent
 \textbf{Step 5.}
 Once the convergence to a two-bubble for continuous time is proved,
 we deduce easily from the modulation equations that the solution is global and $\mu(t) \to \mu_0 \in (0, +\infty)$.
 Scattering on at least one side follows easily from the previous analysis.
 Namely, if the solution is non-scattering in both time directions,
 then the time axis is divided into two bad regions near $\pm \infty$ and one good interval
 in between. We reach a contradiction by a similar (but simpler) argument as in~Step~4.

 \subsection{Acknowledgements}
J. Jendrej was supported by the ERC grant 291214 BLOWDISOL and by the NSF grant DMS-1463746.
This work was completed during his postdoc at the University of Chicago.
A. Lawrie was supported by NSF grant DMS-1700127. We would like to thank Rapha\"el C\^ote for many helpful discussions. And lastly, we would like to thank the anonymous referees for their careful reading of an earlier version of the manuscript and for suggesting substantial improvements.

 \section{Preliminaries and technical lemmas}

 In this section we establish a few preliminary facts about solutions to~\eqref{eq:wmk} that will be required in our analysis. We first aggregate here some notation. 
 
 \subsection{Notation} 
Given a radial function $f: \R^d \to \R$ we'll abuse notation and simply write $f = f(r)$, where $r = \abs{x}$. We'll also drop the factor $2\pi$ in our notation for the $L^2$ pairing of radial functions on $\R^2$ 
\EQ{
\ang{f \mid g}  := \frac{1}{2\pi}\ang{ f \mid g}_{L^2(\R^2)} = \int_0^\infty f(r) g(r) \, r \ud r
}
Recall the definition of the space $\HH_0$:
\EQ{
\HH_0:= \{ (\psi_0, \psi_1) \mid \E(\psi_0, \psi_1)< \infty, \quad \lim_{r \to 0} \psi_0(r) = \lim_{r \to \infty} \psi_0(r) = 0 \}
}
We define a norm $H$ by 
 \EQ{
 \| \psi_0 \|_{H}^2 := \int_0^\infty  \left(( \p_r \psi_0(r))^2 +  k^2  \frac{(\psi_0(r))^2}{r^2} \right)  r \ud r 
  }
 and for pairs $\vec \psi = (\psi_0, \psi_1) \in \HH_0$ we write  
 \EQ{
 \| \vec \psi \|_{\HH_0} := \| (\psi_0, \psi_1)\|_{H \times L^2}.
 }
The change of variables $r \mapsto e^x$ gives us an identification between the radial functions $H(\R^2)$ and $H^1(\R)$, i.e., $\psi_0(r) \in H \Leftrightarrow \psi_0(e^x) \in H^1(\R)$. In particular this means that 
\EQ{
 \| \psi_0 \|_{L^{\infty}} \le C \| \psi_0 \|_{H}
}
Scaling invariance plays a key role in our analysis.
Given a radial function $\phi: \R^2  \to \R$ we denote the $\dot H^1$ and $L^2$  re-scalings as follows 
\EQ{ \label{eq:scaledef} 
\phi_\la(r) = \phi(r/ \la), \quad
\phi_{\ula}(r)  = \frac{1}{\la} \phi(r/ \la)
}
The corresponding infinitesimal generators  are given by 
\EQ{ \label{eq:LaLa0} 
&\La \phi := -\frac{\partial}{\partial \lambda}\bigg|_{\lambda = 1} \phi_\la = r \p_r \phi  \quad (\dot H^1_{\textrm{rad}}(\R^2) \,  \textrm{scaling}) \\
& \La_0 \phi := -\frac{\partial}{\partial \lambda}\bigg|_{\lambda = 1} \phi_{\ula} = (1 + r \p_r ) \phi  \quad (L^2_{\textrm{rad}}(\R^2) \,  \textrm{scaling})
}

\subsection{Review of the Cauchy theory %
%: $4d$ reduction for solutions to~\eqref{eq:wmk} in $\HH_0$
} \label{s:2-4}
 For initial data $(\varphi_0, \varphi_1)$ in the class $\HH_0$ the formulation of the Cauchy problem~\eqref{eq:wmk} can be modified to take into account the strong repulsive potential term in the nonlinearity: 
\ant{
\frac{k^2\sin(2\phi)}{2r^2}  = \frac{ k^2}{r^2}\phi + \frac{ k^2}{2r^2} (\sin(2\phi) - 2\phi)  =  \frac{k^2}{r^2} \phi + \frac{O(\phi^3)}{r^2} 
} 
The presence of the potential $\frac{k^2}{r^2}$ indicates that the linear wave equation, 
\EQ{ \label{eq:2dlin} 
( \p_t^2 - \Delta_{\R^2} + \frac{k^2}{r^2} ) \psi = 0
,% \quad \LL_0 := -\Delta_{\R^2} + \frac{k^2}{2r^2}
}
of \eqref{eq:wmk} has more dispersion than the $2d$ wave equation. In fact, it has the same dispersion as the free wave equation in dimension $d = 2k+2$
as can be seen from the following change of variables:  given a radial function $\phi \in H$, define $v(r)$ by  $\phi(r) = r^k v(r)$. Then 
\EQ{ \label{eq:free} 
\frac{1}{r^k}(- \Delta_{\R^2} + \frac{k^2}{r^2}) \phi = -\De_{\R^{2k+2}} v, \quad \| \phi \|_{H}  = \|v \|_{\dot{H}^1(\R^{2k+2})}. 
}
Thus one way of studying solutions $\vec \psi(t) \in \HH_0$ of  Cauchy problem~\eqref{eq:wmk} is to define $\vec v(t)  = (r^{-k} \psi(t), r^{-k} \psi_t(t)) \in \dot H^1 \times L^2 (\R^{2k+2})$ and analyze the equivalent Cauchy problem for the radial nonlinear wave equation in $\R^{1+ (2k+2)}_{t, x}$ satisfied by $\vec v(t)$. Unfortunately, this route leads to unpleasant technicalities when $k >2$ (spatial dimension $=2k+2>6$) due to the high dimension and the particular structure of the nonlinearity.% -- see for instance~\cite{BCLPZ13}. 

There is a simpler approach that allows us to treat the scattering theory for the Cauchy problem~\eqref{eq:wmk} for all equivariance classes $k \ge 1$ in a unified fashion.
The idea is to make use of some, but not all, of the extra dispersion in $-\De_{\R^2} + k^2/r^2$.
Indeed, given a solution $\vec \psi(t)$ to~\eqref{eq:wmk} we define  $u$ by $ru = \psi$  and obtain the following Cauchy problem for $u$. 
\EQ{\label{eq:4d}
&u_{tt} - u_{rr} -\frac{3}{r} u_r  +\frac{k^2 -1}{r^2}  u  =  k^2\frac{2r u - \sin(2r u)}{2r^3} =: Z(ru) u^3 \\
&\vec u(0)= (u_0, u_1). 
}
where the function $Z$ defined above is a clearly smooth, bounded, even function. The linear part of~\eqref{eq:4d} is the radial wave equation in $\R^{1+4}$ with a \emph{repulsive} inverse square potential,  namely 
\EQ{\label{eq:4dlin}
&v_{tt} - v_{rr} -\frac{3}{r} v_r  + \frac{k^2-1}{r^2}v=0.
}
%Observe that for $\vec \psi(0) \in \HH_0$ we have that  
%\begin{align} \label{2-4}
% \E(\vec\psi(0)) \le \|\vec \psi\|_{H \times L^2}^2:=\int_0^{\infty}\left(\psi_t^2+  \psi_r^2 + \frac{\psi^2}{r^2}\right) \, r \, dr = \int_0^{\infty}(u_t^2+ u_r^2) \, r^3 \, dr.
%\end{align} 
For each $k\ge 1$, define the norm $H_k$ for radially symmetric functions $v$ on $\R^4$ by 
\EQ{
\| v \|_{H_k(\R^4)}^2:= \int_0^\infty \left[(\p_r v)^2 + \frac{(k^2-1)}{r^2} v^2 \right] \, r^3 \, \ud r
}
Solutions to~\eqref{eq:4dlin} conserve the $H_k$ norms.  By Hardy's inequality we have 
\EQ{ \label{eq:hardy} 
\|v \|_{H_k(\R^4)} \simeq \| v \|_{\dot{H}^1(\R^4)}
} 
The mapping, 
 \EQ{
 H_k \times L^2 (\R^4)  \ni (u_0, u_1) \mapsto  ( \psi_0, \psi_1):= (ru_0, ru_1) \in H \times L^2 (\R^2)
 }
 satisfies 
 \EQ{ \label{eq:2-4}
\|(u_0, u_1) \|_{\dot{H}^1 \times L^2(\R^4)} \simeq  \| (u_0, u_1) \|_{H_k \times L^2 (\R^4)}  = \| (\psi_0, \psi_1) \|_{H \times L^2 (\R^2)}
 }
Thus we can conclude that the Cauchy problem for~\eqref{eq:4d} with initial data in $\dot{H}^1 \times L^2(\R^4)$ is equivalent to the Cauchy problem for~\eqref{eq:wmk} for initial data $(\psi_0, \psi_1) \in \HH_0$, allowing us to give a scattering criterion for solutions $\vec \psi(t) \in \HH_0$ to~\eqref{eq:wmk}. 

\begin{lem}\label{l:scattering} Let $\vec \psi(0) = (\psi_0, \psi_1) \in \HH_0$. Then there exists a unique solution $\vec \psi(t) \in \HH_0$ to~\eqref{eq:wmk} defined on a maximal interval of existence $I_{\max}(\vec \psi) :=(-T_-(\vec \psi), T_+(\vec \psi))$ with the following properties: Define 
\EQ{
\vec u(t, r) = (r^{-1}\psi(t, r), r^{-1} \psi_t(t, r)  )\in \dot{H}^1\times L^2(\R^4)
}
Then for any compact time interval $J \Subset I_{\max}$ we have 
\EQ{
\| u\|_{L^3_t(J; L^6_x(\R^4))} \le C(J)< \infty
} 
In addition, if 
\EQ{
\| u\|_{L^3_t([0, T_+(\vec \psi)); L^6_x(\R^4))} < \infty
}
then $T_+ = \infty$ and $\vec \psi(t)$ scatters $t \to \infty$, i.e., there exists a solution $\vec\phi_L(t) \in \HH_0$ to~\eqref{eq:2dlin} so that 
\EQ{
\| \vec \psi(t) - \vec \phi_L(t) \|_{\HH_0} \to 0 \mas t \to \infty.
}
Conversely, any solution $\vec \psi(t)$ that scatters as $t \to \infty$ satisfies $$\|  \psi/ r\|_{L^3_tL^6_x([0, \infty) \times \R^4)} < \infty.$$ 
\end{lem}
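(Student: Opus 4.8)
The plan is to carry out everything through the reformulation \eqref{eq:4d}, which is the radial, $\dot H^1(\R^4)$-critical semilinear wave equation with the \emph{repulsive} inverse-square potential $(k^2-1)/r^2$ and a cubic-type nonlinearity; for such an equation the lemma is an instance of the standard small-data and perturbation theory in the style of Kenig--Merle, carried out as in \cite{CKLS1}. The first ingredient I would record is the Strichartz estimate for the linear equation \eqref{eq:4dlin}. The radial operator $-\Delta_{\R^4}+\tfrac{k^2-1}{r^2}$ is nonnegative for every $k\geq1$, and by \eqref{eq:hardy} its energy norm is equivalent to $\|\cdot\|_{\dot H^1(\R^4)}$; hence its wave propagator $S(t)$ obeys the full free-wave admissible range of Strichartz estimates on $\R^4$ --- nothing is lost precisely because the potential has the favorable sign --- and in particular $\|S(t)\vec v(0)\|_{L^3_tL^6_x(\R\times\R^4)}\lesssim\|\vec v(0)\|_{\dot H^1\times L^2}$ together with the inhomogeneous bound $\|\int_0^tS(t-s)(0,F(s))\,\ud s\|_{L^3_tL^6_x\cap L^\infty_t(\dot H^1\times L^2)}\lesssim\|F\|_{L^1_tL^2_x}$. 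I would also note that conjugating $-\Delta_{\R^4}+\tfrac{k^2-1}{r^2}$ by multiplication by $r$ produces $-\partial_r^2-\tfrac1r\partial_r+\tfrac{k^2}{r^2}$, so under $v\mapsto rv$ the propagator $S(t)$ corresponds to that of the linearized wave maps equation \eqref{eq:2dlin}, while \eqref{eq:2-4} identifies the two energy spaces; this correspondence is what lets one rephrase scattering for \eqref{eq:4d} as scattering in $\HH_0$ toward solutions of \eqref{eq:2dlin}.

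Next I would treat the local Cauchy theory. Since the coefficient $Z$ in \eqref{eq:4d} is smooth and bounded one has $|Z(ru)u^3|\lesssim|u|^3$, and because $Z(ru)u^3=\tfrac{k^2}{2r^3}\big(2ru-\sin(2ru)\big)$ with $\tfrac{d}{ds}(2s-\sin2s)=2-2\cos2s\lesssim\min(1,s^2)$, one also gets the difference bound $|Z(ru_1)u_1^3-Z(ru_2)u_2^3|\lesssim(u_1^2+u_2^2)|u_1-u_2|$, with constants depending only on an a priori bound for $\|ru\|_{L^\infty_{t,x}}$ --- hence on the energy, via $\|ru\|_{L^\infty_x}=\|\psi\|_{L^\infty_x}\lesssim\|\psi\|_H$. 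By H\"older in $x$ and $t$ this yields $\|Z(ru)u^3\|_{L^1_tL^2_x(I\times\R^4)}\lesssim\|u\|_{L^3_tL^6_x(I\times\R^4)}^3$ and the corresponding Lipschitz bound for differences. Feeding these into the estimates of the previous paragraph and running a contraction in $C_t(I;\dot H^1\times L^2)\cap L^3_tL^6_x(I\times\R^4)$ produces: small-data global well-posedness with scattering; local well-posedness for arbitrary data in $\dot H^1\times L^2(\R^4)$ on a maximal interval $I_{\max}$ with $\|u\|_{L^3_tL^6_x(J\times\R^4)}<\infty$ on every $J\Subset I_{\max}$; and the standard continuation criterion, namely that $T_+<\infty$ forces $\|u\|_{L^3_tL^6_x([0,T_+)\times\R^4)}=\infty$. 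Transporting everything back through \eqref{eq:2-4} gives the first assertion of the lemma.

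For the scattering characterization I would argue as usual. If $\|u\|_{L^3_tL^6_x([0,T_+)\times\R^4)}<\infty$ then the continuation criterion forces $T_+=\infty$, and for $0<t_1<t_2$ Duhamel's formula and the inhomogeneous Strichartz estimate give $\|S(-t_2)\vec u(t_2)-S(-t_1)\vec u(t_1)\|_{\dot H^1\times L^2}\lesssim\|u\|_{L^3_tL^6_x([t_1,t_2]\times\R^4)}^3\to0$ as $t_1,t_2\to\infty$, so $S(-t)\vec u(t)$ converges in $\dot H^1\times L^2$ and $\vec u$ scatters to a solution of \eqref{eq:4dlin}; by the conjugation above this is exactly scattering of $\vec\psi$ in $\HH_0$ to a solution of \eqref{eq:2dlin}. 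For the converse, assuming $\vec\psi$ scatters to a solution $\vec\phi_L$ of \eqref{eq:2dlin}, I would set $\vec v_L:=(r^{-1}\phi_L,r^{-1}\partial_t\phi_L)$, a solution of \eqref{eq:4dlin} with $\|v_L\|_{L^3_tL^6_x(\R\times\R^4)}<\infty$ by Strichartz; then $\|v_L\|_{L^3_tL^6_x([T,\infty)\times\R^4)}\to0$ and $\|\vec u(T)-\vec v_L(T)\|_{\dot H^1\times L^2}\to0$ as $T\to\infty$, so the perturbation lemma of the small-data theory (with $v_L$ as approximate solution, whose defect $-Z(rv_L)v_L^3$ is small in $L^1_tL^2_x([T,\infty))$ by the nonlinear estimate) gives $\|u\|_{L^3_tL^6_x([T,\infty)\times\R^4)}<\infty$ for $T$ large; combined with the finite Strichartz norm on the compact piece $[0,T]$ this proves $\|\psi/r\|_{L^3_tL^6_x([0,\infty)\times\R^4)}<\infty$.

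The only substantive input is the Strichartz package for \eqref{eq:4dlin}; once it is in hand --- and it is available precisely because $k^2-1\geq0$, so the inverse-square perturbation retains the entire free-wave admissible range --- the rest is the routine Kenig--Merle scheme, and I do not expect a real obstacle. The one mild technical point is the variable coefficient $Z(ru)$ in the nonlinearity, but the identity $Z(ru)u^3=\tfrac{k^2}{2r^3}(2ru-\sin2ru)$ together with the energy bound on $\|ru\|_{L^\infty}$ reduces all nonlinear estimates to those for a pure cubic, so this causes no genuine difficulty.
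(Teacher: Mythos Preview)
Your proposal is correct and follows exactly the approach the paper indicates: the paper simply states that the lemma is a ``standard consequence of Strichartz estimates for~\eqref{eq:4dlin} and the equivalence of the Cauchy problems~\eqref{eq:wmk} and~\eqref{eq:4d},'' citing Planchon--Stalker--Tahvildar-Zadeh for the Strichartz package (Lemma~\ref{l:strich}), and you have supplied precisely those standard details. One minor remark: the Lipschitz bound $|Z(ru_1)u_1^3-Z(ru_2)u_2^3|\lesssim(u_1^2+u_2^2)|u_1-u_2|$ actually holds with a universal constant (since $4\sin^2 s\le 4s^2$ for all $s$), so the dependence on $\|ru\|_{L^\infty}$ you flag is not needed, but this only simplifies your argument.
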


The proof of Lemma~\ref{l:scattering} is standard consequence of Strichartz estimates for~\eqref{eq:4dlin} and the equivalence of the Cauchy problems~\eqref{eq:wmk} and~\eqref{eq:4d}.  In this case, we need Strichartz estimates for the radial wave equation in $\R^{1+4}$ with a repulsive inverse square potential. For these we can cite the more general results of Planchon, Stalker, and Tahvildar-Zadeh~\cite{PST03b}; see also~\cite{BPST03, BPST04} which cover the non-radial case.  

\begin{lem}[Strichartz estimates]\emph{ \cite[Corollary 3.9]{PST03b}} \label{l:strich} Fix $k \ge 1$ and let $\vec v(t)$ be a radial solution to the linear equation 
\EQ{
v_{tt}  - v_{rr} - \frac{3}{r} v_r + \frac{k^2 -1}{r^2} v  = F(t, r), \quad \vec v(0) = (v_0, v_1) \in \dot{H}^1 \times L^2 (\R^4)
}
Then, for any time interval $I \subset \R$ we have 
\EQ{ \label{eq:strich} 
\| v \|_{L^{3}_t L^6_x(I \times \R^4)} + \sup_{t \in I}\| \vec v(t) \|_{\dot{H}^1 \times L^2(\R^4)} \lesssim   \| \vec v(0) \|_{\dot{H}^1 \times L^2(\R^4)} + \| F \|_{L^1_t, L^2_x(I \times \R^4)}
} 
where the implicit constant above is independent of $I$. 
\end{lem}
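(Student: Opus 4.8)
The plan is to view \eqref{eq:4dlin} as a half-wave evolution generated by a non-negative self-adjoint operator, reduce the inhomogeneous estimate \eqref{eq:strich} to a homogeneous Strichartz bound by a standard $TT^\ast$/Christ--Kiselev argument, and extract the latter from a frequency-localized dispersive estimate via the abstract Keel--Tao theorem. First I would set $\mathcal L_k := -\Delta_{\mathbb R^4} + (k^2-1)/r^2$; since $k\ge1$ the potential coefficient $k^2-1$ is $\ge 0$, hence strictly above the Hardy threshold $-(d-2)^2/4=-1$ in dimension $d=4$, so $\mathcal L_k$ extends from $C_c^\infty(\mathbb R^4\setminus\{0\})$ to a unique non-negative self-adjoint operator whose form domain is $\dot H^1(\mathbb R^4)$ with $\|\mathcal L_k^{1/2}v\|_{L^2}\simeq\|v\|_{\dot H^1}$, by \eqref{eq:hardy}. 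Writing $v(t)=\cos(t\sqrt{\mathcal L_k})v_0+\tfrac{\sin(t\sqrt{\mathcal L_k})}{\sqrt{\mathcal L_k}}v_1+\int_0^t\tfrac{\sin((t-s)\sqrt{\mathcal L_k})}{\sqrt{\mathcal L_k}}F(s)\,\mathrm ds$, unitarity of the spectral calculus together with Minkowski's inequality immediately yield the $\sup_{t\in I}\|\vec v(t)\|_{\dot H^1\times L^2}\lesssim\|\vec v(0)\|_{\dot H^1\times L^2}+\|F\|_{L^1_tL^2_x}$ half of \eqref{eq:strich}, uniformly in $I$.

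Next I would reduce matters to the single homogeneous estimate $\|e^{it\sqrt{\mathcal L_k}}f\|_{L^3_tL^6_x(\mathbb R\times\mathbb R^4)}\lesssim\|f\|_{L^2}$. The pair $(q,r)=(3,6)$ is wave-admissible and non-endpoint in dimension $4$ — one checks $\tfrac1q+\tfrac4r=1$ (the gap condition for $\dot H^1$ data) and $\tfrac1q+\tfrac3{2r}=\tfrac7{12}<\tfrac34$ — while the energy pair $(\infty,2)$ is admissible; so combining the homogeneous estimate with the $TT^\ast$ identity and the Christ--Kiselev lemma (applicable because $3>1$, so no double endpoint occurs) upgrades it to the retarded bound with $F\in L^1_tL^2_x$, and restriction to an arbitrary interval $I$ is trivial. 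Since $e^{it\sqrt{\mathcal L_k}}$ is unitary on $L^2$, the only input still needed for the Keel--Tao machinery is a dispersive decay estimate for $e^{it\sqrt{\mathcal L_k}}$.

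To obtain that estimate I would decompose $L^2(\mathbb R^4)$ into spherical-harmonic sectors: on the sector of degree $\ell$, $\mathcal L_k$ acts as the Bessel operator $-\partial_{rr}-\tfrac3r\partial_r+\tfrac{\nu_\ell^2-1}{r^2}$ with $\nu_\ell=\sqrt{k^2+\ell(\ell+2)}\ge k\ge1$, diagonalized by the Hankel transform of order $\nu_\ell$; the half-wave propagator on that sector is an oscillatory Hankel integral, and one proves the frequency-localized bound $\|P_j e^{it\sqrt{\mathcal L_k}}P_j\|_{L^1\to L^\infty}\lesssim 2^{4j}(2^j|t|)^{-3/2}$ with a constant \emph{uniform in $\ell$}, then sums over the sectors. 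Feeding this and the $L^2$ bound into Keel--Tao gives Strichartz for every admissible pair, and since $(3,6)$ is non-endpoint the Littlewood--Paley summation is lossless, which closes the argument. (For the present application only radial $v$ occur, so it suffices to treat $\ell=0$, a single Hankel transform of order $k$; alternatively this step can be run through heat-kernel technology, since for $k\ge1$ the semigroup $e^{-t\mathcal L_k}$ enjoys Gaussian upper bounds, which combined with finite speed of propagation for the wave equation with a multiplicative potential yield, by the Carron--Coulhon--Hassell and Sikora circle of ideas, the spectral multiplier and dispersive estimates needed above.) The hard part is exactly this last step: the inverse-square potential is scaling-critical, so it is not a perturbation of $-\Delta$ and the dispersive estimate is inaccessible by a Duhamel expansion in the potential; the genuinely delicate point is uniform control of the oscillatory Hankel/Bessel integrals in the turning-point regime $r\sim|t|\sim\nu$. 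This is precisely where the technical work of \cite{PST03b} is concentrated — and the reason we quote their result rather than reprove it — everything else being routine harmonic analysis.
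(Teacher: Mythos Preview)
The paper does not supply a proof of this lemma at all: it is stated as a direct citation of \cite[Corollary~3.9]{PST03b}, with only the surrounding remark that these are the Strichartz estimates for the $4d$ wave equation with a repulsive inverse-square potential. Your write-up is a correct outline of how such a proof is organized (self-adjointness via the Hardy condition, Duhamel for the energy bound, reduction to a homogeneous half-wave estimate by $TT^\ast$/Christ--Kiselev, and Keel--Tao fed by a frequency-localized dispersive bound), and you correctly isolate the one genuinely nontrivial input --- the dispersive estimate for the scaling-critical operator $-\Delta+(k^2-1)/r^2$ --- as precisely the content of \cite{PST03b} that you do not reprove. In that sense your approach and the paper's coincide: both ultimately defer the real work to the cited reference.

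One small slip: the homogeneous estimate you state as $\|e^{it\sqrt{\mathcal L_k}}f\|_{L^3_tL^6_x}\lesssim\|f\|_{L^2}$ is off by one derivative. Since $(q,r)=(3,6)$ in $d=4$ has gap $s=1$ (as you yourself note via $\tfrac1q+\tfrac 4r=1$), the right-hand side should be $\|f\|_{\dot H^1}\simeq\|\mathcal L_k^{1/2}f\|_{L^2}$. This is evidently a typo and does not affect the logic of your reduction.
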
  

 We'll also explicitly require the following nonlinear perturbation lemma from~\cite{KM08}; see also~\cite[Lemma 2.18]{CKLS1}. 
 \begin{lem}[Perturbation Lemma]\emph{\cite[Theorem $2.20$]{KM08}~\cite[Lemma 2.18]{CKLS1} \label{l:pert}} There are continuous functions $\e_0,  C_0: (0, \infty) \to (0, \infty)$ such that the following holds: Let $I\subset \R$ be an open interval, (possibly unbounded), $\psi, \fy \in C^0(I; H) \cap C^1(I; L^2) $ radial functions satisfying for some $A>0$ 
\begin{align*} 
&\|\vec \psi\|_{L^{\I}(I; H \times L^2(\R^2))}+ \|\vec\fy\|_{L^{\I}(I; H \times L^2(\R^2))}+ \|\fy/r\|_{L^3_t(I; L^6_x(\R^4))} \le A\\
&\|\textrm{eq}(\psi/r)\|_{L^1_t(I; L^2_x(\R^4))}+\|\textrm{eq}(\fy/r)\|_{L^1_t(I; L^2_x(\R^4))} + \|w_0\|_{L^3_t(I; L^6_x)} \le \e \le \e_0(A)
\end{align*} 
where $\textrm{eq}(\psi/r):= (\Box_{\R^4} + \frac{k^2-1}{r^2}) (\psi/r) +(\psi/r)^3Z(\psi)$ in the sense of distributions, and $\vec w_0(t):= S(t-t_0)(\vec \psi-\vec \fy)(t_0)$ with $t_0 \in I$ arbitrary, but fixed and $S$ denoting the linear wave evolution operator in $\R^{1+4}$ (i.e., the propagator for~\eqref{eq:2dlin}). Then,
\begin{align*} 
\|\vec \psi -\vec \fy - \vec w_0\|_{L^{\I}_t(I; H \times L^2(\R^2))} + \|\frac{1}{r}(\psi-\fy)\|_{L^3_t(I; L^6_x(\R^4))} \le C_0(A) \e
\end{align*} 
In particular, $\|\psi/r\|_{L^3_t(I; L^6_x(\R^4))} < \I$. 
\end{lem}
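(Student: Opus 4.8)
\emph{Proof strategy.} This is the familiar Kenig--Merle perturbation lemma, and the plan is to follow the argument of \cite[Theorem $2.20$]{KM08} (see also \cite[Lemma 2.18]{CKLS1}) adapted to the $\R^{1+4}$ reformulation of Section~\ref{s:2-4}: reduce to the wave equation \eqref{eq:4d} with repulsive inverse-square potential, subdivide $I$ into finitely many pieces on which the reference map is small in $L^3_t L^6_x$, and run a bootstrap on each piece while tracking the error passed through the subdivision points. Concretely, I would set $u := \psi/r$ and $v := \fy/r$ and use the isometry \eqref{eq:2-4} to rewrite all the norms appearing in the hypotheses as $\dot H^1 \times L^2(\R^4)$ and $L^1_t L^2_x(\R^4)$ norms; then $u$ and $v$ solve, in the distributional sense, $(\Box_{\R^4} + \tfrac{k^2-1}{r^2})h = N(h) + e_h$ with $N(h) := h^3 Z(rh) = k^2 \tfrac{2rh - \sin(2rh)}{2r^3}$, where (after the change of variables) $\|e_u\|_{L^1_t L^2_x(I)}, \|e_v\|_{L^1_t L^2_x(I)}, \|w_0\|_{L^3_t L^6_x(I)} \le \e$ and $\|\vec u\|_{L^\infty_t(\dot H^1\times L^2)}, \|\vec v\|_{L^\infty_t(\dot H^1\times L^2)}, \|v\|_{L^3_t L^6_x} \le A$ over $\R^4$. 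Every Strichartz bound invoked below is an instance of Lemma~\ref{l:strich}, applied on an arbitrary subinterval with the interval-independent constant, and $S(t)$ denotes the corresponding linear propagator (that of \eqref{eq:4dlin}, equivalently \eqref{eq:2dlin}).

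The one genuinely nonlinear ingredient is the \emph{sharp} pointwise difference estimate $|N(u) - N(v)| \lesssim_k (u^2 + v^2)\,|u-v|$, which I would get from the mean value theorem applied to $s \mapsto F(s) := k^2(2s - \sin 2s)$: writing $\psi = ru$, $\fy = rv$ gives $N(u) - N(v) = \tfrac{1}{2r^3}\big(F(\psi) - F(\fy)\big)$, and $|F'(\zeta)| = 4k^2\sin^2\zeta \le 4k^2\zeta^2 \le 4k^2(\psi^2 + \fy^2)$, so $|N(u) - N(v)| \lesssim_k r^{-3}(\psi^2 + \fy^2)|\psi - \fy| = (u^2 + v^2)|u-v|$. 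Hölder in space and then in time yields, on any subinterval $J$, $\|N(u) - N(v)\|_{L^1_t L^2_x(J)} \lesssim_k \big(\|u\|_{L^3_t L^6_x(J)}^2 + \|v\|_{L^3_t L^6_x(J)}^2\big)\|u - v\|_{L^3_t L^6_x(J)}$. Next I would choose a small absolute constant $\de = \de(k) > 0$ and split $I$ into $N = N(A)$ consecutive subintervals $I_j = [t_{j-1}, t_j]$, ordered so that the fixed time $t_0$ of the statement is the left endpoint of $I_1$ (the times to the left of $t_0$ are treated symmetrically), with $\|v\|_{L^3_t L^6_x(I_j)} \le \de$ for each $j$; since $\|v\|_{L^3_t L^6_x(I)} \le A$, this requires at most $N \lesssim_k (A/\de)^3 + 1$ pieces.

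On $I_1$, decompose $w := u - v = w_0 + z$ with $z$ having zero data at $t_0$, so $z$ solves $(\Box_{\R^4} + \tfrac{k^2-1}{r^2})z = (N(u) - N(v)) + (e_u - e_v)$; Strichartz on $I_1$ together with the nonlinear estimate and $u = z + w_0 + v$ gives, with $x := \|z\|_{L^3_t L^6_x(I_1)}$, an inequality of the form $x \lesssim_k \e + (x^2 + \e^2 + \de^2)(x + \e)$. Because the local Cauchy theory for \eqref{eq:4d} (with the $L^1_t L^2_x$ forcing $e_u$ and the given $\dot H^1\times L^2$ continuity of $\vec u$) guarantees $\|u\|_{L^3_t L^6_x}$, hence $x$, is finite on bounded subintervals, the map $\tau \mapsto \|z\|_{L^3_t L^6_x([t_0,\tau])}$ is continuous, and a standard continuity argument — valid once $\de$ and $\e$ are small depending only on $k$ — upgrades this to $x \lesssim_k \e$, hence also $\|\vec z\|_{L^\infty_t(\dot H^1\times L^2)(I_1)} \lesssim_k \e$. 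To pass to $I_2$ I would replace the linear reference by $S(\cdot - t_1)\vec w(t_1)$, which differs from $w_0$ in $L^3_t L^6_x(I)$ by at most $\lesssim \|\vec z(t_1)\|_{\dot H^1\times L^2} \lesssim_k \e$, so the effective size of the reference error on $I_2$ is still $\le C_k \e$; iterating over the $N = N(A)$ intervals multiplies the constant by at most $C_k^{N(A)}$, which depends only on $A$ because $\de$, and hence $N$, was fixed in terms of $k$ alone. Undoing $u = \psi/r$, $v = \fy/r$ via \eqref{eq:2-4} then gives $\|\vec\psi - \vec\fy - \vec w_0\|_{L^\infty_t(H\times L^2)(I)} + \|(\psi-\fy)/r\|_{L^3_t L^6_x(I)} \le C_0(A)\e$, and $\|\psi/r\|_{L^3_t L^6_x(I)} < \infty$ follows by writing $\psi/r = (\psi/r - \fy/r - w_0) + \fy/r + w_0$ with each summand in $L^3_t L^6_x(I)$.

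I expect the main obstacle to lie in the bootstrap of the last step, for two linked reasons. First, running the continuity argument on each $I_j$ requires the qualitative a priori finiteness of $\|u\|_{L^3_t L^6_x}$ on bounded subintervals; since $u = \psi/r$ is only an approximate solution of \eqref{eq:4d}, this has to be extracted from local well-posedness plus the $L^1_t L^2_x$ control of the error $e_u$. Second, and more delicately, the nonlinear term must leave behind no additive error that fails to vanish as $\e \to 0$: this is precisely why the sharp cubic bound $|N(u) - N(v)| \lesssim_k (u^2 + v^2)|u-v|$ (which exploits that $N$ vanishes to third order) is needed rather than a crude decomposition of $N$, since any estimate producing a residual term of size $\|v\|_{L^3_t L^6_x}^3 \sim \de^3$ would contribute a fixed, $\e$-independent error and wreck the conclusion. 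The remaining ingredients — the Strichartz bookkeeping, the Hölder estimates for the cubic term, and the geometric accumulation of constants over the $N(A)$ subintervals — are routine.
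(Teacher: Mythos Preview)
The paper does not supply a proof of this lemma; it is quoted directly from \cite[Theorem~2.20]{KM08} and \cite[Lemma~2.18]{CKLS1} as a known result. Your sketch is the standard Kenig--Merle argument transported to the $\R^{1+4}$ formulation via \eqref{eq:2-4} and Lemma~\ref{l:strich}, which is exactly what those references do, so there is nothing to compare.
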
 

%Let $\vec u(0) = (u_0, u_1) \in \dot{H}^1\times L^2(\R^4)$ be radial and denote by $\vec u(t)$ the corresponding solution to~\eqref{eq:4d} defined on its maximal interval of existence $I_{\max}(\vec u):= (T_-, T_+)$. Suppose that 
%\EQ{
%\| u\|_{L^3_t([0, T_+); L^6_x(\R^4))} \le C< \infty
%}
%Then $T_+ = \infty$ and $\vec u(t)$ scatters to a free wave as $t \to T_+$. Conversely, any solution $\vec u(t)$ that  scatters as $t \to  \infty$ satisfies  $\| u\|_{L^3_tL^6_x} < \infty$. 
%\end{lem}
%
%\begin{cor} Let $\vec \psi(t) \in \HH_0$ be a solution to~\eqref{wm} defined on its maximal interval of existence $I_{\max} = (T_-, T_+)$ and set $u  = r^{-1} \psi$. Then if 
%\EQ{
%\| u\|_{L^3_t([0, T_+); L^6_x(\R^4))} \le C< \infty
%}
%Then, $T_+ = \infty$ and $\vec \psi(t)$ scatters to a linear wave as $t \to \infty$. 
%
%\end{cor}

 \subsection{Concentration Compactness} \label{s:cc} 
Another consequence of~\eqref{eq:2-4} and Lemma~\ref{l:strich} is that we can translate the concentration compactness theory of Bahouri and G\'erard to solutions to~\eqref{eq:2dlin} and~\eqref{eq:wmk}. 
We begin by stating the linear profile decompositions in the $4d$ setting for solutions to~\eqref{eq:4dlin}. 

%A key ingredient in the analysis of large energy solutions to~\eqref{wm} is the profile decomposition of Bahouri and G\'erard~\cite{BG}. Here we state the main results of \cite{BG} and then adapt these results to the case of  degree zero solutions to~\eqref{wm}. 

\begin{lem}[Linear $4d$ profile decomposition] \emph{\cite[Main Theorem]{BG}} \label{l:bg} Let $k \ge1$ be fixed.  Consider a sequence  $\vec u_n = (u_{n, 0}, u_{n, 1}) \in {H}_k \times L^2( \R^4)$ which is bounded in the sense that  $ \|\vec u_n\|_{H_k \times L^2(\R^4)} \lesssim 1$. Then, up to passing to a subsequence,  there exists a sequence of solutions to~\eqref{eq:4dlin}, $\vec V_L^j  \in H_k \times L^2(\R^4)$,  sequences of times $\{t_{n,j}\}\subset \R$, and sequences of scales $\{\la_{n, j}\}\subset (0, \infty)$, and  $\vec w_n^k$ defined by 
\EQ{ \label{eq:4dlinprof}  
\vec u_n(r) = \sum_{j=1}^k (\frac{1}{\la_{n, j}}V_L^j\left( \frac{-t_{n, j}}{ \la_{n, j}}, \frac{r}{ \la_{n, j}}\right),  \frac{1}{(\la_{n, j})^2}\p_t V_L^j\left( \frac{-t_{n, j}}{ \la_{n, j}}, \frac{r}{ \la_{n, j}}\right)) + (w_{n, 0}^k, w_{n, 1}^k)(r)\\
%= \sum_{j=1}^k \frac{1}{(\la_{n, j})^2}\dot V_L^j( -t_{n, j}/ \la_{n, j}, r/ \la_{n, j}) + w_{n, 1}^k(r)
}
so that the following statements hold: Let $w_{n, L}^k(t)$ denote the linear evolution of the data $\vec w_n^k$, i.e., solutions to~\eqref{eq:4dlin}. Then, for any $j \le k$, 
\EQ{ \label{eq:w-weak}
(\la_{n}^j w_{n, L}^k(  t_{n, j},  \la_{n, j}\cdot) , \la_{n, j}^2 w_n^k(  t_{n, j},  \la_{n, j}\cdot)) \rightharpoonup 0\, \,  \textrm{weakly in} \, \,  H_k \times L^2(\R^4). 
}
In addition, for any $j\neq k$ we have
\EQ{ \label{eq:oscales}
\frac{\la_{n, j}}{\la_{n, k}} + \frac{\la_{n, k}}{\la_{n, j}} + \frac{\abs{t_{n, j}-t_{n, k}}}{\la_{n, j}} + \frac{\abs{t_{n, j}-t_{n, k}}}{\la_{n, k}} \to \infty \quad \textrm{as} \quad n \to \infty.
}
Moreover, the errors $\vec w_n^k$ vanish asymptotically in the sense that 
\EQ{ \label{eq:w-in-strich}
\limsup_{n \to \infty} \left\| w_{n, L}^k\right\|_{L^{\infty}_tL^4_x \cap L^3_tL^6_x( \R \times \R^4)}  \to 0 \quad \textrm{as} \quad k \to \infty.
}
Finally, we have almost-orthogonality of the $H_k \times L^2$ norms of the decomposition: 
\EQ{ \label{eq:free-en-ort}
\|\vec u_n\|_{H_k \times L^2}^2 = \sum_{1 \le j \le k} \| \vec V_L^j( - t_{n, j}/ \la_{n, j}) \|_{H_k \times L^2}^2  +  \|\vec w_n^k\|_{H_k\times L^2}^2 + o_n(1) \mas n \to \infty
}

\end{lem}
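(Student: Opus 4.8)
The plan is to run the Bahouri--G\'erard extraction scheme directly for the linear equation \eqref{eq:4dlin}, i.e.\ for the wave equation $\partial_t^2 v + \calL_k v = 0$ with $\calL_k := -\Delta_{\R^4} + \frac{k^2-1}{r^2}$ acting on radial functions (for $k=1$ the potential vanishes and this is literally the Bahouri--G\'erard decomposition for the free radial wave equation in $\R^4$). The two structural facts that make this legitimate are already at hand: the Strichartz estimate of Lemma~\ref{l:strich} for \eqref{eq:4dlin}, and the fact that $\calL_k$ commutes with the $\dot H^1(\R^4)$-critical rescaling $v \mapsto \la^{-1}v(\cdot/\la)$, the potential $\tfrac{k^2-1}{r^2}$ being scale-homogeneous, so that rescaled and time-translated solutions of \eqref{eq:4dlin} are again solutions. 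Since the potential is centered at the origin there is no spatial-translation symmetry, which is exactly why \eqref{eq:4dlinprof} carries only the scaling parameters $\la_{n,j}$ and the time shifts $t_{n,j}$, and no translation parameters.

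First I would set up the iteration. With $\vec w_n^{(0)} := \vec u_n$: given $\vec w_n^{(j-1)}$ bounded in $H_k\times L^2(\R^4)$, if $\limsup_n \|w^{(j-1)}_{n,L}\|_{L^3_t L^6_x(\R\times\R^4)} =: \nu_j > 0$, a \emph{refined} Strichartz inequality for \eqref{eq:4dlin} (discussed below) produces, after passing to a subsequence, parameters $(\la_{n,j}, t_{n,j})$ so that a rescaling and time-translation of $\vec w_n^{(j-1)}$ of the form appearing in \eqref{eq:4dlinprof} converges weakly in $H_k\times L^2(\R^4)$ to a nonzero limit $\vec V^j$; one takes $\vec V_L^j$ to be the solution of \eqref{eq:4dlin} with this data, subtracts its rescaled translated copy to form $\vec w_n^{(j)}$, and repeats. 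The weak-null property \eqref{eq:w-weak} is built into this construction. The orthogonality \eqref{eq:oscales} of the parameter strings is the usual dichotomy argument: were two strings not orthogonal, a further subsequence would let us superpose the associated profiles, contradicting either $\vec V^j \neq 0$ or \eqref{eq:w-weak}. The energy identity \eqref{eq:free-en-ort} follows by expanding $\|\vec u_n\|_{H_k\times L^2}^2$ and using \eqref{eq:oscales} together with weak convergence to annihilate the cross terms; this also gives $\sum_j \|\vec V_L^j(-t_{n,j}/\la_{n,j})\|_{H_k\times L^2}^2 \le \|\vec u_n\|_{H_k\times L^2}^2 \lesssim 1$, so only finitely many profiles carry limiting energy, $\nu_j \to 0$, and \eqref{eq:w-in-strich} follows --- both the $L^3_t L^6_x$ and the $L^\infty_t L^4_x$ norms of the error being controlled by a Besov-type quantity that the successive extractions drive to zero (even though the errors retain order-one energy).

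The main obstacle is the \emph{refined Strichartz} (``improved Sobolev'') inequality for \eqref{eq:4dlin} --- the analogue of the harmonic-analysis input of \cite{BG} --- which must upgrade the $L^3_t L^6_x$ bound of Lemma~\ref{l:strich} to a gain localized at a single frequency-and-scale block. I see two routes. Intrinsically, one builds a Littlewood--Paley calculus adapted to $\calL_k$; this is available because the potential is nonnegative, so $e^{-t\calL_k}$ enjoys Gaussian kernel bounds, and one then interpolates the Strichartz bound against the energy bound block by block. Alternatively, and with less effort, one uses the conjugation recorded in \eqref{eq:free}--\eqref{eq:2-4}: the map $u \mapsto v := r^{1-k}u$ is a bijective isometry from the radial part of $H_k\times L^2(\R^4)$ onto the radial part of $\dot H^1\times L^2(\R^{2k+2})$, it intertwines \eqref{eq:4dlin} with the free wave equation on $\R^{2k+2}$, and it commutes with the $\dot H^1$-critical rescalings on both sides; so one proves the refined inequality for the free wave equation on $\R^{2k+2}$, where it is classical (the even dimension causing no difficulty once Strichartz is granted), and transports it, checking that the scale-homogeneous factor $r^{1-k}$ acts harmlessly on the Besov-type norms involved. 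That same conjugation also confirms \eqref{eq:free-en-ort} and \eqref{eq:w-weak}, being an isometry commuting with rescaling, but it does \emph{not} directly yield \eqref{eq:w-in-strich}, since $r^{1-k}$ is unbounded on $L^3_t L^6_x(\R\times\R^4)$; this is why the extraction above is run in the $\R^4$ picture. A routine check throughout is that all profiles and errors may be taken radial: every step of the construction commutes with $SO(2)$ and the radial subspace is closed, so one simply averages. The whole package is, in effect, assembled in \cite[Section~2]{CKLS1}, and \eqref{eq:4dlinprof}--\eqref{eq:free-en-ort} is its radial reformulation through Lemma~\ref{l:strich}.
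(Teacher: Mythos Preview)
Your proposal is correct and essentially matches the paper's first suggested route: rerun the Bahouri--G\'erard argument directly for \eqref{eq:4dlin}, using scaling invariance of the inverse-square potential and the Strichartz estimates of Lemma~\ref{l:strich}. The paper does not give a formal proof, only a remark outlining two approaches, and your extraction scheme is exactly the first of these.

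Where you diverge slightly is in your treatment of the conjugation route. You use the isometry $u\mapsto r^{1-k}u$ to $\dot H^1\times L^2(\R^{2k+2})$ only as a device to obtain a refined Strichartz inequality in the $\R^4$ picture, and you explicitly flag that the conjugation ``does not directly yield \eqref{eq:w-in-strich}'' because $r^{1-k}$ is unbounded on $L^3_tL^6_x(\R\times\R^4)$. The paper takes the opposite view: it runs the \emph{entire} Bahouri--G\'erard decomposition in $\R^{2k+2}$ and then recovers \eqref{eq:w-in-strich} by combining the vanishing of $r^{1-k}w_{n,L}^J$ in suitable $(2k+2)$-dimensional Strichartz norms with the Strauss-type estimate $\sup_{t,r}|r\,w_{n,L}^J(t,r)|\lesssim \|w_{n,L}^J\|_{L^\infty_t H_k(\R^4)}$ and interpolation. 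This sidesteps the need to build or transport a refined Strichartz inequality in the $\R^4$ setting, which is the step you leave the most sketchy (``acts harmlessly on the Besov-type norms''). Both routes work; the paper's Strauss-plus-interpolation trick is the cleaner way to close the loop on the conjugation side. A minor slip: the relevant rotation group for radial symmetry in $\R^4$ is $SO(4)$, not $SO(2)$.
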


\begin{rem}
The difference between Lemma~\ref{l:bg} and the main theorem in~\cite{BG} is that here we have phrased matters in terms of solutions to the $4d$ linear wave equation with a repulsive inverse square potential~\eqref{eq:4dlin} (which conserve the $H_k \times L^2$ norm), as opposed to the free wave equation in $4d$ with data in $\dot H^1 \times L^2$.  However, a proof identical to the one in~\cite{BG} can be used to establish Lemma~\ref{l:bg}. Alternatively, one can establish Lemma~\ref{l:bg} by conjugating~\eqref{eq:4dlin} to the free wave equation in dimension $d = 2k+2$ via the map $v(r) \mapsto r^{-k+1} v(r) = u$. This map induces an isometry $H_k(\R^4) \to \dot{H}^1(\R^{2k+2})$; see~\eqref{eq:free}. Then the usual Bahouri-G\'erard profile decomposition in $d=2k+2$ induces a profile decomposition in $H_k$.  Once must check that the errors $w_{n, L}^J$ can be made to vanish as in~\eqref{eq:w-in-strich}, but this follows by combining the vanishing of $r^{-k+1}w_{n, L}^J$ in appropriate dim $=2k+2$ Strichartz norms with the Strauss estimate, 
\EQ{
 \sup_{t \in \R, r >0} \abs{r w_{n, L}^J(t, r) }\lesssim \|w_{n, L}^J \|_{L^{\infty}_t H_k(\R^4)}, 
 }
 and interpolation. 

 %with Lemma~\ref{l:strich} replacing the usual Strichartz estimates for free waves. 
\end{rem} 

%The norms in~\eqref{w in strich} are Strichartz norms, see Lindblad, Sogge~\cite{LinS} or the book~\cite{Sogge} for more background. 
%Here it will be useful to rephrase the above decomposition in the framework of the $2d$ linear wave equation \eqref{eq:2dlin}. Note that the map 
% \EQ{
% \dot{H}^1 \times L^2 (\R^4)  \ni (u_0, u_1) \mapsto  ( \psi_0, \psi_1):= (ru_0, ru_1) \in H \times L^2 (\R^2)
% }
% satisfies 
% \EQ{
% \| (u_0, u_1) \|_{\dot{H}^1 \times L^2 (\R^4)}  = \| (\psi_0, \psi_1) \|_{H \times L^2 (\R^2)}
% }
% where $H \times L^2$ is defined as in~\eqref{H}. 
A direct consequence of Lemma~\ref{l:bg} and~\eqref{eq:2-4}  with the identifications 
\EQ{  \label{ident}
& \psi_n(r) := r u_n(r), \quad  \ga_n^J(r)  := r w_n^J, \\
& \fy^j_L( -t_{n, j}/ \la_{n, j}, r/ \la_{n, j}) := \frac{r}{\la_{n, j}}V^j_L( -t_{n, j}/ \la_{n, j}, r/ \la_{n, j}),
}
is the following profile decomposition for bounded sequences $\vec \psi_n \in \HH_0$.

\begin{cor}[Linear profile decomposition]\label{c:bg} Consider a sequence $\vec \psi_n \in \HH_0$ that is uniformly bounded in $\HH_0$. Then, up to passing to a subsequence,  there exists a sequence of solutions $\vec \fy^j_L \in \HH_0$ to~\eqref{eq:2dlin},  sequences of times $\{t_{n, j}\}\subset \R$,  sequences of scales $\{\la_{n, j}\}\subset (0, \infty)$, and errors $\vec \ga_n^J$ defined by 
\EQ{
\vec \psi_n = \sum_{j=1}^J (\fy^j_L( -t_{n, j}/ \la_{n, j}, \cdot/ \la_{n, j}), \frac{1}{\la_{n, j}}\p_t\fy^j_L( -t_{n, j}/ \la_{n, j}, \cdot/ \la_{n, j})) + (\ga_{n, 0}^J, \ga_{n, 1}^J)
%& \psi_{n, 1}(r)= \sum_{j=1}^k \frac{1}{\la_{n, j}}\dot\fy^j_L( -t_{n, j}/ \la_{n, j}, r/ \la_{n, j}) + \ga_{n, 1}^k(r)
}
so that the following statements hold: Let $\ga_{n, L}^J(t) \in \HH_0$ denote the linear evolution, (i.e., solution to \eqref{eq:2dlin}) of the data $\vec \ga_n^J \in \HH_0$. Then,  for any $j \le \ell$, 
\EQ{ \label{eq:ga-weak} 
(\ga_n^\ell(  t_{n, j},  \la_{n, j}\cdot) , \la_{n, j} \ga_n^\ell(  t_{n, j},  \la_{n, j}\cdot)) \rightharpoonup 0\quad \textrm{weakly in} \quad \HH_0. 
}
In addition, for any $j\neq \ell$ we have
\EQ{ \label{eq:po}
\frac{\la_{n, j}}{\la_{n, \ell}} + \frac{\la_{n, \ell}}{\la_{n, j}} + \frac{\abs{t_{n, j}-t_{n, \ell}}}{\la_{n, j}} + \frac{\abs{t_{n, j}-t_{n, \ell}}}{\la_{n, \ell}} \to \infty \quad \textrm{as} \quad n \to \infty.
}
Moreover, the errors $\vec \ga_n^J$ vanish asymptotically in the sense that  
\EQ{
\limsup_{n \to \infty} \left\|\frac{1}{r} \ga_{n, L}^J\right\|_{L^{\infty}_tL^4_x \cap L^3_tL^6_x( \R \times \R^4)}  \to 0 \quad \textrm{as} \quad J \to \infty.
}
Finally, we have almost-orthogonality of the $\HH_0$ norms of the decomposition: 
\EQ{ \label{ort H} 
\|\vec \psi_n\|_{\HH_0}^2 = \sum_{1 \le j \le J} \| \vec \fy_L^j( - t_{n, j}/ \la_{n, j}) \|_{\HH_0}^2  +  \|\vec \ga_n^J\|_{\HH_0}^2 + o_n(1) \mas n \to \infty
} 
\end{cor}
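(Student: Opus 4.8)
The plan is to read Corollary~\ref{c:bg} off from Lemma~\ref{l:bg} by transporting the $4d$ profile decomposition through the correspondence $\psi = ru$ set up in Section~\ref{s:2-4}; the statement is essentially a dictionary translation, so the work is organizational rather than analytical. \textbf{Step 1 (reduction to the $4d$ statement).} Given a sequence $\vec\psi_n \in \HH_0$ bounded in $\HH_0$, put $\vec u_n := (r^{-1}\psi_{n,0}, r^{-1}\psi_{n,1})$. By the norm equivalence~\eqref{eq:2-4}, $\vec u_n$ is bounded in $H_k \times L^2(\R^4)$, so after extracting a subsequence Lemma~\ref{l:bg} provides linear profiles $\vec V^j_L$ solving~\eqref{eq:4dlin}, parameters $\{t_{n,j}\} \subset \R$ and $\{\la_{n,j}\} \subset (0,\infty)$, and errors $\vec w^J_n$ obeying~\eqref{eq:4dlinprof}--\eqref{eq:free-en-ort}.

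\textbf{Step 2 (transport of the decomposition).} Define $\vec\fy^j_L$ to be the solution of~\eqref{eq:2dlin} with $\fy^j_L = r\,V^j_L$, and set $\vec\ga^J_n := (r\,w^J_{n,0},\, r\,w^J_{n,1})$; this is precisely the identification~\eqref{ident}. By~\eqref{eq:free} and~\eqref{eq:2-4}, multiplication by $r$ is a scale-invariant isometry of $H_k\times L^2(\R^4)$ onto $H\times L^2(\R^2)$ which conjugates the flow~\eqref{eq:4dlin} to~\eqref{eq:2dlin}; moreover, since $r\cdot\lambda^{-1}V(r/\lambda) = (r/\lambda)\,V(r/\lambda)$, it intertwines the $\dot H^1(\R^4)$-dilation with the $2d$ rescaling of~\eqref{eq:scaledef}, so the profile parameters $(t_{n,j},\lambda_{n,j})$ retain their meaning. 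Multiplying the identity~\eqref{eq:4dlinprof} through by $r$ then yields the claimed decomposition of $\vec\psi_n$.

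\textbf{Step 3 (transport of the four properties).} Each remaining assertion is now immediate. Weak convergence: the relevant (rescaled) multiplication operator is a bounded isomorphism with bounded inverse, so it preserves weak limits and~\eqref{eq:w-weak} becomes~\eqref{eq:ga-weak}. Pseudo-orthogonality of the parameters: \eqref{eq:po} is literally~\eqref{eq:oscales}, since the parameters are unchanged. Vanishing of the error flows: because $\ga^J_{n,L} = r\,w^J_{n,L}$ one has $r^{-1}\ga^J_{n,L} = w^J_{n,L}$, so the mixed space-time norms on $\R\times\R^4$ coincide and~\eqref{eq:w-in-strich} gives the stated vanishing. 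Almost-orthogonality of energies: apply the isometry~\eqref{eq:2-4} to~\eqref{eq:free-en-ort}, using that the $H_k\times L^2$ norm is conserved along~\eqref{eq:4dlin}, so that $\|\vec V^j_L(-t_{n,j}/\lambda_{n,j})\|_{H_k\times L^2}=\|\vec\fy^j_L(-t_{n,j}/\lambda_{n,j})\|_{\HH_0}$, to obtain~\eqref{ort H}.

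\textbf{Main obstacle.} There is no substantial difficulty; the only points deserving care are bookkeeping ones: matching the $2d$ and $4d$ dilation normalizations so that $(t_{n,j},\lambda_{n,j})$ play identical roles on both sides, and checking that \emph{every} norm occurring in Lemma~\ref{l:bg} — especially the space-time norm $L^\infty_t L^4_x\cap L^3_t L^6_x(\R\times\R^4)$ — is transported isometrically, or at least with a fixed equivalence constant, under $v\mapsto rv$; both are supplied by~\eqref{eq:free}, \eqref{eq:2-4} and Lemma~\ref{l:strich}. If one prefers to bypass even this bookkeeping, an equivalent route is to conjugate~\eqref{eq:4dlin} to the free wave equation in dimension $2k+2$ via the isometry $v \mapsto r^{-k+1}v$, $H_k(\R^4)\to\dot H^1(\R^{2k+2})$, of~\eqref{eq:free}, after which Corollary~\ref{c:bg} follows directly from the classical Bahouri--G\'erard decomposition in $\R^{1+(2k+2)}$, exactly as indicated in the Remark following Lemma~\ref{l:bg}.
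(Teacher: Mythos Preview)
Your proposal is correct and follows essentially the same approach as the paper: the paper presents Corollary~\ref{c:bg} as an immediate consequence of Lemma~\ref{l:bg} via the identification~\eqref{ident} and the norm equivalence~\eqref{eq:2-4}, which is precisely the $\psi = r u$ transport you carry out in Steps~1--3.
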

Our applications of the concentration-compactness techniques developed by Kenig and Merle in~\cite{KM06, KM08} requires a ``Pythagorean decomposition'' of the nonlinear energy proved in~\cite{CKLS1}. 
\begin{lem}\emph{\cite[Lemma $2.16$]{CKLS1} }\label{l:enorth}
Let  $\vec \psi_n \in \HH_0$ be a bounded sequence with a linear profile decomposition as in Corollary~\ref{c:bg}. Then the following Pythagorean decomposition holds for the nonlinear energy of the sequence: 
\EQ{\label{eq:enorth} 
\E(\vec \psi_n) = \sum_{j=1}^J \E(\vec \fy_L^j(-t_{n, j}/ \la_{n, j})) + \E(\vec \ga_n^J) + o_n(1)  \mas n \to \infty.
} 
 \end{lem}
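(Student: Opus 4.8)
The plan is to split the nonlinear energy into its quadratic part, for which the Pythagorean expansion is already the content of \eqref{ort H}, and a scale--invariant quartic remainder, which one decouples along the profile decomposition by the standard Bahouri--G\'erard orthogonality computation. Throughout $J$ is fixed, so $\vec\ga_n^J$ is genuinely one of the summands and no $J\to\infty$ limit is needed.

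\textbf{Reduction.} Set $g(s) := s^2 - \sin^2 s$ and $N(\phi) := \int_0^\infty g(\phi(r))\, r^{-1}\,\ud r$, so that
\EQ{
  \E(\vec\psi) = \tfrac12\|\vec\psi\|_{\HH_0}^2 - \tfrac{k^2}{2}N(\psi).
}
The function $g$ is smooth, even, vanishes to fourth order at the origin, and obeys $0 \le g(s) \le C s^4$, $|g'(s)| \le C|s|^3$ and $|g''(s)| \le C s^2$ with an absolute constant $C$. Passing to the four--dimensional variable $u = \psi/r$ and using $\dot H^1(\R^4)\hookrightarrow L^4(\R^4)$, the first bound gives $0 \le N(\psi) \lesssim \|\psi\|_H^4$, while $|g'(x) - g'(y)| \lesssim |x-y|(x^2 + y^2)$ gives $|N(\phi) - N(\tilde\phi)| \lesssim \|\phi-\tilde\phi\|_H(\|\phi\|_H^3 + \|\tilde\phi\|_H^3)$; in particular $N$ is continuous on $H$. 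Finally $N$, like $\E$, is invariant under the $\dot H^1$--rescaling $\phi \mapsto \phi(\cdot/\lambda)$, so the $j$--th bubble of Corollary~\ref{c:bg} has energy $\E(\vec\fy^j_L(-t_{n,j}/\la_{n,j})) = \tfrac12\|\vec\fy^j_L(-t_{n,j}/\la_{n,j})\|_{\HH_0}^2 - \tfrac{k^2}{2}N(\fy^j_L(-t_{n,j}/\la_{n,j}))$.

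\textbf{The two pieces.} For the quadratic piece I would simply invoke \eqref{ort H}. For the quartic piece, writing $\Phi^j_n := \fy^j_L(-t_{n,j}/\la_{n,j},\cdot/\la_{n,j})$, the goal is
\EQ{
  N(\psi_n) = \sum_{j=1}^J N(\Phi^j_n) + N(\ga^J_{n,0}) + o_n(1).
}
Telescoping the expansion of $g\big(\Phi^1_n + \cdots + \Phi^J_n + \ga^J_{n,0}\big)$, using $g(a+b) - g(a) - g(b) = \int_0^1 [g'(a+sb) - g'(sb)]\,b\,\ud s$ and the pointwise bound on $g''$, reduces the difference of the two sides to a finite sum of cross integrals $\int_0^\infty \frac{|f|^3|\tilde f|}{r^2}\, r\,\ud r = \int_0^\infty |u|^3|\tilde u|\, r^3\,\ud r$ (with $u = f/r$, $\tilde u = \tilde f/r$), taken over ordered pairs $f \ne \tilde f$ from $\{\Phi^1_n,\dots,\Phi^J_n,\ga^J_{n,0}\}$. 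When $f = \Phi^i_n$ and $\tilde f = \Phi^j_n$ with $i \ne j$, this tends to $0$ by the orthogonality \eqref{eq:po}: either the scales are non--comparable and a density argument makes the rescaled supports essentially disjoint, or the scales are comparable and then at least one of the internal times $-t_{n,i}/\la_{n,i}$, $-t_{n,j}/\la_{n,j}$ diverges, so the corresponding free wave converges weakly to $0$ in $\dot H^1(\R^4)$ by dispersion and the integral vanishes. When $f = \Phi^i_n$ and $\tilde f = \ga^J_{n,0}$, the relevant input is \eqref{eq:ga-weak}: rescaled by $\la_{n,i}$ and transported to the internal time of the $i$--th profile, the remainder converges weakly to $0$, hence strongly in $L^4_{\mathrm{loc}}(\R^4)$, and combined with the tightness of the rescaled $\Phi^i_n$ the integral vanishes. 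This is exactly the computation behind \cite[Lemma~2.16]{CKLS1}.

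\textbf{Conclusion and main difficulty.} Multiplying the quartic identity by $-\tfrac{k^2}{2}$, adding $\tfrac12$ times \eqref{ort H}, and recognizing the per--bubble and per--remainder energies from the Reduction step yields $\E(\vec\psi_n) = \sum_{j=1}^J \E(\vec\fy^j_L(-t_{n,j}/\la_{n,j})) + \E(\vec\ga_n^J) + o_n(1)$. The only real work is the decoupling of the quartic remainder, and the delicate cases there are profiles at comparable scales but diverging internal times (handled via the dispersive weak convergence of free waves on $\R^4$) and the profile--remainder cross terms (handled via \eqref{eq:ga-weak}); everything else is routine.
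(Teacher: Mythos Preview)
The paper does not give its own proof of this lemma; it simply quotes the statement from \cite[Lemma~2.16]{CKLS1}. Your sketch is a faithful outline of the standard argument (and essentially the one in \cite{CKLS1}): write $\E(\vec\psi)=\pi\|\vec\psi\|_{\HH_0}^2-\pi k^2 N(\psi_0)$ with $N(\phi)=\int_0^\infty(\phi^2-\sin^2\phi)\,r^{-1}\ud r$, use \eqref{ort H} for the quadratic part, and decouple the quartic remainder via the $4d$ picture $u=\psi/r$ together with the Bahouri--G\'erard orthogonality \eqref{eq:po} and the weak vanishing \eqref{eq:ga-weak} of the remainder. The cross-term analysis is correct in outline; the one place worth a remark is the profile--remainder terms when $-t_{n,j}/\la_{n,j}\to\pm\infty$: there \eqref{eq:ga-weak} is stated at time $t_{n,j}$ rather than $0$, but either one transports the weak limit through the (unitary, strongly continuous) linear flow, or one uses that free waves of \eqref{eq:4dlin} satisfy $\|V_L(t)\|_{L^4(\R^4)}\to 0$ as $|t|\to\infty$ (by density and dispersion), which kills the cross term directly. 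With that caveat your proposal is complete and matches the intended proof.
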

 We will also require the following nonlinear profile decomposition analogous to~\cite[Proposition 2.17]{CKLS1}, or~\cite[Proposition 2.8]{DKM1}. We'll use the following notation: Given a linear profile decomposition as in Corollary~\ref{c:bg} with profiles $\{\fy^j_L\}$ and parameters $\{t_{n,j}\}, \{ \la_{n, j}\}$ we denote by $\{\fy^j\}$ the nonlinear profile associated to $\{\fy^j_L(-t_n^j/ \la_n^j), \dot{\fy}^j_L(-t_n^j/ \la_n^j)\}$, i.e., the unique solution to \eqref{eq:wmk} so that for all $-t_n^j/ \la_n^j \in I_{\max}(\vec \fy^j)$ we have
\EQ{
\lim_{n \to \infty} \| \vec \fy^j(-t_{n, j}/\la_{n, j}) - \vec \fy_L^j(-t_{n, j}/ \la_{n,j})\|_{\HH_0} =0.
}
The existence of a non-linear profile is immediate from the local well-posedness theory for~\eqref{eq:wmk} in the case that $-t_{n, j}/ \la_{n,j} \to \tau_{\infty, j} \in \R$. If $-t_{n, j}/ \la_{n, j}  \to \pm \infty$ then the existence of the nonlinear profile follows from the existence of wave operators for~\eqref{eq:wmk} and it follows that the maximal forward/backward time of existence  $T_{\pm}(\vec \fy) = \infty$. Each of these facts are now standard consequences of the Strichartz estimates in Lemma~\ref{l:strich}. 

\begin{lem}[Nonlinear Profile Decomposition]\emph{\cite[Proposition 2.17]{CKLS1}\cite[Proposition 2.8]{DKM1}\cite{BG}}\label{p:nlprof} Let $\vec \psi_n(0) \in \HH_0$ be a uniformly bounded sequence with a profile decomposition as in Corollary~\ref{c:bg}. Assume that the nonlinear profile $\fy^j$ associated to the linear profile $\fy^j_L$ has maximal forward time of existence $T_+(\vec \fy^j)$. Let $s_n \in (0, \infty)$ be any sequence such that for all $j$ and for all $n$, 
\EQ{
\frac{s_n - t_{n, j}}{\la_{n, j}} < T_+(\vec \fy^j), \quad \limsup_{n \to \infty} \|\fy^j/ r\|_{L^3_t([-\frac{t_{n, j}}{\la_{n, j}}, \frac{s_n - t_{n, j}}{\la_{n, j}}); L^6_x(\R^4))} <\infty.
}
Let $\vec \psi_n(t)$ denote the solution of \eqref{eq:wmk} with initial data $\vec \psi_n(0)$.  Then for $n$ large enough  $\vec \psi_n(t)$ exists on the interval $s \in (0, s_n)$ and satisfies, 
\EQ{
\limsup_{n \to \infty} \|\psi_n/r\|_{L^3_t([0, s_n); L^6_x(\R^4))} < \infty.
}
Moreover, the following non-linear profile decomposition holds for all $s \in [0, s_n)$,  
\EQ{
\vec \psi_n(s, r) = \sum_{j=1}^J \left(\fy^j\left( \frac{s- t_{n, j}}{\la_{n, j}}, \frac{r}{\la_{n, j}}\right), \frac{1}{\la_n^j}\p_t \fy^j\left(\frac{s-t_{n, j}}{\la_{n, j}}, \frac{r}{\la_{n,j}}\right) \right) + \vec \ga_{n, L}^{J}(s, r)+ \vec \te_n^J(s, r)
}
with  $\ga_{n, L}^J(t)$ as in \eqref{eq:w-in-strich} and 
\EQ{\label{eq:nlerror}
\lim_{J \to \infty} \limsup_{n\to \infty} \left( \|\te_n^J/r\|_{L^3_t([0, s_n); L^6_x(\R^4))} + \|\vec \te_n^J\|_{L^{\infty}_t ([0, s_n); \HH_0)} \right) =0.
}
The analogous statement holds for sequences $s_n\in (-\infty, 0)$. 
\end{lem}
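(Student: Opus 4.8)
The plan is to follow the standard Bahouri--G\'erard and Kenig--Merle scheme: build an explicit \emph{approximate solution} of \eqref{eq:4d} out of the rescaled nonlinear profiles and the dispersive remainder, show it \emph{almost} solves the equation, and then compare it with the genuine solution $\vec\psi_n$ via the Perturbation Lemma~\ref{l:pert}. The first preliminary step is to peel off the non-small profiles: by the almost-orthogonality \eqref{ort H} (equivalently the Pythagorean identity of Lemma~\ref{l:enorth}) together with conservation of the $\HH_0$-norm under \eqref{eq:2dlin}, the series $\sum_j \|\vec\fy_L^j\|_{\HH_0}^2$ is bounded, so there is $J_1$ with $\|\vec\fy_L^j\|_{\HH_0}$ below the small-data threshold for all $j > J_1$; for those indices $\fy^j$ is global with $\|\fy^j/r\|_{L^3_tL^6_x(\R\times\R^4)}\lesssim\|\vec\fy_L^j\|_{\HH_0}$, and these contributions are summable. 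Combined with the hypothesis on the remaining finitely many profiles (and the standing assumption $(s_n-t_{n,j})/\la_{n,j}<T_+(\vec\fy^j)$), this gives a bound on each rescaled profile over its relevant time window and, crucially, a bound on the full sum that is uniform in $J$.

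Next I would take $\vec\Psi_n^J$ to be the claimed right-hand side with $\vec\te_n^J$ deleted, i.e.\ the sum over $j\le J$ of the rescaled nonlinear profiles $\fy^j\big((\cdot-t_{n,j})/\la_{n,j},\,\cdot/\la_{n,j}\big)$ plus $\vec\ga_{n,L}^J$. Using the scale/time pseudo-orthogonality \eqref{eq:po}, distinct profiles decouple in the spacetime Strichartz norms, so $\limsup_n\|\Psi_n^J/r\|_{L^3_tL^6_x([0,s_n)\times\R^4)}$ is bounded by a constant independent of $J$ (the tail $j>J_1$ being absorbed by the summability above, the $\vec\ga_{n,L}^J$-term by \eqref{eq:w-in-strich}), and likewise $\limsup_n\|\vec\Psi_n^J\|_{L^\infty_t\HH_0}$ is bounded via \eqref{ort H}. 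The main point is then the approximate-solution estimate. Since $\textrm{eq}(\fy^j/r)=0$ for each $j$ and $\ga_{n,L}^J/r$ solves the free equation \eqref{eq:4dlin}, the remainder $\textrm{eq}(\Psi_n^J/r)$ equals the cubic nonlinearity $(\Psi_n^J/r)^3 Z(\Psi_n^J)$ evaluated on the full sum minus the sum of the individual nonlinearities; expanding the cube, the diagonal terms cancel, the genuinely cross terms between two distinct rescaled profiles vanish in $L^1_tL^2_x([0,s_n)\times\R^4)$ as $n\to\infty$ by \eqref{eq:po} (H\"older plus the scaling invariance and decoupling of the spacetime norms), and every term involving $\vec\ga_{n,L}^J$ is controlled by $\|\ga_{n,L}^J/r\|_{L^3_tL^6_x\cap L^\infty_tL^4_x}$, hence small as $J\to\infty$ by \eqref{eq:w-in-strich}. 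The bounded, smooth, even coefficient $Z$ and the differences $Z(\Psi_n^J)-Z(\fy^j)$ produce only harmless modifications, since $Z$ is bounded and Lipschitz; one also needs the standard reduction of replacing each $\fy^j$, on its relevant interval, by a truncated profile agreeing with its free evolution at large times whenever $-t_{n,j}/\la_{n,j}\to\pm\infty$. The outcome is $\lim_{J\to\infty}\limsup_{n\to\infty}\|\textrm{eq}(\Psi_n^J/r)\|_{L^1_tL^2_x([0,s_n)\times\R^4)}=0$.

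To conclude, at $t=0$ the difference $\vec\psi_n(0)-\vec\Psi_n^J(0)$ equals the sum over $j\le J$ of the rescaled $(\vec\fy_L^j-\vec\fy^j)$ evaluated at $-t_{n,j}/\la_{n,j}$, which tends to $0$ in $\HH_0$ as $n\to\infty$ by the very definition of the nonlinear profiles, so its linear evolution is small in $L^3_tL^6_x$. Feeding the uniform bounds of the second paragraph and this smallness into Lemma~\ref{l:pert}, after a routine continuity argument to first guarantee that $\vec\psi_n$ is defined on all of $[0,s_n)$ with controlled norms, gives $\|\vec\psi_n-\vec\Psi_n^J\|_{L^\infty_t([0,s_n);\HH_0)}+\|\tfrac{1}{r}(\psi_n-\Psi_n^J)\|_{L^3_tL^6_x([0,s_n)\times\R^4)}\to0$, first in $n$ and then in $J$; setting $\vec\te_n^J:=\vec\psi_n-\vec\Psi_n^J$ delivers \eqref{eq:nlerror}, the uniform Strichartz bound on $\psi_n/r$, and the existence of $\vec\psi_n$ on $(0,s_n)$, while the case $s_n<0$ is identical with time reversed. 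The step I expect to be the main obstacle is the approximate-solution estimate of the second paragraph: expanding the $Z$-weighted cubic nonlinearity, verifying that the cross terms between distinct profiles genuinely vanish while the $\vec\ga_{n,L}^J$-terms are absorbed by their Strichartz smallness, and handling the profiles with $|t_{n,j}|/\la_{n,j}\to\infty$ by free approximation; everything else is soft functional analysis combined with the cited Strichartz and perturbation estimates.
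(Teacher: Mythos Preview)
The paper does not supply its own proof of this lemma: it is stated with citations to \cite[Proposition 2.17]{CKLS1}, \cite[Proposition 2.8]{DKM1}, and \cite{BG}, and no argument is given in the text. Your outline is the standard Bahouri--G\'erard / Kenig--Merle scheme carried out in those references, adapted here via the $4d$ reformulation \eqref{eq:4d} and the Perturbation Lemma~\ref{l:pert}; it is correct and is exactly what the cited proofs do.
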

 
%\subsubsection{Concentration compactness for bounded threshold solutions
%to~\eqref{eq:wmk}
% }
Our main application of these ideas can be summarized in the following compactness lemma.

  \begin{lem} \label{l:1profile} Let $\vec \psi(t) \in \HH_0$ be a solution to~\eqref{eq:wmk} defined on its forward maximal interval of existence $[0, T_+(\vec \psi))$. Suppose that $\E(\vec \psi) = 2 \E(Q_k)$ and that $\vec \psi(t)$ does not scatter as $t \to T_+(\vec \psi)$. Then the following holds: Suppose that  $t_n \to T_+ $ is any sequence of times such that 
  \EQ{ \label{eq:Hbounded} 
  \sup_n \| \vec\psi(t_n)\|_{\HH_0} \le C < \infty
  }
  Then, up to passing to a subsequence of the $t_n$, there exists scales $\nu_n>0$ and a nonzero $\vec \fy \in \HH_0$ such that 
  \EQ{
   \vec \psi(t_n)_{\frac{1}{\nu_n}} \to \vec \fy \in \HH_0
  }
  strongly in $\HH_0$ and $ \E(\vec \fy ) = 2\E(Q_k)$. Moreover, the nonlinear evolution $\vec \fy(s)$ of the data $\vec \fy(0) = \vec \fy$   is non-scattering in both forwards and backwards time. 
 \end{lem}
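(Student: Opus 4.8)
The plan is to run a concentration--compactness argument of Kenig--Merle type, following closely the proof of \cite[Proposition 2.17 and its consequences]{CKLS1} and the analogous minimal-non-scattering-solution extraction in \cite{DKM1}. First I would apply the linear profile decomposition of Corollary~\ref{c:bg} to the bounded sequence $\vec\psi(t_n)$, after performing a preliminary rescaling. Since $\vec\psi(t)$ does not scatter, the standard argument (using Theorem~\ref{t:2EQ}, Lemma~\ref{l:scattering}, the nonlinear perturbation Lemma~\ref{l:pert}, and the nonlinear profile decomposition Lemma~\ref{p:nlprof}) shows that at least one profile $\vec\fy^{j_0}_L$ is nonzero and its associated nonlinear profile $\vec\fy^{j_0}$ does not scatter on at least one side; in particular, by Theorem~\ref{t:2EQ} that profile carries energy at least $2\E(Q_k)$. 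Combined with the energy Pythagorean expansion \eqref{eq:enorth} and the hypothesis $\E(\vec\psi) = 2\E(Q_k)$, this forces $J = 1$, the single profile to carry the full energy $2\E(Q_k)$, the radiation error $\vec\ga_n^1 \to 0$ strongly in $\HH_0$, and — comparing with \eqref{ort H} — the scaled profile sequences $\vec\fy^1_L(-t_{n,1}/\la_{n,1})$ to converge strongly. Writing $\nu_n := \la_{n,1}$ and $\vec\fy := \vec\fy^1(-\tau_{\infty,1})$ when $-t_{n,1}/\la_{n,1}\to\tau_{\infty,1}\in\R$ (respectively the appropriate scattering-state limit when $-t_{n,1}/\la_{n,1}\to\pm\infty$, which the wave-operator theory in Section~\ref{s:2-4} handles) gives $\vec\psi(t_n)_{1/\nu_n}\to\vec\fy$ strongly in $\HH_0$ with $\E(\vec\fy) = 2\E(Q_k)$.

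Next I would verify the two-sided non-scattering of the limiting solution $\vec\fy(s)$. If the case $-t_{n,1}/\la_{n,1}\to\pm\infty$ occurred, then $\vec\fy$ would be a scattering state, hence $\vec\fy(s)$ scatters in that direction; feeding this back through Lemma~\ref{p:nlprof} and Lemma~\ref{l:pert} would contradict the non-scattering of $\vec\psi(t)$ as $t\to T_+$, ruling this case out and showing $-t_{n,1}/\la_{n,1}$ stays bounded. So $\vec\fy = \vec\fy^1(\tau)$ for some finite $\tau$, and by time-translating we may take $\vec\fy = \vec\fy^1(0)$. Now if $\vec\fy^1$ scattered in \emph{forward} time, then a single nonscattering profile producing a scattering nonlinear evolution would again, via the nonlinear profile decomposition applied on $[0, s_n)$ with $s_n\to T_+$, force $\|\psi/r\|_{L^3_tL^6_x}$ to be finite near $T_+$, i.e. $\vec\psi$ would scatter forward — contradiction. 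The same reasoning applied to a backwards-in-time profile decomposition of $\vec\psi(t_n)$ (still legitimate since the $t_n$ lie inside $I_{\max}$) rules out backward scattering of $\vec\fy^1$. Hence $\vec\fy(s) = \vec\fy^1(s)$ is non-scattering in both time directions.

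The main obstacle I anticipate is the bookkeeping needed to promote the \emph{sequential} strong convergence into the genuine rigidity statement $J=1$ with all of the energy in a single profile: one must simultaneously exploit (i) the nonlinear energy Pythagorean identity \eqref{eq:enorth}, which together with $\E(\vec\psi) = 2\E(Q_k)$ and the lower bound $\E(\vec\fy^{j_0}) \ge 2\E(Q_k)$ (itself a consequence of Theorem~\ref{t:2EQ} applied to a nonscattering solution in $\HH_0$) forces all other profiles and the radiation to be trivial; and (ii) the consistency between \eqref{eq:enorth} and the \emph{linear} orthogonality \eqref{ort H}, which upgrades the vanishing of the $\HH_0$-norm of $\vec\ga_n^1$ to strong convergence of the rescaled sequence. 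A subtlety worth isolating is that one must be careful that the single surviving profile is not a scattering state — equivalently that $-t_{n,1}/\la_{n,1}$ is bounded — which is where the non-scattering hypothesis on $\vec\psi$ gets used crucially via Lemma~\ref{p:nlprof}. Finally, the statement $\E(\vec\fy) = 2\E(Q_k)$ and non-triviality of $\vec\fy$ are immediate once $J=1$ and the single profile is nonzero, since a zero profile would make $\vec\psi(t_n)\to0$ and then Theorem~\ref{t:2EQ} (or just small-data theory) would force scattering. I would organize the write-up so that the two-sided non-scattering conclusion is deduced last, reusing the already-established nonlinear profile machinery rather than re-deriving it.
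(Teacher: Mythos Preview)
Your proposal is correct and follows essentially the same approach as the paper: apply the profile decomposition, use the energy Pythagorean identity \eqref{eq:enorth} together with Theorem~\ref{t:2EQ} to force a single profile with vanishing error, rule out $-t_{n,1}/\la_{n,1}\to\pm\infty$ via the perturbation lemma, and then deduce two-sided non-scattering of $\vec\fy$. One small clarification on the last step: for backward non-scattering of $\vec\fy$ the paper does not invoke a separate ``backwards-in-time profile decomposition'' but simply notes that if $\vec\fy$ scattered as $s\to-\infty$ then $\|\psi/r\|_{L^3_tL^6_x([0,t_n])}$ (which in profile time corresponds to $[-t_n/\nu_n,0]$) would be uniformly bounded, forcing $\vec\psi$ to scatter forward---so the contradiction is again with the \emph{forward} non-scattering hypothesis.
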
 
 
 \begin{rem}
One consequence the main result,  Theorem~\ref{t:main}, is that the hypothesis of Lemma~\ref{l:1profile} are not satisfied by any solution! However, we'll use Lemma~\ref{l:1profile} in the context of a contradiction argument in the proof of Proposition~\ref{p:psi_t} in Section~\ref{s:dynamics}. Since the proof of the lemma uses only standard facts about profile decompositions, the local Cauchy theory, and the Threshold Theorem~\ref{t:2EQ} we include it here in Section~\ref{s:cc}. 
 \end{rem} 
 
\begin{proof}[Proof of Lemma~\ref{l:1profile}]
By~\eqref{eq:Hbounded} we can perform a linear profile decomposition as in Corollary~\ref{c:bg} on $\vec \psi(t_n)$. 

First we observe that there can only be one non-zero profile $\vec \fy = \vec\fy^1$ and that the errors $\vec \gamma_{n, L}^J$ must vanish strongly $\HH_0$ as $n \to \infty$. Indeed, if there were two non-trivial profiles, or if the errors did not vanish strongly in $\HH_0$, then~\eqref{eq:enorth} along with our hypothesis that $\E(\vec \psi) = 2 \E(Q)$  imply that every nonzero profile must have energy $< 2\E(\vec Q)$. Thus each non-zero nonlinear profile scatters in both directions by the Threshold Theorem~\ref{t:2EQ}. A now standard argument based on the nonlinear Perturbation Lemma~\ref{l:pert}, and the orthogonality of the parameters in~\eqref{eq:po} implies that $\vec \psi(t)$ must also scatter in forward time, a contradiction.

Thus,  there exists times $t_{n, 1}$ and scales $\nu_{n, 1}$, and a single limiting profile $\vec \fy = (\fy_0, \fy_1)$ so that 
\EQ{ \label{eq:psim0} 
( \psi( t_n + \nu_n t_{n, 1}, \nu_{n, 1} \cdot ),   \nu_{n, 1}\psi_t( t_n + \nu_n t_{n, 1}, \nu_{n, 1} \cdot  ) \to  \vec \fy \in \HH_0 \mas n \to \infty
}
Next we claim both $-\frac{t_{n, 1}}{\nu_{n_1}} \to \pm \infty$ are impossible and we can therefore assume without loss of generality that $t_{n, 1} = 0$ for all $n$. 
To see this, first assume first   $-\frac{t_{n, 1}}{\nu_{n, 1}} \to +\infty$. Then $\vec \fy$ scatters in forward time and we can deduce that
 \EQ{
\| \fy/ r\|_{L^3_t L^6_x([ -\frac{t_{n, 1}}{ \nu_{n, 1}}, \infty) \times \R^4)} \to 0 \mas n \to \infty
} 
by the definition of the nonlinear profile. But then the Nonlinear Perturbation Lemma~\ref{l:pert} implies that $ \vec \psi(t)$ must also scatter in forward time, which contradicts our initial assumptions on $\vec \psi(t)$. 
 
Now assume that $-t_{n, 1}/ \nu_{n, 1} \to -\infty$. Then the nonlinear profile $\vec \fy(s)$ scatters in backwards time, and the Nonlinear Perturbation Lemma~\ref{l:pert}  implies that 
\EQ{
 \| \psi/r\|_{L^3_t L^6_x(([0,t_n]) \times \R^4)} = \| \varphi / r \|_{L^3_t L^6_x([ \frac{-t_n-t_{n, 1}}{ \nu_{n, 1}}, -\frac{t_{n, 1}}{ \nu_{n, 1}}] \times \R^4)} + o_n(1) \to 0, 
 }
 a contradiction. Thus we can assume that $ t_{n, 1}  \equiv 0$
and we simply write $\nu_{n, 1} = \nu_n$. At this point we've shown that up to passing to a subsequence in $t_n$ we have 
\EQ{
\vec \psi(t_n)_{\frac{1}{\nu_{n}}} \to \fy \in \HH_0, \quad \E(\vec \fy) = 2\E(Q)
}
We can now run a nearly identical argument to show that  nonlinear evolution $\vec \fy(s) \in \HH_0$ can not scatter in either time direction. To see this, first suppose that $\vec \fy$ scatters in forward time. Then,  the Nonlinear Perturbation Lemma~\ref{l:pert} implies that $\vec \psi(t)$ must also scatter as $t \to \infty$. If $\vec \fy(s)$ were to scatter as $s \to -\infty$, then, 
\EQ{
\| \psi/r \|_{L^3_t L^6_x([0,t_n] \times \R^4)} = \|  \vec \fy/ r \|_{L^3_t L^6_x([ (-t_n)/ \nu_{n, 1}, 0]) \times \R^4)} + o_n(1) \le C< \infty.
}
Letting limit $n \to \infty$, we see that $\| \psi /r\|_{L^3_t L^6_x([0, T_+(\vec \psi)) \times \R^4)} \le C$,  which again means that $\vec \psi(t)$ scatters in forward time, a contradiction. Hence $\vec \fy(s)$ does not scatter in either direction. 
\end{proof}

% We record the following change of variables. Given $\vec \psi = ( \psi_0, \psi_1)$ with $\psi_0(0) = 0$ and $\psi_0( \infty) = 0$ that   arises from a $k$-equivariant map  we define the norm 
% \EQ{
% \| \vec \psi \|_{H \times L^2( \R^2)}^2 := \int_0^\infty \left(\psi_1^2 + (\p_r \psi_0)^2  + k^2\frac{ \psi_0^2}{r^2} \right) \, \rdr
% }
%and the relevant linear operator 
% \EQ{
% \LL_k \psi_0 = (- \De_{\R^2}+ \frac{k^2}{r^2}) \psi_0 = - \frac{1}{r} \p_r( r \p_r \psi_0) + \frac{k^2}{r^2} \psi_0
% }
% Note that conjugating $\LL_k$ by $r^{-k}$ yields $\De_{\R^d}$ for $d = 2k+2$, i.e., 
% \EQ{
% r^{-k} \LL_k( r^{-k} \psi_0)  =   -\De_{\R^{2k+2}} \phi_0 = - \p_r^2 \phi_0 - \frac{2k+1}{r} \p_r \phi_0
% }
% where we have set 
% \EQ{
%  \phi_0(r)  = r^{-k} \psi_0(r) , \quad  \phi_1(r):= r^{-k} \psi_1(r)
%  }
%  Note that we have 
%  \EQ{
%   \| \vec \psi \|_{H \times L^2( \R^2)}^2 =  \| ( \phi_0, \phi_1) \|_{ \dot{H}^1 \times L^2( \R^{2k+2})}^2
%  }
 
 \subsection{The harmonic maps $Q = Q_k$} \label{s:hm} 
 
 We record a few properties about the unique (up to scaling) $k$-equivariant harmonic map $Q = Q_k(r) = 2 \arctan r^k$ and some consequences of the fact that each $Q_k$ minimizes the energy functional amongst all  $k$-equivariant  maps. 
  
First observe  that $Q$ satisfies 
 \EQ{
 r \p_r Q(r) = k \sin Q(r), \quad Q(0) = 0, \quad Q(\infty) = \pi.
 }
Recall that  $\HH_\pi$ the set of all finite energy $k$-equivariant maps, with $\phi_0(0) = 0$ and  $\phi_0(\infty) = \pi$, 
  \EQ{
 \HH_\pi:= \{ (\phi_0, \phi_1) \mid \E(\vec \phi)< \infty, \quad \phi_0(0) = 0, \quad \lim_{r \to \infty} \fy_0(r) = \pi\}
 }
%Now recall that the energy of initial data $(\fy_0, \fy_1)$ satisfying 
% \ant{
% \phi_0(0) = 0, \quad \lim_{r \to \infty} \fy_0(r) = \pi
% }
%  is given by 
% \EQ{
% \E( \fy_0, \fy_1)  = 2 \pi  \frac{1}{2}\int_0^\I  \left( \fy_1^2(r)  +  ( \p_r \fy_0(r))^2 + k^2\frac{\sin^2 \fy_0(r)}{r^2} \, \right)\,\rdr 
% }
 The fact that $Q$ minimizes the energy in $\HH_\pi$ can be easily seen from the following Bogomol'nyi factorization of the energy:
\EQ{ \label{eq:bog}
  \E( \fy_0, \fy_1)  &=   \pi \| \fy_1 \|_{L^2}^2 +  \pi \int_0^{\infty} \left(\p_r \fy_0  - k\frac{\sin(\fy_0)}{r}\right)^2 \, r\, dr +  2\pi k \int_0^{\infty} \sin(\fy_0) \p_r\fy_0 \, \dr\\
&= \pi \| \fy_1 \|_{L^2}^2  + \pi \int_0^{\infty} \left(\p_r \fy_0  - k\frac{\sin(\fy_0)}{r}\right)^2 \, \rdr + 2 \pi k \int_{\fy_0(0)}^{\fy_0(\infty)} \sin(\rho)  \, \ud\rho \\
&  =  \pi \| \fy_1 \|_{L^2}^2  +  \pi \int_0^{\infty} \left(\p_r \fy_0  - k\frac{\sin(\fy_0)}{r}\right)^2 \, \rdr  + 4\pi k
}
%  \E(\fy, \fy_t)&=  \int_0^{\infty} \fy_t^2\, r\, dr + \int_0^{\infty} \left(\fy_{r}  - \frac{\sin(\fy)}{r}\right)^2 \, r\, dr + 2\int_0^{\infty} \sin(\fy) \fy_r \, dr\\
%&= \int_0^{\infty} \fy_t^2\, r\, dr + \int_0^{\infty} \left(\fy_{r}  - \frac{\sin(\fy)}{r}\right)^2 \, r\, dr + 2\int_{\fy(0)}^{\fy(\infty)} \sin(\rho)  \, d\rho
%\end{align*} 
%Hence, in $\HH_1$ we have 
Hence, 
\begin{align} \label{var char}
\E(\fy_0, \fy_1) \ge\pi  \| \fy_1 \|_{L^2}^2 + 4 \pi k =  \pi \| \fy_1 \|_{L^2}^2 + \E(Q_k, 0)
\end{align}
where the inequality in the last line above is in fact strict if $\fy_0 \neq Q_k$.

We define a functional on maps $\Phi: \R^2 \to \Sp^2$ of finite energy. Let $\om_{\Sp^2}$ denote the volume form on $\Sp^2$.  Given $\Om  \subset \R^2$ set 
\EQ{
G( \Phi, \Om):=  \int_{\Phi(\Om)} \om_{\Sp^2} =  \int_{\Om} \Phi^*(\om_{\Sp^2}) 
}
where $\Phi^*(\om_{\Sp^2})$ denotes the pull-back. Given $k$-equivariant $\Phi$ with polar angle $\phi$, this reduces to 
\EQ{
G(\phi_0(r)):= 2 \pi  \int_{\phi_0(0)}^{\phi_0(r)}k \abs{\sin \rho} \, \ud\rho
}
Observe that for any $(\phi, 0)$ with $\E(\vec \phi)< \infty$  and for any $R \in[0, \infty)$ we have 
\EQ{\label{b R}
\abs{G(\phi_0(R))}    =\abs{2 \pi \int_{\phi_0(0)}^{\phi_0(R)} k\abs{\sin \rho} \, \ud\rho} 
= \abs{2 \pi \int_0^R \abs{k\sin(\phi_0(r))} \p_r\phi_0(r) \, dr}
 \le \E_0^R(\phi_0, 0)
}
where for any $0 \le a <b$ we define the localized energy $\E^a_b$ by
\EQ{
\E_a^b( \phi_0, \phi_1) := 2 \pi \int_a^b \frac{1}{2} \left(\phi_1^2 + ( \p_r \phi_0)^2 + k^2 \frac{\sin^2 \phi_0}{ r^2} \right) \, r \, \ud r. 
}
The same argument shows that 
\EQ{ \label{a R} 
\abs{G(\phi_0(R))} \le  \E_R^{\infty}(\phi_0, 0)
}
On the other hand, since  $Q$ satisfies $r \p_r Q(r) = k\sin(Q)$,  for any $0\le a\le  b < \infty$ we see that 
\EQ{\label{eq:GQ}
G(Q(b))-G(Q(a)) = 2 \pi \int_{a}^{b} \abs{\sin(Q(r))} Q_r(r) \, dr =  \E_a^b(Q, 0)
}
Letting $a \to 0$ and $b \to \infty$ we recover the fact that $\E(Q, 0) = G(\pi) = 4 \pi k $.

We recall the following variational characterization of $Q$ in  $\HH_\pi$ from~\cite{Cote}, which amounts to the coercivity of the energy functional near $Q$. 

\begin{lem}\emph{\cite[Proposition $2.3$]{Cote}}\label{l:decQ} There exists a function $c: [0, \infty) \to [0, \infty)$ such that $c(\al) \to 0$ as  $\al \to 0$ and such that the following holds: Let  $( \phi_0, 0) \in \HH_\pi$. Suppose 
\ant{
\alpha :=\E( \phi_0, 0)- \E(Q, 0) \ge 0
}
Then for $\la>0$ defined so that  $\E_0^{\la}(\phi_0, 0) = \E_0^1(Q)= \E(Q)/2$, we have 
\ant{
\|\phi_0 - Q_\la\|_{H} \le c(\alpha)
}
Moreover,  $\al = 0$ if and only if $\phi_0(r)  = Q(r/ \la)$ for some $\la>0$. 
\end{lem}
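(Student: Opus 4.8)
The plan is to exploit the Bogomol'nyi factorization \eqref{eq:bog}, which already exhibits $\E(\phi_0,0) - \E(Q,0)$ as the nonnegative quantity $\pi\int_0^\infty (\p_r\phi_0 - k\sin(\phi_0)/r)^2\,\rdr$, and then to run a standard compactness/contradiction argument to promote the vanishing of this ``Bogomol'nyi defect'' to smallness in $H$. First I would fix $\lambda = \lambda(\phi_0)$ by the rescaling condition $\E_0^\lambda(\phi_0,0) = \E(Q)/2$; this is well-defined since $t \mapsto \E_0^t(\phi_0,0)$ is continuous, nondecreasing, starts at $0$, and has total limit $\E(\phi_0,0) \geq \E(Q,0) = 4\pi k > \E(Q)/2$. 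By the scaling invariance of $\|\cdot\|_H$ and of the energy, I may normalize $\lambda = 1$, so that $\E_0^1(\phi_0,0) = \E(Q)/2 = \E_0^1(Q,0)$, and the goal becomes: $\alpha := \E(\phi_0,0) - \E(Q,0) \to 0$ forces $\|\phi_0 - Q\|_H \to 0$.

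The core is a concentration argument: suppose not, so there is a sequence $\phi_0^{(n)}$ with $\alpha_n \to 0$, the normalization above, but $\|\phi_0^{(n)} - Q\|_H \geq \delta > 0$. The energies are bounded, hence (passing to a subsequence) $\phi_0^{(n)} \wto \phi_0^\infty$ weakly in $H$, and by lower semicontinuity $\E(\phi_0^\infty,0) \leq \E(Q,0)$, while the variational inequality \eqref{var char} gives $\E(\phi_0^\infty,0) \geq \E(Q,0)$ provided $\phi_0^\infty \in \HH_\pi$; I would need to check the boundary conditions $\phi_0^\infty(0)=0$, $\phi_0^\infty(\infty)=\pi$ survive the weak limit, which is where the normalization $\E_0^1(\phi_0^{(n)},0) = \E(Q)/2$ is essential — it prevents the mass (equivalently, via \eqref{b R}–\eqref{a R}, the topological charge $G$) from escaping to $r = 0$ or $r = \infty$, pinning down $\phi_0^\infty(\infty) = \pi$. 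Then $\E(\phi_0^\infty,0) = \E(Q,0)$, so by the strictness clause in \eqref{var char} $\phi_0^\infty = Q_\mu$ for some $\mu > 0$, and the normalization forces $\mu = 1$, i.e. $\phi_0^\infty = Q$. Since the energies converge, $\E(\phi_0^{(n)},0) \to \E(Q,0) = \E(\phi_0^\infty,0)$, weak convergence upgrades to strong convergence in $H$ (convergence of norms plus weak convergence in a Hilbert-space-type setting, using that $\|\cdot\|_H^2$ controls $\int (\p_r\phi_0)^2 + k^2\phi_0^2/r^2$ and the remaining energy terms are lower-order perturbations handled by the compactness of the embedding into $L^\infty$ after the $r = e^x$ change of variables). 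This contradicts $\|\phi_0^{(n)} - Q\|_H \geq \delta$, so such a modulus-of-continuity function $c$ exists. The final ``moreover'' is immediate: $\alpha = 0$ makes the Bogomol'nyi defect vanish identically, so $\p_r\phi_0 = k\sin(\phi_0)/r$, an ODE whose solutions with the right boundary behavior are exactly $Q(\cdot/\lambda)$.

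The main obstacle I anticipate is the step ensuring the limit $\phi_0^\infty$ genuinely lies in $\HH_\pi$ with the correct value $\pi$ at infinity (equivalently, no loss of topological degree in the weak limit). Weak $H$-convergence alone does not see the behavior at $r = 0$ and $r = \infty$; one must use the localized-energy identities \eqref{b R}, \eqref{a R}, \eqref{eq:GQ} together with the normalization $\E_0^1 = \E(Q)/2$ to get uniform (in $n$) smallness of $\E_0^\rho(\phi_0^{(n)},0)$ as $\rho \to 0$ and of $\E_{R}^\infty(\phi_0^{(n)},0)$ as $R \to \infty$ — a tightness/no-vanishing statement — and then pass these bounds to the limit to conclude $G(\phi_0^\infty(\infty)) = 2\pi k \int_0^\pi |\sin\rho|\,d\rho$, forcing $\phi_0^\infty(\infty) = \pi$. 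Everything else is routine weak-convergence bookkeeping and the explicit ODE integration.
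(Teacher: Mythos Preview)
The paper does not give its own proof of this lemma; it is simply cited from \cite{Cote}. Your compactness/contradiction approach is the standard one and is essentially correct, but two points need tightening.

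First, the phrase ``$\phi_0^{(n)} \rightharpoonup \phi_0^\infty$ weakly in $H$'' is not quite right: since $\phi_0^{(n)}(\infty) = \pi$, the function $\phi_0^{(n)}$ does not lie in $H$ (the term $\int (\phi_0^{(n)})^2/r\,\ud r$ diverges at infinity). You should instead extract local uniform convergence via the change of variables $u_n(x) := \phi_0^{(n)}(e^x)$, using that $\partial_x u_n$ is bounded in $L^2(\ud x)$ and $u_n$ is bounded in $L^\infty$ (the latter from $|G(u_n(x))| \le \E(\phi_0^{(n)},0) \le \E(Q) + \alpha_n$). The Bogomol'nyi defect $\partial_x u_n - k\sin u_n \to 0$ in $L^2$ then forces the limit to solve $u' = k\sin u$, and your normalization argument (which is correct and is indeed the key step) pins down $u_\infty(0) = \pi/2$, hence $u_\infty = 2\arctan(e^{kx})$, i.e.\ $\phi_0^\infty = Q$.

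Second, the upgrade to strong convergence in $H$ is more delicate than ``weak convergence plus norm convergence''. Convergence of the energies gives strong $L^2$ convergence of $\partial_r\phi_0^{(n)}$ and of $\sin(\phi_0^{(n)})/r$, not of $(\phi_0^{(n)} - Q)/r$. To close this you need a separate tightness argument: the strong $L^2$ convergence of the energy density makes $\E_0^\rho(\phi_0^{(n)},0)$ and $\E_R^\infty(\phi_0^{(n)},0)$ uniformly small for $\rho$ small and $R$ large, which (via $G$) forces $\phi_0^{(n)}$ uniformly close to $0$ near $r=0$ and to $\pi$ near $r=\infty$; there $\sin\phi_0^{(n)} \sim \phi_0^{(n)}$ (resp.\ $\sin\phi_0^{(n)} \sim \pi - \phi_0^{(n)}$), and the $H$ norm of the tail is controlled by the energy tail. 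On the remaining compact annulus, local uniform convergence suffices. Your identification of the boundary-condition/tightness issue as the main obstacle is exactly right; it just reappears at this final step as well.
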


%\begin{rem}
%We remark that in Lemma~\ref{l:decQ} one can choose  $\la>0$ so that $\E_0^{\la}(\phi_0, 0) = \E_0^1(Q)= \E(Q)/2$. Also $\al = 0$ if and only if $\phi_0(r)  = Q(r/ \la)$ for some $\la>0$. 
%\end{rem}

 \subsection{Threshold solutions near a $2$-bubble configuration} 
 The goal of this section is to relate the proximity of a map $\phi \in \HH_0$ to a $2$-bubble configuration to the size of the $\HH_0$-norm of $\vec \phi$. With this in mind we make the following definition. 

 \begin{defn}[Proximity to a $2$-bubble]\label{d:ddef} Given a map $ \vec\phi =  (\phi_0, \phi_1) \in \HH_0$ we define its proximity $\bfd(\vec \phi)$ to a pure $2$-bubble by 
\EQ{ \label{eq:ddef} 
\bfd(\vec \phi) := \inf_{\la, \mu >0, \iota \in \{+1, -1\}}  \Big(  \| (\phi_0 - \iota (Q_\la - Q_\mu), \phi_1) \|_{\HH_0}^2 + \left( \la/\mu \right)^k \Big)
}
\end{defn}

The proof of Theorem~\ref{t:main} will require a few technical lemmas concerning $\bfd$. We'll state the lemmas first and postpone the proofs until the end of this section. 
 
 \begin{lem} \label{l:d-size}
 Suppose that $\vec \phi = (\phi_0, \phi_1) \in \HH_0$ is $k$-equivariant and satisfies, 
\EQ{
&\E( \vec \phi)  = 2 \E(\vec Q_k). 
%&\inf_{\la, \mu >0}  \Big(  \| (\phi_0 - (Q_\la - Q_\mu), \phi_1) \|_{H \times L^2}^2 + \left( \la/\mu \right)^k \Big)\ge \al_0.
 }
  Then for each $\be>0$ there exists a there exists a constant $C(\be)>0$ such that 
 \EQ{ \label{eq:d-big} 
  \bfd(\vec \phi) \ge \be \Longrightarrow  \|(\phi_0, \phi_1) \|_{\HH_0} \le C(\be) 
   } 
Conversely, for each $A>0$ we can find $\al  = \al(A)$ such that 
\EQ{\label{eq:d-small}
 \bfd(\vec \phi) \le \al(A) \Longrightarrow \|(\phi_0, \phi_1) \|_{\HH_0}  \ge A
}
%
% \EQ{ \label{eq:phipi} 
%  \bfd(\vec \phi) \ge \al \Longrightarrow \| \phi_0 \|_{L^\infty} \le \pi -  \de(\al_0), 
%   } 
%   which in turn implies that there exists a constant $0<C(\al_0)<\infty$ so that 
%   \EQ{\label{eq:phiH} 
%    \|(\phi_0, \phi_1) \|_{H \times L^2} \le C(\al_0) 
%   }
 \end{lem}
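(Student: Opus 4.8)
The plan is to prove the two implications separately; \eqref{eq:d-small} is elementary, while \eqref{eq:d-big} is a quantitative bubbling statement which I would prove by contradiction, reducing it to a one‑dimensional plateau analysis in the variable $x=\log r$. For \eqref{eq:d-small}, the input is a lower bound for the $H$‑distance between two bubbles at separated scales: by scale invariance of $\|\cdot\|_H$ it suffices to bound $\|Q_1-Q_\mu\|_H$ for $\mu$ large, and since $0\le Q_1-Q_\mu$ with $Q_1(r)\ge\pi-2r^{-k}$ and $Q_\mu(r)\le 2(r/\mu)^k$, one has $Q_1-Q_\mu\ge\pi/2$ on the annulus $3\le r\le\mu/3$, hence $\|Q_\la-Q_\mu\|_H^2\ge c\,k^2\log(\mu/\la)$ once $\mu/\la$ is large. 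Then, if $\bfd(\vec\phi)\le\al$ with $\al$ small, there exist $\la,\mu,\iota$ with $(\la/\mu)^k\lesssim\al$ and $\|\phi_0-\iota(Q_\la-Q_\mu)\|_H^2\lesssim\al$, so by the triangle inequality $\|\vec\phi\|_{\HH_0}\ge\|\phi_0\|_H\ge c'k(\log(1/\al))^{1/2}-C\al^{1/2}$, which exceeds any prescribed $A$ once $\al=\al(A)$ is small enough.

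For \eqref{eq:d-big}, suppose there are $\be>0$ and $\vec\phi_n=(\phi_{n,0},\phi_{n,1})\in\HH_0$ with $\E(\vec\phi_n)=2\E(\vec Q)$, $\bfd(\vec\phi_n)\ge\be$, but $\|\vec\phi_n\|_{\HH_0}\to\infty$. Since $\pi\|\phi_{n,1}\|_{L^2}^2\le\E(\vec\phi_n)$ is bounded, $\|\phi_{n,0}\|_H\to\infty$, and passing to $w_n(x):=\phi_{n,0}(e^x)$ we get $w_n(\pm\infty)=0$, $\int_\R\big((w_n')^2+k^2 w_n^2\big)\,\ud x=\|\phi_{n,0}\|_H^2\to\infty$, and $\pi\int_\R\big((w_n')^2+k^2\sin^2 w_n\big)\,\ud x=\E(\phi_{n,0},0)\le 8\pi k$. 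Fix a small $\delta_0>0$; on $A_n:=\{x:\dist(w_n(x),\pi\Z)\ge\delta_0\}$ one has $\sin^2 w_n\gtrsim_{\delta_0}1$, so $|A_n|\lesssim_{\delta_0,k}1$, and on each component of the complement $w_n$ stays within $\delta_0$ of a fixed multiple of $\pi$. Thus $\R$ splits into \emph{plateaus} near various $m\pi$ joined by finitely many \emph{transitions}, and the Bogomol'nyi factorization \eqref{eq:bog} shows a transition between plateaus near $m\pi$ and $(m\pm1)\pi$ costs at least $\E(\vec Q)-o_{\delta_0}(1)$; since the total energy is $2\E(\vec Q)$, there are at most two transitions once $\delta_0$ is small. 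The boundary conditions $w_n(\pm\infty)=0$ exclude zero transitions (then $\sin^2 w_n\gtrsim w_n^2$ and $\|\phi_{n,0}\|_H$ is bounded) and one transition (the endpoint value would be near $\pm\pi$), so there are exactly two, the plateau heights are $0,\iota\pi,0$, and after $w_n\mapsto -w_n$ we take $\iota=1$. Energy accounting then forces each transition to carry $\E(\vec Q)+o_{\delta_0}(1)$, all plateaus together to carry $o_{\delta_0}(1)$, and in particular $\pi\|\phi_{n,1}\|_{L^2}^2=2\E(\vec Q)-\E(\phi_{n,0},0)=o_{\delta_0}(1)$; since $\int(w_n')^2$ is bounded while $\int w_n^2\to\infty$ and only the central plateau can carry unbounded $\int w_n^2$, that plateau has length $\to\infty$.

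To finish, I would identify the two transitions with rescaled copies of $Q$. Choosing a radius $R_n$ in the middle, on a logarithmic scale, of the central $\pi$‑plateau, the restriction of $\phi_{n,0}$ to $r\le R_n$ runs from $0$ to within $\delta_0$ of $\pi$; extending it past $R_n$ monotonically up to the exact value $\pi$ at energy cost $o_{\delta_0}(1)$ yields a map in $\HH_\pi$, automatically of energy $\ge\E(\vec Q)$ by \eqref{var char} and of energy $\le\E(\vec Q)+o_{\delta_0}(1)$ (the second transition lies beyond $R_n$), so Lemma~\ref{l:decQ} produces a scale $\la_n$ with $\|\phi_{n,0}-Q_{\la_n}\|_H=o_{\delta_0}(1)$ on $r\le R_n$. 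Applying Lemma~\ref{l:decQ} in the same way to $\pi-\phi_{n,0}$ on $r\ge R_n$ (note $\sin^2(\pi-\phi)=\sin^2\phi$) gives a scale $\mu_n$ with $\|\phi_{n,0}-(\pi-Q_{\mu_n})\|_H=o_{\delta_0}(1)$ on $r\ge R_n$, and the divergence of the plateau length forces $\la_n/\mu_n\to 0$. Since $Q_{\mu_n}$ has $o(1)$ $H$‑norm on $r\le R_n$ and $\pi-Q_{\la_n}$ has $o(1)$ $H$‑norm on $r\ge R_n$, this gives $\|\phi_{n,0}-(Q_{\la_n}-Q_{\mu_n})\|_H=o_{\delta_0}(1)$; combined with the bound on $\phi_{n,1}$ and with $(\la_n/\mu_n)^k\to 0$, we obtain $\limsup_{n\to\infty}\bfd(\vec\phi_n)=o_{\delta_0}(1)$, and letting $\delta_0\to 0$ contradicts $\bfd(\vec\phi_n)\ge\be$.

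\textbf{Main obstacle.} The delicate step is the last one: Lemma~\ref{l:decQ} applies only to globally defined maps in $\HH_\pi$, so it must be localized to a half‑line by capping $\phi_{n,0}$ across $R_n$ with a negligible and carefully controlled energy cost, and one must organize the iterated limit — first $n\to\infty$, then $\delta_0\to 0$ — so that all the $o_{\delta_0}(1)$ errors genuinely vanish. The plateau/transition bookkeeping is the equivariant bubbling analysis familiar from \cite{Struwe, Cote, CKLS1}, but here it must be pushed to a quantitative form compatible with the definition of $\bfd$.
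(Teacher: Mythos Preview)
Your argument is correct and lands on the same endgame as the paper---splitting $\phi_{n,0}$ at a point where it is close to $\pi$, capping each half to a map in $\HH_\pi$ of energy $\E(\vec Q)+o(1)$, and invoking Lemma~\ref{l:decQ} to extract the two bubbles. The treatment of~\eqref{eq:d-small} is also essentially the same as the paper's.

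The difference is in how you reach that splitting point. The paper does not do any plateau/transition bookkeeping. Instead it first proves the intermediate statement
\[
\bfd(\vec\phi)\ge\beta \ \Longrightarrow\ \|\phi_0\|_{L^\infty}\le \pi-\delta(\beta),
\]
from which~\eqref{eq:d-big} is immediate since $|\phi_0|\le\pi-\delta$ gives $\phi_0^2\le C(\beta)\sin^2\phi_0$ and hence $\|\phi_0\|_H^2\lesssim_\beta \E(\vec\phi)$. To prove the $L^\infty$ bound they argue by contradiction: the failure of that bound \emph{directly} produces a sequence $r_n$ with $\phi_n(r_n)\to\pi$, with no further structural analysis needed. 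Your route---assuming $\|\phi_{n,0}\|_H\to\infty$, passing to $w_n(x)=\phi_{n,0}(e^x)$, and running a measure-of-level-sets argument to count transitions and locate a long $\pi$-plateau---ultimately manufactures exactly such a point $r_n=e^{x_n}$, but at the cost of the extra combinatorics you flag as the ``main obstacle'' (handling non-transition excursions, choosing $R_n$ in a genuine plateau, and the double limit in $n$ and $\delta_0$). The paper's detour through $L^\infty$ sidesteps all of this: once $\phi_n(r_n)\to\pi$, the capping error is $\lesssim(\pi-\phi_n(r_n))^2=o_n(1)$ and the iterated limit disappears. So both approaches are valid; the paper's is shorter because it recognizes that the \emph{only} way the $H$ norm can blow up under the energy constraint is for $|\phi_0|$ to approach $\pi$ somewhere, and takes that as the starting hypothesis.
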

 
 Note that $\bfd$ is small when $\vec \phi$ is close to either a bubble/anti-bubble ($\iota= +$ in the definition of $\bfd$) or anti-bubble/bubble configuration ($\iota = -$). The next lemma makes precise the intuitive notion that a map $\vec \phi$ cannot be simultaneously close to both configurations. With this in mind we define 
 \EQ{\label{eq:dpm} 
 \bfd_\pm(\vec \phi) := \inf_{\la, \mu >0}  \Big(  \| (\phi_0  \mp  (Q_\la - Q_\mu), \phi_1) \|_{\HH_0}^2 + \left( \la/\mu \right)^k \Big)
 }
 
 \begin{lem}\label{l:dpm} 
 There exists $\al_0>0$ with the following property: Let $\vec \phi \in \HH_0$. Then, %For all $\al  \le  \al_0$, 
 \EQ{
 \bfd_{\pm}(\vec \phi) \le \al_0 \Longrightarrow \bfd_{\mp}(\vec \phi)  \ge \al_0. 
 }
 \end{lem}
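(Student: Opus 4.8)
The plan is to argue by contradiction and compactness. Suppose the statement fails; then there is a sequence $\vec\phi_n \in \HH_0$ with $\bfd_+(\vec\phi_n) \to 0$ and $\bfd_-(\vec\phi_n) \to 0$. By definition of $\bfd_\pm$ we can choose parameters $\la_n \ll \mu_n$ and $\la_n' \ll \mu_n'$ (in the sense that $\la_n/\mu_n \to 0$ and $\la_n'/\mu_n' \to 0$) such that
\EQ{
\|(\phi_{n,0} - (Q_{\la_n} - Q_{\mu_n}), \phi_{n,1})\|_{\HH_0} \to 0, \qquad \|(\phi_{n,0} + (Q_{\la_n'} - Q_{\mu_n'}), \phi_{n,1})\|_{\HH_0} \to 0.
}
Subtracting these two, $\phi_{n,1} \to 0$ in $L^2$ and, crucially,
\EQ{
\big\| (Q_{\la_n} - Q_{\mu_n}) + (Q_{\la_n'} - Q_{\mu_n'}) \big\|_{H} \to 0.
}
So the whole game is to show that this last relation is impossible when $\la_n/\mu_n \to 0$ and $\la_n'/\mu_n'\to 0$: a sum of two ``$+$ oriented'' two-bubble profiles (one bubble near scale $\la$ and an anti-bubble near scale $\mu$, both with the \emph{same} sign convention) cannot tend to zero in $H$ — intuitively because the total ``winding'' is $+2k - 2k$ vs $-2k+2k$ and the $r\to 0$ and $r\to\infty$ boundary values rule out cancellation.

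The cleanest way to make this rigorous, I think, is to track boundary values together with a localized-energy / degree argument of the type already set up in Section~\ref{s:hm} via the functional $G$ and the inequalities \eqref{b R}, \eqref{a R}. First, rescale: after dividing by $\mu_n$ we may assume $\mu_n = 1$, so $\la_n \to 0$; then $Q_{\la_n} \to \pi$ pointwise for $r>0$ while $Q_{\mu_n} = Q \to 0$ at $r=0$ and $\to \pi$ at $r = \infty$. One then distinguishes cases according to the relative sizes of the four scales $\la_n, \mu_n, \la_n', \mu_n'$ (by passing to a subsequence, the ordered configuration stabilizes), and in each case one extracts, from $\|(Q_{\la_n} - Q_{\mu_n}) + (Q_{\la_n'} - Q_{\mu_n'})\|_H \to 0$, a contradiction by evaluating at a well-chosen radius $R_n$ lying strictly between consecutive scales: there the left side is $H$-small on $[\,\cdot, \cdot]$ but the sum of the profiles is forced to be close to a nonzero multiple of $\pi$ (e.g. $\approx 2\pi$ or $\approx -2\pi$ or $\approx 2\pi$ depending on the ordering), while $\|u\|_{L^\infty} \lesssim \|u\|_H$ on $\R^2$ gives the contradiction. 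Equivalently one can use the energy identity: each genuine bubble/anti-bubble pair contributes at least $2\E(\vec Q)$ of localized ``$G$-charge'', and the charges of the $+$ and $-$ configurations have opposite signs on the overlapping annuli, which is incompatible with a small $H$-difference via \eqref{b R}--\eqref{a R}.

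The main obstacle is the bookkeeping in the case analysis: a priori the two decompositions need not be ordered compatibly (e.g. one could have $\la_n \ll \la_n' \ll \mu_n \ll \mu_n'$, or $\la_n' \ll \la_n \ll \mu_n' \ll \mu_n$, etc.), and within each regime one must check that some intermediate radius $R_n$ separates the scales with enough room ($R_n/\text{(lower scale)} \to \infty$ and (upper scale)$/R_n \to \infty$) so that the pointwise evaluation of $Q_{\la_n}, Q_{\mu_n}, Q_{\la_n'}, Q_{\mu_n'}$ at $R_n$ each converges to $0$ or $\pi$ as appropriate. This is where one must be careful that the condition $\la_n/\mu_n \to 0$ (and its primed analog), which is exactly what the $(\la/\mu)^k$ penalty term in $\bfd_\pm$ forces once $\bfd_\pm \to 0$, is used in an essential way — it guarantees a genuine scale gap within each decomposition, so that at least one separating radius always exists. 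Once the pointwise picture is pinned down, the contradiction with $\|\cdot\|_{L^\infty} \lesssim \|\cdot\|_H$ is immediate and short.
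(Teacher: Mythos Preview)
Your proposal is correct and follows essentially the same route as the paper's proof: argue by contradiction, subtract the two decompositions to reduce to
\[
\big\| (Q_{\la_n^+} - Q_{\mu_n^+}) + (Q_{\la_n^-} - Q_{\mu_n^-}) \big\|_{H} \to 0,
\]
then do a case analysis on the relative positions of the four scales and, in each case, exhibit an annulus (equivalently, a radius) where the sum is pointwise bounded away from zero by a fixed multiple of $\pi$, contradicting smallness in $H$ via $\|\cdot\|_{L^\infty} \lesssim \|\cdot\|_H$. The paper organizes the case analysis a bit more economically---after arranging $\la_n^+ \le \la_n^-$ it splits only on whether $\la_n^-/\mu_n^+ \to 0$, $\to\infty$, or $\to c>0$, and in the last case reduces to the already-proved \eqref{eq:d-small} from Lemma~\ref{l:d-size}---but this is just bookkeeping; your full-ordering enumeration works equally well and avoids that back-reference.
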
 
 
 We begin by proving Lemma~\ref{l:d-size}. 
 
 \begin{proof}[Proof of Lemma~\ref{l:d-size}]
 It suffices to consider $\vec \phi$ of the form $\vec \phi = (\phi, 0)$. First we prove~\eqref{eq:d-big}. To see this we'll first show that for each $\be >0$ there exists a constant $\de = \de(\be)$ so that for any $\vec \phi \in \HH_0$ with $\E(\vec \phi) = 2\E(Q)$ we have 
 \EQ{ \label{eq:d-big1}
 \bfd(\vec \phi) \ge \be \Longrightarrow \| \phi \|_{L^\infty} \le \pi -  \de(\be),
 }
 Suppose~\eqref{eq:d-big1} fails. Then we can find $\be>0$, a sequence $\vec \phi_n = (\phi_n, 0) \in \HH_0$ with $\E(\vec \phi_n) = 2\E(Q)$, and numbers $r_n >0$ so that 
 \EQ{ \label{eq:phinpi} 
 \bfd(\phi_n, 0) \ge \be \mand \abs{ \phi_n(r_n) - \pi}  = o_n(1) \mas n \to \infty
 }
 Define scales $\la_n$ and $\mu_n$ by 
 \EQ{ \label{eq:lanmun} 
 \E_0^{\la_n} (\vec \phi_n) =  \E(Q)/2, \quad  \E_{\mu_n}^\infty(\vec \phi_n) = \E(Q)/2
 }
 Then, by~\eqref{eq:GQ} we see that for $n $ large enough $\la_n < r_n$ and $\mu_n >r_n$. 
 Now define $\phi_{n, 1}$ and $\phi_{n, 2}$ as follows 
\EQ{
&\phi_{n, 1}(r)  = \begin{cases}  \phi_n(r)  \mif 0 \le r \le  r_n  \\  \pi + \frac{ \pi - \phi(r_n )}{r_n  } (r - 2r_n) \mif  r \in [r_n, 2r_n] \\  \pi \mif r \ge 2r_n\end{cases}  
\\
&  \phi_{n, 2}( r)  = \begin{cases}  \pi + \frac{\phi_{n}(r_n )- \pi}{ r_n } r  \mif r \le r_n  \\ \phi_n( r)  \mif r \ge  r_n   \end{cases}
 }
 And define $\eta_n(r)$ by 
 \EQ{
  \eta_n( r)  :=  \phi_n( r) - \phi_{n, 1}( r) - \phi_{n, 2}( r) + \pi
 }
 We claim that 
 \begin{align}
 &\E( \phi_{n, 1}, 0) = \E(Q, 0) + o_n(1) \mas n \to \infty \label{eq:phi1en} \\
 &\E(\phi_{n, 2}, 0) = \E(Q, 0) + o_n(1) \mas n \to \infty  \label{eq:phi2en}\\
 &\| \eta_n\|_{H} \to 0 \mas t \to \infty \label{tieta1} 
 \end{align}
First we prove~\eqref{eq:phi1en}~\eqref{eq:phi2en}. Since $\phi_n(r_n) \to \pi$ we have 
 \ant{
 &\E_0^{r_n}(\phi_{n, 1}, 0)  =  \E_0^{r_n}( \phi_{n}, 0)  \ge G( \phi_{n}(r_n)) \to G( \pi) = \E(Q, 0) \mas n \to \infty \\
 &\E_{r_n}^\infty(\phi_{n}, 0)  =  \E_{r_n}^\infty( \phi_{n}, 0)  \ge G( \phi_{n, 2}(r_n)) \to G( \pi) = \E(Q, 0) \mas n \to \infty 
}
From the above and the fact that $\E( \vec \phi) = 2 \E(Q)$ we see that in fact
\EQ{ \label{e1g}
&\E_0^{r_n}(\phi_{n, 1}, 0) = \E(Q, 0) + o_n(1) \mas t \to \infty \\
&\E_{r_n}^\infty(\phi_{n, 2}, 0) = \E(Q, 0) + o_n(1) \mas t \to \infty
}
Direct computations using the definitions of $\phi_{n, 1}, \phi_{n, 2}$  then show that 
\EQ{ \label{e2g}
&\E_{r_n}^{\infty}(\phi_{n, 1}, 0)  \lesssim  \left(  \pi - \phi_n(r_n) \right)^2 \to 0 \mas n \to \infty  \\
&\E_0^{r_n}(\phi_{n, 2}, 0)  \lesssim \left(  \pi -\phi_n(r_n) \right)^2 \to 0 \mas n \to \infty 
}
Combining~\eqref{e1g} and~\eqref{e2g} gives~\eqref{eq:phi1en} and~\eqref{eq:phi2en}. By construction
 \EQ{
 &\eta_n( r) =  \pi - \phi_{n, 2}( r) \mif r \le r_n, \quad  \eta_n( r)  =  \pi - \phi_{n, 1}( r) \mif r \ge r_n
 }
 A direct computation using the above and the definitions of $\phi_{n, 1}, \phi_{n, 2}$ on the relevant intervals then yields
  \EQ{
   \|\eta_n \|_{H}^2 \lesssim  \left(  \pi - \phi_n(r_n) \right)^2 \to 0 \mas n \to \infty
   }
   By~\eqref{eq:phi1en}, and~\eqref{eq:phi2en}, and $\la_n, \mu_n$ defined in~\eqref{eq:lanmun} we  use Lemma~\ref{l:decQ} to find $\eta_{n, 1}, \eta_{n, 2} \in H$ so that  
   \EQ{
   & \phi_{n, 1}( r) =  Q_{\la_n} + \eta_{n, 1}(r) , \quad  \phi_{n, 2}( r) = \pi -   Q_{\mu_n} - \eta_{n, 2}(r) \\
      &\| \eta_{n, j} \|_{H} \to 0 \mas n \to \infty
      }
      for 
      $j  = 1, 2$. 
  Thus, 
 \EQ{\label{eq:partd}
 \| \phi_{n} - Q_{\la_n} + Q_{\mu_n} \|_{H}  = \| \eta_n + \eta_{n, 1} -  \eta_{n, 2} \|_H \to 0 \mas n \to \infty
 }
 Moreover, we must have $\la_n/ \mu_n \to 0 \mas n \to \infty$. To see this, simply note that if $\la_n/ \mu_n \simeq 1$ then $Q_{\la_n} - Q_{\mu_n}$ stays bounded away from $\pi$. But this contradicts~\eqref{eq:partd} and the assumption that $\phi_n(r_n) \to \pi$ as $n \to \infty$. Hence, 
 \EQ{
  \| \phi_{n} - Q_{\la_n} + Q_{\mu_n} \|_{H}^{2} +\left( \frac{\la_n}{\mu_n}\right)^k \to 0
 }
 and thus $\bfd(\phi_n, 0) \to 0$, which contradicts~\eqref{eq:phinpi}. To finish the proof, note that~\eqref{eq:d-big1}  implies the estimate 
 \EQ{
 \phi^2(r) \le C(\be) \sin^2 \phi(r)
 }
 which means we can control the $H$ norm of $\phi$ by a constant (which depends only on $\be$) times $\E(\phi) = 2\E(Q)$.

 Lastly, we prove~\eqref{eq:d-small}. Suppose that $\bfd(\vec \phi) \le \al$. Then we can find, say, $\la_0, \mu_0$ such that 
 \EQ{
\al \le  \|( \phi_0 - Q_{\la_0} + Q_{\mu_0}, \phi_1) \|_{\HH_0} + \left(\la_0/\mu_0 \right)^k  \le 2 \al
 }
 A direct computation then shows, 
 \EQ{
 \| \phi_0 \|_H  &\ge  \| Q_{\la_0} - Q_{\mu_0} \|_H  - \| \phi_0 - Q_{\la_0} + Q_{\mu_0} \|_{H} \\
 & \gtrsim   \abs{\log(\la_0/ \mu_0)} - 2\al  \to  \infty \mas \al   \to 0
 }
 which completes the proof. 
 \end{proof}

 We next prove Lemma~\ref{l:dpm}. 
 \begin{proof}[Proof of Lemma~\ref{l:dpm}]
If the conclusion fails we could find a sequence  $\phi_n \in H$, and two sequences of scales $\la_n^+, \mu_n^+, \la_n^-, \mu_n^-$ so that 
\EQ{
\| \phi_n - Q_{\la_n^+} + Q_{\mu_{n}^+} \|_H + \frac{\la_{n}^{+}}{\mu_{n}^{+}}  \to 0 \mas n \to \infty \\
\| \phi_n + Q_{\la_n^-} - Q_{\mu_{n}^-} \|_H + \frac{\la_{n}^{-}}{\mu_{n}^{-}}  \to 0 \mas n \to \infty
}
It follows that 
\begin{align} 
0   &= \|( \phi_n - Q_{\la_n^+} + Q_{\mu_{n}^+} ) - (\phi_n  +Q_{\la_n^-} -Q_{\mu_{n}^-}) +(Q_{\la_n^+} - Q_{\mu_{n}^+} + Q_{\la_n^-} - Q_{\mu_{n}^-})\|_H \\
& \ge  \| Q_{\la_n^+} - Q_{\mu_{n}^+} + Q_{\la_n^-} - Q_{\mu_{n}^-} \|_H - o_n(1) \mas n \to \infty\label{eq:dpm-bad}
\end{align}
Passing to subsequences if necessary,  relabeling $\pm$, or rescaling, we can assume that $\la_{n}^+ \le \la_{n}^-$ for all $n$  and that one of  the following three possibilities holds
\EQ{
\frac{\la_{n}^{-}}{\mu_{n}^+} \to 0 , \mor \frac{\la_{n}^{-}}{\mu_{n}^+} \to \infty, \mor \frac{\la_{n}^{-}}{\mu_{n}^+} \to 1>0, \mas n \to \infty
}
Assume we are in the first situation. Then, we can choose $n$ large enough so that 
\EQ{
Q_{\la_n^+}(r) + Q_{\la_n^-}(r) \ge  \pi \quad \forall r \in [\la_{n}^-,  2 \la_{n}^-] \\ 
 Q_{\mu_{n}^+}(r) +  Q_{\mu_{n}^-}(r) \le \frac{\pi}{2}  \quad \forall r \in [\la_{n}^-,  2 \la_{n}^-]
}
and thus 
\EQ{
 \| Q_{\la_n^+} - Q_{\mu_{n}^+} + Q_{\la_n^-} - Q_{\mu_{n}^-} \|_H^2 \ge \frac{\pi^2}{4}\int_{\la_n^-}^{2 \la_n^-} \frac{\ud r}{r} \ge  \frac{\pi^2}{4} \log 2
}
for all $n$ large enough, which is impossible by~\eqref{eq:dpm-bad}. Now suppose we are in the second case $\frac{\la_{n}^{-}}{\mu_{n}^+} \to \infty$. This means that 
\EQ{
\la_n^+ \ll \mu_{n}^+ \ll \la_{n}^- \ll \mu_n^-
}
and so for large enough $n$ we have 
\EQ{
(Q_{\la_n^+} - Q_{\mu_{n}^+} + Q_{\la_n^-} - Q_{\mu_{n}^-})(r) \ge \frac{\pi}{4}, \quad \forall r \in  [\la_{n}^+,  2 \la_{n}^+]
}
which similarly leads~\eqref{eq:dpm-bad} into a contradiction. Finally, if $ \frac{\la_{n}^{-}}{\mu_{n}^+} \to 1$ we have 
\EQ{
\| Q_{\la_n^+} - Q_{\mu_{n}^+} + Q_{\la_n^-} - Q_{\mu_{n}^-} \|_H  \ge \| Q_{\la_n^+} - Q_{\mu_{n}^-} \|_H - o_n(1)
}
Then setting $\fy_n:=Q_{\la_n^+} - Q_{\mu_{n}^-}$ we see that $\bfd((\fy_n, 0)) \to 0$ and hence the right-hand-side above is bounded below by a fixed constant by~\eqref{eq:d-small} in Lemma~\ref{l:d-size}. This again leads to a contradiction in~\eqref{eq:dpm-bad}, which completes the proof. 
 \end{proof}

% Define, 
% \EQ{
% G( \phi):= 2 \pi k^2\int_0^\phi \abs{\sin \rho} d \rho
% }
% \Red{Relate $G$ to the pull-back of the volume form on $\Sp^2$ and the area covered by finite energy equivariant maps $U: \R^2 \to \Sp^2$.}
%
 
 \subsection{Virial identity} 
 
 In this section we record a \emph{nonlinear} estimate related to a virial-type identity that will be used in the proof of Theorem~\ref{t:main}. 
 
% \subsubsection{The virial identity} 
 We begin with a virial-type  identity for solutions to~\eqref{eq:wmk}. In what follows we fix a smooth radial cut-off function  $\chi \in C^{\infty}_{\mathrm{rad}}(\R^2)$, so that, writing $\chi = \chi(r)$ we have 
 \EQ{
  \chi(r) = 1 \mif r \le 1 \mand  \chi(r) = 0 \mif r \ge 3 \mand \abs{\chi'(r)} \le 1\quad \forall r \ge 0
 }
 For each $R>0$ we then define 
 \EQ{
 \chi_R(r) := \chi(r/ R)
 }
%For radial functions $f, g \in L^2_{\mathrm{rad}}(\R^2)$ we denote 
% \EQ{
% \ang{ f \mid g}_{L^2} := \int_0^\infty f(r) \ba{ g(r)} \, \rdr
% }
 
 \begin{lem} \label{l:vir} 
 Let $\vec \psi(t)$ be a solution to~\eqref{eq:wmk} on a time interval $I$. Then for any time $t \in I$  and $R>0$ fixed we have 
 \EQ{\label{eq:vir}
 \frac{\ud}{\ud t} \ang{ \psi_t \mid \chi_R \, r \p_r \psi}_{L^2}(t)  = - \int_0^\infty \psi_t^2(t, r) \, \rdr + \Om_R(\vec \psi(t))
 }
 where 
 \EQ{ \label{eq:OmRdef} 
 \Om_R(\vec \psi(t)) &:=  \int_0^\infty \psi_t^2(t)(1 - \chi_R) \, \rdr   \\
 & \quad -\frac{1}{2} \int_0^\infty  \Big( \psi_t^2(t) + \psi_r^2(t) - k^2 \frac{\sin^2 \psi(t)}{r^2} \Big)   \frac{r}{R} \chi'(r/R) \rdr
 }
 satisfies 
 \EQ{ \label{eq:OmRest} 
 \abs{\Om_R(\vec \psi(t))} &\lesssim \int_{R}^\infty \psi_t^2(t, r) \, r \ud r  \, \ud t + \int_{R}^{\infty} \abs{ \psi_r^2 - k^2 \frac{\sin^2 \psi}{r^2} } \,r  \ud r \ud t  \\
 &\lesssim  \E_{R}^{\infty}(\vec \psi(t))
 }
 \end{lem}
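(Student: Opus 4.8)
The plan is to differentiate the quantity $\ang{\psi_t \mid \chi_R\, r\p_r\psi}_{L^2}$ directly, using the equation \eqref{eq:wmk} to replace $\psi_{tt}$, and then rearrange terms so that a negative-definite piece $-\int_0^\infty \psi_t^2 \rdr$ is separated off. First I would write
\[
\frac{\ud}{\ud t}\ang{\psi_t \mid \chi_R\, r\p_r\psi}_{L^2} = \ang{\psi_{tt}\mid \chi_R\, r\p_r\psi}_{L^2} + \ang{\psi_t \mid \chi_R\, r\p_r\psi_t}_{L^2},
\]
and substitute $\psi_{tt} = \psi_{rr} + \tfrac1r\psi_r - k^2\tfrac{\sin 2\psi}{2r^2}$ from \eqref{eq:wmk}. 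The second term $\ang{\psi_t \mid \chi_R\, r\p_r\psi_t}_{L^2} = \int_0^\infty \psi_t (r\p_r\psi_t)\chi_R \, r\,\ud r$ is integrated by parts in $r$ (in the measure $r\,\ud r$, so against $\p_r(r^2 \chi_R \cdot)$): one obtains $-\int_0^\infty \psi_t^2 \rdr$ plus a commutator term $-\tfrac12\int_0^\infty \psi_t^2 \tfrac{r}{R}\chi'(r/R)\rdr$ plus a term involving $(1-\chi_R)$. This is where the $-\int_0^\infty \psi_t^2\rdr$ and the first line of $\Om_R$ come from.

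Next I would treat the spatial terms. The term $\ang{\psi_{rr}+\tfrac1r\psi_r \mid \chi_R\, r\p_r\psi}_{L^2}$ is $\int_0^\infty (\p_r(r\p_r\psi))\chi_R\,\p_r\psi\, r\,\ud r \cdot \tfrac1r$... more precisely $\int_0^\infty (\psi_{rr}+\tfrac1r\psi_r)(r\p_r\psi)\chi_R\, r\,\ud r$; writing $\psi_{rr}+\tfrac1r\psi_r = \tfrac1r\p_r(r\psi_r)$ and integrating by parts gives, after using $\p_r(\psi_r^2)=2\psi_r\psi_{rr}$, a contribution $+\tfrac12\int_0^\infty \psi_r^2\,\tfrac{r}{R}\chi'(r/R)\rdr$ (the boundary/bulk terms without derivatives on $\chi_R$ cancel against part of what follows). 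The potential term $-k^2\int_0^\infty \tfrac{\sin 2\psi}{2r^2}(r\p_r\psi)\chi_R\, r\,\ud r = -\tfrac{k^2}{2}\int_0^\infty \p_r(\sin^2\psi)\chi_R\,\ud r$, integrated by parts, yields $-\tfrac{k^2}{2}\int_0^\infty \sin^2\psi\,\tfrac{1}{R}\chi'(r/R)\,\ud r = -\tfrac12\int_0^\infty k^2\tfrac{\sin^2\psi}{r^2}\,\tfrac{r}{R}\chi'(r/R)\rdr$. Collecting the three contributions with a $\chi'$ factor gives exactly the second line of \eqref{eq:OmRdef}; collecting the $\psi_t^2$ pieces gives the first line together with $-\int_0^\infty\psi_t^2\rdr$. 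I would double-check that all the terms \emph{without} a derivative falling on $\chi_R$ cancel among themselves — this is the classical unweighted Pohozaev/virial cancellation in two dimensions, and the cut-off only disturbs it through $\chi'$ and $(1-\chi_R)$ factors.

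For the estimate \eqref{eq:OmRest}, I would observe that $\chi'(r/R)$ is supported in $R \le r \le 3R$ and bounded by $1$, so the second line of $\Om_R$ is bounded by $\int_R^{3R}\big(\psi_t^2 + |\psi_r^2 - k^2\tfrac{\sin^2\psi}{r^2}|\big) r\,\ud r \le \int_R^\infty\big(\psi_t^2 + |\psi_r^2 - k^2\tfrac{\sin^2\psi}{r^2}|\big) r\,\ud r$, while the $(1-\chi_R)$ term is supported in $r\ge R$ and bounded by $\int_R^\infty \psi_t^2\, r\,\ud r$. Since $|\psi_r^2 - k^2\tfrac{\sin^2\psi}{r^2}| \le \psi_r^2 + k^2\tfrac{\sin^2\psi}{r^2}$, the whole thing is $\lesssim \int_R^\infty \big(\psi_t^2 + \psi_r^2 + k^2\tfrac{\sin^2\psi}{r^2}\big) r\,\ud r \lesssim \E_R^\infty(\vec\psi(t))$. (The spurious $\ud t$ integrals written in \eqref{eq:OmRest} should be dropped; the bound is pointwise in $t$.) The only genuinely delicate point is bookkeeping: making sure every derivative-free term in the expansion cancels exactly, and justifying the integration by parts at $r=0$ and $r=\infty$ — at $r=0$ one uses that finite-energy $k$-equivariant maps satisfy $\psi(t,0)=0$ with $\psi_r, \tfrac{\psi}{r} \in L^2(r\,\ud r)$ locally, so all boundary terms vanish; at $r=\infty$ the cut-off $\chi_R$ has compact support, so there is nothing to check. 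I expect this cancellation bookkeeping to be the main (though routine) obstacle.
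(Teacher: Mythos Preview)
Your approach is correct and is exactly the paper's: the paper's entire proof reads ``By direct calculation, using~\eqref{eq:wmk} we have\ldots'' followed by the stated identity, so you have supplied more detail than the authors themselves. Two minor sign slips in your narrative (the $\psi_r^2$ and $k^2\sin^2\psi/r^2$ contributions each carry the opposite sign to what you wrote) are precisely the bookkeeping you flagged for double-checking; once corrected they assemble into the second line of $\Om_R$ as claimed.
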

 \begin{proof}
 By direct calculation, using~\eqref{eq:wmk} we have 
 \ant{
 \frac{\ud}{\ud t} \ang{ \psi_t \mid \chi_R \, r \p_r \psi}_{L^2}(t) &=  - \int_0^\infty \psi_t^2(t) \, \rdr   + \int_0^\infty \psi_t^2(t)(1 - \chi_R) \, \rdr \\
 & \quad -\frac{1}{2} \int_0^\infty  \Big( \psi_t^2(t) + \psi_r^2(t) - k^2 \frac{\sin^2 \psi(t)}{r^2} \Big)   \frac{r}{R} \chi'(r/R) \rdr
 }
  \end{proof}
 
  We show below how the quantities appearing on the right hand side of the virial identity can be estimated in terms of $\bfd(\vec \psi)$
 in the vicinity of a two-bubble.
 \begin{lem}\label{l:error-estim}
 There exists a number $C_0 > 0$ depending only on $k$ such that for all $\vec \phi = (\phi_0, \phi_1) \in \HH_0$ with $\E(\vec\phi) = 2\E(Q)$
 and all $R > 0$ there holds
 \begin{gather}
 |\ang{\phi_1, \chi_R r\partial_r \phi_0}| \leq C_0 R \sqrt{\bfd(\vec \phi)}, \label{eq:virial-end} \\
 \Omega_R(\vec \phi) \leq C_0 \sqrt{\bfd(\vec \phi)}. \label{eq:virial-err}
 \end{gather}
 \end{lem}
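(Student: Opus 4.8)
The plan is to prove the two estimates separately. The bound \eqref{eq:virial-end} should be essentially immediate from Cauchy--Schwarz, whereas \eqref{eq:virial-err} will rest on an exact, sign-favorable identity for the two-bubble profile.

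For \eqref{eq:virial-end}: since $\chi_R$ is supported in $\{r \le 3R\}$, I would write
\[
|\langle \phi_1, \chi_R\, r\partial_r\phi_0\rangle| \le \|\phi_1\|_{L^2}\,\big\|\chi_R\, r\partial_r\phi_0\big\|_{L^2} \le 3R\,\|\phi_1\|_{L^2}\Big(\int_0^\infty(\partial_r\phi_0)^2\, r\,\ud r\Big)^{1/2},
\]
bound the last integral by $\E(\vec\phi) = 8\pi k$, and observe that $\|\phi_1\|_{L^2}^2 \le \bfd(\vec\phi)$ directly from \eqref{eq:ddef} (the term $\|\phi_1\|_{L^2}^2$ occurs in every element of the infimum). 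This yields \eqref{eq:virial-end} with a constant depending only on $k$; the two-bubble structure is not needed here.

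For \eqref{eq:virial-err}: if $\bfd(\vec\phi) \ge 1$, then by \eqref{eq:OmRest} one has $\Omega_R(\vec\phi) \le |\Omega_R(\vec\phi)| \lesssim \E_R^\infty(\vec\phi) \le \E(\vec\phi) = 8\pi k \le 8\pi k\,\sqrt{\bfd(\vec\phi)}$, so I may assume $\bfd(\vec\phi) < 1$. Fix $\epsilon > 0$, choose $\iota \in \{\pm 1\}$ and $\lambda, \mu > 0$ nearly realizing the infimum in \eqref{eq:ddef}, and set $g := \phi_0 - \iota(Q_\lambda - Q_\mu)$, so that $\|g\|_H^2 + \|\phi_1\|_{L^2}^2 \le \bfd(\vec\phi) + \epsilon$. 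In \eqref{eq:OmRdef} the two terms containing $\phi_1$ are bounded by $\frac52\|\phi_1\|_{L^2}^2 \le \frac52(\bfd(\vec\phi)+\epsilon)$ since the cutoff weights are uniformly bounded, so the task reduces to estimating the contribution of $(\partial_r\phi_0)^2 - k^2\sin^2\phi_0/r^2$ against the weight $w(r) := -\frac12\frac rR\chi'(r/R)$, which is nonnegative, bounded by $\frac32$, and supported in $[R, 3R]$. The decisive input is the identity, valid for $P := Q_\lambda - Q_\mu$ (and unchanged if $P$ is replaced by $\iota P$), obtained from $r\partial_r Q_\nu = k\sin Q_\nu$ and elementary trigonometry:
\[
(\partial_r P)^2 - k^2\frac{\sin^2 P}{r^2} \;=\; -\,\frac{2k^2}{r^2}\,\sin Q_\lambda\,\sin Q_\mu\,\big(1 - \cos(Q_\lambda - Q_\mu)\big) \;\le\; 0,
\]
the inequality holding because $0 \le Q_\nu \le \pi$. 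Thus the pure two-bubble contributes a nonpositive quantity (recall $w \ge 0$). For the remainder I would expand $(\partial_r\phi_0)^2 = (\partial_r(\iota P))^2 + 2\iota\,\partial_r P\,\partial_r g + (\partial_r g)^2$ and use $|\sin^2\phi_0 - \sin^2(\iota P)| \le |g|$. The cross term is handled by Cauchy--Schwarz together with $\int_R^{3R}(\partial_r P)^2 r\,\ud r \le 2\int_0^\infty\big((\partial_r Q_\lambda)^2 + (\partial_r Q_\mu)^2\big) r\,\ud r = 4\int_0^\infty(\partial_r Q)^2 r\,\ud r$, a finite constant depending only on $k$ (here the localization is essential, since $\|P\|_H$ itself is large when $\lambda/\mu$ is small); the $(\partial_r g)^2$ term is $\le \frac32\|g\|_H^2$; and the $O(|g|/r^2)$ term is controlled by $\|g\|_H$ via Hardy's inequality $\int_0^\infty g^2 r^{-2}\, r\,\ud r \lesssim \|g\|_H^2$. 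Collecting, $\Omega_R(\vec\phi) \le C_k\big((\bfd(\vec\phi)+\epsilon) + \sqrt{\bfd(\vec\phi)+\epsilon}\big)$; sending $\epsilon \to 0$ and using $\bfd(\vec\phi) < 1$ gives \eqref{eq:virial-err}.

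The main obstacle I anticipate is finding and verifying the sign identity above: without it, for $\phi_0$ near a two-bubble the quantity $\int_R^{3R}\big((\partial_r\phi_0)^2 - k^2\sin^2\phi_0/r^2\big) r\,\ud r$ is only of order one (at a transition scale $(\partial_r Q_\lambda)^2 r\,\ud r$ carries mass of order one) rather than $O(\sqrt{\bfd(\vec\phi)})$, and the argument would break down. I also note that this identity requires no relation between $\lambda$ and $\mu$, so the term $(\lambda/\mu)^k$ in the definition of $\bfd$ plays no role in this lemma.
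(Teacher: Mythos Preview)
Your argument is correct, and for \eqref{eq:virial-end} essentially coincides with the paper's (your bound $\int(\partial_r\phi_0)^2\,r\,\ud r \le \pi^{-1}\E(\vec\phi)$ is in fact slightly cleaner than the paper's case split). For \eqref{eq:virial-err} your route is genuinely different: the paper bounds the two-sided quantity $|(\partial_r\phi_0)^2 - k^2\sin^2\phi_0/r^2|$ in $L^1(r\,\ud r)$ by expanding trigonometrically and estimating the leading interaction term $\int_0^\infty|\Lambda Q_\lambda\,\Lambda Q_\mu|\,r^{-1}\ud r \lesssim (\lambda/\mu)^{k}|\log(\lambda/\mu)| \lesssim (\lambda/\mu)^{k/2}$, so their proof actively uses the $(\lambda/\mu)^k$ piece of $\bfd$ and yields the stronger conclusion $|\Omega_R(\vec\phi)| \lesssim \sqrt{\bfd(\vec\phi)}$. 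You instead exploit the one-sidedness of the claimed bound via the pointwise sign identity $(\partial_r P)^2 - k^2 r^{-2}\sin^2 P = -2k^2 r^{-2}\sin Q_\lambda\sin Q_\mu\,(1-\cos P) \le 0$, which lets you discard the pure two-bubble contribution outright; only $\|g\|_H$ and $\|\phi_1\|_{L^2}$ remain, confirming your observation that $(\lambda/\mu)^k$ is not needed here. One small point: your sign argument requires the weight $w = -\tfrac12 (r/R)\chi'(r/R)$ to be nonnegative, i.e.\ $\chi$ monotone; the paper's definition only imposes $|\chi'|\le 1$, so you should state this (harmless) additional choice explicitly.
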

 \begin{proof}
 By Cauchy-Schwarz, we get
 \EQ{
 |\ang{\phi_1, \chi_R r\partial_r \phi_0}| \lesssim R \|\phi_1\|_{L^2}\|\partial_r \phi_0\|_{L^2}. \label{eq:virial-end-1}
 }
 We have $\|\partial_r(Q_\lambda - Q_\mu)\|_{L^2} \lesssim 1$ for all $\lambda$ and $\mu$, hence by the triangle inequality
 $\|\partial_r \phi_0\|_{L^2} \lesssim 1 + \sqrt{\bfd(\vec \phi)}$. If $\bfd(\vec\phi) \leq 1$, then we obtain $\|\partial_r \phi_0\|_{L^2} \lesssim 1$.
 If $\bfd(\vec \phi) \geq 1$, then Lemma~\ref{l:d-size} gives $\|\vec\phi\|_{\HH_0} \lesssim 1$,
 in particular again $\|\partial_r \phi_0\|_{L^2} \lesssim 1$. Thus \eqref{eq:virial-end-1} yields \eqref{eq:virial-end}.
 
To prove \eqref{eq:virial-err}, we write
 \EQ{
   \big|\Omega_R(\vec\phi)\big| \lesssim \int_0^{+\infty}\phi_1^2r\ud r + \Big|(\partial_r \phi_0)^2 - k^2\frac{\sin^2\phi_0}{r^2}\Big|r\ud r.
 }
 Again, the conclusion is clear if $\bfd(\vec\phi) \geq 1$, we can assume $\bfd(\vec\phi) \leq 1$.  Find $\la, \mu>0$ such that, say, 
 \EQ{
 \left( \frac{\la}{\mu} \right)^k  \le 2\bfd( \vec \phi) \mand \| (\phi_0 - Q_\la + Q_\mu), \phi_1 \|_{\HH_0}^2  \le 2\bfd(\vec \phi)
 } 
 By the above it suffices to show that for $g:= \phi_0 - Q_\la + Q_\mu$ we have 
 \EQ{
 \int_0^{+\infty} \Big|(\partial_r \phi_0)^2 - k^2\frac{\sin^2\phi_0}{r^2}\Big|r\ud r   \lesssim  \left( \left( \frac{\la}{\mu} \right)^{k/2} + \| g \|_{H} \right)
 }
 Using trigonometric identities we expand 
  \ant{
 \sin^2( Q_\la - Q_\mu + g)  & =     \sin^2 Q_\la + \sin^2 Q_\mu  - \frac{1}{2} \sin2Q_\la \sin2 Q_\mu - 2 \sin ^2Q_\la \sin^2 Q_\mu \\
 & \quad +\frac{1}{2} \sin 2g \sin 2(Q_\la - Q_\mu) + \sin^2 g \cos2(Q_\la - Q_\mu) 
 }
Then, since $\La Q_{\la}:= r \p_r Q_\la  = k \sin Q_\la$ we have 
 \EQ{
 \int_0^\infty \Big|(r\partial_r \phi_0)^2 - k^2\sin^2\phi_0\Big|  \frac{\ud r}{r} &\lesssim  \int_0^\infty   \abs{ \La Q_\la \La Q_\mu} + \abs{ \La Q_\la  r\p_r g} + \abs{ \La Q_\mu r\p_r g}  \frac{\ud r}{r}\\
  & \quad + \int_0^\infty \abs{ g \La Q_\la  } + \abs{ g \La Q_\mu r } + \abs{r \p_r g}^2+ \abs{g}^2 \, \frac{\ud r}{r}
 }
 To estimate the first term above we see that setting $\s = \la/ \mu$ we have 
 \EQ{
 \int_0^\infty   \abs{ \La Q_\la \La Q_\mu} \frac{\ud r}{r}  &\lesssim  \int_0^\infty  \frac{(r/\la)^k (r/\mu)^k}{ (1+ (r/\la)^{2k})(1+ (r/\mu)^{2k})} \, \frac{\ud r }{r} \\
 &= \s^{k} \int_0^\infty\frac{r^{2k-1}}{ (\s^{2k} + r^{2k})( 1+ r^{2k})} \, \ud r \lesssim \s^{k} \abs{\log \s} \lesssim \left(\frac{\la}{\mu}\right)^{\frac{k}{2}}
 }
The remaining terms can be controlled by $\| g \|_H$ by Cauchy-Schwarz. 
  \end{proof}

 %%%%%%%%%%%%%%%%%%%%    Section 3 %%%%%%%%%%%%%%%%%%%%%%%%%%%%%%%%%%%%%%%%%%%%%%%%%%%%

\section{The modulation method:  analysis of $2$-bubble collisions} \label{s:mod}

In this section we give a careful analysis of the modulation equations that govern the
evolution of $2$-bubble configurations.
The intuition is that the less concentrated bubble does not change its scale and influences
the dynamics of the more concentrated bubble. We will quantify this influence.
% \Red{maybe give a one sentence caricature of the main conclusions -- outer scale freezes and  inner scale collapses \dots} 

\subsection{Modulation Equations}
We consider solutions $\vec \psi(t)$ to~\eqref{eq:wmk} that are close to a $2$-bubble configuration on a time interval $J$ in the sense that $ \bfd( \vec \psi(t))$,
defined in \eqref{eq:ddef}, is small for all $t \in J$.
Recall that $\bfd(\vec\psi(t))$ is the smaller of the numbers $\bfd_+(\vec\psi(t))$ and $\bfd_-(\vec\psi(t))$ defined in \eqref{eq:dpm}.
%In fact, it will be convenient to distinguish here between so-called bubble/anti-bubble configurations and anti-bubble/bubble configurations. To do this we define $\bfd_{\pm}$ as follows:  given $\vec \phi \in H \times L^2$  set   \red{This is already defined in Section 2.}
%\EQ{ \label{eq:dpmdef} 
% \bfd_{\pm}(\vec \phi) := \inf_{\la, \mu >0}  \Big(  \| (\phi_0 \mp (Q_\la - Q_\mu), \phi_1) \|_{\HH_0}^2 + \left( \la/\mu \right)^k \Big).
%}
%The starting point of our analysis is the following statement: If, say, $\bfd_+(\vec \phi)$ is small enough, we can find unique parameters $\la, \mu>0$ so that the difference $\phi - ( Q_\la - Q_\mu)$ is $L^2$-orthogonal to both $ \La Q_{\ula}:= \frac{1}{\la} (r\p_r Q)_\la$ and $\La Q_{\umu}:= \frac{1}{\mu} (r \p_r Q)_\mu$.

%This is possible due to the structure  of the operator $\LL$
%\red{I think there is no direct relation with the operator}
Linearizing \eqref{eq:wmk} about $Q_\la$ leads to the Schr\"odinger operator
\EQ{
\LL_\la:= - \p_r^2 - \frac{1}{r} \p_r + k^2 \frac{\cos 2Q_\la}{r^2} 
}
%The $\dot{H}^1(\R^2)$ invariant scaling symmetry gives rise to a zero-energy eigenvalue for $\LL$. Indeed,  
%given a radial function $f: \R^2  \to \R$ we denote the $\dot H^1$ and $L^2$  re-scalings as follows 
%\EQ{ \label{eq:scaledef} 
%f_\la(r) = f(r/ \la), \quad
%f_{\ula}(r)  = \frac{1}{\la} f(r/ \la)
%}
%The corresponding infinitesimal generators  are given by 
%\EQ{ \label{eq:LaLa0} 
%&\La f := r \p_r f  \quad (\dot H^1_{\textrm{rad}}(\R^2) \,  \textrm{scaling}) \\
%& \La_0 f :=  (1 + r \p_r ) f  \quad (L^2_{\textrm{rad}}(\R^2) \,  \textrm{scaling})
%}
%Above $\La$ is the infinitesimal generator of the $\dot H^1_{\textrm{rad}}(\R^2)$ scaling and $\La_0$ is the generator of $L^2_{\textrm{rad}}(\R^2)$ scaling. 
We write $\LL := \LL_1$. Recall from~\eqref{eq:LaLa0} that $\La = r \p_r$ is the infinitesimal generator of dilations in $\dot{H}^1(\R^2)$. One can check that  $\La Q$ is a zero energy eigenfunction for  $\LL$, i.e., 
\EQ{
\LL \La Q = 0, \mand \La Q  \in L^2_{\textrm{rad}}(\R^2).
}
When $k=1$, $\LL \La Q = 0$ still holds but in this case $\La Q \not \in L^2$ due to slow decay as $r \to \infty$ and is $0$ is referred to as a threshold resonance.

In fact, $\La Q$ spans the kernel of $\LL$. This can be seen using the following well known factorization of $\LL$, 
\EQ{ \label{eq:LLfact} 
\LL = A^* A  \mwhere   A^* = \partial_r + \frac{1+k\cos (Q)}{r}, \quad A = - \partial_r + \frac{k\cos(Q)}{r}
}
together with the observation that  $A(\La Q) = 0$;  we note that~\eqref{eq:LLfact} is a consequence of the Bogomol'nyi factorization~\eqref{eq:bog}; see~\cite{RS, RR} for more.  

The fact that $\LL_\la \Lambda Q_\la = 0$ will play an important role in the modulation estimates.

We fix a radial function $\ZZ \in C^{\infty}_0(\R^2)$ so that 
\EQ{ \label{eq:ZZdef} 
\int_0^\infty  \ZZ(r)  \cdot \La Q(r) \,  r \, \dr >0, \quad \abs{ \frac{\ZZ(r)}{r^k}} \lesssim 1  \quad  \forall \, r \le 1. 
}
%We first prove a static statement.  
%\begin{lem} \label{l:mod2bub} Fix $M>0$. There exists $\eps_0>0$ with the following property: Let $\vec \phi  = (\phi_0, \phi_1)\in \HH_0$ be $k$-equivariant and suppose that 
%\EQ{
%\bfd_+(\vec \phi)  \le \eps^2 < \eps_0^2
%}
%Then, there exists unique $\la, \mu  \in (0, \infty)$ and a unique $g \in H$ so that 
%\EQ{
%\phi_0 = Q_{\la} - Q_{\mu} + g
%}
%In addition there exists  a uniform constant $C>0$ so that 
%\EQ{ \label{eq:glamu-bound} 
%\|(g, \psi_t) \|_{H \times L^2} + \left( \frac{\la}{\mu}\right)^k  \le C \bfd_+( \vec \phi)
%}
%Moreover $g$ satisfies the orthogonality conditions, 
%\EQ{
%\ang{ \chi_M \La Q_{\U \la} \mid g } = 0 \mand \ang{ \ZZ_{\U \mu} \mid g} = 0
%}
%Finally, if $\vec \phi$ satisfies $\E(\vec \phi) =2 \E(Q)$ there exists a uniform constant $C_1>0$ such that 
%\EQ{
%\| (g, \phi_1 ) \|_{\HH_0} \le C\left( \frac{\la}{\mu} \right)^{\frac{k}{2}} \label{eq:g0H} 
%}
%\end{lem} 
%
%
%We require the following dynamical version of Lemma~\ref{l:mod2bub}. 

\begin{lem}[Modulation Lemma]  \label{l:modeq} There exist $\eta_0>0$ and $C > 0$
 with the following property:   Let $J\subset \R$ be a time interval, $\vec \psi(t)$ a solution to~\eqref{eq:wmk} defined on $J$,  and assume that
 \EQ{
 \bfd_+(\vec\psi(t)) \leq \eta_0\qquad \forall t\in J.
 }
 Then, there exist unique $C^1(J)$ functions $\la(t), \mu(t)$ so that, defining $g(t) \in H$ by
\EQ{ \label{eq:gdef1} 
g(t):= \psi(t) - Q_{\la(t)} + Q_{\mu(t)} , 
}
we have, for each $t \in J$,
\begin{gather} 
\ang{ \cZ_{\uln{\lambda(t)}} \mid  g(t)}  = 0  \label{eq:ola},   \\
\ang{\ZZ_{\uln{\mu(t)}} \mid g(t) } = 0  \label{eq:omu},  \\
%\EQ{ 
\bfd_+(\vec \psi(t)) \le \| (g(t), \psi_t(t)) \|_{\HH_0}^2 +\big(\lambda(t)/\mu(t)\big)^k \le C \bfd_+(\vec\psi(t))\label{eq:gdotgd} . 
\end{gather} 
Moreover, 
\begin{align} 
\| (g(t), \psi_t(t) ) \|_{\HH_0} \le C\left( \frac{\la(t)}{\mu(t)} \right)^{\frac{k}{2}} \label{eq:gH} , 
\end{align} 
and hence 
\EQ{ \label{eq:lamud1}
\bfd_+(\vec \psi(t)) \simeq \left( \frac{\la(t)}{\mu(t)} \right)^{k}. 
}
%and that for all $t \in J$ we have 
%\EQ{
%\frac{1}{2} \sup_{s \in J} \mu(s) \le \mu(t)  \le 2 \inf_{s \in J} \mu(s)
%} 
\end{lem}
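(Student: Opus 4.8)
The plan is to prove Lemma~\ref{l:modeq} by a standard implicit function theorem / modulation argument, combined with the coercivity estimates for $\bfd_+$ already established. First I would set up the map whose zero set defines the modulation parameters. Given $\vec\phi = (\phi_0,\phi_1)$ with $\bfd_+(\vec\phi)$ small, by Definition~\ref{d:ddef} and~\eqref{eq:dpm} one can find $\la_0,\mu_0$ with $\la_0 \ll \mu_0$ and $\|(\phi_0 - Q_{\la_0}+Q_{\mu_0}, \phi_1)\|_{\HH_0}^2 + (\la_0/\mu_0)^k$ comparable to $\bfd_+(\vec\phi)$. For parameters $(\la,\mu)$ near $(\la_0,\mu_0)$ (on the logarithmic scale, i.e. thinking of $\log\la,\log\mu$) define $g = \phi_0 - Q_\la + Q_\mu$ and consider the two functionals
\EQ{
F_1(\la,\mu) := \ang{\cZ_{\uln\la} \mid g}, \qquad F_2(\la,\mu) := \ang{\ZZ_{\uln\mu} \mid g}.
}
The key computation is the Jacobian of $(F_1,F_2)$ with respect to $(\log\la,\log\mu)$ at the approximate parameters. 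Using $\p_{\log\la}(-Q_\la) = \La Q_\la$ and $\p_{\log\mu} Q_\mu = -\La Q_\mu$, together with the scaling properties of the $L^2$-rescaled cutoff $\cZ_{\uln\la}$, the diagonal entries are $\ang{\cZ_{\uln\la}\mid \La Q_\la} = \ang{\cZ\mid\La Q} \neq 0$ (by~\eqref{eq:ZZdef}) plus lower-order terms, while the off-diagonal entries are $O((\la/\mu)^{k/2}|\log(\la/\mu)|)$ or smaller, by the same type of scalar-product estimate as in the proof of Lemma~\ref{l:error-estim} (overlap integrals of $\La Q_\la$ against $\La Q_\mu$ against a compactly supported bump at the wrong scale). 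Hence the Jacobian is invertible with uniformly bounded inverse once $\eta_0$ is small enough, and the implicit function theorem produces unique $C^1$ functions $\la(t),\mu(t)$ solving~\eqref{eq:ola}--\eqref{eq:omu}; the $C^1$ regularity in $t$ follows because $\vec\psi(t)$ is $C^1$ (indeed $C^0(J;\HH_0)\cap C^1(J;L^2\times\dot H^{-1})$, but the orthogonality conditions only involve $g$ and hence $\phi_0$, so continuity of $t\mapsto \vec\psi(t)$ in $\HH_0$ suffices for continuity and a differentiation of the implicit relation gives $C^1$).

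Next I would establish~\eqref{eq:gdotgd}. The right-hand inequality $\|(g(t),\psi_t(t))\|_{\HH_0}^2 + (\la(t)/\mu(t))^k \le C\bfd_+(\vec\psi(t))$ is the nontrivial direction and is the heart of the lemma: it says that the particular choice of $(\la,\mu)$ dictated by the orthogonality conditions is comparable to the infimizing choice. This I would get by a compactness/contradiction argument in the spirit of the proofs of Lemmas~\ref{l:d-size} and~\ref{l:dpm}. Suppose not; then there is a sequence $\vec\psi_n$ with $\bfd_+(\vec\psi_n)\to 0$ but the modulation parameters $\la_n,\mu_n$ (determined by the orthogonality conditions) satisfying $\|(g_n,\psi_{n,t})\|_{\HH_0}^2 + (\la_n/\mu_n)^k \ge M_n \bfd_+(\vec\psi_n)$ with $M_n\to\infty$. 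Letting $\wt\la_n,\wt\mu_n$ be near-optimal parameters, one has $\wt g_n := \phi_{n,0}-Q_{\wt\la_n}+Q_{\wt\mu_n}\to 0$ in $H$ and $\wt\la_n/\wt\mu_n\to0$. The orthogonality conditions then force $\la_n/\wt\la_n\to 1$ and $\mu_n/\wt\mu_n\to 1$ (each orthogonality condition, localized at the respective scale and evaluated on $\wt g_n + (Q_{\wt\la_n}-Q_{\la_n}) + (Q_{\mu_n}-Q_{\wt\mu_n})$, pins the corresponding ratio to $1$ up to $o(1)$, again using the non-degeneracy $\ang{\cZ\mid\La Q}\neq 0$ and the smallness of cross terms). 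This yields $g_n \to 0$ in $H$ with $\|g_n\|_H^2 + (\la_n/\mu_n)^k \lesssim \bfd_+(\vec\psi_n)$, contradicting $M_n\to\infty$. The left-hand inequality of~\eqref{eq:gdotgd} is immediate from the definition of $\bfd_+$ as an infimum.

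Finally, for~\eqref{eq:gH} and~\eqref{eq:lamud1}: this is where I expect the main obstacle, and it must use more than just the orthogonality conditions — it uses the energy constraint $\E(\vec\psi) = 2\E(\vec Q)$. The idea is to insert the decomposition $\psi = Q_\la - Q_\mu + g$ into the energy functional and expand. Using the Bogomol'nyi-type algebra and~\eqref{eq:GQ}, the cross energy $\E(\vec\psi) - \E(\vec Q_\la) - \E(\vec Q_\mu) - (\text{quadratic form in } g)$ is controlled by overlap integrals of size $O((\la/\mu)^{k/2})$ (precisely as in Lemma~\ref{l:error-estim}), while $\E(\vec Q_\la) = \E(\vec Q_\mu) = \E(\vec Q)$, so $2\E(\vec Q) = \E(\vec\psi) = 2\E(\vec Q) + \ang{\LL_\la g\mid g} + \|\psi_t\|_{L^2}^2 + O((\la/\mu)^{k/2}) + O(\|g\|_H^3)$, giving
\EQ{
\ang{\LL_\la g \mid g} + \|\psi_t\|_{L^2}^2 \lesssim (\la/\mu)^{k/2} + \|g\|_H^3.
}
The operator $\LL_\la$ has a one-dimensional kernel spanned by $\La Q_\la$ (from the factorization~\eqref{eq:LLfact}), so $\LL_\la \ge 0$ is not coercive; but the orthogonality condition~\eqref{eq:ola}, $\ang{\cZ_{\uln\la}\mid g}=0$, together with the fact that $\ang{\cZ_{\uln\la}\mid\La Q_\la}\neq 0$, restores coercivity on the relevant subspace: $\ang{\LL_\la g\mid g} \gtrsim \|g\|_H^2$ modulo a controlled correction from the $Q_\mu$ piece (the second orthogonality~\eqref{eq:omu} and smallness of $\la/\mu$ handle the low-frequency part near scale $\mu$). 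Combined with~\eqref{eq:gdotgd} to absorb the cubic term $\|g\|_H^3 \ll \|g\|_H^2$, this yields $\|g\|_H^2 + \|\psi_t\|_{L^2}^2 \lesssim (\la/\mu)^{k/2}$. Then $\bfd_+(\vec\psi) \lesssim \|g\|_H^2 + \|\psi_t\|_{L^2}^2 + (\la/\mu)^k \lesssim (\la/\mu)^{k/2}$, and to upgrade the exponent from $k/2$ to $k$ one feeds this back: the coercivity argument actually gives $\|g\|_H^2 + \|\psi_t\|_{L^2}^2 \lesssim (\la/\mu)^k + (\text{genuinely smaller cross terms})$ once one is more careful that the leading cross term between $Q_\la$ and $Q_\mu$ in the energy expansion, after using that $Q$ is a critical point, is actually of order $(\la/\mu)^k$ rather than $(\la/\mu)^{k/2}$ — this is the standard gain when both profiles are exact solitons. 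This last refinement of the cross-term estimate is the delicate computational point; granting it, \eqref{eq:gH} follows, and \eqref{eq:lamud1} is then immediate by combining~\eqref{eq:gH} with~\eqref{eq:gdotgd}.
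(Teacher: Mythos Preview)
Your approach is essentially the same as the paper's: implicit function theorem for the existence of $\lambda(t),\mu(t)$ satisfying \eqref{eq:ola}--\eqref{eq:omu}, then an expansion of the nonlinear energy together with coercivity of the linearized operator (under the orthogonality condition) to obtain \eqref{eq:gH}. Two points deserve comment.

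First, your contradiction argument for the right inequality in \eqref{eq:gdotgd} is more complicated than necessary and, as written, slightly imprecise: merely knowing $\lambda_n/\wt\lambda_n\to 1$ is not quantitative enough to conclude $\|g_n\|_H^2 \lesssim \bfd_+(\vec\psi_n)$. The paper avoids this entirely. The quantitative IFT already gives $|\log\lambda-\log\lambda_0|+|\log\mu-\log\mu_0|\lesssim \eta_0$, hence $(\lambda/\mu)^k\lesssim(\lambda_0/\mu_0)^k\leq \bfd_+$. Then \eqref{eq:gH} (proved independently from the energy expansion) gives $\|(g,\psi_t)\|_{\HH_0}^2\lesssim(\lambda/\mu)^k$, and the right inequality in \eqref{eq:gdotgd} follows by combining these two bounds. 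So \eqref{eq:gdotgd} is a \emph{consequence} of \eqref{eq:gH}, not a separate step.

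Second, your hesitation about the order of the cross term in the energy expansion is well founded but resolves cleanly. The paper computes the expansion directly (no bootstrap from $k/2$ to $k$): after using that $Q_\lambda,Q_\mu$ are exact critical points to cancel the linear-in-$g$ terms against the mixed gradient terms, the leading interaction is $\frac{4}{k^2}\int_0^\infty \Lambda Q_\lambda (\Lambda Q_\mu)^3\,\frac{dr}{r} = 16k(\lambda/\mu)^k(1+O((\lambda/\mu)^{2k}))$, and all remaining cross terms are $o((\lambda/\mu)^k)$. The quadratic form in $g$ carries the potential $\cos 2(Q_\lambda-Q_\mu)$ rather than $\cos 2Q_\lambda$, but the coercivity under \eqref{eq:ola}--\eqref{eq:omu} still holds (this is \cite[Lemma~5.4]{JJ-AJM} in the paper). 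So the ``delicate computational point'' you flag is exactly the content of the paper's explicit calculation, and once done gives \eqref{eq:gH} directly without an intermediate $(\lambda/\mu)^{k/2}$ step.
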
 

\begin{rem} \label{r:ift}  The following version of the implicit function theorem will be used in the proof.
%The advantage of this version is that a lower bound can be given on the size of the ball on which it can be applied. 

 \emph{
Let $X, Y, Z$ be Banach spaces. Let $(x_0, y_0) \in X \times Y$, let $\de_1, \de_2>0$ and consider a mapping  $G: B(x_0, \de_1) \times B(y_0, \de_2) \to Z$ that is continuous in $x$ and $C^1$ in $y$. Suppose that $G(x_0, y_0) = 0$ and $(D_y G)(x_0, y_0)$ has bounded inverse $L_0$. Moreover, suppose that 
\EQ{ \label{eq:ift} 
&\| L_0 - D_y G(x, y) \|_{\LL(Y, Z)} \le \frac{1}{3 \| L_0^{-1} \|_{\LL(Z, Y)}}  \\ %\mif \| x - x_0 \|_{X} \le \de_1 \mand \| y - y_0 \|_Y \le \de_2 \\
& \| G(x, y_0) \|_Z \le \frac{\de_2}{ 3 \| L_0^{-1} \|_{\LL(Z, Y)}} %\mif \| x - x_0 \|_{X} \le \de_1
}
for all $\| x - x_0 \|_{X} \le \de_1$ and $\| y - y_0 \|_Y \le \de_2$. 
Then, there exists a continuous function $\si: B(x_0, \de_1)  \to B(y_0, \de_2)$ such that for all $x \in B(x_0, \de_1)$, $y = \si(x)$ is the unique solution of $G(x, \si(x)) = 0$ in $B(y_0, \de_2)$. 
}

The above is proved in the same fashion as the usual implicit function theorem, see, e.g.,~\cite[Section 2.2]{ChowHale}. The key point is that the bounds~\eqref{eq:ift} give uniform control on the size of the open set on which the Banach contraction mapping theorem can be applied. 
\end{rem} 
%\begin{rem}  Consider a solution $\vec \psi(t)$ as in Lemma~\ref{l:modl}. Observe that for each $t \in J$, Lemma~\ref{l:mod2bub} unique parameters $\la(t)$ and $\mu(t)$ and $g(t) \in H$ satisfying~\eqref{eq:gdef1},~\eqref{eq:gdotgd}, the orthogonality conditions~\eqref{eq:ola}~\eqref{eq:omu}, and the estimate~\eqref{eq:gH}.  Thus to complete the proof of Lemma~\ref{l:modl} it only remains to show that $\la(t)$ and $\mu(t)$ are actually $C^1(J)$ functions. This will be accomplished via a standard ODE argument based on a priori estimates that we will  establish in Proposition~\ref{p:modp}, and a bootstrap argument. We postpone the proof until Section~\ref{s:modest}. 
%\end{rem} 

%We give a brief sketch on how to complete the proof of Lemma~\ref{l:modl} assuming the conclusions of Lemma~\ref{l:mod2bub}, leaving detailed estimates to be established in Section~\ref{s:modest}. 
%
%We sketch the proof of Lemma~\ref{l:modeq}. 

\begin{proof}%[Proof of Lemma~\ref{l:modeq}]
The proof %of Lemma~\ref{l:modeq}
follows by standard techniques that we outline below; we refer the reader to~\cite[Lemma 3.3]{JJ15}  for a detailed proof of a similar statement. 

 We begin by showing that for each $t\in J$ there exist unique $\la(t), \mu(t)$, and $g(t)$ that satisfy~\eqref{eq:gdef1},~\eqref{eq:gdotgd} and the orthogonality conditions~\eqref{eq:ola}~\eqref{eq:omu} using an argument based on the implicit function theorem stated in Remark~\ref{r:ift}. That $\la(t)$ and $\mu(t)$ are actually $C^1(J)$ is then proved via a standard ODE argument, which we postpone until Remark~\ref{r:ODE} in Section~\ref{s:modest}. 
 
 To establish the former statement let $\vec \phi \in \HH_0$ be such that $\bfd_+(\vec \phi) \le \eta_0$. This means we can find $\la_0, \mu_0>0$ such that for $g_0 \in H$ defined by 
\EQ{ \label{eq:g0def1} 
g_0 := \phi_0 - (Q_{\la_0}  - Q_{\mu_0})
}
we have 
\EQ{
\| (g_0, \phi_1) \|_{H\times L^2}^2 + \left( \frac{\la_0}{\mu_0} \right)^k  \le 2 \eta_0
}
%\red{I don't think we should rescale here -- this would force us to rescale the cut-off as well.}
%To simplify we  rescale everything by $\mu_0^{-1}$, defining $\s_0:= \la_0/ \mu_0$, and $\ti g_0:= (g_0)_{\mu_0^{-1}}$ and $\ti \phi_0 =  (\phi_0)_{\mu_0^{-1}}$ so that 
%\EQ{
%\ti g_0 =  \ti \phi_0 - (Q_{\s_0}  + Q)
%}
%We now set up an argument based on the implicit function theorem.

Define the mapping 
$
F: H \times (0, \infty) \times (0, \infty) \to H$ by 
\EQ{
F( g, \la, \mu) :=  g - (Q_{\la} - Q_\mu)  + (Q_{\la_0} - Q_{\mu_0})
}
Note that $F(0, \la_0, \mu_0) = 0$ and moreover that 
\EQ{ \label{eq:FH}
\| F( g, \mu, \la) \|_{H} \lesssim  \|g \|_{H} + \abs{(\la/\la_0)-1}^{\frac{1}{2}} + \abs{( \mu/ \mu_0) - 1}^{\frac{1}{2}} 
}
Next define a mapping $G: H \times (0, \infty) \times (0, \infty) \to \R^2$  by 
\EQ{
G(g, \la, \mu)  :=  \pmat{ \ang{ \frac{1}{\la} \ZZ_{\ula} \mid F(g, \la, \mu)} , \, \ang{ \frac{1}{\mu} \ZZ_{\U\mu}  \mid F(g, \la, \mu)} }
}
For $g \in H$ we have 
\EQ{
\frac{1}{\la}\ang{  \ZZ_{\ula}  \mid g } \le \| r\la^{-1} \ZZ_{\ula} \|_{L^2} \| r^{-1} g \|_{L^2} \lesssim   \|g \|_H , \quad
\frac{1}{\mu}\ang{ \ZZ_{\U\mu}  \mid g } \lesssim  \|g \|_H
}
which ensures that the mapping $G$ is well-defined and continuous. Taking the $\la, \mu$ derivatives of $G$, we have 
\EQ{ \label{eq:pA1} 
\frac{\ud}{ \ud \la} \ang{ \frac{1}{\lambda}\ZZ_{\ula} \mid F(g, \la, \mu)} &=  \frac{1}{\la}\ang{ \ZZ_{\ula} \mid \La Q_{\ula}} - \frac{1}{\la^2} \ang{ (\La_0+1) \ZZ_{\ula} \mid F(g, \la, \mu)} \\
&  =: \frac{1}{\la} A_{11}(g, \la, \mu)\\
\frac{\ud}{ \ud \mu} \ang{\frac{1}{\lambda}\ZZ_{\ula} \mid F(g, \la, \mu)} &=  -\frac{1}{\la} \ang{  \ZZ_{\ula} \mid \La Q_{\umu}} =: \frac{1}{\la} A_{1 2}(g, \la,\mu)  
}
and 
\EQ{\label{eq:pA2}
\frac{\ud}{ \ud \la} \ang{ \frac{1}{\mu}\ZZ_{\umu} \mid F(g, \la, \mu)} &= \frac{1}{\mu} \ang{ \ZZ_{\umu} \mid \La Q_{\ula}} =: \frac{1}{\mu} A_{2 1}(g, \la, \mu) \\
\frac{\ud}{ \ud \mu} \ang{ \frac{1}{\mu}\ZZ_{\umu} \mid F(g, \la, \mu)} &=  - \frac{1}{\mu} \ang{ \ZZ_{\umu} \mid \La Q_{\umu}} - \frac{1}{\mu^2} \ang{ (\La_0+1) \ZZ_{\umu} \mid F(g, \la, \mu)}  \\
&=:  \frac{1}{\mu} A_{22} (g, \la, \mu)
}
For convenience in applying the implicit function theorem we change variables, setting $\ell := \log \la$ and $m := \log \mu$. In the new variables we write 
\EQ{
\ti G( g,  \ell, m) = G( g, \la,\mu), \quad \ti F ( g, \ell, m) = F(g, \la, \mu)
}
We now check that the conditions~\eqref{eq:ift} are satisfied for $x_0 = 0 \in H$, $y_0 = (\ell_0, m_0) \in \R^2$ and $\ti G: B_H(0, 2 \eta_0) \times  B_{\R^2}( (\ell_0, m_0), C_0\eta) \to \R^2$, for $ \de_1 = 2\eta_0>0$ small enough and $C_0$ a uniform constant.  Since $\p_\ell = \la \p_\la$ and $\p_m = \mu \p_\mu$ we deduce using~\eqref{eq:pA1},~\eqref{eq:pA2} that 
\EQ{
D_{\ell, m} \ti G( g,  \ell, m) = \pmat{ A_{11}(g, \la, \mu) & A_{12}(g,\la, \mu) \\ A_{21}(g, \la, \mu) & A_{22}(g, \la, \mu)}
}

Restricting to $(g, \la, \mu) = (0, \la_0, \mu_0)$, this yields  
\EQ{
L_0:= D_{\ell, m} \ti G\rest_{(g=0, \ell = \ell_0, m=m_0)} =  \pmat{ \ang{ \ZZ_{\U{\la_0}}\mid \La Q_{\U{\la_0}}} &  {-}\ang{\ZZ_{\U{\la_0}} \mid \La Q_{\U{\mu_0}}} \\  \ang{ \ZZ_{\U{\mu_0}} \mid \La Q_{ \U{\la_0}}} & {-}\ang{ \ZZ_{\U{\mu_0}} \mid \La Q_{\U{\mu_0}}}} =: \pmat{ A_{11} & A_{12} \\ A_{21} & A_{22}}
}
The diagonal terms in the matrix $L_0$ above are size $O(1)$ by~\eqref{eq:ZZdef}. We can estimate the off-diagonal terms as follows:
  \begin{claim} \label{c:M12est}
  For $\lambda \ll \mu$ we have
 \begin{align} \label{eq:M12}
\abs{\ang{ \cZ_{\U{\lambda}}\mid \Lambda Q_{\U{\mu}}}}  \lesssim ({\la/\mu})^{k+1}, \quad 
\abs{\ang{ \cZ_{\U{\mu}}\mid \Lambda Q_{\U{\lambda}}}} \lesssim ({\la/\mu})^{k-1}
 \end{align} 
 %\Red{The estimate for $M_{21}$ requires that $\ZZ$ is chosen so that it vanishes at $r =0$ like $\La Q$, i.e., like $r^{k}$. }
 \end{claim}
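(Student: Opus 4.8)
The plan is to evaluate both pairings directly by rescaling to a fixed reference variable and using the explicit formula $\La Q(r) = k\sin Q(r) = \frac{2kr^k}{1+r^{2k}}$, which vanishes like $r^k$ as $r\to 0$ and decays like $r^{-k}$ as $r\to\infty$, together with the two defining properties of $\cZ$ from \eqref{eq:ZZdef}: its support is contained in a fixed ball $\{r\le R_0\}$, and $|\cZ(r)|\lesssim r^k$ for $r\le 1$. Since $r^k\gtrsim 1$ on $[1,R_0]$, the latter upgrades to $|\cZ(r)|\lesssim r^k$ on all of $\supp\cZ$. Throughout, the elementary bound in play is $(\La Q)(u)\lesssim\min(u^k,u^{-k})$.

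For the first estimate, the substitution $r=\lambda s$ in the $L^2$ pairing gives
\[
  \ang{\cZ_{\ula}\mid \La Q_{\umu}} = \frac{\lambda}{\mu}\int_0^{\infty}\cZ(s)\,(\La Q)(\lambda s/\mu)\,s\,\ud s .
\]
On $\supp\cZ$ one has $\lambda s/\mu\le R_0\,\lambda/\mu\ll 1$, so $(\La Q)(\lambda s/\mu)\lesssim(\lambda s/\mu)^{k}$; inserting this together with $|\cZ(s)|\lesssim 1$ and $\supp\cZ\subset\{s\le R_0\}$ pulls out the factor $(\lambda/\mu)^{k+1}$ times the finite integral $\int_0^{R_0}|\cZ(s)|\,s^{k+1}\,\ud s$, which gives the first inequality in \eqref{eq:M12}.

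For the second estimate, the substitution $r=\mu s$ gives
\[
  \ang{\cZ_{\umu}\mid \La Q_{\ula}} = \frac{\mu}{\lambda}\int_0^{\infty}\cZ(s)\,(\La Q)(\mu s/\lambda)\,s\,\ud s ,
\]
and now the argument $\mu s/\lambda$ is small for $s$ near $0$ but large near the edge of $\supp\cZ$, so I would split the integral at the transition scale $s=\lambda/\mu$. On $\{s\le\lambda/\mu\}$ one uses $(\La Q)(\mu s/\lambda)\lesssim(\mu s/\lambda)^{k}$ and $|\cZ(s)|\lesssim s^k$, which after the prefactor $\mu/\lambda$ contributes $\lesssim(\lambda/\mu)^{k+1}$; on $\{\lambda/\mu\le s\le R_0\}$ one uses the decay $(\La Q)(\mu s/\lambda)\lesssim(\mu s/\lambda)^{-k}$ and $|\cZ(s)|\lesssim s^k$, so the integrand is $\lesssim(\lambda/\mu)^k s$ and, after multiplying by $\mu/\lambda$, the contribution is $\lesssim(\lambda/\mu)^{k-1}$. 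Since $\lambda/\mu<1$, the larger of the two contributions is $(\lambda/\mu)^{k-1}$, which is the second inequality in \eqref{eq:M12}.

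Both computations are routine once the substitutions are in place; the only point requiring any thought is the second one, where one must use both the growth of $\La Q$ at the origin and its decay at infinity and split the $s$-integral at $\lambda/\mu$. Estimating $(\La Q)(\mu s/\lambda)$ by a single power of its argument on all of $\supp\cZ$ would ignore the fixed support radius $R_0$ and fail to produce any gain. (The first estimate could alternatively be read off from the general small-$\lambda/\mu$ asymptotics of $\ang{\cZ_{\ula}\mid \La Q_{\umu}}$, but the direct rescaling above is the shortest route.)
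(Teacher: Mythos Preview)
Your proof is correct and follows essentially the same approach as the paper: both arguments exploit the compact support of $\cZ$, the bound $|\cZ(r)|\lesssim r^k$, and the explicit growth/decay of $\La Q$, splitting the second pairing at the transition scale $\lambda/\mu$ (the paper sets $\mu=1$ and splits at $r=\lambda$, which is the same thing). Your rescaling to put $\cZ$ at unit scale makes the first estimate slightly cleaner, avoiding an unnecessary split that the paper performs there, but otherwise the two arguments coincide.
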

To prove the Claim, without loss of generality we can assume $\mu = 1$ and $\la \ll 1$. Let $B>0$ be such that $\supp \ZZ \subset \{ r \le  B\}$. Then, using~\eqref{eq:ZZdef} we have 
\EQ{
\abs{\ang{ \cZ_{\U{\lambda}}\mid \Lambda Q}}& \lesssim \frac{1}{\la}  \int_0^\infty \ZZ(r/ \la) \La Q(r) \, r \, \ud r \\
 & \lesssim \frac{1}{\la} \int_0^\la (r/ \la)^k \frac{r^{k+1}}{1 + r^{2k}} \, \ud r + \frac{1}{\la} \int_\la^{B \la}  \frac{r^{k+1}}{1 + r^{2k}} \, \ud r   \lesssim \la^{k+1} 
}
Similarly, 
\EQ{
\abs{\ang{ \cZ\mid \Lambda Q_{\U{\lambda}}}}  & \lesssim \frac{1}{\la} \int_0^\infty \La Q( r/ \la) \ZZ(r) \, \ud r \\
& \lesssim  \la^{k-1} \int_0^\la \frac{ r^k}{ \la^{2k} + r^{2k} } r^{k+1} \, \ud r +  \la^{k-1}  \int_\la^B \frac{\ZZ(r)}{r^k} \frac{ r^{2k+1}}{ \la^{2k} + r^{2k} } \,\ud r \\
& \lesssim \la^{-k-1} \int_0^\la r^{2k+1} \, \ud r  +  \la^{k-1}  \lesssim \la^{k+1} + \la^{k-1}  \lesssim \la^{k-1} 
}
which proves Claim~\ref{c:M12est}. 

%  \begin{claim}
%  \label{c:M11est} 
%  Define $ \be := \ang{  \ZZ \mid \La Q}$ and note that $\be>0$ is a fixed positive number.  Then 
%  \EQ{
%  \abs{M_{11} - \be } + \abs{ M_{22} + \be } \lesssim \la^{\frac{k}{2}}
%  }
%  \end{claim} 
%  To see this, note that by the definitions of $M_{11}, M_{22}$ and the fact that $\ZZ \in C^{\infty}_0$ we have 
%  \EQ{
%   \abs{M_{11}-\be} + \abs{ M_{22} + \be } \le  \abs{\ang{ \frac{r}{\la} [\La_0 \ZZ]_{\ula} \mid r^{-1} g}} + \abs{\ang{ \frac{r}{\mu}[\La_0 \ZZ]_{\umu} \mid r^{-1}g}} \lesssim \|g \|_{H} \lesssim \la^{\frac{k}{2}}
%   }
%   where the last inequality above follows from~\eqref{eq:gH}. 
   %The diagonal terms in the matrix $L_0$ above are size $O(1)$ and the off-diagonal terms
%\EQ{
% \ang{\ZZ_{\U{\la_0}} \mid \La Q_{\U{\mu_0}}}   = O( (\la_0/\mu_0)^{k-1}), \quad  \ang{ \ZZ_{\U{\mu_0}} \mid \La Q_{ \U{\la_0}}} = O( (\la_0/\mu_0)^{k-1})
%}
This proves that the off diagonal terms in $L_0$ are of size $O((\la_0/\mu_0)^{k-1})$.
Hence for $k \ge 2$ the matrix $L_0$ is invertible as long as $(\la_0/\mu_0)^{k-1}$  is small enough.
%We note that the estimates stated above are identical to computations performed in the proof of Proposition~\ref{p:modp} below, and we refer the reader to that argument for details. 
% \EQ{
%D_{\ell, m} G\rest_{(g=0, \ell = \ell_0, m=m_0)} = \pmat{ \la_0 A_{11} &  \la_0 A_{12} \\ \mu_0 A_{21} &  \mu_0A_{22}}
%} 
%which has norm $\simeq \sqrt{ \la_0^2 + \mu_0^2}$ and determinant $\simeq \la_0 \mu_0$, by the discussion above. 

The second condition in~\eqref{eq:ift} is clear since $F(g, \la_0, \mu_0) = g$ and hence
\EQ{
| G( g, \la_0, \mu_0)| = \abs{  \pmat{ \frac{1}{\la} \ang{ \ZZ_{\ula} \mid g} , \, \frac{1}{\mu} \ang{ \ZZ_{\U\mu}  \mid g} }} \lesssim   \| g\|_H
}
The first condition in~\eqref{eq:ift} follows from a direct computation checking that for $1 \le j, k \le2$, 
\EQ{
\abs{  A_{ij}(g, \la, \mu) - A_{ij} } \lesssim ( \abs{ \la/\la_0 - 1}^{\frac{1}{2}} + \abs{\mu/ \mu_0 -1}^{\frac{1}{2}} + \|g \|_H)   \ll 1 
} 
as long as $\eta_0>0$ is chosen small enough. Here let us just remark that the factors involving $\la/\la_0$ and $\mu/\mu_0$ on the right-hand-side above appear from the estimates 
\EQ{
 \| \La Q_{\U \s} - \La Q_{\U \s_0} \|_{L^2} + \| \ZZ_{\U \s} - \ZZ_{\U \s_0} \|_{L^2} \lesssim \abs{\s/ \s_0 - 1}^{\frac{1}{2}}
}
An application of Remark~\ref{r:ift}  yields the following: There exists $\eta_0>0$ small enough %an open neighborhood $U \ni (\la_0, \mu_0)$ in $\R^2$  and
 and a continuous mapping $\si: B_{H}(0, 2\eta_0) \to B_{\R^2}((\ell_0, m_0), C_0 \eta_0)$
 %with $\si(g) = (\ell, m)$
 so that for all $(g, \ell, m) \in B_{H}(0, \eta_0) \times B_{\R^2}( (\ell_0, m_0), C_0 \eta_0)$ we have 
\EQ{
G( g, \la, \mu) \equiv 0 \Longleftrightarrow  (\ell, m) = \si(g), \quad \la = e^\ell, \, \, \mu = e^m
}
Finally, we observe that if we let $ g_0$ be as in~\eqref{eq:g0def1}, and define $(\la, \mu) = (e^{\ell}, e^{m})$ and $ g \in H$ by 
\EQ{
(\ell, m):=  \si( g_0), \quad g:= F(  g_0, \la, \mu)
}
we see that $ \phi_0 = Q_{\la} - Q_{\mu} + g$ and moreover that 
\EQ{
\ang{ \ZZ_{\ula}  \mid g} = 0 \mand \ang{ \ZZ_{\umu} \mid g} = 0. 
}
Lastly, since $\abs{\ell - \ell_0} \le C_0 \eta_0$ and $\abs{m - m_0} \le C_0 \eta_0$, we have
\begin{equation}
\label{eq:g-small-666}
\abs{\lambda/\lambda_0 - 1} + \abs{\mu/\mu_0 - 1} \lesssim \eta_0 \ll 1,\qquad \|g\|_H \lesssim \sqrt{\eta_0} \ll 1.
\end{equation}
In particular, we have
\begin{equation}
\label{eq:g-small-6666}
(\lambda/\mu)^k \lesssim (\lambda_0/\mu_0)^k  \leq \bfd_+(\vec \phi).
\end{equation}

Now we establish the estimate~\eqref{eq:gH}. This follows by expanding the nonlinear energy. We'll make use of trigonometric identities here for simplicity but note that the following computation relies only on the fact that the nonlinearity in~\eqref{eq:wmk} is smooth, that $Q$ is a solution, and the orthogonality conditions~\eqref{eq:ola}~\eqref{eq:omu}.  
\EQ{ \label{enexp1}
\frac{2}{\pi} \E(Q) &= \frac{1}{ \pi}\E( \vec \phi) = \frac{2}{\pi} \E(Q) + \int_0^\infty g_r^2    \rdr  + \int_0^\I \psi_t^2 \rdr \\
& - 2 \int_0^{\infty}  \p_r Q_\la \p_r Q_\mu \rdr + 2 \int_0^\infty  \p_r Q_\la g_r \rdr - 2 \int_0^\infty  \p_r Q_\mu g_r \rdr \\
&  + k^2 \int_0^\I  \frac{ \sin^2( Q_\la- Q_\mu + g)}{r^2} \rdr - k^2 \int_0^\I  \frac{ (\sin^2 Q_\la + \sin^2 Q_\mu )}{r^2} \rdr 
}
We expand the nonlinear terms on the last line using trigonometric  identities  
%\EQ{
%\sin^2(A+B) %&= \sin^2A + \sin^2B - 2\sin^2A \sin^2B + \frac{1}{2} \sin 2A \sin 2B \\
%& =   \sin^2A +  \frac{1}{2} \sin 2A \sin 2B +  \sin^2B \cos 2A
%}
%This gives 
\ant{
 \sin^2( Q_\la - Q_\mu + g)  &= \sin^2 (Q_\la - Q_{\mu})    + \frac{1}{2}\sin 2 g \sin2(Q_\la- Q_\mu)    \\
  &\quad + \sin^2 g  \cos 2(Q_\la - Q_\mu)\\
 & =     \sin^2 Q_\la + \sin^2 Q_\mu  - \frac{1}{2} \sin2Q_\la \sin2 Q_\mu - 2 \sin ^2Q_\la \sin^2 Q_\mu \\
 & \quad +  g \sin 2(Q_\la - Q_\mu) + g^2 \cos2(Q_\la - Q_\mu) +O(\abs{g}^3)% \\
 %& \quad +  \frac{1}{2} \sin 2(Q_\la - Q_\mu) \big[ \sin 2 g - 2 g \big] + \cos 2(Q_\la - Q_\mu) \big[ \sin^2 g - g^2 \big]
 }
 which further reduces to 
\EQ{  \label{sin2Q}
 & =  \sin^2 Q_\la + \sin^2 Q_\mu  +  g^2 \cos2(Q_\la - Q_\mu)   - 2 \sin ^2Q_\la \sin^2 Q_\mu \\
 & \quad +  g \sin 2Q_\la  - g \sin 2 Q_\mu    -  \sin2Q_\la Q_\mu  + \frac{1}{2} \sin 2Q_{\la}  \big[2Q_\mu - \sin 2Q_\mu]\\
 & \quad - g (2\sin 2Q_\la \sin^2Q_\mu - 2\sin 2Q_\mu \sin^2Q_\la)  + O(\abs{g}^3) % \frac{1}{2} \sin 2(Q_\la - Q_\mu) \big[ \sin 2 g - 2 g \big] + \cos 2(Q_\la - Q_\mu) \big[ \sin^2 g - g^2 \big] 
}
Next we observe that the first three terms in the second line of~\eqref{sin2Q} will give exact cancelations with the terms in the second line of~\eqref{enexp1}. Indeed, using the identity 
\EQ{
 \frac{1}{r} \p_r( r \p_r Q_\la) = k^2\frac{\sin 2 Q_\la}{2r^2}
 }
 we  integrate by parts to obtain
 \ant{
 &k^2 \int_0^{\infty}  \frac{  \sin 2Q_{\la}}{r^2}Q_\mu \rdr  =  2\int_0^\I \frac{1}{r} \p_r( r \p_r Q_\la) Q_\mu \rdr = - 2\int_0^{\infty}  \p_r Q_\la \p_r Q_\mu \rdr\\
 & k^2 \int_0^{\infty}  \frac{  \sin 2Q_{\la}}{r^2}g \rdr  =  2\int_0^\I \frac{1}{r} \p_r( r \p_r Q_\la) g \rdr = - 2\int_0^{\infty}  \p_r Q_\la g_r  \rdr\\
 & k^2 \int_0^{\infty}  \frac{  \sin 2Q_{\mu}}{r^2}g \rdr  =  2\int_0^\I \frac{1}{r} \p_r( r \p_r Q_\mu) g \rdr = - 2\int_0^{\infty}  \p_r Q_\mu g_r  \rdr
}
We can use the same identity to integrate by parts the terms arising from the rest of~\eqref{sin2Q}. 
\ant{
-2 k^2 \int_0^\I \frac{ \sin 2Q_\la}{r^2} & \sin^2Q_\mu g \rdr = -4 \int_0^\I \frac{1}{r} \p_r( r \p_r Q_\la) \sin^2 Q_\mu g  \rdr \\
& = 4 \int_0^\I \p_r Q_\la  \p_r Q_\mu \sin 2Q_\mu g \rdr + 4 \int_0^\I \p_r Q_\la  \sin^2 Q_\mu g_r \rdr
}
and the same for the symmetric term in $\la, \mu$. Finally, we have 
\ant{
k^2 \int_0^\I \frac{ \sin2 Q_\la}{ 2r^2}& \big[2Q_\mu - \sin 2Q_\mu] \rdr =  \int_0^\I\frac{1}{r}\p_r( r \p_r Q_\la)  \big[2Q_\mu - \sin 2Q_\mu] \rdr \\
& =  - \int_0^\I \p_r Q_\la \p_r  \big[2Q_\mu - \sin 2Q_\mu] \rdr  = - 4\int_0^\I \p_r Q_\la \p_r Q_\mu \sin^2 Q_\mu \rdr
}
Therefore, using the above,~\eqref{sin2Q}, the identity $\La Q = k \sin  Q$, and the fact that $\E( \vec \psi) = 2 \E(Q)$, we can deduce from~\eqref{enexp1} that 
\EQ{ \label{eq:enexp2}
 \int_0^\I &\psi_t^2 \rdr +  \int_0^\infty g_r^2 \rdr + k^2 \int_0^{\infty}  \frac{ \cos 2(Q_\la - Q_\mu)}{r^2} g^2  \rdr \\
 & =   \frac{4}{k^2} \int_0^\I \La Q_\la (\La Q_\mu)^3 \, \frac{\dr}{r} - \frac{2}{k^2}\int_0^\I (\La Q_\la)^2 (\La Q_\mu)^2 \, \frac{\dr}{r}  \\
 &  \quad  - 4 \int_0^\I \La Q_\la  \La Q_\mu \sin 2Q_\mu g \frac{\ud r}{r} -  \frac{4}{k^2} \int_0^\I \La Q_\la  (\La Q_\mu)^2 (rg_r) \frac{\ud r}{r} \\
 & \quad + 4 \int_0^\I \La Q_\la  \La Q_\mu \sin 2Q_\la  g \frac{\ud r}{r} + \frac{4}{k^2} \int_0^\I \La Q_\mu  (\La Q_\la)^2 (r g_r) \frac{\ud r}{r} \\
 &\quad  % - \frac{4}{k^2}\int_0^\I \La Q_\la (\La Q_\mu)^3  \frac{\ud r}{r} 
 + O\left( \int_0^\infty \abs{g}^3 \, \frac{ \ud r}{r}\right) 
% & \quad + k^2 \int_0^\I  \frac{ \sin 2(Q_\la - Q_\mu)}{2r^2} \big[ \sin 2 g - 2 g \big] \rdr \\
% & \quad+ k^2 \int_0^\I \frac{ \cos 2(Q_\la - Q_\mu) }{r^2}  \big[ \sin^2 g - g^2 \big]  \rdr
 }
 Next, we estimate each of the terms on the right-hand-side of~\eqref{eq:enexp2}. Denote $\s := \la / \mu$. We claim that first term on the right-hand-side of~\eqref{eq:enexp2} gives the leading order, i.e., we claim that 
 \EQ{ \label{eq:lead}
  \frac{4}{k^2} \int_0^\I \La Q_\la (\La Q_\mu)^3 \, \frac{\dr}{r} = 16k \s^k( 1+ O(\s^{2k})) 
 } 
 We compute, using the identity $k \sin Q_\la =\La Q_\la$, and setting $\s = \la/ \mu$, 
 \begin{align}
  \frac{4}{k^2} \int_0^\I \La Q_\la (\La Q_\mu)^3 \, \frac{\dr}{r} &=\frac{4}{k^2} \int_0^\I \La Q_\s (\La Q)^3 \, \frac{\dr}{r}  \\
% & = \frac{4}{k^2 \s} \int_0^\I Q_{r}(r/ \s) Q_r^3(r) \, r^3 \dr \\
% &= \frac{4}{ k^2\s} \int_0^\I \frac{2k (r/\s)^{k-1}}{1+ (r/\s)^{2k}} \frac{8k^3 r^{3k-3}}{(1+ r^{2k})^3} \, r^3 \dr  \\
 & = 64k^2 \s^k \int_0^\I \frac{ r^{4k-1}}{( \s^{2k} + r^{2k})(1+ r^{2k})^3} \dr \label{lo1}
 \end{align}
 First we estimate the contribution of the integral on the interval $[0, \s]$. Since we can assume that $\s \ll 1$, we have 
 \ant{
 \int_0^\s \frac{ r^{4k-1}}{( \s^{2k} + r^{2k})(1+ r^{2k})^3} \dr \simeq \s^{-2k}\int_0^\s r^{4k-1} \dr   \simeq  \s^{2k}
 }
 Next, we estimate the integral on $[\s, \infty]$. For $\s <r$ we have 
 \ant{
  \frac{1}{ \s^{2k} + r^{2k}}  &= \frac{1}{r^{2k}}+  \left( \frac{1}{ \s^{2k} + r^{2k}} - \frac{1}{r^{2k}} \right) \\
  & = \frac{1}{r^{2k}} + \frac{1}{r^{2k}}\left( \frac{1}{ 1 + (\s/r)^{2k}} - 1 \right) \\
  & =  \frac{1}{r^{2k}}+ \frac{1}{r^{2k}}\left( - (\s/r)^{2k} + O( (\s/r)^{4k}) \right) \\
  }
  Hence, 
  \ant{
   \int_\s^\I \frac{ r^{4k-1}}{( \s^{2k} + r^{2k})(1+ r^{2k})^3} \dr &=  \int_0^\I  \frac{r^{2k-1}}{ (1+ r^{2k})^3} \dr -  \int_0^\s  \frac{r^{2k-1}}{ (1+ r^{2k})^3} \dr + O( \s^{2k}) \\
   & =  \int_0^\I  \frac{r^{2k-1}}{ (1+ r^{2k})^3} \dr + O( \s^{2k}) \\
   & = \frac{1}{4k} + O( \s^{2k})
      }
      where the integral on the second to  last line can be computed explicitly by contour integration. Inserting the above into the last line of~\eqref{lo1} yields~\eqref{eq:lead}.  

Next we observe that all of the remaining terms on the right-hand-side of~\eqref{eq:enexp2} are $o( \s^{k})$. Indeed, a similar computation to the one performed above yields, 
\EQ{
\int_0^\I (\La Q_\la)^2 (\La Q_\mu)^2 \, \frac{\dr}{r} \lesssim \s^{2k} \abs{\log \s}
}
Moreover since $\|g \|_{L^\infty} \lesssim \|g \|_H$ %\lesssim \s^{\frac{k}{2}}$ 
we have 
\EQ{
\int_0^\infty \abs{g}^3 \, \frac{ \ud r}{r} \lesssim \|g\|_{H}^{3}   \lesssim  \sqrt{\eta_0} \| g\|_{H}^2 
}
And the remaining terms in~\eqref{eq:enexp2} can be controlled by a combination of these last two estimates together with Cauchy-Schwarz. Therefore as long as $\eta_0$ is small enough and since  $\s = \la/ \mu \lesssim \eta_0$ we have 
\EQ{ \label{eq:enexp3} 
\int_0^\infty &\psi_t^2\,   r \, \ud r +  \int_0^\infty g_r^2 \rdr + k^2 \int_0^{\infty}  \frac{ \cos 2(Q_\la - Q_\mu)}{r^2} g^2  r \ud r  = 16k \s^k - O( \s^{\frac{3k}{2} }\abs{\log \s})
}
To complete the proof of~\eqref{eq:gH} we claim the following coercivity statement: there exists a uniform constant  $c >0$ so that 
\EQ{ \label{eq:coercivity} 
 \int_0^\infty g_r^2 \rdr + k^2 \int_0^{\infty}  \frac{ \cos 2(Q_\la - Q_\mu)}{r^2} g^2  r \ud r  \ge c  \| g \|_H^2
}
for all $g \in H$ such that~\eqref{eq:ola}~\eqref{eq:omu} hold and such that ~$\|g \|_H$ is small enough. This is a standard consequence of the orthogonality conditions~\eqref{eq:ola},~\eqref{eq:omu} and the smallness of $\la/ \mu, \|g\|_H$ and we refer the reader to~\cite[Lemma 5.4]{JJ-AJM} for a detailed proof.

The left inequality in \eqref{eq:gdotgd} is trivial and the right inequality follows from \eqref{eq:gH} and \eqref{eq:g-small-6666}.
\end{proof}

\subsection{Dynamical control of the modulation parameters}\label{s:modest} 

In this section we obtain precise control of the evolution of the modulation parameters $\la(t), \mu(t)$ on any time interval $J$ on which $\bfd_+(\vec \psi(t))$ is small. We'll show that any solution $\vec \psi(t)$ that lies within a small enough $\eps$-neighborhood of a $2$-bubble at some time $t_0$, must be ejected from this $\eps$-neighborhood in at least one time direction.

This ejection happens by a defocalisation of the more concentrated bubble $Q_\lambda$
until its scale becomes comparable with the less concentrated bubble $Q_\mu$
(which does not change in the process).
The influence of the bubble $Q_\mu$ on the evolution of $Q_\lambda$ is reflected
in the time derivative of the function $b(t)$ defined in \eqref{eq:bdef} below.
Indeed, the main term of $b'(t)$ is given precisely by the interaction between
the two bubbles, see \eqref{eq:b'lb}. 
The main term of $b(t)$ is related to $\lambda'(t)$.
Hence $b'(t)$ is related to $\lambda''(t)$
so that the interaction influences the acceleration,
as it should be expected.

%\subsubsection{The truncated virial functional}
%\label{ssec:mod-virial}
In this subsection we define a truncated virial functional and state some estimates related to it. The same functional was used crucially in the two-bubble construction by the first author in~\cite{JJ-AJM}.
For the proofs of the following statements we refer the reader to~\cite[Lemma 4.6]{JJ-AJM} and~\cite[Lemma 5.5]{JJ-AJM}. 

\begin{lem} \emph{\cite[Lemma 4.6]{JJ-AJM}}
  \label{lem:fun-q}
  For each $c, R > 0$ there exists a function $q(r) = q_{c, R}(r) \in C^{3,1}((0, +\infty))$ with the following properties:
  \begin{enumerate}[label=(P\arabic*)]
    \item $q(r) = \frac{1}{2} r^2$ for $r \leq R$, \label{enum:approx-q}
    \item there exists $\ti R = \ti R(R, c)> R$ such that $q(r) \equiv \tx{const}$ for $r \geq \ti R$, \label{enum:support-q}
    \item $|q'(r)| \lesssim r$ and $|q''(r)| \lesssim 1$ for all $r > 0$, with constants independent of $c, R$, \label{enum:gradlap-q}
    \item $q''(r) \geq -c$ and $\frac 1r q'(r) \geq -c$, for all $r > 0$, \label{enum:convex-ym}
    \item $(\frac{\vd^2}{\vd r^2} + \frac 1r \dd r)^2 q(r) \leq c\cdot r^{-2}$, for all $r > 0$, \label{enum:bilapl-ym}
    \item $\big|r\big(\frac{q'(r)}{r}\big)'\big| \leq c$, for all $r > 0$. \label{enum:multip-petit-ym}
  \end{enumerate}
\end{lem}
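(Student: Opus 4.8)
\emph{Proof idea.} The plan is to prescribe $q$ through the radial density $p(r):=\tfrac1r q'(r)$ and to let the transition from $p\equiv 1$ near the origin to $p\equiv 0$ near infinity take place over a window that is \emph{long on the logarithmic scale}. In the variable $s=\log r$ every quantity entering (P4)--(P6) turns out to be controlled by the $s$-derivatives of the transition profile, and each of those derivatives can be made as small as we wish by stretching the window; this is what allows us to beat a given $c$, at the harmless cost of pushing $\ti R$ out to $R\,e^{L(c)}$ for some $L(c)$ depending only on $c$.

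Concretely, I would first fix, once and for all, a smooth non-increasing $\theta:\R\to[0,1]$ with $\theta\equiv 1$ on $(-\infty,0]$ and $\theta\equiv 0$ on $[1,+\infty)$, so that $\|\theta^{(j)}\|_{L^\infty}\le C_j$ for each $j$. Given $c,R>0$, I would choose a length $L=L(c)\ge 1$ (to be fixed at the very end), put $\ti R:=R\,e^{L}$, set $\chi(s):=\theta\big((s-\log R)/L\big)$, and define
\[
p(r):=\chi(\log r),\qquad q(r):=\int_0^r \rho\,p(\rho)\,\ud\rho .
\]
Then $p\equiv 1$ on $(0,R]$ yields (P1), $p\equiv 0$ on $[\ti R,+\infty)$ yields (P2), and, since $\theta$ (hence $\chi$) is smooth with all derivatives of order $\ge 1$ vanishing at the matching points, $q$ is in fact $C^\infty((0,+\infty))$, in particular of class $C^{3,1}$.

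The rest is differentiation together with careful bookkeeping. One records
\[
q'(r)=r\,\chi(\log r),\qquad q''(r)=\chi(\log r)+\chi'(\log r),\qquad \tfrac1r q'(r)=\chi(\log r),
\]
\[
r\Big(\tfrac{q'(r)}{r}\Big)'=\chi'(\log r),\qquad \Big(\partial_r^2+\tfrac1r\partial_r\Big)q=2\chi(\log r)+\chi'(\log r),
\]
and, applying $\tfrac1r\partial_r(r\partial_r\,\cdot)$ once more to the last expression,
\[
\Big(\partial_r^2+\tfrac1r\partial_r\Big)^2 q=\tfrac1{r^2}\big(2\chi''(\log r)+\chi'''(\log r)\big).
\]
Using $0\le\chi\le 1$ and $\|\chi^{(j)}\|_{L^\infty}\le C_j\,L^{-j}$ for $j\ge 1$, the bounds in (P3) hold with absolute constants as soon as $L\ge 1$; (P4) follows from $\chi\ge 0$ and $\chi'\ge -C_1L^{-1}$; (P6) is $|\chi'|\le C_1L^{-1}$; and (P5) is $|2\chi''+\chi'''|\le 2C_2L^{-2}+C_3L^{-3}$. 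Each of (P4), (P5), (P6) is then ensured by taking $L:=L(c):=\max\{1,\ C_1/c,\ C\,c^{-1/2}\}$ for a suitable absolute constant $C$ depending only on $\theta$, and this choice of $L$ — hence of $\ti R=R\,e^{L}$ — completes the construction.

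I do not anticipate a real obstacle: the construction is soft, and the only delicate point is the accounting showing that every term which degrades the pure $\tfrac12 r^2$ profile carries at least one factor $\chi'=O(L^{-1})$, and the bilaplacian two such factors, so that (P4)--(P6) can be arranged for large $L$ while (P1)--(P3) are unaffected. The single genuinely checkable point is that $q\in C^{3,1}((0,+\infty))$: here $q''''(r)=r^{-2}\big(\chi'''-\chi'\big)(\log r)$, which is bounded because $\chi'''-\chi'$ is supported where $r\in[R,\ti R]$, so there $r^{-2}\le R^{-2}$.
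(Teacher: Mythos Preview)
Your construction is correct and complete. The paper itself does not give a proof of this lemma; it simply cites \cite[Lemma~4.6]{JJ-AJM}, so there is no in-paper argument to compare against. Your approach --- defining $p(r)=\tfrac1r q'(r)=\chi(\log r)$ with a transition stretched over a logarithmic window of length $L=L(c)$ --- is exactly the natural one, and your verification of (P1)--(P6) is accurate (in particular the bilaplacian computation $(\partial_r^2+\tfrac1r\partial_r)^2 q = r^{-2}(2\chi''+\chi''')$ is right).
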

For each $\lambda > 0$ we define the operators $\A(\lambda)$ and $\A_0(\lambda)$ as follows:
\begin{align}
  [\A(\lambda)g](r) &:= q'\big(\frac{r}{\lambda}\big)\cdot \p_r g(r), \label{eq:opA-wm} \\
  [\A_0(\lambda)g](r) &:= \big(\frac{1}{2\lambda}q''\big(\frac{r}{\lambda}\big) + \frac{1}{2r}q'\big(\frac{r}{\lambda}\big)\big)g(r) + q'\big(\frac{r}{\lambda}\big)\cdot\p_r g(r). \label{eq:opA0-wm}
\end{align}
Note the similarity between $\A$ and $\frac{1}{\la} \La$ and between $\A_0$ and $\frac{1}{\la} \La_0$. 
For technical reasons we introduce the space 
\EQ{
X:= \{ g \in H \mid \frac{g}{r}, \p_r g \in H\}
}
Let 
$
f(\rho):= \frac{k^2}{2} \sin 2 \rho
$
denote the nonlinearity in~\eqref{eq:wmk}. 
\begin{lem} \emph{\cite[Lemma 5.5]{JJ-AJM}}
  \label{lem:op-A-wm}
  Let $c_0>0$ be arbitrary. There exists $c>0$ small enough and $R, \ti R>0$ large enough in Lemma~\ref{lem:fun-q} so that the operators $\A(\lambda)$ and $\A_0(\lambda)$ defined in~\eqref{eq:opA-wm} and~\eqref{eq:opA0-wm} have the following properties:
  \begin{itemize}[leftmargin=0.5cm]
    \item the families $\{\A(\lambda): \lambda > 0\}$, $\{\A_0(\lambda): \lambda > 0\}$, $\{\lambda\partial_\lambda \A(\lambda): \lambda > 0\}$
      and $\{\lambda\partial_\lambda \A_0(\lambda): \lambda > 0\}$ are bounded in $\mathscr{L}(H; L^2)$, with the bound depending only on the choice of the function $q(r)$,
    \item %Let $f(\rho):= \frac{k^2}{2}\sin 2 \rho$.  
    For all $\lambda > 0$ and $g_1, g_2 \in X$  there holds
      \begin{multline}  \label{eq:A-by-parts-wm}
      \Big| \ang{ \A(\lambda)g_1\mid  \frac{1}{r^2}\big(f(g_1 + g_2) - f(g_1) - f'(g_1)g_2\big)}  \\ +\ang{ \A(\lambda)g_2\mid \frac{1}{r^2}\big(f(g_1+g_2) - f(g_1) -k^2 g_2\big)}\Big| 
        \leq \frac{c_0}{\lambda} \|g_2\|_H^2, 
      \end{multline}
     % with a constant $\eps_0$ arbitrarily small,
    \item For all $g \in X$ we have  %for any $\eps_0 > 0$, if the constants $c$ and $R$ in the definition of $q(r)$ are chosen appropriately, then
\EQ{
        \label{eq:A-pohozaev-wm}
        \ang{\A_0(\lambda)g | \big(\partial_r^2 + \frac 1r\partial_r - \frac{k^2}{r^2}\big)g} \leq \frac{c_0}{\lambda}\|g\|_{H}^2 - \frac{1}{\lambda}\int_0^{R\lambda}\Big((\partial_r g)^2 + \frac{k^2}{r^2}g^2\Big) \udr, 
        }
        \item Moreover, for $\la, \mu >0$ with $\la/\mu \ll 1$, %Let $Q = Q_k(r) = 2 \arctan r^k$ and let $g \in H$.  There exists $\eps_0>0$ small enough so that if $\la^k  \le \eps_0 $ and $\mu \in [1, 2]$, then \Red{check hypothesis here. do we really want $\mu \in [1, 2]$ etc}
 \begin{gather}
      \label{eq:L0-A0-wm}
      \|\Lambda_0 \Lambda Q_\uln{\lambda} - \A_0(\lambda)\Lambda Q_{\lambda}\|_{L^2} \leq c_0, \\
      \label{eq:L-A-wm}
      \|\Lambda Q_\uln\lambda - \A(\lambda)Q_\lambda\|_{L^\infty} \leq \frac{c_0}{\lambda},  \\
    \| \A(\la) Q_\mu \|_{L^\infty} + \| \A_0(\la) Q_\mu \|_{L^\infty}  \lesssim \frac{1}{\la} (\la/ \mu)^k,   \label{eq:Ainfty} 
     \end{gather} 
    and, for any $g \in H$, 
    \begin{multline}   \label{eq:approx-potential-wm}
        \bigg|\int_0^{+\infty}\frac 12 \Big(q''\big(\frac{r}{\lambda}\big) + \frac{\lambda}{r}q'\big(\frac{r}{\lambda}\big)\Big)\frac{1}{r^2}\big(f({-}Q_\mu + Q_\lambda + g) - f({-}Q_\mu + Q_\lambda)-k^2g\big)g \udr \\
        - \int_0^{+\infty} \frac{1}{r^2}\big(f'(Q_\lambda)-k^2\big)g^2 \udr\bigg| \leq c_0(\|g\|_H^2 + (\lambda/\mu)^k).  
  \end{multline} 
  \end{itemize}
\end{lem}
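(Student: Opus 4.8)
The plan is to follow the proof of \cite[Lemma~5.5]{JJ-AJM}; the organizing principle is that $\A(\lambda)$ and $\A_0(\lambda)$ are smooth truncations of $\tfrac1\lambda\La$ and $\tfrac1\lambda\La_0$: by \ref{enum:approx-q} they coincide with the untruncated operators on $\{r\le R\lambda\}$, and by \ref{enum:support-q} they are supported in $\{r\le\ti R\lambda\}$. Every bound will therefore be split into an exact ``inner'' contribution on $\{r\le R\lambda\}$ and a ``truncation error'' supported on $\{r>R\lambda\}$, the latter absorbed using the quantitative estimates \ref{enum:gradlap-q}--\ref{enum:multip-petit-ym}. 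First I would dispatch the boundedness claims: since $q$ is a fixed function with $q'\in L^\infty$ and $|q''|\lesssim 1$, and since on the support of the cut-offs one may trade $\tfrac1\lambda$ for $\tfrac{\ti R}{r}$, the Hardy inequality built into the $H$-norm (i.e.\ $\|g/r\|_{L^2}\le\tfrac1k\|g\|_H$) gives $\|\A(\lambda)g\|_{L^2}+\|\A_0(\lambda)g\|_{L^2}\lesssim\|g\|_H$ with constants independent of $\lambda$; applying $\lambda\partial_\lambda$ merely replaces $q'(s)$ by $-sq''(s)$ (and similarly inside $\A_0$), again a fixed bounded function, so the same argument applies.

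For the cancellation \eqref{eq:A-by-parts-wm} the key observation is that the two bracketed nonlinearities are the partial derivatives of $H(a,b):=G(a+b)-G(a)-G'(a)b-\tfrac{k^2}{2}b^2$, where $G'=f$; hence the left-hand side, evaluated at $(a,b)=(g_1,g_2)$, equals exactly $\int_0^\infty\tfrac{q'(r/\lambda)}{r}\,\partial_r\!\big[H(g_1,g_2)\big]\,\ud r$. An integration by parts converts this into $-\int_0^\infty\partial_r\!\big(\tfrac{q'(r/\lambda)}{r}\big)H(g_1,g_2)\,\ud r$, and \ref{enum:multip-petit-ym} gives $\big|\partial_r(q'(r/\lambda)/r)\big|=\tfrac1{\lambda r}\big|q''(r/\lambda)-\tfrac\lambda r q'(r/\lambda)\big|\le\tfrac{c}{\lambda r}$, while smoothness of $f$ yields $|H(g_1,g_2)|\lesssim g_2^2$; the result is $\le\tfrac c\lambda\int_0^\infty\tfrac{g_2^2}{r}\,\ud r\lesssim\tfrac c\lambda\|g_2\|_H^2\le\tfrac{c_0}\lambda\|g_2\|_H^2$ once $c$ is taken small in Lemma~\ref{lem:fun-q}.

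For the Pohozaev bound \eqref{eq:A-pohozaev-wm} I would run the standard localized virial computation: writing $b(r):=q'(r/\lambda)$, so that $\A_0(\lambda)g=\tfrac12\big(b'(r)+\tfrac{b(r)}{r}\big)g+b(r)\,\partial_r g$ is skew-adjoint, one finds
\EQ{
\ang{\A_0(\lambda)g\mid\big(\partial_r^2+\tfrac1r\partial_r-\tfrac{k^2}{r^2}\big)g}=-\int_0^\infty b'(r)(\partial_r g)^2\udr-k^2\int_0^\infty\frac{b(r)}{r^2}g^2\,\ud r+\tfrac14\int_0^\infty\De_{\R^2}\!\Big(b'+\tfrac br\Big)g^2\udr.
}
On $\{r\le R\lambda\}$ one has $b(r)=r/\lambda$, so the first two terms give precisely $-\tfrac1\lambda\int_0^{R\lambda}\big((\partial_r g)^2+\tfrac{k^2}{r^2}g^2\big)\udr$ and the bilaplacian term vanishes; on $\{r>R\lambda\}$, \ref{enum:convex-ym} yields $-b'(r)\le\tfrac c\lambda$ and $-\tfrac{b(r)}{r}\le\tfrac c\lambda$ while \ref{enum:bilapl-ym} yields $\big|\De_{\R^2}(b'+\tfrac br)\big|\le c\lambda^{-1}r^{-2}$, so the whole outer contribution is $\le\tfrac c\lambda\|g\|_H^2\le\tfrac{c_0}\lambda\|g\|_H^2$. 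The identities of the last part are again ``inner exact, outer small'': on $\{r\le R\lambda\}$ one has $\A(\lambda)Q_\lambda=\La Q_{\uln{\lambda}}$ and $\A_0(\lambda)\La Q_\lambda=\La_0\La Q_{\uln{\lambda}}$, so the differences in \eqref{eq:L0-A0-wm}--\eqref{eq:L-A-wm} are supported in $\{r>R\lambda\}$, where the decay bounds $|\La Q(\rho)|,|\sin Q(\rho)|\lesssim\rho^{-k}$ and $|Q'(\rho)|\lesssim\rho^{-k-1}$ make both terms small (in the relevant $L^2$, resp.\ $\lambda\|\cdot\|_{L^\infty}$, norm) once $R$ is large; for \eqref{eq:Ainfty} one uses that on $\supp q'(\cdot/\lambda)\subset\{r\lesssim\ti R\lambda\}$ one has $r/\mu\lesssim\lambda/\mu\ll1$, whence $|Q_\mu|\lesssim(r/\mu)^k$ and $|\partial_r Q_\mu|\lesssim\tfrac1r(r/\mu)^k$. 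Finally, for \eqref{eq:approx-potential-wm} I would Taylor-expand $f$ about ${-}Q_\mu+Q_\lambda$, creating an error $O\big(\int\tfrac{|g|^3}{r}\,\ud r\big)=O(\|g\|_{L^\infty}\|g\|_H^2)=O(\|g\|_H^3)$; then replace $f'({-}Q_\mu+Q_\lambda)$ by $f'(Q_\lambda)$ at a cost $\lesssim(\lambda/\mu)^k\|g\|_H^2$ since $|Q_\mu|\lesssim(\lambda/\mu)^k$ on the support of the weight; then replace $\tfrac12\big(q''(r/\lambda)+\tfrac\lambda r q'(r/\lambda)\big)$ by $1$, which is exact on $\{r\le R\lambda\}$ and on $\{r>R\lambda\}$ costs $\lesssim R^{-2k}\|g\|_H^2$ because there $|f'(Q_\lambda)-k^2|=2k^2\sin^2Q_\lambda\lesssim(\lambda/r)^{2k}$.

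The step I expect to be the genuine obstacle is the final bookkeeping: one has to fix $c$ small and $R,\ti R$ large in Lemma~\ref{lem:fun-q}, and only afterwards impose smallness of $\lambda/\mu$ and of $\|g\|_H$ (the latter supplied in applications by Lemma~\ref{l:modeq}), in the correct logical order, so that each truncation error is strictly dominated by whichever main quantity it must be compared against --- namely the coercive negative term in \eqref{eq:A-pohozaev-wm}, the gain $c_0$, or the main term $\int\tfrac1{r^2}(f'(Q_\lambda)-k^2)g^2\udr$ in \eqref{eq:approx-potential-wm}. This is precisely the computation carried out in detail in \cite[Lemma~5.5]{JJ-AJM}, to which I would refer for the remaining routine verifications.
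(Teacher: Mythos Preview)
Your proposal is correct and matches the paper's treatment: the paper gives no proof of this lemma at all, simply citing \cite[Lemma~5.5]{JJ-AJM} and noting in a remark that the one new estimate \eqref{eq:Ainfty} follows immediately from \ref{enum:support-q}, \ref{enum:gradlap-q} and the explicit formula for $Q$. Your sketch is exactly the argument of that reference---split each quantity into the exact inner region $r\le R\lambda$ and a truncation error on $r>R\lambda$ controlled via \ref{enum:gradlap-q}--\ref{enum:multip-petit-ym}---and your handling of \eqref{eq:Ainfty} is the same as the paper's remark.

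One small point worth flagging: in your treatment of \eqref{eq:approx-potential-wm} the cubic error $O(\|g\|_H^3)$ is only $\le c_0\|g\|_H^2$ under a smallness assumption on $\|g\|_H$, which the lemma as stated does not impose (it says ``for any $g\in H$''). You correctly note at the end that this smallness is supplied in every application by Lemma~\ref{l:modeq}; just be aware that the lemma's hypotheses as literally written are slightly weaker than what your argument uses.
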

\begin{rem}
  The conditions $g, g_1, g_2 \in X$ is required only to ensure that the left-hand-side of \eqref{eq:A-by-parts-wm} and~\eqref{eq:A-pohozaev-wm} are well defined,
  but do not appear on the right-hand-side of the estimates. Note also that in \eqref{eq:A-by-parts-wm}, \eqref{eq:A-pohozaev-wm} and \eqref{eq:approx-potential-wm}
  we have extracted the linear part of $f$. Lastly, the estimate~\eqref{eq:Ainfty} is not stated in~\cite{JJ-AJM} but follows immediately from  $P$2, $P3$ in Lemma~\ref{lem:fun-q} and the explicit formula for $Q$. 
  %We refer the reader to~\cite[Proof of Lemma~5.5]{JJ-AJM} for the proof. 
\end{rem}

%\subsubsection{Main Modulation estimates} 

We are now ready to state the main modulation estimates. Proofs are given in Section~\ref{s:modproof}. Our first  estimate is a consequence of the orthogonality conditions~\eqref{eq:ola} and~\eqref{eq:omu}.  

\begin{prop}[Modulation Control Part 1] \label{p:modp} 
% Let $\de>0$ be arbitrary and
Let $\eta_0>0$ be as in Lemma~\ref{l:modeq}, let $J \subset \R$ be a time interval, and let
$\vec \psi(t)$ be a solution to~\eqref{eq:wmk} on $J$ such that 
\EQ{
\bfd(\vec \psi(t))\le \eta_0 \quad \forall t \in J.
}
Let $\la(t), \mu(t)$ be given by Lemma~\ref{l:modeq}. Then the following estimates hold for $t \in J$:
\begin{align}
\abs{\la'(t)} &\lesssim \la(t)^{\frac{k}{2}}/\mu(t)^\frac k2, \label{eq:la'} \\ 
\abs{ \mu'(t)}& \lesssim \la(t)^{\frac{k}{2}}/\mu(t)^\frac k2,  \label{eq:mu'} 
\end{align} 
\end{prop}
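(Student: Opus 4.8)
The plan is to project the equation~\eqref{eq:wmk} onto the directions $\ZZ_{\uln{\lambda(t)}}$ and $\ZZ_{\uln{\mu(t)}}$ appearing in the orthogonality conditions~\eqref{eq:ola}--\eqref{eq:omu}, differentiate these conditions in time, and solve the resulting linear system for $\lambda'(t)$ and $\mu'(t)$. Write the solution as $\psi(t) = Q_{\lambda(t)} - Q_{\mu(t)} + g(t)$ with $\psi_t(t)$ the velocity. Since $\partial_t Q_{\lambda(t)} = -\frac{\lambda'(t)}{\lambda(t)}\Lambda Q_{\lambda(t)} = -\lambda'(t)(\Lambda Q)_{\uln{\lambda(t)}}$ (and similarly for $\mu$), differentiating $\ang{\ZZ_{\uln{\lambda(t)}}\mid g(t)} = 0$ produces, schematically,
\EQ{
-\lambda'(t)\ang{\ZZ_{\uln{\lambda}}\mid \Lambda Q_{\uln{\lambda}}} + \mu'(t)\ang{\ZZ_{\uln{\lambda}}\mid \Lambda Q_{\uln{\mu}}} + \ang{\tfrac{\ud}{\ud t}\ZZ_{\uln{\lambda}}\mid g} + \ang{\ZZ_{\uln{\lambda}}\mid \psi_t} = 0,
}
and an analogous identity from differentiating $\ang{\ZZ_{\uln{\mu}}\mid g(t)}=0$. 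Here $\frac{\ud}{\ud t}\ZZ_{\uln{\lambda}} = -\frac{\lambda'}{\lambda}(\Lambda_0\ZZ)_{\uln{\lambda}}$, so the term $\ang{\frac{\ud}{\ud t}\ZZ_{\uln{\lambda}}\mid g}$ is $O(\frac{|\lambda'|}{\lambda}\|g\|_H)$ and can be absorbed into the left side once $\|g\|_H$ is small (by~\eqref{eq:gH}, $\|g\|_H\lesssim (\lambda/\mu)^{k/2}\ll 1$). The matrix of the system is exactly $L_0$ from the Modulation Lemma, whose diagonal entries are $O(1)$ and bounded away from zero, and whose off-diagonal entries are $O((\lambda/\mu)^{k-1})$ by Claim~\ref{c:M12est}; hence it is boundedly invertible for $\lambda/\mu$ small, and the system can be solved for $(\lambda',\mu')$ with the inverse having $O(1)$ entries (in the $\ell=\log\lambda$, $m=\log\mu$ variables, as in the proof of Lemma~\ref{l:modeq}).

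It then remains to bound the inhomogeneous terms. The terms $\ang{\ZZ_{\uln{\lambda}}\mid \psi_t}$ and $\ang{\ZZ_{\uln{\mu}}\mid \psi_t}$ are controlled by $\|\psi_t\|_{L^2}\lesssim \|(g,\psi_t)\|_{\HH_0}\lesssim (\lambda/\mu)^{k/2}$ using~\eqref{eq:gH} (the cutoff functions $\ZZ_{\uln{\lambda}}$, $\ZZ_{\uln{\mu}}$ have $L^2$ norm $\lesssim 1$ after the $L^2$ rescaling, by their defining properties~\eqref{eq:ZZdef}). There is, however, a second contribution I have been sloppy about: differentiating the orthogonality condition also requires knowing $\partial_t g(t)$, and $g(t)$ satisfies an evolution equation obtained from~\eqref{eq:wmk}, whose right-hand side contains $\psi_t$ (already handled) plus elliptic and nonlinear terms. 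The cleanest route is to avoid ever computing $\partial_t g$ directly: instead write $\partial_t \psi = \psi_t$ (first equation) and note $\partial_t g = \psi_t + \lambda'(\Lambda Q)_{\uln\lambda} - \mu'(\Lambda Q)_{\uln\mu}$, so that the identity above is already closed in $(\lambda',\mu',\psi_t,g)$ — no appeal to $\partial_t\psi_t$ is needed. This is the standard trick and is exactly why only the velocity $\psi_t$, and not the full force term, enters Proposition~\ref{p:modp}; the force term (with its extra power of $(\lambda/\mu)$) appears only in Proposition~\ref{p:modp} Part~2, i.e.\ in the estimate for $b'(t)$.

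Putting it together: the linear system reads $L_0(t)\cdot(\lambda'(t),\mu'(t))^{\mathsf T} = O((\lambda/\mu)^{k/2}) + O(\tfrac{|\lambda'|+|\mu'|}{\lambda}\cdot\|g\|_H)$ where the last term, after rescaling to the $(\ell,m)$ variables, becomes $O(\|g\|_H)\cdot O(|\ell'|+|m'|) = o(1)(|\ell'|+|m'|)$; moving it to the left and inverting $L_0$ yields $|\lambda'(t)|/\lambda(t) + |\mu'(t)|/\mu(t) \lesssim (\lambda(t)/\mu(t))^{k/2}$, i.e.\ $|\lambda'(t)| \lesssim \lambda(t)(\lambda(t)/\mu(t))^{k/2}$ and $|\mu'(t)|\lesssim \mu(t)(\lambda(t)/\mu(t))^{k/2}$. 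Since $\lambda \leq \mu$ on $J$, this is in particular bounded by $\lambda(t)^{k/2}/\mu(t)^{k/2}$ (using $\lambda\leq\mu$, and for the $\mu'$ bound using $\mu(t)\cdot(\lambda/\mu)^{k/2} = \lambda^{k/2}\mu^{1-k/2}\leq \lambda^{k/2}/\mu^{k/2}$ when $k\geq 2$... more carefully, one should track whether the paper's stated form $\lambda^{k/2}/\mu^{k/2}$ is literally what the argument gives — for $\mu'$ the natural output is $\mu\cdot(\lambda/\mu)^{k/2}$, and equality with $\lambda^{k/2}/\mu^{k/2}$ holds only for $k=2$; for $k>2$ one has $\mu(\lambda/\mu)^{k/2}\leq(\lambda/\mu)^{k/2}$ iff $\mu\leq 1$, so the stated estimate is presumably meant up to the normalization $\mu\lesssim 1$, or the $\mu'$ bound should read $|\mu'|\lesssim(\lambda/\mu)^{k/2}$). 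The main obstacle is bookkeeping: correctly identifying which terms survive at order $(\lambda/\mu)^{k/2}$ versus which are higher order (e.g.\ $\ang{\ZZ_{\uln\lambda}\mid \Lambda Q_{\uln\mu}}$ is $O((\lambda/\mu)^{k+1})$, genuinely negligible), and making the absorption of the $\frac{|\lambda'|}{\lambda}\|g\|_H$ self-improving terms rigorous via the smallness of $\eta_0$; the analytic inputs (invertibility of $L_0$, the bounds on $\ZZ_{\uln\lambda}$, the size of $\|g\|_H$ and $\|\psi_t\|_{L^2}$) are all already in hand from Lemma~\ref{l:modeq} and Claim~\ref{c:M12est}.
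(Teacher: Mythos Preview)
Your approach is the same as the paper's: differentiate the orthogonality conditions, obtain a $2\times 2$ linear system for $(\lambda',\mu')$ with diagonally dominant matrix (the matrix $L_0$ / $M$ from Lemma~\ref{l:modeq}), and bound the right-hand side by $\|\psi_t\|_{L^2}\lesssim(\lambda/\mu)^{k/2}$ via~\eqref{eq:gH}. Two points of bookkeeping are worth cleaning up.

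First, the paper begins by rescaling so that $\mu(t_0)=1$ and works in a neighborhood where $\tfrac12\le\mu\le 2$; since both sides of~\eqref{eq:la'}--\eqref{eq:mu'} are scale-invariant this is legitimate, and it dissolves your worry in the last paragraph about whether $|\mu'|\lesssim\mu(\lambda/\mu)^{k/2}$ matches the stated form.

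Second, your estimate $\big\langle\tfrac{\ud}{\ud t}\ZZ_{\uln{\lambda}}\mid g\big\rangle=O\big(\tfrac{|\lambda'|}{\lambda}\|g\|_H\big)$ is off by a factor of $\lambda$: using the compact support of $\ZZ$ one has
\[
\Big|\big\langle(\Lambda_0\ZZ)_{\uln{\lambda}}\mid g\big\rangle\Big|
=\Big|\big\langle \tfrac{r}{\lambda}\,(\Lambda_0\ZZ)_{\uln{\lambda}}\,\lambda \mid r^{-1}g\big\rangle\Big|
\lesssim \lambda\,\|g\|_H,
\]
so the term is actually $O(|\lambda'|\,\|g\|_H)$. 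This is exactly how the paper absorbs it into the diagonal entry $M_{11}=\langle\ZZ_{\uln{\lambda}}\mid\Lambda Q_{\uln{\lambda}}\rangle-\langle\tfrac1\lambda[\Lambda_0\ZZ]_{\uln{\lambda}}\mid g\rangle=\beta+O(\|g\|_H)$, with no passage to log variables needed; the system is then solved directly for $(\lambda',\mu')$, yielding $|\lambda'|,|\mu'|\lesssim\|\dot g\|_{L^2}\lesssim(\lambda/\mu)^{k/2}$.
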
 

The control we obtain on $\la(t), \mu(t)$ above is not sufficient for our purposes. In particular, we'd like to show that the ratio $\la(t)/ \mu(t)$ grows in a controlled fashion away from any small enough local minimum value. For this purpose we introduce a virial-type correction  $b(t)$ to $\la'(t)$.
The idea of modifying a modulation parameter by a virial term was used in \cite{JJ-NLS}
and, in a different context of minimal mass blow-up for non-homogeneous $L^2$-critical NLS,
in an earlier work of Rapha\"el and Szeftel \cite{RaSz11}. 

Given scaling parameters $\la(t), \mu(t)$ we write 
\EQ{ \label{eq:gdef} 
&g(t) := \psi(t) - Q_{\la(t)} + Q_{\mu(t)} \\
&\dot g(t):= \psi_t(t)
}
so that the vector $\vec g:= (g, \dot g)$ satisfies the system of equations
\begin{align} \label{eq:ptg} 
&\p_t g = \dot{g} + \la' \La Q_{\ula} - \mu' \La Q_{\umu} \\
& \p_t \dot g = \p_r^2 g + \frac{1}{r} \p_r g  - \frac{1}{r^2} \left( f( Q_\la - Q_\mu + g) - f(Q_\la) + f(Q_\mu)\right) \label{eq:ptgdot} 
\end{align} 
We then define the auxiliary function $b(t)$ by  
\EQ{ \label{eq:bdef} 
b(t):= - \ang{ \La Q_{\underline{\la(t)}}  \mid \dot g(t)}  - \ang{ \dot g(t) \mid \A_0(\la(t)) g(t)}
}
We'll show below that we can think of $b(t)$ as a subtle monotonic correction to the derivative $\la'(t)$.

Before stating the estimates satisfied by $b(t)$ we  record the following numbers, which can be computed using contour integration: 
%\EQ{
% \| \La Q \|_{2}^2  = \begin{cases} 2 \pi \mif k = 2 \\ \frac{4 \pi}{\sqrt{3}} \mif k = 3 \\ 2 \sqrt{2} \pi \mif k = 4 \end{cases} 
%}
%which can be computed using contour integration. In general,  the Residue Theorem yields
\EQ{
 \| \La Q \|_{2}^2  = \frac{2 \pi i \textrm{Res}[(\La Q(z))^2 z; \, \om_k]}{1- \om_k^4}, \quad \om_k:= \exp( 2\pi i/ 4k)
}
which means that 
\EQ{
 \| \La Q \|_{2}^2  = \frac{2\pi}{\sin(\pi/k)} =: \ka = \kappa(k)>0
}
We will also use  fact that 
\EQ{ \label{eq:LaQ3}
\int_0^\I (\La Q(r))^3 \, r^{k-1} \ud r = 2k^2
}

Lastly, the modulation parameter $\la(t)$ itself is an imprecise proxy for the true dynamics because it was defined with respect to a somewhat arbitrary function $\ZZ$ as in~\eqref{eq:ZZdef}. To account for this imprecision we introduce a correction to $\la(t)$ as follows. Fix a radial cutoff  $\chi\in C^{\infty}_0(\R^2)$ such that $\chi(r) = 1$ if $r \le 1$, $\supp( \chi) \in B(0, 2)$. Define 
\EQ{ \label{eq:zetadef} 
\zeta(t) := \lambda(t) - \frac{1}{\kappa}\langle \chi_{\mu(t)}\Lambda Q_{\uln{\lambda(t)}} \mid g(t)\rangle
}
Note that $\zeta(t)$ is $C^1$ (because $\partial_t g(t)$ is continuous in $L^2$ with respect to $t$).

\begin{prop}[Modulation Control Part 2] \label{p:modp2} Fix $k \ge 2$. 
Assume the same hypothesis as in  Proposition~\ref{p:modp}. Let $0<\de< 1- 2^{-\frac{1}{k-1}}$ be arbitrary and let $\eta_0$ be as in Lemma~\ref{l:modeq}.  Let $b(t)$ be as in~\eqref{eq:bdef} and let $\zeta(t)$ be as in~\eqref{eq:zetadef}. % \Blue{and let $\zeta(t) := \lambda(t) - \frac{1}{\kappa}\langle \chi_{\mu(t)}\Lambda Q_{\uln{\lambda(t)}} \mid g(t)\rangle$}.  
Then, there exists $\eta_1 = \eta_1(\de) < \eta_0$ such that  if $\bfd_+(\vec \psi(t)) \le \eta_1$ for all $t \in J$ we have 
\begin{align}
&\abs{\zeta(t)/\lambda(t) - 1} \le \de , \label{eq:bound-on-l} \\ 
& \abs{\zeta'(t) - b(t) } \le \de \la(t)^{\frac k2}/\mu(t)^{\frac k2}  \le 2 \de \zeta(t)^{\frac k2}/\mu(t)^{\frac k2},  \label{eq:kala'}  \\ 
&\abs{b(t)} \le 4\sqrt{\kappa k }  (\la(t)/\mu(t))^{\frac{k}{2}} + \de \la^{\frac{k}{2}}(t)/\mu(t)^\frac k2  \le 10 \sqrt{\kappa k } \zeta(t)^{\frac k2}/\mu(t)^{\frac k2},\label{eq:b-bound}  \
\end{align}
In addition, $b(t)$ is locally Lipschitz and the derivative $b'(t)$ satisfies
\begin{align}
&|b'(t)| \leq C_0 \lambda(t)^{k-1}/\mu(t)^k \leq 2C_0 \zeta(t)^{k-1}/ \mu(t)^k \label{eq:b'} \\
&b'(t) \ge 8k^2 \la^{k-1}(t)/\mu^k(t) - \de \la^{k-1}(t)/\mu(t)^k \ge 2k^2 \zeta(t)^{k-1}/ \mu(t)^k \label{eq:b'lb}  
\end{align} 
where $C_0 > 0$ depends only on $k$. 
\end{prop}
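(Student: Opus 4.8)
The plan is to derive everything from the modulation equations obtained by differentiating the orthogonality conditions \eqref{eq:ola}, \eqref{eq:omu} in time, combined with the energy expansion already established in Lemma~\ref{l:modeq}. First I would write out the equation of motion for $\vec g=(g,\dot g)$, namely \eqref{eq:ptg}--\eqref{eq:ptgdot}, and expand the nonlinearity around $Q_\lambda$ keeping the interaction term with $Q_\mu$ explicit. Pairing \eqref{eq:ptgdot} against $\Lambda Q_{\uln{\lambda}}$ and using $\mathcal{L}_\lambda \Lambda Q_\lambda = 0$ together with the orthogonality \eqref{eq:ola} to kill the $\lambda'$ and $\mu'$ terms (up to errors controlled by Proposition~\ref{p:modp} and Claim~\ref{c:M12est}), the leading contribution is the interaction integral $\langle \Lambda Q_{\uln{\lambda}} \mid \tfrac{1}{r^2}(f(Q_\lambda)-f({-}Q_\mu+Q_\lambda)+k^2 Q_\mu)\rangle$, which to leading order equals a positive multiple of $(\lambda/\mu)^{k}$. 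This is the mechanism behind the defocusing: $\lambda$ is pushed to grow. A careful bookkeeping of this pairing, using \eqref{eq:LaQ3} and the explicit form of $Q$, gives the main term $8k^2\lambda^{k-1}/\mu^k$ in $b'(t)$.

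Next I would treat $b(t)$ itself. Its first piece $-\langle\Lambda Q_{\uln\lambda}\mid\dot g\rangle$ is, via the $\partial_t g$ equation \eqref{eq:ptg} and \eqref{eq:ola} differentiated in $t$, essentially $\kappa\lambda'(t)$ plus controlled errors; the second piece $-\langle\dot g\mid\A_0(\lambda)g\rangle$ is the virial correction, and using \eqref{eq:opA0-wm}, the boundedness of $\A_0(\lambda)$ in $\mathscr L(H;L^2)$ from Lemma~\ref{lem:op-A-wm}, \eqref{eq:la'}--\eqref{eq:mu'} and \eqref{eq:gH}, it is $O(\lambda^{k/2}/\mu^{k/2})$. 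Differentiating $b(t)$ in time is where the real work is: one computes $\tfrac{\vd}{\vd t}\langle\Lambda Q_{\uln\lambda}\mid\dot g\rangle$ using \eqref{eq:ptgdot}, and $\tfrac{\vd}{\vd t}\langle\dot g\mid\A_0(\lambda)g\rangle$ using both \eqref{eq:ptg} and \eqref{eq:ptgdot} plus $\lambda\partial_\lambda\A_0(\lambda)$ boundedness. The virial piece is designed so that the bad quadratic-in-$\dot g$ term $-\langle\dot g\mid\A_0(\lambda)\dot g\rangle$ and the term from pairing $\A_0(\lambda)g$ against the linearized operator $\partial_r^2+\tfrac1r\partial_r-\tfrac{k^2}{r^2}$ combine via \eqref{eq:A-pohozaev-wm} and \eqref{eq:A-by-parts-wm} to produce only a harmless $c_0\lambda^{-1}\|g\|_H^2$ error (which by \eqref{eq:gH} is $\lesssim c_0\lambda^{-1}(\lambda/\mu)^k\ll \lambda^{k-1}/\mu^k$ when $\lambda/\mu$ is small and $c_0$ small), while \eqref{eq:approx-potential-wm} handles the potential cross-term. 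What survives as the genuine main term of $b'(t)$ is exactly the bubble interaction $\langle\Lambda Q_{\uln\lambda}\mid \tfrac{1}{r^2}(f(Q_\lambda)-f({-}Q_\mu+Q_\lambda)+k^2 Q_\mu)\rangle = 8k^2\lambda^{k-1}/\mu^k(1+o(1))$, giving \eqref{eq:b'lb} after choosing $\eta_1$ small enough so that the error is below $\delta\lambda^{k-1}/\mu^k$; the two-sided bound \eqref{eq:b'} follows from the same expansion keeping track of all terms in absolute value.

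For \eqref{eq:bound-on-l} and \eqref{eq:kala'}, I would use the definition \eqref{eq:zetadef} of $\zeta(t)$: the correction term $\tfrac1\kappa\langle\chi_{\mu}\Lambda Q_{\uln\lambda}\mid g\rangle$ has size $\lesssim \lambda\|g\|_H\lesssim \lambda(\lambda/\mu)^{k/2}$ by Cauchy-Schwarz (the cutoff $\chi_\mu$ and the fact that $\Lambda Q_{\uln\lambda}$ is concentrated at scale $\lambda\ll\mu$), so $|\zeta/\lambda-1|\lesssim(\lambda/\mu)^{k/2}$, which is $\le\delta$ once $\eta_1$ is small by \eqref{eq:lamud1}; in particular $\zeta/\lambda\in[1-\delta,1+\delta]$ and the restriction $\delta<1-2^{-1/(k-1)}$ is exactly what makes $\zeta^{k-1}\le 2\lambda^{k-1}$ and $\lambda^{k-1}\le 2\zeta^{k-1}$, allowing the conversions between $\lambda$ and $\zeta$ on the right-hand sides. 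Differentiating \eqref{eq:zetadef}, $\zeta'(t)=\lambda'(t)-\tfrac1\kappa\tfrac{\vd}{\vd t}\langle\chi_\mu\Lambda Q_{\uln\lambda}\mid g\rangle$; expanding the latter derivative via \eqref{eq:ptg}, the leading part cancels against the piece of $b(t)$ that equals $\kappa\lambda'$, and what remains is $b(t)$ up to errors $\lesssim\delta\lambda^{k/2}/\mu^{k/2}$ — this is precisely the role of the $\chi_\mu$-truncated correction, to remove the leading discrepancy between $\lambda'$ and $b$. Then \eqref{eq:b-bound} follows from \eqref{eq:kala'} combined with a direct estimate of $|b(t)|$: the main term $-\langle\Lambda Q_{\uln\lambda}\mid\dot g\rangle$ is bounded using the energy expansion \eqref{eq:enexp3} (which controls $\|\dot g\|_{L^2}^2+\|g\|_H^2$ by $16k\sigma^k+o(\sigma^k)$, giving $\|\dot g\|_{L^2}\le 4\sqrt{k}\,\sigma^{k/2}(1+o(1))$ and $\|\Lambda Q\|_{L^2}=\sqrt\kappa$), plus the $O(\delta\sigma^{k/2})$ virial correction.

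The main obstacle I expect is the computation of $b'(t)$: one must expand $\tfrac{\vd}{\vd t}\langle\dot g\mid\A_0(\lambda)g\rangle$ carefully, identify which terms are genuinely quadratic in $\vec g$ (and hence potentially dangerous, as they are only $O(\lambda^{-1}\|g\|_H^2)$ rather than $O(\lambda^{k-1}/\mu^k)$), and verify that the algebraic structure — the choice of $\A_0(\lambda)$ as an approximate generator of $L^2$-scaling, the Pohozaev-type identity \eqref{eq:A-pohozaev-wm}, and the cancellation \eqref{eq:A-by-parts-wm} — forces those to combine into controllable errors, leaving only the positive interaction term. The subtlety is that $\|g\|_H^2\sim(\lambda/\mu)^k$ while the target lower bound for $b'(t)$ is $\sim\lambda^{k-1}/\mu^k=(\lambda/\mu)^k\cdot\mu^{-1}\cdot\lambda^{-1}\cdot\lambda^k\cdot\lambda^{-(k-1)}$... more precisely $\lambda^{k-1}/\mu^k$ vs $\lambda^{-1}(\lambda/\mu)^k=\lambda^{k-1}/\mu^k$ — so they are actually the \emph{same} order, and the gain must come entirely from the smallness of the constant $c_0$ (chosen after $\delta$, before $\eta_1$) beating the fixed constant $8k^2$ in the main term. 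Keeping this hierarchy of constants $\delta\gg c_0\gg\eta_1$ straight, and propagating it through \eqref{eq:A-by-parts-wm}, \eqref{eq:A-pohozaev-wm}, \eqref{eq:approx-potential-wm}, is the delicate bookkeeping that the proof in Section~\ref{s:modproof} must carry out; I would cite \cite[Lemma 4.6, Lemma 5.5]{JJ-AJM} and follow the scheme there, adapting the sign of the interaction (bubble vs.\ anti-bubble) as needed.
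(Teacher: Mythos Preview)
Your proposal is essentially the paper's approach: differentiate $b(t)$, extract the interaction term $\langle\Lambda Q_{\ula}\mid r^{-2}(f(Q_\lambda-Q_\mu)-f(Q_\lambda)+f(Q_\mu))\rangle = 8k^2\lambda^{k-1}/\mu^k(1+o(1))$, and use the virial machinery of Lemma~\ref{lem:op-A-wm} to control the quadratic-in-$g$ errors. Two corrections are in order.

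First, a minor one: the term $\langle\dot g\mid\A_0(\lambda)\dot g\rangle$ does not need to combine with anything; it vanishes identically because $\A_0(\lambda)$ is skew-adjoint on $L^2$ (integrate by parts).

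Second, and this is the real gap in your outline: you write that \eqref{eq:approx-potential-wm} ``handles the potential cross-term'' and that what remains is a harmless $c_0\lambda^{-1}\|g\|_H^2$. This is not accurate. After applying \eqref{eq:A-pohozaev-wm} and \eqref{eq:approx-potential-wm} you are left with
\[
\frac{1}{\lambda}\int_0^{R\lambda}\Big((\partial_r g)^2+\frac{k^2}{r^2}g^2\Big)r\,\ud r \;+\; \frac{1}{\lambda}\int_0^\infty\frac{1}{r^2}\big(f'(Q_\lambda)-k^2\big)g^2\,r\,\ud r,
\]
and the second integral is \emph{negative} (since $f'(Q_\lambda)-k^2=-2k^2\sin^2 Q_\lambda$) and of size $\lambda^{-1}\|g\|_H^2\sim\lambda^{k-1}/\mu^k$, i.e.\ exactly the order of the main term, with no small constant. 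What saves you is a \emph{localized coercivity} estimate (this is \cite[Lemma 5.4, eq.~(5.28)]{JJ-AJM}, invoked at the very end of the paper's proof): using the orthogonality $\langle\cZ_{\ula}\mid g\rangle=0$, one shows the sum above is $\ge -c_1\lambda^{-1}\|g\|_H^2$ with $c_1$ as small as desired by taking $R$ large. Without this step the lower bound \eqref{eq:b'lb} cannot close. Your hierarchy $\delta\gg c_0\gg\eta_1$ is correct but you also need $R=R(c_1)$ chosen large, and it is the pair $(c_0,c_1)$---not $c_0$ alone---that must beat the fixed constant $8k^2$.
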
 

\begin{rem}
If $k \geq 3$, then we can take $\cZ = \Lambda Q$ in Lemma~\ref{l:modeq} and use no cut-off function in the definition of $\zeta$. 
Then $\zeta(t) \equiv \lambda(t)$.
This fails for $k = 2$, which was pointed out to us by one of the referees.
\end{rem}

We'll deduce the following consequence of Proposition~\ref{p:modp} and Proposition~\ref{p:modp2}. 

\begin{prop}
\label{prop:modulation}
Let $C > 0$. For any $\eps_0>0$ small enough, and for all $\eps>0$ sufficiently small relative to $\eps_0$ the following conclusions hold true.   %depending only on $C$ and $k$,
Let $\vec\psi(t): [T_0, T_+) \to \HH_0$ be a solution of \eqref{eq:wmk}. 
Assume that $t_0 \in [T_0, T_+)$ is such that $\bfd(\vec \psi(t_0)) \leq \eps$
and $\dd t(\zeta(t)/\mu(t))\vert_{t = t_0} \geq 0$.
Then there exist $t_1$ and $t_2$, $T_0 \leq t_0 \leq t_1 \leq t_2 < T_+$,
such that
\begin{align}
\bfd(\vec \psi(t)) &\geq 2\eps,\qquad \text{for }t \in [t_1, t_2], \label{eq:d-t1-t2}\\
\bfd(\vec \psi(t)) &\leq \frac 14\eps_0,\qquad \text{for }t \in [t_0, t_1], \label{eq:d-t0-t1}\\
\bfd(\vec \psi(t_2)) &\geq 2\eps_0, \label{eq:d-at-t2} \\
\int_{t_1}^{t_2} \|\partial_t\psi(t)\|_{L^2}^2\ud t &\geq C\int_{t_0}^{t_1}\sqrt{\bfd(\vec \psi(t))}\ud t
\label{eq:err-absorb}
\end{align}
Analogous statements hold with times  $t_2  \le  t_1 \le t_0$ if $\dd t(\zeta(t)/\mu(t))\vert_{t = t_0} \le 0$. 
\end{prop}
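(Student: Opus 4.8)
The plan is to run an ODE ejection argument for the single scalar quantity $h(t) := \zeta(t)/\mu(t)$ together with its virial‐corrected derivative $b(t)$, staying in the regime where the modulation machinery applies. First, I would reduce and normalize: since $\bfd$, $\|\psi_t\|_{L^2}^2$ and $\sqrt{\bfd}$ are scale invariant and $\ud t$ carries one factor of the scaling parameter, the inequality \eqref{eq:err-absorb} is scale invariant, so we may assume $\mu(t_0) = 1$. Because $\bfd(\vec\psi(t_0)) \le \eps$ is small, Lemma~\ref{l:dpm} forces exactly one of $\bfd_\pm(\vec\psi(t_0))$ to be small; flipping the sign of $\psi$ if needed we may assume $\bfd_+(\vec\psi(t_0))$ is small, and by continuity together with Lemma~\ref{l:dpm} we remain in the $\bfd_+$ regime as long as $\bfd(\vec\psi(t)) \le \tfrac14\eps_0 < \eta_1$. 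On such an interval Lemma~\ref{l:modeq}, Proposition~\ref{p:modp} and Proposition~\ref{p:modp2} provide $\lambda(t),\mu(t),g(t),b(t),\zeta(t)$ with $\bfd(\vec\psi(t)) \simeq (\lambda(t)/\mu(t))^k \simeq h(t)^k$; moreover \eqref{eq:mu'} makes $\int|\mu'|$ summable (the mechanism behind \eqref{eq:bad-error-contr}), so $\mu(t) \in [\tfrac12,2]$ throughout, and with \eqref{eq:bound-on-l} we have $h(t)\simeq\lambda(t)$.

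Next I would exploit the monotonicity of $b$. From \eqref{eq:kala'} and \eqref{eq:mu'} (using $\mu\simeq1$) one gets $h'(t) = b(t) + O(\delta\,h(t)^{k/2})$, so the hypothesis $\dd t(\zeta(t)/\mu(t))\vert_{t=t_0} \ge 0$ gives $b(t_0) \ge -C\delta\,h(t_0)^{k/2}$; and \eqref{eq:b'lb} gives $b'(t) \ge 2k^2\zeta(t)^{k-1}/\mu(t)^k>0$ on the regime, so $b$ is strictly increasing and $b(t)\ge b(t_0)$ for $t\ge t_0$. The core of the argument is then to show that $h$, hence $\bfd(\vec\psi(t))$, must grow past $\tfrac14\eps_0$. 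The plan is to track the almost‐conserved quantity $b^2 - 4k\,h^k$: combining $h' = b + O(\delta h^{k/2})$ with $2k^2h^{k-1}\le b' \le 2C_0 h^{k-1}$ (from \eqref{eq:b'},\eqref{eq:b'lb}) one computes $\dd t(b^2-4kh^k) = 2b(b'-2k^2h^{k-1}) + O(\delta h^{3k/2-1}) \ge -O(\delta h^{3k/2-1})$, so this quantity stays $\ge -Ch(t_0)^k$; one rules out the scenario $h(t)\to0$ (which would force $b'\to0$ hence $h\to0$, but then $h'\approx b\le0$ with $|b|$ eventually dominating the $O(\delta h^{k/2})$ error, driving $h$ to $0$ in finite time — impossible), so $b$ becomes positive of size $\simeq h^{k/2}$ and thereafter $h'\simeq h^{k/2}>0$; an ODE comparison ($h' \gtrsim h^{k/2}$, $k\ge2$) shows $h$ reaches $\tfrac14\eps_0$ in finite time. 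Hence there is a first $t_1\ge t_0$ with $\bfd(\vec\psi(t_1)) = 2\eps$ (so $\bfd<2\eps\le\tfrac14\eps_0$ on $[t_0,t_1]$, taking $\eps$ small relative to $\eps_0$) and a first $t_2>t_1$ with $\bfd(\vec\psi(t_2))=2\eps_0$; since $b(t_1)>0$, $b$ stays positive and $\bfd\simeq h^k$ increases on $[t_1,t_2]$, giving $\bfd\ge2\eps$ there, i.e. \eqref{eq:d-t1-t2}, \eqref{eq:d-t0-t1}, \eqref{eq:d-at-t2}. (For $\dd t(\zeta/\mu)\vert_{t_0}\le0$ the same argument is run backward in time.)

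For the kinetic‐energy bound \eqref{eq:err-absorb} I would use that the definition \eqref{eq:bdef} of $b$ together with boundedness of $\A_0(\lambda)$ on $H$ (Lemma~\ref{lem:op-A-wm}) gives $|b(t)| \le (\sqrt\kappa + C\|g(t)\|_H)\|\psi_t(t)\|_{L^2} \le 2\sqrt\kappa\,\|\psi_t(t)\|_{L^2}$, hence on $[t_1,t_2]$, where $b\simeq h^{k/2}$, one has $\|\psi_t(t)\|_{L^2}^2 \gtrsim h(t)^k \simeq \bfd(\vec\psi(t))$. Writing $\ud t = \ud h/h'$ and using $h'\lesssim h^{k/2}$ then yields $\int_{t_1}^{t_2}\|\psi_t\|_{L^2}^2\,\ud t \gtrsim \int_{h(t_1)}^{h(t_2)} h^{k/2}\,\ud h \gtrsim h(t_2)^{\frac k2+1} \simeq \eps_0^{\frac12+\frac1k}$, a fixed positive constant depending only on $k$ and $\eps_0$. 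On the other hand, on $[t_0,t_1]$ we have $\sqrt{\bfd(\vec\psi(t))}\lesssim h(t)^{k/2}\lesssim\eps^{1/2}$ while the length of $[t_0,t_1]$ is $\lesssim\eps^{\frac1k-\frac12}$ for $k\ge3$ (resp. $\lesssim|\log\eps|$ for $k=2$) by the ODE bounds of the previous step, so $\int_{t_0}^{t_1}\sqrt{\bfd(\vec\psi(t))}\,\ud t \lesssim \eps^{1/k}$ (resp. $\eps^{1/2}|\log\eps|$), which tends to $0$ as $\eps\to0$. Thus \eqref{eq:err-absorb} holds once $\eps$ is taken small enough relative to $C$ and $\eps_0$.

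The hard part will be the second step: upgrading the one‐sided estimates \eqref{eq:kala'}, \eqref{eq:b-bound}, \eqref{eq:b'}, \eqref{eq:b'lb} into a genuine ejection statement. In particular one must rule out that $h(t)$ lingers near $0$ or drifts back down despite $b$ increasing — i.e. one must control the error terms in the almost‐conserved quantity $b^2-4kh^k$ and the variation of $\mu(t)$ carefully — and one must establish the "no return" property that $\bfd(\vec\psi(t))$ does not fall back below $2\eps$ on $[t_1,t_2]$ before it reaches $2\eps_0$. Everything else is a careful but routine bookkeeping of the estimates already provided by Propositions~\ref{p:modp} and~\ref{p:modp2}.
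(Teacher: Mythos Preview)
Your overall architecture is exactly the paper's: normalize $\mu(t_0)=1$, stay in the modulation regime, run an ODE ejection argument for $h=\zeta/\mu$ driven by $b$, define $t_1,t_2$ by level sets, and close \eqref{eq:err-absorb} by showing the left side is bounded below by a constant depending only on $\eps_0$ while the right side is $\lesssim \eps^{1/k}$. The kinetic lower bound you sketch (using $|b|\lesssim\|\psi_t\|_{L^2}$ and the change of variables $\ud t=\ud h/h'$) is valid once you know $b\gtrsim h^{k/2}$ and $h$ is increasing on $[t_1,t_2]$; in fact it yields a slightly sharper exponent than the paper's cruder argument on a subinterval of fixed length near $t_2$.

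The genuine gap is in your ejection step, specifically the choice of Lyapunov functional. The claimed inequality
\[
\dd t\big(b^2-4kh^k\big)=2b\big(b'-2k^2h^{k-1}\big)+O\big(\delta\,h^{3k/2-1}\big)\ \ge\ -O\big(\delta\,h^{3k/2-1}\big)
\]
fails when $b<0$: the upper bound \eqref{eq:b'} allows $b'-2k^2h^{k-1}$ to be positive of size $\sim h^{k-1}$ with a constant that is \emph{not} small, so $2b(b'-2k^2h^{k-1})$ can be as negative as $-Ch^{3k/2-1}$ without any factor of $\delta$. Even accepting $\dd t(b^2-4kh^k)\ge -Ch^{3k/2-1}$, you cannot integrate this to $\ge -Ch(t_0)^k$ without already controlling $\int h^{3k/2-1}\,\ud t$, which requires a lower bound on $|h'|$ --- precisely what you are trying to prove. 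Your dismissal of the scenario $h(t)\to 0$ is likewise circular: since $|b|\lesssim h^{k/2}$ always, $b$ and $h$ could in principle drift to $0$ together without any finite-time contradiction.

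The paper fixes this with a different functional: set $\xi(t):=b(t)+\kappa_1\zeta(t)^{k/2}$ with $\kappa_1=\sqrt{k\kappa/2^{k-1}}$. A direct computation using \eqref{eq:kala'} and \eqref{eq:b'lb} gives the clean multiplicative inequality
\[
\xi'(t)\ \ge\ c_k\,\zeta(t)^{\frac k2-1}\,\xi(t),
\]
and the same inequality holds for the shifted quantity $\xi_1:=b+\tfrac{\kappa_1}{2}\zeta^{k/2}$. The hypothesis $(\zeta/\mu)'(t_0)\ge 0$ forces $\xi_1(t_0)>0$, hence $\xi_1(t)>0$ for all later $t$, which gives $\xi(t)\ge \tfrac{\kappa_1}{2}\zeta(t)^{k/2}$ and in particular $\xi>0$. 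Combined with the two-sided bound $\xi\simeq\zeta^{k/2}$ from \eqref{eq:b-bound}, the inequality for $\xi$ becomes $\xi'\ge \kappa_2\,\xi^{(2k-2)/k}$, which integrates to explicit growth and produces $t_1,t_2$ with all the stated properties. Once you have $\xi\gtrsim\zeta^{k/2}$ (hence $b\gtrsim -\tfrac{\kappa_1}{2}\zeta^{k/2}$ and, after $b'$ has acted for a fixed time near $t_2$, $b\gtrsim \zeta^{k/2}$), your kinetic and error estimates go through essentially as written.
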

\begin{rem}
We will take $\eps < \eta_0$, so that $\bfd(\vec\psi(t_0)) \leq \eps$
implies that the modulation parameters $\lambda(t), \mu(t)$ and also $\zeta(t)$ are well-defined $C^1$ functions in a neighborhood of $t= t_0$.

%We also note that it follows from the proof that $\eps_0$ can be taken
%as small as we wish.
\end{rem}
\begin{rem}
We will actually deduce \eqref{eq:err-absorb} from the following stronger statement.
There exist $\eps_0, C_k$ depending only on $k$ such that
for any $\eps > 0$ small enough, Proposition~\ref{prop:modulation} holds and additionally
\begin{align}
\int_{t_0}^{t_1}\sqrt{\bfd(\vec\psi(t))}\,\ud t &\leq C_k \eps^{\frac 1k}, \label{eq:uni-borne} \\
\int_{t_1}^{t_2}\|\partial_t \psi(t)\|_{L^2}^2\,\ud t &\geq \frac{1}{C_k}.
\end{align}
\end{rem}

\subsection{Proofs of the Modulation Estimates}  \label{s:modproof} % of the Propositions~\ref{p:modp,~\ref{p:modp1} and Lemma~\ref{l:modeq}} 

We first assume the conclusions of Propositions~\ref{p:modp} and ~\ref{p:modp2} and prove Proposition~\ref{prop:modulation}. 
We record here a few useful formulae:
%\EQ{
%& \p_t Q_\la = -\frac{\dot \la}{\la}  [\La Q]_{\la} = -\dot \la [\La Q]_{\ula} \\
% &\p_{tt} Q_\la  =-\frac{\ddot \la}{\la} [\La Q]_{\la} + \frac{\dot \la^2}{\la^2}[ \La_0 \La Q]_{\la}
% = - \ddot \la [\La Q]_{\ula} + \frac{\dot \la^2}{\la}[ \La_0 \La Q]_{\ula}
% }
% 
\begin{align}
&\La Q:= r \p_r Q = k \sin Q  =  \frac{2k r^k}{ 1+ r^{2k}}  \label{eq:LaQ}\\
& \La^2 Q = \frac{k^{2}}{2} \sin 2 Q = 2k^2 r^k \left(\frac{ 1-r^{2k}}{ (1 + r^{2k})^2} \right) \label{eq:La2Q} \\ 
&\La^3 Q  =2k^3 r^k \left( \frac{1+ r^{2k} - 5r^{4k} - r^{6k}}{(1+r^{2k})^4}\right) \label{eq:La3Q} \\
& \La_0 \La Q = ( r\p_r +1)(r \p_r Q) = 2 \La Q + r^2 \p_r^2 Q \label{eq:DLa} \\
%& \La Q_{\ula} := \frac{1}{\la}  \frac{r}{\la} \p_r Q(r/ \la)  \\
%& \frac{\ud}{\ud t} Q_\la = - \la' [\La Q]_{\ula} \\
%& \dd{t} F_{\ula}  = - \frac{\la'}{\la} [\La_0 F]_{\ula} \quad \textrm{for any} \, F\\
%& \frac{\ud}{\ud t} \La Q_{\ula} = -\frac{ \la'}{\la}( 2 \La Q_{\ula} + (r^2 \p_r^2 Q)_{\ula})=  - \frac{\la'}{\la} [\La_0 \La Q]_{\ula}\\
%& \dd t \A(\la) h = \frac{\la'}{\la} \la \p_{\la} \A(\la) h + \A(\la) \p_t h \\
%& \dd t \A_0(\la) h =  \frac{\la'}{\la} \la \p_{\la} \A_0(\la) h + \A_0(\la) \p_t h 
\end{align}

\begin{proof}[Proof of Proposition~\ref{prop:modulation}]

From \eqref{eq:gH} and~\eqref{eq:bound-on-l} it follows that there exists $\eps_1 > 0$ such that if
$\zeta(t)/\mu(t) \leq \eps_1$,  the modulation estimates hold
in a neighborhood of time $t$.
If needed, we will assume that $\eps_1$ is sufficiently small, but depending only on $k$.
Let $t_2$ be the first time $t_2 \geq t_0$ such that $\zeta(t_2) / \mu(t_2) = \eps_1$
(if there is no such time, we set  $t_2 = T_+$).
By the estimate~\eqref{eq:lamud1} in Lemma~\ref{l:modeq} along with~\eqref{eq:bound-on-l}, which accounts for the difference between $\la(t)$ and $\zeta(t)$, there exists $\eps_0$ sufficiently small such that $\zeta(t_2) / \mu(t_2) = \eps_1$
implies \eqref{eq:d-at-t2}. 
The number $\eps > 0$ will be chosen later in the proof and should
be thought of as being much smaller than $(\eps_1)^\frac k2$,
whereas we can think of $\eps_0$ as comparable to $(\eps_1)^\frac k2$.

Without loss of generality we can assume that $\mu(t_0) = 1$.
%We let $M = 1$ in the 
Let $t_3 \leq t_2$ be the last time such that $\mu(t) \in [\frac 12, 2]$ for all $t \in [t_0, t_3]$. If there is no such final time we set $t_3= t_2$. 
(Later we will see that we can always take $t_3 = t_2$ as long as $\eps_1>0$ is small enough.)

For $t \in [t_0, t_3]$, from \eqref{eq:b'lb} we obtain
\begin{equation}
\label{eq:b'lb2}
b'(t) \geq \frac{k^2}{2^{k-1}}\zeta(t)^{k-1}.
\end{equation}
We also obtain from \eqref{eq:kala'}
\begin{equation}
\zeta'(t) \geq \frac{1}{\kappa}b(t) - \sqrt{\frac{k}{2^{k-1}\kappa}}\zeta(t)^\frac k2.
\end{equation}
Let $\kappa_1 := \sqrt{\frac{k\kappa}{2^{k-1}}}$ and consider $\xi(t) := b(t) + \kappa_1 \zeta(t)^\frac k2$.
Using the two inequalities above we obtain
\begin{equation}
\label{eq:psi'}
\begin{aligned}
\xi'(t) &\geq \frac{k^2}{2^{k-1}}\zeta(t)^{k-1} + \kappa_1 \frac k2 \zeta(t)^{\frac k2 - 1}
\Big(\frac{1}{\kappa}b(t) - \sqrt{\frac{k}{2^{k-1}\kappa}}\zeta(t)^\frac k2\Big) \\
& = \frac{\kappa_1 k}{2\kappa}\zeta(t)^{\frac k2 - 1}b(t)
+ \Big(\frac{k^2}{2^{k-1}}-\frac{k\kappa_1}{2}\sqrt{
\frac{k}{2^{k-1}\kappa}}\Big)\zeta(t)^{k-1} \\
&= k\sqrt{\frac{k}{\kappa 2^{k+1}}}\zeta(t)^{\frac k2 - 1}b(t)
+ \frac{k^2}{2^k}\zeta(t)^{k-1} \\
&= k\sqrt{\frac{k}{\kappa 2^{k+1}}}\zeta(t)^{\frac k2 - 1}\xi(t).
\end{aligned}
\end{equation}
It is easy to compute that \eqref{eq:b-bound} yields $|b(t)| \leq \kappa_1 2^{k+4}\zeta(t)^\frac k2$,
so we have
\begin{equation}
\label{eq:psi-bound}
\xi(t) \leq \kappa_1 2^{k+5}\zeta(t)^\frac k2
\end{equation}
and \eqref{eq:psi'} leads to
\begin{equation}
\label{eq:psi'2}
\xi'(t) \geq \kappa_2 \xi(t)^\frac{2k-2}{k},
\end{equation}
for some constant $\kappa_2 > 0$ depending only on $k$.

Let $\xi_1(t) := b(t) + \frac{\kappa_1}{2}\zeta(t)^\frac k2 = \frac 12 b(t) + \frac 12 \xi(t)$.
Since $b'(t) \geq 0$, we have
\begin{equation}
\label{eq:psi1'}
\xi_1'(t) \geq \frac 12 \xi'(t) \geq \frac k2 \sqrt{\frac{k}{\kappa 2^{k+1}}}\zeta(t)^{\frac k2 - 1}\xi(t) \geq \frac k2 \sqrt{\frac{k}{\kappa 2^{k+1}}}\zeta(t)^{\frac k2 - 1}\xi_1(t).
\end{equation}

Since $\mu(t_0) = 1$, we have $0 \leq \dd t(\lambda(t)/\mu(t))\vert_{t = t_0} = \zeta'(t_0) - \zeta(t_0)\mu'(t_0)$, so \eqref{eq:mu'} and~\eqref{eq:bound-on-l} imply that $\kappa \zeta'(t_0) \geq -\frac{\kappa_1}{4}\zeta(t_0)^\frac k2$ as long as $\eps$ is taken small enough. Now \eqref{eq:kala'} gives $b(t_0) \geq -\frac{\kappa_1}{3}\zeta(t_0)^\frac k2$,
so $\xi_1(t_0) > 0$ and \eqref{eq:psi1'} yields $\xi_1(t) > 0$ for all $t \in [t_0, t_3]$.
Thus
\begin{equation}
\label{eq:psi-lbound}
\xi(t) \geq \frac{\kappa_1}{2}\zeta(t)^\frac k2, \qquad \text{for }t \in [t_0, t_3].
\end{equation}
In particular, \eqref{eq:psi'2} implies that $\xi(t)$ is strictly increasing on $[t_0, t_3]$
and by \eqref{eq:psi-bound} we see that  $\zeta(t)$ and thus $\la(t)$  is far from $0$ on $[t_0, t_3]$.

Bounds \eqref{eq:psi-bound} and~\eqref{eq:bound-on-l} imply that there exists a constant $\kappa_3$ depending only on $k$
such that $\xi(t) \geq \kappa_3 \sqrt{\eps}$ forces $\bfd(\vec \psi(t)) \geq 2\eps$.
Let $t_1 \in [t_0, t_3]$ be the last time such that $\xi(t_1) = \kappa_3 \sqrt{\eps}$
(set $t_1 = t_3$ if no such time exists).
Then by \eqref{eq:psi-lbound} and~\eqref{eq:bound-on-l}  we have
\EQ{
\frac{1}{2} \lambda(t)^\frac k2  \le \zeta(t)^{\frac{k}{2}} \leq \frac{2\kappa_3}{\kappa_1}\sqrt\eps\quad\text{for }t\in[t_0, t_1],
}
which yields \eqref{eq:d-t0-t1} if $\eps$ is small enough.

\textbf{Case $k = 2$.}
In this case \eqref{eq:psi'2} reads
\begin{equation}
\label{eq:psi'3}
\xi'(t) \geq \kappa_2 \xi(t).
\end{equation}
Integrating between $t$ and $t_3$ we get $\xi(t) \leq \eee^{\kappa_2(t - t_3)}\xi(t_3)$.
Thus \eqref{eq:psi-lbound} and \eqref{eq:psi-bound} yield
\begin{equation}
\zeta(t) \leq \kappa_4 \eee^{\kappa_2(t - t_3)}\zeta(t_3) \leq 2 \kappa_4 \eee^{\kappa_2(t - t_3)}\eps_1,
\end{equation}
with a universal constant $\kappa_4 > 0$. Thus integrating \eqref{eq:mu'} and using $\mu(t_0) = 1$
we get $\mu(t_3) \in [2/3, 3/2]$ if $\eps_1$ is small enough, which implies that $t_3 = t_2$.
Also, suppose that there is no $t_2 \geq t_0$ such that $\zeta(t_2)/\mu(t_2) = \eps_1$.
Then, since $\zeta(t)$ (and hence $\la(t)$) is far from $0$, by known arguments,
see for instance \cite[Corollary A.4]{JJ-AJM},
the solution is global and \eqref{eq:psi'2} implies that $\xi(t)$ is unbounded.
Thus $\lambda(t)$ is also unbounded, which is a contradiction.
We infer that there must be $t_2 < T_+$ such that $\zeta(t_2)/\mu(t_2) = \eps_1$,
which implies \eqref{eq:d-at-t2} by choosing $\eps_0$ comparable to $(\eps_1)^{\frac k2}$.

We have $|\zeta'(t)| \lesssim |\zeta(t)|$, see \eqref{eq:kala'} and~\eqref{eq:b-bound},
hence there exists a constant $\kappa_5$
such that $\zeta(t) \geq \frac 14 \eps_1$ for $t \in [t_2 - \kappa_5, t_2]$.
Thus \eqref{eq:b'lb2} yields
\begin{equation}
b(t) - b(t_0) \geq \kappa_6 (t - (t_2 - \kappa_5)) \eps_1, \qquad \text{for }t \in [t_2 - \kappa_5, t_2].
\end{equation}
Thus, if $\eps$ is small enough, we get
$b(t) \geq \kappa_7 \eps_1$ for $t \in [t_2 - \frac 12 \kappa_5, t_2]$.
Note that $\kappa_7$ is independent of $\eps_1$.
From the definition of $b(t)$ and the Cauchy-Schwarz inequality we can deduce, if $\eps_1$ is small enough,
that $\|\dot g(t)\|_{L^2} \geq \kappa_8 \eps_1$ for $t \in [t_2 - \frac 12 \kappa_5, t_2]$, which leads to
\begin{equation}
\label{eq:err-lbound}
\int_{t_2 - \frac 12 \kappa_5}^{t_2} \|\dot g(t)\|_{L^2}^2 \ud t \geq \kappa_9 \eps_1^2.
\end{equation}
Integrating \eqref{eq:psi'3} between $t$ and $t_1$ and using \eqref{eq:psi-lbound}, \eqref{eq:psi-bound}
and the definition of $t_1$ we obtain
\begin{equation}
\frac{1}{2} \la(t) \le \zeta(t) \leq \kappa_{10} \eee^{\kappa_2(t-t_1)}\sqrt{\eps}, \qquad \text{for }t \in [t_0, t_1].
\end{equation}
Thus
\begin{equation}
\label{eq:err-bound}
\int_{t_0}^{t_1}\sqrt{\bfd(\vec \psi(t))}\ud t \leq \kappa_{11}\sqrt{\eps}.
\end{equation}
Comparing this bound with \eqref{eq:err-lbound} and choosing $\eps$ small enough,
we get \eqref{eq:err-absorb}.

%Moreover, we obtain $\lambda(t)/\mu(t) \leq 2\kappa_9 \sqrt{\eps}$
%for $t \in [t_0, t_1]$, so that \eqref{eq:gH} yields \eqref{eq:d-t0-t1} provided that $\eps_0 \geq (C^2+1)(2\kappa_9)^2\eps$.
%Note that $\eps_0$ can be made arbitrarily small
%by taking $\eps$ small enough.

\textbf{Case $k \geq 3$.}
Most of the argument can be repeated without major changes. We can rewrite \eqref{eq:psi'2} as
\begin{equation}
\label{eq:psi'3k}
\big(\xi(t)^{\frac{2-k}{k}}\big)' \leq -\frac{(k-2)\kappa_2}{k}.
\end{equation}
Integrating and using \eqref{eq:psi-lbound}, \eqref{eq:psi-bound} we obtain
\begin{equation}
\label{eq:lambda-bound-k}
\zeta(t)^\frac k2 \leq \kappa_4 \big(\zeta(t_3)^\frac{2-k}{2} + (t_3 - t)\big)^\frac{k}{2-k},
\end{equation}
with $\kappa_4$ only depending on $k$. Thus
\begin{equation}
\frac{1}{2}\int_{t_0}^{t_3}\lambda(t)^\frac k2 \ud t  \le\int_{t_0}^{t_3}\zeta(t)^\frac k2 \ud t \leq \kappa_4 \int_{\zeta(t_3)^{\frac{2-k}{2}}}^{+\infty}
\tau^\frac{k}{2-k}\ud \tau \leq \frac{k-2}{2}\kappa_4 \lambda(t_3) \leq (k-2)\kappa_4 \eps_1.
\end{equation}
As in the case $k = 2$, we can deduce that $t_3 = t_2$ and that $\zeta(t_2)/\mu(t_2) = \eps_1$,
which implies \eqref{eq:d-at-t2}.

The proof of \eqref{eq:err-lbound} applies without significant changes and yields
\begin{equation}
\label{eq:err-lbound-k}
\int_{t_2 - \frac 12 \kappa_5}^{t_2} \|\dot g(t)\|_{L^2}^2 \ud t \geq \kappa_9 \eps_1^{2k-2}.
\end{equation}
The proof of \eqref{eq:lambda-bound-k} yields
\begin{equation}
\frac{1}{2} \lambda(t)^\frac k2 \le \zeta(t)^{\frac{k}{2}} \leq \kappa_{10}\big(\eps^\frac{2-k}{2k} + (t_1 - t)\big)^\frac{k}{2-k}, \qquad \text{for }t \in [t_0, t_1].
\end{equation}
After integrating, this implies
\begin{equation}
\label{eq:err-bound-k}
\int_{t_0}^{t_1}\sqrt{\bfd(\vec \psi(t))}\ud t \leq \kappa_{11}\eps^\frac 1k,
\end{equation}
With $\kappa_{11}$ depending only on $k$.
Comparing this bound with \eqref{eq:err-lbound-k} and choosing $\eps$ small enough,
we get \eqref{eq:err-absorb}.
\end{proof}

\begin{proof}[Proof of Proposition~\ref{p:modp}]
Let $t_0 \in J$ be any point in $J$. By rescaling  $ \vec \psi(t) \mapsto \vec \psi(t)_{\mu(t_0)^{-1}}$ we can assume without loss of generality that $\mu(t_0) = 1$.
We can also assume that
\EQ{ \label{eq:mu12} 
\frac{1}{2} \le \mu(t) \le 2 
}
for all $t \in J$ (we work in a small neighborhood of $t_0$).

We begin by differentiating the orthogonality conditions~\eqref{eq:ola} and~\eqref{eq:omu} to derive a linear system for $(\la', \mu')$.

Differentiating~\eqref{eq:ola} yields 
 \EQ{
  0 &= \dd t \ang{ \ZZ_{\ula} \mid g} = - \lambda' \ang{\frac{1}{\la} [\La_0 \ZZ]_{\ula} \mid  g} + \ang{\ZZ_{\ula} \mid  \p_t g}  
  }
  Plugging in~\eqref{eq:ptg}  above and rearranging  we have 
  \EQ{
 - \ang{ \ZZ_{\ula} \mid \dot g} =    \la' \left(\ang{\ZZ_{\ula} \mid \La Q_{\ula}} -  \ang{\frac{1}{\la} [\La_0 \ZZ]_{\ula} \mid  g} \right) -  \mu' \ang{ \ZZ_{\ula} \mid \La Q_{\umu}} 
  }

Differentiating~\eqref{eq:omu} yields 
 \EQ{
  0 &= \dd t \ang{ \ZZ_{\umu} \mid g} = - \mu' \ang{\frac{1}{\mu} [\La_0 \ZZ]_{\umu} \mid  g} + \ang{\ZZ_{\umu} \mid  \p_t g}  
  }
  Plugging in~\eqref{eq:ptg}  above and rearranging  we have 
  \EQ{
 - \ang{ \ZZ_{\umu} \mid \dot g} =    \la' \ang{\ZZ_{\umu} \mid \La Q_{\ula}} +   \mu' \left({-} \ang{ \ZZ_{\umu} \mid \La Q_{\umu}} -  \ang{\frac{1}{\mu} [\La_0 \ZZ]_{\umu} \mid  g} \right) 
  }
  We then arrive at the following linear system for $(\la', \mu')$, 
  \EQ{
  \pmat{ M_{11} & M_{12} \\ M_{21} & M_{22}} \pmat{ \la' \\ \mu'} =  \pmat{ {-}\ang{\ZZ_{\ula} \mid  \dot g} \\ -  \ang{ \ZZ_{\umu} \mid \dot g} }
  }
  where
  \EQ{ \label{eq:Mmat} 
 & M_{11} :=  \ang{\ZZ_{\ula} \mid \La Q_{\ula}} -  \ang{\frac{1}{\la} [\La_0 \ZZ]_{\ula} \mid  g}  \\
  & M_{12} := -\ang{ \ZZ_{\ula} \mid \La Q_{\umu}}  \\ 
  &M_{21}:= \ang{\ZZ_{\umu} \mid \La Q_{\ula}}  \\
  & M_{22}:= - \ang{ \ZZ_{\umu} \mid \La Q_{\umu}} -  \ang{\frac{1}{\mu} [\La_0 \ZZ]_{\umu} \mid  g}
  }
 We first claim that $M = (M_{ij})$ is diagonally dominant with coefficients of size $\simeq 1$ on the diagonal. This will allow us to invert $M$ and estimate $\la', \mu'$.  Indeed, in Claim~\ref{c:M12est} we showed that the off-diagonal terms $M_{12}$ and $M_{21}$ satisfy $\abs{M_{12}} \lesssim \la^{k+1}$ and $\abs{M_{21}} \lesssim \la^{k-1}$. To estimate the diagonal terms define $ \be := \ang{  \ZZ \mid \La Q}$ and note that $\be>0$ is a fixed positive number by~\eqref{eq:ZZdef}.  Then 
  \EQ{
  \abs{M_{11} - \be } + \abs{ M_{22} + \be } \lesssim \la^{\frac{k}{2}}
  }
  To see this, note that by the definitions of $M_{11}, M_{22}$ and the fact that $\ZZ \in C^{\infty}_0$ we have 
  \EQ{
   \abs{M_{11}-\be} + \abs{ M_{22} + \be } \le  \abs{\ang{ \frac{r}{\la} [\La_0 \ZZ]_{\ula} \mid r^{-1} g}} + \abs{\ang{ \frac{r}{\mu}[\La_0 \ZZ]_{\umu} \mid r^{-1}g}} \lesssim \|g \|_{H} \lesssim \la^{\frac{k}{2}}
   }
   where the last inequality above follows from~\eqref{eq:gH}.

    We solve for  $(\la', \mu')$, by inverting $M$, 
   \EQ{
   \pmat{  \la' \\ \mu ' } =  \frac{1}{\det M}  \pmat{ - M_{22} \ang{ \ZZ_{\ula} \mid  \dot g} + M_{12} \ang{\ZZ_{\umu} \mid  \dot g}  \\  M_{21}  \ang{\ZZ_{\ula} \mid  \dot g}- M_{11}  \ang{\ZZ_{\umu} \mid  \dot g}}
   }
   Now note that by~\eqref{eq:gH} we have 
   \EQ{ \label{eq:rsest} 
  %& \abs{  \ang{ \chi_\mu \La Q_{\ula} \mid  \dot g}} \lesssim \| \dot g \|_{L^2} \lesssim \la^{\frac{k}{2}} \\
   &\abs{ \ang{\ZZ_{\ula} \mid  \dot g}} + \abs{ \ang{\ZZ_{\umu} \mid  \dot g}} \lesssim \| \dot g \|_{L^2} \lesssim \la^{\frac{k}{2}}
   }
   Our estimates for the coefficients of $M$ imply  that 
   \EQ{
   \det M =   M_{11}M_{22} + O( \la^{2k}) , \quad
   \frac{1}{\det M}  = \frac{1}{M_{11} M_{22}} + O( \la^{2k})
   }
   Using the above we now write $\la'$ and $\mu'$ as follows, 
   \EQ{
   \la' =  \left( \frac{1}{M_{11} M_{22}} + O( \la^{2k}) \right) \left( - M_{22} \ang{ \ZZ_{\ula} \mid  \dot g} + M_{12} \ang{\ZZ_{\umu} \mid  \dot g}  \right)  \\
   }
and thus using~\eqref{eq:M12} and~\eqref{eq:rsest} we conclude that 
\EQ{ \label{la'est}
\abs{\la'} \lesssim \la^{\frac{k}{2}}
}
%\begin{align}
%    \abs{ \la' - \frac{1}{M_{11}}( -  \ang{ \chi_\mu \La Q_{\ula} \mid  \dot g} )} \lesssim \abs{M_{12}} \abs{ \ang{\ZZ_{\umu} \mid  \dot g} }  + O( \la^{2k-1}) \lesssim  \la^{k} 
%    \end{align} 
% Then, recalling~\eqref{eq:m11est}, we deduce that 
% \EQ{ \label{eq:kala'1} 
% \abs{ \ka \la'(t) + \ang{ \chi_\mu \La Q_{\ula} \mid  \dot g} } \lesssim \begin{cases} \la^2 \sqrt{\log \la} \mif k =2 \\  \la^k \mif k \ge 3 \end{cases}
% }
% With~\eqref{eq:rsest} we see that then, 
% \EQ{ \label{eq:la'est} 
%\abs{\la'} \lesssim \la^{\frac{k}{2}}
%}
Similarly, for $\mu'$ we have 
\EQ{
 \mu' =  \left( \frac{1}{M_{11} M_{22}} + O( \la^{2k}) \right) \left(  M_{21} \ang{ \ZZ_{\ula} \mid  \dot g} - M_{11} \ang{\ZZ_{\umu} \mid  \dot g}  \right) 
}
and hence, 
\EQ{ 
\abs{ \mu'} \lesssim \la^{\frac{k}{2}}
}
which proves~\eqref{eq:mu'} and  completes the proof of Proposition~\ref{p:modp}. 
 \end{proof} 
 
% Next, recall that 
% \EQ{
% b(t):= - \ang{ \La Q_{\ula} \mid  \dot g}  - \ang{ \dot g \mid \A_0(\la) g}
%}
%and that by~\eqref{eq:gH} and the fact that $\A_0: H \to L^2$ is bounded independently of $\la$ -- see Lemma~\ref{l: }, we have
%\EQ{  \label{eq:dotgAg} 
% \ang{ \dot g \mid \A_0(\la) g} \lesssim  \| \dot g \|_{L^2} \| \A_0(\la) g \|_{L^2} \lesssim \| (g, \dot g) \|_{\HH_0}^2 \lesssim \la^k
% }
%This means that 
%\EQ{
%\abs{\ka \la' - b}  &\lesssim \abs{ \ka \la' +  \ang{ \chi \La Q_{\ula} \mid  \dot g}} + \abs{ \ang{(1-\chi) \La Q_{\ula} \mid \dot g}} + \abs{ \ang{ \dot g \mid A_0(\la) g}} \\
%& \lesssim \begin{cases} \la^2 \abs{\log \la}  \mif k =2 \\ \la^{k} \mif k  \ge 3 \end{cases} 
%}
%where we have used~\eqref{eq:kala'1},~\eqref{eq:Qlaext}, ~\eqref{eq:gH}, and~\eqref{eq:dotgAg} in the last line above. This completes the proof of~\eqref{eq:kala'} 
% 
%Similarly, for $\mu'$ we have 
%\EQ{
% \mu' =  \left( \frac{1}{M_{11} M_{22}} + O( \la^{2k-2}) \right) \left(  M_{21} \ang{ \chi \La Q_{\ula} \mid  \dot g} - M_{11} \ang{\ZZ_{\umu} \mid  \dot g}  \right) 
%}
%and hence, 
%\EQ{ 
%\abs{ \mu'} \lesssim \la^{\frac{k}{2}}
%}
%which proves~\eqref{eq:mu'}. Note also that the same logic implies the estimate 
%\EQ{ \label{eq:la'est} 
%\abs{\la'} \lesssim \la^{\frac{k}{2}}
%}
%which completes  the proof of~\eqref{eq:la'est} and hence of Proposition~\ref{p:modp}. 
%\end{proof}

\begin{rem}\label{r:ODE} 
We remark here that $\la(t)$, $\mu(t)$ obtained in the proof of Lemma~\ref{l:modeq} can be easily seen to be  $C^1$ functions. Recall the ODE 
%By Lemma~\ref{l:mod2bub} we can find, for each $t \in J$ unique parameters $\la(t)$ and $\mu(t)$ and $g(t) \in H$ satisfying~\eqref{eq:gdef1},~\eqref{eq:gdotgd}, and the orthogonality conditions~\eqref{eq:ola}~\eqref{eq:omu}. It thus remains to show that $\la(t)$ and $\mu(t)$ are actually $C^1(J)$ functions. 
\EQ{\label{eq:lamuODE} 
  \pmat{ M_{11} & M_{12} \\ M_{21} & M_{22}} \pmat{ \la' \\ \mu'} =  \pmat{ -\ang{\ZZ_{\ula} \mid  \psi_t(t)} \\ -  \ang{ \ZZ_{\umu} \mid \psi_t(t)} }, 
  }
 obtained by formally differentiating the orthogonality conditions~\eqref{eq:ola} and~\eqref{eq:omu}; the coefficients $M_{ij}$ are given explicitly in~\eqref{eq:Mmat}. For any $t_0 \in J$ the smallness of $\la(t_0)/ \mu(t_0)$ guarantees the existence of a unique $C^1$ solution  $(\ti \la(t), \ti\mu(t))$ with initial data $(\ti \la, \ti \mu)(t_0) = (\la(t_0), \mu(t_0))$ in a neighborhood of $t_0$. Because of how the system~\eqref{eq:lamuODE} was derived, $\ti \la(t), \ti \mu(t)$ and $g(t) := \psi(t) - (Q_{\ti \la(t)} + Q_{\ti \mu(t)})$ satisfy~\eqref{eq:ola}~\eqref{eq:omu} and~\eqref{eq:gdotgd} in a small enough neighborhood of $t_0$. Since the $\la(t), \mu(t)$ obtained by the implicit value theorem are unique with these properties we have $\la(t) = \ti \la(t)$ and $\mu(t) = \ti \mu(t)$ proving that $\la(t), \mu(t)$ are indeed $C^1$. 
%   Indeed, for any fixed $t_0 \in J$ 
%Formal differentiation in $t$ of the orthogonality conditions~\eqref{eq:ola} and~\eqref{eq:omu} leads to a first order system of equations
%
%The coefficients $M_{ij}$ are given explicitly in~\eqref{eq:Mmat} within the proof of Proposition~\ref{p:modp}. 
  
%This follows from a standard ODE argument. Give any $t_0 \in J$ find $\la(t_0), \mu(t_0)$ and $g(t_0)$ as in Lemma~\ref{l:mod2bub} and consider the following first order system for $(\la(t), \mu(t))$, 
% \EQ{\label{eq:lamuODE} 
%  \pmat{ M_{11} & M_{12} \\ M_{21} & M_{22}} \pmat{ \la' \\ \mu'} =  \pmat{ -  \ang{ \chi \La Q_{\ula} \mid  \psi_t(t)} \\ -  \ang{ \ZZ_{\umu} \mid \psi_t(t)} }, 
%  }
% with initial data $(\la(t_0), \mu(t_0)$. The coefficients $M_{ij}$ are obtained by formally differentiating the orthogonality conditions~\eqref{eq:ola} and~\eqref{eq:omu}. The explicit formulas for $M_{ij}$ are given in~\eqref{eq:Mmat} within the proof of the proof of Proposition~\ref{p:modp}. The existence, uniqueness, and regularity of the solution $(\la(t), \mu(t))$ to~\eqref{eq:lamuODE} are obtained via  a priori estimates proved in Proposition~\ref{p:modp} together with a standard bootstrap argument. Since the solution $(\la(t), \mu(t))$ along with 
% %We also refer to the reader to the proof of Prpoposition~\ref{p:modp} for and the estimates needed to uniquely solve for $(\la(t), \mu(t))$ 
\end{rem}

\begin{proof}[Proof of Proposition~\ref{p:modp2}]

As in the proof of Proposition~\ref{p:modp} we can assume that $\frac 12 \le \mu(t) \le 2$ below. 

We first prove \eqref{eq:bound-on-l}. From \eqref{eq:gH} we have $\|g\|_{L^\infty} \lesssim \lambda^\frac k2$, so it suffices
to check that
\begin{equation}
\label{eq:bound-on-l-2}
\|\chi_\mu \Lambda Q_{\ula}\|_{L^1(r\ud r)}  \ll \lambda^{1-\frac k2} \mas \la \to 0 % \ll \lambda^{-\frac k2},
\end{equation}
which follows from
\begin{equation}
\|\chi_\mu \Lambda Q_{\ula}\|_{L^1(r\ud r)} \leq \lambda \int_0^{4/\lambda}\Lambda Q(r)r\ud r
\lesssim  \la \int_0^{4/\lambda} (1+r)^{-k+1}\ud r \lesssim  \la \abs{\log(4/\lambda)} %\ll \lambda^{-\frac k2}.
\end{equation}

Now we prove~\eqref{eq:kala'}.
From \eqref{eq:ptg} we have
\begin{equation}
\label{eq:bound-on-l-666}
\begin{aligned}
\dd t\ang{\chi_\mu \Lambda Q_{\ula} \mid g} &= \ang{\chi_\mu \Lambda Q_{\ula}\mid \dot g} + \lambda'\ang{ \chi_\mu \Lambda Q_{\ula}\mid \Lambda Q_{\ula}}
- \mu'\ang{\chi_\mu \Lambda Q_{\ula}\mid \Lambda Q_{\umu}} \\
&-\frac{\lambda'}{\lambda}\ang{\chi_\mu \Lambda_0\Lambda Q_{\ula}, g} - \mu'\ang{ \Lambda \chi_{\umu}\Lambda Q_{\ula}\mid g},
\end{aligned}
\end{equation}

Most terms on the right hand side are negligible (that is, $\ll \lambda^\frac k2$).
  Since $\la \ll 1$, we have
  \EQ{
  \begin{aligned}
  \label{eq:1-chi-LQ}
 \|(1- \chi_\mu) \La Q_{\ula}\|_{L^2}^2 &\lesssim \int_{1/2}^\infty (\La Q_{\ula})^2 \, r \ud r  = \int_{\frac 12 \la^{-1}}^{\infty}  (\La Q)^2 \, r \ \ud r \\
 &\lesssim \int_{\frac 12 \la^{-1}}^\infty r^{-2k+1} \, \ud r \lesssim \la^{2k-2}.
 \end{aligned}
  }
  Together with \eqref{eq:gH} this yields
  \begin{equation}
  \abs{ \ang{(1-\chi_\mu) \La Q_{\ula} \mid \dot g}} \lesssim \la^{k-1}\la^\frac k2 \ll \lambda^\frac k2,
  \end{equation}
  so in the first term we can erase $\chi_\mu$.
 Similarly, from \eqref{eq:1-chi-LQ} and \eqref{eq:la'} we have $\abs{\lambda'\ang{(1-\chi_\mu)\Lambda Q_{\ula}\mid
\Lambda Q_{\ula}}} \ll \lambda^\frac k2$, so $\chi_\mu$ can be erased also in the second term.

Regarding the third term,
 since we are assuming  $\frac 12 \le  \mu \leq 2$, we have 
  \EQ{
  \abs{\ang{\chi_\mu \Lambda Q_{\ula}\mid \Lambda Q_{\umu}}}& \lesssim \int_0^4 \frac{1}{\la} \frac{r}{\la} Q_r(r/ \la) \frac{1}{\mu} \frac{r}{\mu}  Q_{r}(r/ \mu) \, r \ud r \\
  & \lesssim \frac{1}{\la} \int_0^4  \frac{ (r/ \la)^{k}}{ 1+ (r/\la)^{2k}} \frac{r^{k+1}}{ 1+ r^{2k}} \, \ud r \\
  & \lesssim \la^{k-1} \int_0^\la  \frac{ r^{2k+1}}{ \la^{2k} + r^{2k}} \frac{1}{ 1+ r^{2k}} \, \ud r  +  \la^{k-1} \int_\la^4  \frac{ r^{2k+1}}{ \la^{2k} + r^{2k}} \frac{1}{ 1+ r^{2k}} \, \ud r\\
  }
  To estimate the first integral on the right above on the interval $[0, \la]$ we note that since $\la \ll 1$ we have 
  \EQ{
   \la^{k-1} \int_0^\la  \frac{ r^{2k+1}}{ \la^{2k} + r^{2k}} \frac{1}{ 1+ r^{2k}} \, \ud r \lesssim \la^{k-1}  \la^{-2k} \int_0^\la r^{2k+1} \,  \dr \lesssim \la^{k+1} 
  }
  On the interval $[\la, 4]$ we write 
  \EQ{
  \frac{1}{\la^{2k} + r^{2k}}  &= \frac{1}{r^{2k}} +  \left( \frac{1}{\la^{2k} + r^{2k}}  - \frac{1}{r^{2k}} \right) \\
  & =  \frac{1}{r^{2k}} +  \frac{1}{r^{2k}} \left( \frac{1}{1 + (\la/r)^{2k} }  - 1\right)  \\ 
  &= \frac{1}{r^{2k}} + \frac{1}{r^{2k}} \left( -\la^{2k}r^{-2k}   + O( \la^{4k}r^{-4k})\right)
  }
  This yields, 
  \EQ{
  \la^{k-1} \int_\la^4  \frac{ r^{2k+1}}{ \la^{2k} + r^{2k}} \frac{1}{ 1+ r^{2k}} \, \ud r & =   \la^{k-1} \int_0^{\infty} \frac{r}{1+ r^{2k}} \, \dr \\
  & \quad  - \la^{k-1}\int_0^\la   \frac{r}{1+ r^{2k}} \, \dr + O( \la^{k+1}) \\
    & = C \la^{k-1} + O( \la^{k+1})
  }  
 This and \eqref{eq:mu'} imply that the third term of the right hand side in \eqref{eq:bound-on-l-666}
 is negligible.

As for the fourth term, we have
\begin{equation}
\Big|\frac{\lambda'}{\lambda}\ang{\chi_\mu \Lambda_0\Lambda Q_{\ula}, g}\Big| \lesssim |\lambda'|\|g\|_{L^\infty}\|\chi_{\mu/\lambda}\Lambda_0\Lambda Q\|_{L^1} \lesssim \lambda^k\|\chi_{2/\lambda}\Lambda_0\Lambda Q\|_{L^1},
\end{equation}
which is $\ll \lambda^\frac k2$, see the proof of \eqref{eq:bound-on-l-2}.
The fifth term is even smaller (we gain an additional factor $\lambda$).

Summarizing, from the definition of $\zeta(t)$ and \eqref{eq:bound-on-l-6666}, we obtain
\begin{equation}
\label{eq:bound-on-l-6666}
|\kappa\zeta' - \ang{\Lambda Q_{\ula}\mid \dot g}| \ll \lambda^\frac k2.
\end{equation}

Recall that 
 \EQ{
 b(t):= - \ang{ \La Q_{\ula} \mid  \dot g}  - \ang{ \dot g \mid \A_0(\la) g}
}
 By~\eqref{eq:gH} and the fact that $\A_0: H \to L^2$ is bounded independently of $\la$ -- see Lemma~\ref{lem:op-A-wm} --  we have
\EQ{  \label{eq:dotgAg} 
 \ang{ \dot g \mid \A_0(\la) g} \lesssim  \| \dot g \|_{L^2} \| \A_0(\la) g \|_{L^2} \lesssim \| (g, \dot g) \|_{\HH_0}^2 \lesssim \la^k \ll \lambda^\frac k2.
 }
Together with~\eqref{eq:bound-on-l-6666} this  means that 
\EQ{
\abs{\kappa\zeta' - b}  \ll \lambda^\frac k2,
}
which is~\eqref{eq:kala'}. 

Arguing as above we have 
\EQ{
\abs{b(t)} \le \| \La Q_{\ula}\|_2 \| \psi_t\|_{2}  - O(\la^k) = \sqrt{\ka} \| \psi_t \|_2 - O(\la^k)
} 
From the expansion of the nonlinear energy in the proof of~\eqref{eq:gH} and our assumption that $\mu(t) \simeq  1$ we see that 
\EQ{
\| \psi_t(t) \|_2^2 \le 16k (\la/ \mu)^k  + o(\la^k)
}
Plugging this in above yields~\eqref{eq:b-bound}.

Finally, we  begin the delicate proof of~\eqref{eq:b'lb}; we note that~\eqref{eq:b'} will also be a consequence of this analysis. It is sufficient to prove the result for smooth solutions.
Indeed, we can then use a standard approximation procedure.
We approximate a solution $\vec\psi:J\to \HH_0$ by a sequence of smooth solutions $\vec\psi_n$.
Then $b_n(t)$ converges to $b(t)$ uniformly for $t\in J$, and we can pass to a limit in \eqref{eq:b'} and \eqref{eq:b'lb}. Differentiating $b(t)$ and recalling the formulae~\eqref{eq:ptg},~\eqref{eq:ptgdot} we have 
\EQ{ \label{eq:b'1} 
b'(t) &= \frac{\la'}{\la} \ang{[ \La_0 \La Q]_{\ula} \mid \dot g }   - \ang{ \La Q_{\ula}  \mid \p_t \dot g} 
 - \ang{\p_t  \dot g \mid \A_0( \la) g}  \\ 
& \quad - \frac{\la'}{\la}\ang{ \dot g \mid  (\la \p_\la \A_0(\la)) g}  -  \ang{ \dot g \mid \A_0(\la) \p_t g} \\ 
& = \frac{\la'}{\la} \ang{[ \La_0 \La Q]_{\ula} \mid \dot g }   \\
 &\quad -  \ang{ \La Q_{\ula} \mid  \p_r^2 g + \frac{1}{r} \p_r g  - \frac{1}{r^2} \left( f( Q_\la - Q_\mu + g) - f(Q_\la) +f(Q_\mu)\right)} \\
& \quad - \ang{ \p_r^2 g + \frac{1}{r} \p_r g  - \frac{1}{r^2} \left( f( Q_\la - Q_\mu + g) - f(Q_\la) + f(Q_\mu)\right) \mid \A_0(\la) g}  \\
& \quad - \frac{\la'}{\la}\ang{ \dot g \mid  (\la \p_\la \A_0(\la)) g} - \ang{ \dot g \mid \A_0(\la) \dot g} -  \la' \ang{ \dot g \mid \A_0(\la) \La Q_{\ula}} \\
&\quad + \mu' \ang{ \dot g \mid \A_0(\la) \La Q_{\umu}}
}
Let us first identify terms above that we've already established to be $\ll \la^{k-1}$ and discard them. 
%Observe that because of the $\HH_0 \to L^2$ boundedness of $\B(\la)$ we have by~\eqref{eq:gHest} 
%\EQ{
%\abs{b(t)} \lesssim \|\dot g \|_{L^2}  + \|(g, \dot g) \|_{\HH_0}^2  \lesssim  \la^{\frac{k}{2}}(t)
%}
First note that since $(\la \p_\la \A_0(\la)): H \to L^2$ is bounded, and since we've already shown $\abs{\la'} \lesssim \la^{\frac{k}{2}}$ we have 
\EQ{
\frac{\la'}{\la}\ang{ \dot g \mid  (\la \p_\la \A_0(\la)) g} \lesssim  \|(g, \dot g) \|_{\HH_0}^2 \lesssim \la^k \ll \la^{k-1}
}
Then we note that 
\EQ{
\ang{ \dot g \mid \A_0(\la) \dot g} = 0, 
}
which can be shown directly by integration by parts. 
Next, using the fact that $\mu   \simeq1$, along with the boundedness of $\A_0(\la): H \to L^2$  we have 
\EQ{
\mu' \ang{ \dot g \mid \A_0(\la) \La Q_{\umu}} &= \frac{\mu'}{\mu} \ang{ \dot g \mid \A_0(\la) \La Q_{\mu}} \\
& \lesssim \abs{ \mu'} \|  \dot g\|_{L^2}  \| \La Q_{\mu} \|_{H} \lesssim  \abs{ \mu'} \| \dot g \|_{L^2} \lesssim  \la^{k} \ll \la^{k-1}
}
Next, the combination of the first and sixth terms on the right-hand-side of~\eqref{eq:b'1} can be estimated using~\eqref{eq:L0-A0-wm}, 
\EQ{ \label{eq:A0cancel}
\Big|\frac{\la'}{\la}  \ang{[ \La_0 \La Q]_{\ula} \mid \dot g } - & \la' \ang{ \dot g \mid \A_0(\la) \La Q_{\ula}}\Big| = \abs{\frac{\la'}{\la} \ang{[ \La_0 \La Q]_{\ula} -  \A_0(\la) \La Q_{\la} \mid  \dot g} } \\
& \lesssim  \la^{\frac{k}{2}-1} \| [\La_0 \La Q]_{\ula} -  \A_0(\la) \La Q_{\la} \|_{L^2} \| \dot g \|_{L^2}  \\
& \lesssim c_0\la^{\frac{k}{2}-1}  \la^{\frac{k}{2}}   \ll \la^{k-1}
 }
 where in the last line above we rely on our ability to take $c_0$ as small as we like in the estimate~\eqref{eq:L0-A0-wm} from Lemma~\ref{lem:op-A-wm}. 
 
Thus we've show that up to terms of order $\ll \la^{k-1}$, which can be absorbed into the error,  we have 
\EQ{ \label{eq:b'2}
b'(t) & \simeq   -  \ang{ \La Q_{\ula} \mid  \p_r^2 g + \frac{1}{r} \p_r g  - \frac{1}{r^2} \left( f( Q_\la - Q_\mu + g) - f(Q_\la) + f(Q_\mu)\right)} \\
& \quad - \ang{ \p_r^2 g + \frac{1}{r} \p_r g  - \frac{1}{r^2} \left( f( Q_\la - Q_\mu + g) - f(Q_\la) + f(Q_\mu)\right) \mid \A_0(\la) g}  
}
Next, rescaling the equation $\LL \La Q = 0$, we see that 
\EQ{
 \LL_\la \La Q_{\ula} := (- \p_{rr} - \frac{1}{r} \p_r + \frac{f'(Q_\la)}{r^2}) \La Q_{\ula} = 0 
}
And since $\LL_{\la}$ is symmetric we have 
\EQ{
 \ang{ \La Q_{\ula} \mid  \p_r^2 g + \frac{1}{r} \p_r g } =  \ang{ \La Q_{\ula} \mid \frac{f'(Q_\la)}{r^2} g}
}
We thus rewrite~\eqref{eq:b'2} as 
\EQ{ \label{eq:b'3} 
b'(t) & \simeq     \ang{ \La Q_{\ula} \mid  \frac{1}{r^2} \Big( f( Q_\la - Q_\mu + g) - f(Q_\la) + f(Q_\mu)- f'(Q_\la)g\Big)} \\
& \quad - \ang{ \p_r^2 g + \frac{1}{r} \p_r g  - \frac{1}{r^2} \Big( f( Q_\la - Q_\mu + g) - f(Q_\la) + f(Q_\mu)\Big) \mid \A_0(\la) g}  
}
where the symbol $\simeq$ above means ``up to terms of order $\ll \la^{k-1}$". %We begin by expanding the first term on the right hand side of~\eqref{eq:b'3} as follows. 
Adding and subtracting we have 
\begin{align}  \label{eq:lead3}
b'(t)  &\simeq 
% \ang{ \La Q_{\ula} \mid  \frac{1}{r^2} \Big( f( Q_\la - Q_\mu + g) - f(Q_\la) + f(Q_\mu)- f'(Q_\la)g\Big)} \\ 
 \ang{ \La Q_{\ula} \mid \frac{1}{r^2}  \Big( f(Q_{\la} - Q_\mu) - f(Q_\la) + f(Q_\mu) \Big)} \\ 
  &+  \ang{ \La Q_{\ula} \mid \frac{1}{r^2} \Big( f'(Q_\la - Q_\mu) - f'(Q_\la) \Big) g}  \label{eq:2ndterm} \\ 
& +  \ang{ \La Q_{\ula} \mid \frac{1}{r^2} \Big( f(Q_\la - Q_\mu + g) - f(Q_\la - Q_\mu) - f'(Q_\la - Q_\mu) g \Big)} \label{eq:3rdterm}  \\
 & - \ang{ \p_r^2 g + \frac{1}{r} \p_r g  - \frac{1}{r^2} \Big( f( Q_\la - Q_\mu + g) - f(Q_\la) + f(Q_\mu)\Big) \mid \A_0(\la) g}  \label{eq:4thterm}
\end{align}
Let's begin by estimating the first term on the right-hand-side above, which we'll show contributes the leading order: 
\begin{claim} \label{c:lead} 
\EQ{
\ang{ \La Q_{\ula} \mid \frac{1}{r^2}  \Big( f(Q_{\la} - Q_\mu) - f(Q_\la) + f(Q_\mu) \Big)}  \simeq 8k^2 \frac{\la^{k-1}}{\mu^k}
}
where again $\simeq$ means ``up to terms of order $\ll \la^{k-1}$."
\end{claim}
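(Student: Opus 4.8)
The plan is to expand $f(Q_\lambda - Q_\mu) - f(Q_\lambda) + f(Q_\mu)$ using the structure of $f(\rho) = \tfrac{k^2}{2}\sin 2\rho$ and to identify the leading-order term as coming from the region $r \sim \lambda$ where $\Lambda Q_{\uln\lambda}$ is concentrated.

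\textbf{Step 1: Trigonometric expansion of the source term.} First I would use the addition formula for $\sin 2(Q_\lambda - Q_\mu)$ to write
\EQ{
f(Q_\lambda - Q_\mu) - f(Q_\lambda) + f(Q_\mu) = \frac{k^2}{2}\big(\sin 2Q_\lambda \cos 2Q_\mu - \sin 2Q_\lambda - \cos 2Q_\lambda \sin 2Q_\mu + \sin 2Q_\mu\big).
}
Grouping terms, this equals $\frac{k^2}{2}\big[\sin 2Q_\lambda(\cos 2Q_\mu - 1) + \sin 2Q_\mu(1 - \cos 2Q_\lambda)\big] = -k^2 \sin 2Q_\lambda \sin^2 Q_\mu + k^2 \sin 2Q_\mu \sin^2 Q_\lambda$. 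Since $\sin Q_\lambda = \tfrac1k \Lambda Q_\lambda$ and $\sin 2Q_\lambda = \tfrac{2}{k^2}\Lambda^2 Q_\lambda$ (using \eqref{eq:La2Q} rescaled), and similarly for $\mu$, I can rewrite the inner product as a sum of two explicit integrals involving $\Lambda Q_{\uln\lambda}$, $\Lambda^2 Q_\lambda$, $\Lambda Q_\mu$ and $\Lambda^2 Q_\mu$, all with the weight $r^{-2}$.

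\textbf{Step 2: Identify and compute the leading term.} The key observation is that on the support-relevant region $r \sim \lambda \ll \mu$, we have $Q_\mu(r) \approx 2(r/\mu)^k \to 0$, so $\sin^2 Q_\mu \approx 4(r/\mu)^{2k}$ while $\sin 2Q_\mu \approx 4(r/\mu)^k$. The term $k^2 \sin 2Q_\mu \sin^2 Q_\lambda$ behaves like $(r/\mu)^k$ near $r\sim\lambda$ and, paired against $\Lambda Q_{\uln\lambda} \sim \lambda^{-1}$ over a region of measure $\sim\lambda^2$ with the $r^{-2}$ weight, this should produce the leading contribution of size $\lambda^{k-1}/\mu^k$. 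Concretely, I expect
\EQ{
\ang{\La Q_{\ula} \mid \frac{k^2 \sin 2Q_\mu \sin^2 Q_\lambda}{r^2}} = \frac{1}{\mu^k}\ang{\La Q_{\ula} \mid \frac{k^2 \La^2 Q_{(r\mapsto(r/\mu)^k)}\text{-factor}}{r^2}\sin^2 Q_\lambda} + \dots
}
and the point is to extract the $\mu \to \infty$ (equivalently $\lambda/\mu \to 0$) asymptotics by replacing $\sin 2Q_\mu(r)$ with its leading behavior $4(r/\mu)^k = \tfrac{2}{k}\cdot \tfrac{2k r^k}{\mu^k}$ (up to $O((r/\mu)^{3k})$) and then evaluating $\frac{1}{\mu^k}\int_0^\infty \Lambda Q(r/\lambda)\cdot \lambda^{-1}\cdot 2k r^{k-1}\cdot \frac1k\sin^2 Q(r/\lambda)\,r\,\tfrac{dr}{r^2}$ by rescaling $r \mapsto \lambda r$, which gives $\lambda^{k-1}\mu^{-k}\cdot 2\int_0^\infty (\Lambda Q(r))(\sin^2 Q(r)) r^{k-1}\,dr$. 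Using $\sin Q = \tfrac1k\Lambda Q$ this integral is $\tfrac{1}{k^2}\int_0^\infty (\Lambda Q)^3 r^{k-1}\,dr = \tfrac{1}{k^2}\cdot 2k^2 = 2$ by \eqref{eq:LaQ3}, giving $4k^2 \cdot$ something; I would track the numerical constants carefully to land on exactly $8k^2$.

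\textbf{Step 3: Control the error terms.} I would show that (a) the other term, $-k^2 \sin 2Q_\lambda \sin^2 Q_\mu$, paired against $\Lambda Q_{\uln\lambda}/r^2$, is of order $\lambda^{k+1}/\mu^{2k}$ or similar, hence $\ll \lambda^{k-1}$; and (b) the correction from replacing $\sin 2Q_\mu$ by its leading term, and the contribution of the region $r \gtrsim \mu$ (where $\Lambda Q_{\uln\lambda}$ has decayed), are likewise $\ll \lambda^{k-1}$ — these follow from the same explicit power-counting integrals as in Claim~\ref{c:M12est} and the estimates in the proof of \eqref{eq:gH}, e.g. the bound $\int (\Lambda Q_\lambda)^2(\Lambda Q_\mu)^2 r^{-1}\,dr \lesssim (\lambda/\mu)^{2k}|\log(\lambda/\mu)|$. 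The main obstacle I anticipate is \emph{bookkeeping the numerical constant}: getting the precise coefficient $8k^2$ requires carefully combining the factors from $\sin 2Q_\mu \approx \tfrac{2}{k}\Lambda Q_\mu/\mu^k\cdot(\dots)$, the normalization of $\Lambda Q$ in \eqref{eq:LaQ}, and the value \eqref{eq:LaQ3}, with no sign or factor-of-two slips; the analytic content (which terms dominate, which are negligible) is routine given the earlier lemmas, but the constant-chasing is where care is needed since $8k^2$ appears verbatim in \eqref{eq:b'lb}.
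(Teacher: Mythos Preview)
Your approach is essentially the same as the paper's: the same trigonometric decomposition in Step~1, the same identification of $k^2\sin 2Q_\mu \sin^2 Q_\lambda$ as the leading term via the replacement $\sin 2Q_\mu \approx 4(r/\mu)^k$ and rescaling onto \eqref{eq:LaQ3}, and the same power-counting for the remainders. The only minor tactical difference is that the paper handles the subleading term $-k^2\sin 2Q_\lambda (\Lambda Q_\mu)^2/k^2$ by first integrating by parts (using $\sin 2Q_\lambda = \tfrac{2}{k^2}\,r\partial_r(\Lambda Q_\lambda)$) and splits the leading integral at $r=\sqrt{\lambda\mu}$ rather than at $r\sim\mu$, but your direct estimates would work equally well.
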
 
\begin{proof}
Let's prove the claim. Recall that the nonlinearity $f(\rho)$ is given by 
\EQ{
f(\rho) := \frac{k^2}{2} \sin(2 \rho), \quad f'(\rho) = k^2 \cos(2 \rho)
}
Using trigonometric identities we can write 
\begin{align} 
f(Q_{\la} - Q_\mu) - f(Q_\la) + f(Q_\mu)  &=\frac{k^2}{2} ( \sin2 Q_\la( \cos 2Q_\mu - 1) + \sin 2 Q_\mu( 1 -  \cos 2 Q_\la)) \\
& = - k^2  \sin 2 Q_\la \sin^2 Q_\mu +  k^2 \sin 2 Q_\mu  \sin^2 Q_\la \\
& =  - \sin 2 Q_\la (\La Q_\mu)^2 + \sin 2 Q_\mu (\La Q_\la)^2.     \label{eq:trig1} 
\end{align}
We show that the leading order contribution comes from the second term above. Indeed, writing $\s= \la/ \mu$ and changing variables we have 
\EQ{ \label{eq:lead1} 
\big\langle \La Q_{\ula} &\mid  \frac{1}{r^2}(\La Q_{\la})^2 \sin 2Q_\mu \big\rangle= \frac{1}{ \la} \int_0^\infty ( \La Q_{\s}(r))^3 \sin 2 Q(r) \frac{ \ud r }{r}  \\
& = \frac{1}{\la}  \int_0^{\sqrt{\s}} ( \La Q_{\s}(r))^3 \sin 2 Q(r) \frac{ \ud r }{r}   + \frac{1}{\la}  \int_{\sqrt{\s}}^\infty ( \La Q_{\s}(r))^3 \sin 2 Q(r) \frac{ \ud r }{r} 
}
Since $\s = \la/ \mu  \ll 1$, on the interval $[0, \sqrt{\s}]$ we write 
\EQ{ \label{eq:sin2Q-small} 
\sin 2 Q = \frac{2}{k^2} 2 k^2 r^k \frac{ 1 - r^{2k}}{ (1+ r^{2k})^2} = 4 r^k + O(r^{3k})
}
Changing variables again we have 
\EQ{
\frac{1}{\la} \int_0^{\sqrt{\s}} ( \La Q_{\s})^3 4r^{k} \, \frac{\ud r}{r} &= 4 \frac{\s^k}{\la} \int_0^{\frac{1}{\sqrt{\s}}}  (\La Q)^3(r) r^{k-1} \, \ud r \\
& = 4 \frac{\s^k}{\la} \int_0^\infty  (\La Q)^3(r) r^{k-1} \, \ud r -  4 \frac{\s^k}{\la} \int_{\frac{1}{\sqrt{\s}}}^{\infty}  (\La Q)^3(r) r^{k-1} \, \ud r \\
& = 8k^2\frac{ \la^{k-1} }{\mu^k} + O( \la^{-1}\s^{2k})
}
where we used~\eqref{eq:LaQ3} in the last line above. Moreover, using~\eqref{eq:sin2Q-small} we estimate 
\EQ{
\frac{1}{\la}  \int_0^{\sqrt{\s}}( \La Q_{\s})^3( \sin 2Q - 4r^{k}) \, \frac{\ud r}{r}& \lesssim \frac{1}{\la}  \int_0^{\sqrt{\s}}( \La Q_{\s})^3 r^{3k-1} \, \ud r \\
& = \frac{\s^{3k}}{\la} \int_0^{\frac{1}{\sqrt{\s}}}( \La Q)^3 r^{3k-1} \, \ud r  \lesssim \frac{\s^{3k} \abs{\log \s}}{\la} 
}
Finally, we estimate the second term on the last line~\eqref{eq:lead1} by 
\EQ{
\abs{\frac{1}{\la}  \int_{\sqrt{\s}}^\infty ( \La Q_{\s}(r))^3 \sin 2 Q(r) \frac{ \ud r }{r} }&\lesssim \frac{1}{\la} \int_{\sqrt{\s}}^\infty ( \La Q_{\s}(r))^3 \, \frac{\ud r}{r}  =  \frac{1}{\la}\int_{\frac{1}{\sqrt{\s}}} ( \La Q(r))^3 \frac{ \ud r }{r} \\
&  \lesssim \frac{1}{\la}\int_{\frac{1}{\sqrt{\s}}}^{\infty} r^{-3k-1} \, \ud r \lesssim \frac{ \s^{\frac{3}{2} k}}{\la}
}
Next we estimate the contribution of the first term in~\eqref{eq:trig1}. Recalling that $\sin 2 Q_\la = \frac{2}{k^2} \La^2 Q_\la = \frac{2}{k^2} r \p_r (\La Q_{\la})$ we integrate by parts to obtain
\EQ{
-\ang{ \La Q_{\ula}  \mid\frac{1}{r^2} \sin 2 Q_\la (\La Q_\mu)^2} &= \frac{2}{k^2 \la} \ang{ (\La Q_{\la})^2 \mid \frac{1}{r^2} \La Q_{\mu} \La^2 Q_\mu } \\
& = \frac{2}{k^2 \la} \ang{ (\La Q_{\s})^2 \mid \frac{1}{r^2} \La Q \La^2 Q }
}
where $\s = \la/ \mu$ as before. We first estimate the last line above on the interval $[0, \s]$ and $[\sigma, 1]$ using \eqref{eq:LaQ}~\eqref{eq:La2Q} to obtain the bound 
\EQ{
\abs{\La Q(r)\La^2 Q(r)} \lesssim r^{2k} 
}
which gives 
\EQ{
\frac{2}{\la k^2} \int_0^\s (\La Q_{\s})^2 \La Q\La^2 Q \frac{\ud r}{r}&  \lesssim\frac{1}{\la} \int_0^\s \La Q_{\s}^2 r^{2k-1} \, \ud r = \frac{\s^{2k}}{\la} \int_0^1 \La Q^2  \, r^{2k-1} \, \ud r 
 \lesssim \frac{\s^{2k}}{\la}
}
and 
\EQ{
\frac{2}{\la k^2} \int_{\s}^1 (\La Q_{\s})^2 \La Q\La^2 Q \frac{\ud r}{r}& \lesssim \frac{\s^{2k}}{\la} \int_\s^1  \frac{r^{4k-1}}{ (\s^{2k} + r^{2k})^2} \, \ud  r \lesssim \frac{\s^{2k}\abs{\log \s}}{\la}
}
On the interval $[1, \infty]$ we use the formulas~\eqref{eq:LaQ}~\eqref{eq:La2Q} to estimate 
\EQ{
\abs{\La Q(r)\La^2 Q(r)} \lesssim \frac{r^{4k}}{(1+ r^{2k})^3}% \frac{r^{2k}}{(1+r^{2k})^3}  + \frac{r^{4k}}{(1+ r^{2k})^3}
}
which means 
\EQ{
\frac{2}{\la k^2} \int_{\s}^\infty (\La Q_{\s})^2 \La Q(r)\La^2 Q(r) \frac{\ud r}{r}&  \lesssim \frac{\s^{2k}}{\la} \int_{\s}^\infty \frac{r^{6k-1}\,  \ud r}{ (\s^{2k}+ r^{2k})^2(1+r^{2k})^3}  \\
& \lesssim  \frac{\s^{2k}}{\la} \int_0^\infty \frac{ r^{2k-1}}{(1+r^{2k})^3} \, \ud r \lesssim \frac{\s^{2k}}{\la}
}
Putting this all together, we've shown that 
\EQ{
\ang{\La Q_{\ula} \mid  \Big( f(Q_{\la} - Q_\mu) - f(Q_\la) + f(Q_\mu) \Big)}   =  8k^2 \frac{\la^{k-1}}{\mu^k} + O\left(\frac{\la^{k-1}}{\mu^k} \left(\frac{\la}{\mu}\right)^{\frac{k}{2}} \right)
}
which is precisely Claim~\ref{c:lead}. 
\end{proof} 

Next, we claim the second term~\eqref{eq:2ndterm} in our expansion of $b'(t)$ satisfies
\EQ{ \label{eq:2term} 
 \ang{ \La Q_{\ula} \mid \frac{1}{r^2} \Big( f'(Q_\la - Q_\mu) - f'(Q_\la) \Big) g}   = o(\la^{k-1})
}
and can thus be absorbed into the error. 
First note that the we have 
\EQ{
 f'(Q_\la - Q_\mu) - f'(Q_\la)& = k^2 \sin 2Q_\la \sin 2Q_\mu -2k^2 \cos 2Q_\la \sin^2Q_\mu \\
&=  \frac{4}{k^2} \La^2 Q_\la \La^2 Q_\mu- (\La Q_\mu)^2\cos 2Q_\la 
}
For the contribution from the first term above we integrate by parts, change variables, and use the explicit formulae \eqref{eq:LaQ}~\eqref{eq:La2Q}~\eqref{eq:La3Q} to estimate 
\EQ{
&\abs{\ang{\La Q_{\ula} \mid \frac{4}{k^2r^2} (\La^2 Q_\la \La^2 Q_\mu \Big)g}} \lesssim \frac{1}{\la} \abs{ \int_0^\infty (\La Q_\la)^2 \La^3 Q_\mu g \frac{\ud r }{r}} + \abs{\int_0^\infty (\La Q_\la)^2 \La^2Q_\mu r \p_r g \frac{\ud r}{r}}\\
& \lesssim \frac{1}{\la}  \| g\|_{H} \left[\left( \int_0^\infty (\La Q_\s)^4 (\La^3 Q)^2 \frac{\ud r}{r} \right)^{\frac{1}{2}} + \left( \int_0^\infty (\La Q_\s)^4 (\La^2 Q)^2 \ \frac{\ud r}{r} \right)^{\frac{1}{2}} \right] = o(\s^{k}/\la)
}
where $\s = \la/ \mu$ as before.  For the second term we write 
\EQ{
\abs{\ang{ \La Q_{\ula} \mid \frac{1}{r^2}( (\La Q_\mu)^2\cos 2Q_\la)  g}} &\lesssim \| g\|_{H} \left( \int_0^\infty  (\La Q_{\s})^2 (\La Q)^4 \frac{ \ud r}{r} \right)^{\frac{1}{2}} \\
& \lesssim \frac{1}{\la} \s^{\frac{k}{2}} \s^k \abs{\log \s}^{\frac{1}{2}}  = o( \s^k/ \la)
}
which finishes the proof of~\eqref{eq:2term}. 

Finally, we consider the last two terms~\eqref{eq:3rdterm}~\eqref{eq:4thterm}. We will reorganize these terms in anticipation of applications of Lemma~\ref{lem:op-A-wm}. First we rewrite~\eqref{eq:3rdterm} as follows: 
\begin{multline}
\Big\langle \Lambda Q_\uln\lambda \mid
\frac{1}{r^2}\big(f({-}Q_\mu + Q_\lambda + g) - f({-}Q_\mu + Q_\lambda) - f'({-}Q_\mu + Q_\lambda)g\big)\Big\rangle \\ 
=  -  \ang{ \A(\la) g \mid \frac{1}{r^2}\big(f({-}Q_\mu + Q_\lambda + g) - f({-}Q_\mu + Q_\lambda) -k^2g\big)} \\
+\ang{ \A(\la) g \mid \frac{1}{r^2}\big(f({-}Q_\mu + Q_\lambda + g) - f({-}Q_\mu + Q_\lambda) -k^2g\big)} \\
+\ang{ \A(\la)  (Q_\la- Q_\mu) \mid \frac{1}{r^2}\big(f({-}Q_\mu + Q_\lambda + g) - f({-}Q_\mu + Q_\lambda) - f'({-}Q_\mu + Q_\lambda)g\big)}  \\
+ \ang{ \A(\la) Q_\mu \mid \frac{1}{r^2}\big(f({-}Q_\mu + Q_\lambda + g) - f({-}Q_\mu + Q_\lambda) - f'({-}Q_\mu + Q_\lambda)g\big)}  \\
+ \ang{ \La Q_{\ula} - \A(\la)  Q_\la  \mid  \frac{1}{r^2}\big(f({-}Q_\mu + Q_\lambda + g) - f({-}Q_\mu + Q_\lambda) - f'({-}Q_\mu + Q_\lambda)g\big)}
\end{multline}
The second two terms on the right-hand-side can be controlled by setting  $g_1 = Q_{\la} - Q_\mu$ and $g_2 = g$ in~\eqref{eq:A-by-parts-wm}: 
\begin{multline}
\bigg|\ang{ \A(\la) g \mid \frac{1}{r^2}\big(f({-}Q_\mu + Q_\lambda + g) - f({-}Q_\mu + Q_\lambda) -k^2g\big)} 
\\ +\ang{ \A(\la)  (Q_\la- Q_\mu) \mid \frac{1}{r^2}\big(f({-}Q_\mu + Q_\lambda + g) - f({-}Q_\mu + Q_\lambda) - f'({-}Q_\mu + Q_\lambda)g\big)} \bigg| \\
 \lesssim c_0 \la^{k-1}
\end{multline}
Using  the pointwise bound
\begin{multline}
\abs{ f(Q_\la - Q_\mu + g) - f(Q_\la - Q_\mu) - f'(Q_\la - Q_\mu) g }  \\
= \frac{k^2}{2}\abs{ \sin(2Q_\la - 2Q_\mu)[ \cos 2 g - 1] + \cos(2Q_\la - 2Q_\mu)[ \sin 2g -2 g]} 
\lesssim \abs{g}^2
\end{multline} 
along with~\eqref{eq:Ainfty} the second to last line of the above can be estimated by 
\EQ{
 \ang{ \A(\la) Q_\mu \mid \frac{1}{r^2}\big(f({-}Q_\mu + Q_\lambda + g) - f({-}Q_\mu + Q_\lambda) - f'({-}Q_\mu + Q_\lambda)g\big)}  \lesssim c_0 \la^{k-1}.
}
Similarly, the last line of the expansion of~\eqref{eq:3rdterm} can be controlled as follows 
\begin{multline}
\abs{\ang{ \La Q_{\ula} - \A(\la)  Q_\la  \mid  \frac{1}{r^2}\big(f({-}Q_\mu + Q_\lambda + g) - f({-}Q_\mu + Q_\lambda) - f'({-}Q_\mu + Q_\lambda)g\big)}}
\\ \lesssim \|\La Q_{\ula} - \A(\la)  Q_\la \|_{L^\infty} \| g\|^2_H  \le Cc_0 \la^{k-1} \ll \la^{k-1}
\end{multline} 
In the last line we used~\eqref{eq:L-A-wm} and the fact that $c_0$ can be taken small independently of $\la$ in Lemma~\ref{lem:op-A-wm}. 

Thus, up to terms of order $\ll \la^{k-1}$ we have 
\begin{multline}  \label{eq:3rdterm2} 
\Big\langle \Lambda Q_\uln\lambda \mid
\frac{1}{r^2}\big(f({-}Q_\mu + Q_\lambda + g) - f({-}Q_\mu + Q_\lambda) - f'({-}Q_\mu + Q_\lambda)g\big)\Big\rangle  \\ \simeq -  \ang{ \A(\la) g \mid \frac{1}{r^2}\big(f({-}Q_\mu + Q_\lambda + g) - f({-}Q_\mu + Q_\lambda) -k^2g\big)} 
\end{multline} 
%and thus
%\begin{equation}
%\int \frac{1}{r^2}\big|f({-}Q_\mu + Q_\lambda + g) - f({-}Q_\mu + Q_\lambda) - f'({-}Q_\mu + Q_\lambda)g\big| r\ud r \lesssim \|\bs g\|_H^2 \lesssim \lambda^k.
%\end{equation}
%Using \eqref{eq:L-A-ym} we get
%\begin{equation}
%\begin{aligned}
%&\Big|\Big\langle \Lambda Q_\uln\lambda,
%\frac{1}{r^2}\big(f({-}Q_\mu + Q_\lambda + g) - f({-}Q_\mu + Q_\lambda) - f'({-}Q_\mu + Q_\lambda)g\big)\Big\rangle \\
%&- \Big\langle \A(\lambda) Q_\lambda,
%\frac{1}{r^2}\big(f({-}Q_\mu + Q_\lambda + g) - f({-}Q_\mu + Q_\lambda) - f'({-}Q_\mu + Q_\lambda)g\big)\Big\rangle\Big| \lesssim c_0 \lambda^{\frac{k}{2}}.
%\end{aligned}
%\end{equation}
%From this and \eqref{eq:A-by-parts-ym} with $h_1 = {-}Q_\mu + Q_\lambda$ and $h_2 = g$ we obtain
%\begin{equation}
%\begin{aligned}
%&\Big|\Big\langle \Lambda Q_\uln\lambda,
%\frac{1}{r^2}\big(f({-}Q_\mu + Q_\lambda + g) - f({-}Q_\mu + Q_\lambda) - f'({-}Q_\mu + Q_\lambda)g\big)\Big\rangle \\
%&+ \Big\langle \A(\lambda) g,
%\frac{1}{r^2}\big(f({-}Q_\mu + Q_\lambda + g) - f({-}Q_\mu + Q_\lambda) -4g\big)\Big\rangle\Big| \lesssim c_0 \lambda^{\frac{k}{2}}.
%\end{aligned}
%\end{equation}
%Comparing with \eqref{eq:mod-dtb-1st-line} we have
%\begin{equation}
%\label{eq:mod-dtb-1st-line-2}
%\begin{aligned}
%&\Big\langle \Lambda Q_\uln\lambda,
%\frac{1}{r^2}\big(f({-}Q_\mu + Q_\lambda + g) + f(Q_\mu) - f(Q_\lambda) - f'(Q_\lambda)g\big)\Big\rangle =\frac{32\lambda}{\mu^2} \\
%&- \Big\langle \A(\lambda) g,
%\frac{1}{r^2}\big(f({-}Q_\mu + Q_\lambda + g) - f({-}Q_\mu + Q_\lambda) -4g\big)\Big\rangle + O(c_0 \lambda^{\frac{k}{2}}).
%\end{aligned}
%\end{equation}
We now transform the last line~\eqref{eq:4thterm} adding and subtracting terms as before. Using~\eqref{eq:A-pohozaev-wm} we have
\begin{equation}
\label{eq:mod-dtb-2nd-line}
\begin{aligned}
&{-} \Big\langle \partial_r^2 g + \frac 1r \partial_r g - \frac{1}{r^2}\big(f(Q_\la- Q_\mu + g) - f(Q_\la) +f(Q_\mu)\big) \mid  \A_0(\lambda)g\Big\rangle \\
& = -\ang{ \A_0(\la) g \mid  \partial_r^2 g + \frac 1r \partial_r g - \frac{k^2}{r^2}g} \\
&\quad + \ang{ \A_0(\la) g \mid \frac{1}{r^2} \Big( f(Q_\la - Q_\mu) - f(Q_\la) + f(Q_\mu) \Big)}\\
& \quad +\ang{ \A_0(\la) g \mid  \frac{1}{r^2} \Big( f(Q_\la - Q_\mu + g) - f(Q_\la-Q_\mu) - k^2 g \Big)} \\
&\geq -\frac{c_0}{\lambda}\|g \|_H^2 + \frac{1}{\lambda}\int_0^{R\lambda}\Big((\partial_r g)^2 + \frac{k^2}{r^2}g^2\Big)r\ud r \\
&+ \ang{ \A_0(\la) g \mid \frac{1}{r^2} \Big( f(Q_\la - Q_\mu) - f(Q_\la) + f(Q_\mu) \Big)}\\
&+ \Big\langle \A_0(\lambda)g \mid 
\frac{1}{r^2}\big(f({-}Q_\mu + Q_\lambda + g) + f(-Q_\mu + Q_\la) - k^2g\big)\Big\rangle
\end{aligned}
\end{equation}
where $R$ is as in Lemma~\ref{lem:op-A-wm}. 
Note that from~\eqref{eq:trig1} we have the pointwise inequality 
\EQ{
\abs{ f(Q_\la - Q_\mu) - f(Q_\la) + f(Q_\mu)} \lesssim (\La Q_\la)^2 (\La Q_\mu) + \La Q_\la (\La Q_{\mu})^2
}
Since $\| \A_0(\la) g \|_{L^2} \lesssim \|g \|_{H}$, and since $\A_0(\la) g$ is supported on a ball of radius $R \la$, the term on the second to last line above can be estimated as follows, 
\EQ{
\bigg| \bigg\langle &\A_0(\la) g \mid \frac{1}{r^2} \Big( f(Q_\la - Q_\mu) - f(Q_\la) + f(Q_\mu) \Big) \bigg \rangle \bigg| \\ & \lesssim \|g \|_H\left[ \left( \int_0^{R \s} r^{-2} (\La Q_\s)^4 (\La Q)^2 \,  \frac{\ud r}{r} \right)^{\frac{1}{2}} +  \left( \int_0^{R \s} r^{-2} (\La Q)^4 (\La Q_\s)^2 \,  \frac{\ud r}{r} \right)^{\frac{1}{2}} \right] \\
&\lesssim \s^{\frac{k}{2}} \s^{k-1} \ll \la^{k-1} 
}
where $\s = \la/ \mu$ as usual and $\mu \simeq 1$. 

Therefore, up to terms of order $\ll \la^{k-1}$, we can put together~\eqref{eq:3rdterm2} and~\eqref{eq:mod-dtb-2nd-line} to estimate the combination of~\eqref{eq:3rdterm} and~\eqref{eq:4thterm} from below by 
\EQ{ \label{eq:34terms} 
&\Big\langle \Lambda Q_\uln\lambda \mid
\frac{1}{r^2}\big(f({-}Q_\mu + Q_\lambda + g) - f({-}Q_\mu + Q_\lambda) - f'({-}Q_\mu + Q_\lambda)g\big)\Big\rangle \\
&{-} \Big\langle \partial_r^2 g + \frac 1r \partial_r g - \frac{1}{r^2}\big(f(Q_\la- Q_\mu + g) - f(Q_\la) +f(Q_\mu)\big) \mid  \A_0(\lambda)g\Big\rangle  \\
& \ge  \frac{1}{\lambda}\int_0^{R\lambda}\Big((\partial_r g)^2 + \frac{k^2}{r^2}g^2\Big)r\ud r  \\
& \quad +\Big\langle \big(\A_0(\lambda) - \A(\lambda)\big) g \mid \frac{1}{r^2}\big(f({-}Q_\mu + Q_\lambda + g) - f({-}Q_\mu+Q_\lambda)- k^2g\big)\Big\rangle
}
Since $\A_0(\lambda) - \A(\lambda)$ is the operator of multiplication by $\frac{1}{2\lambda} \Big(q''\big(\frac{r}{\lambda}\big) + \frac{\lambda}{r}q'\big(\frac{r}{\lambda}\big)\Big)$,
we can use \eqref{eq:approx-potential-wm} to estimate the last term above, 
\EQ{ \label{eq:34terms-error} 
 \Big\langle \big(\A_0(\lambda) - \A(\lambda)\big) g\mid \frac{1}{r^2}\big(f({-}Q_\mu + Q_\lambda + g) - f({-}Q_\mu+Q_\lambda)- k^2g\big)\Big\rangle   \\ 
 =  \frac{1}{\lambda}\int_0^{+\infty} \frac{1}{r^2}\big(f'(Q_\lambda)-k^2\big)g^2 \udr  + O(c_0 \la^k)
}
where $c_0>0$ is as in Lemma~\ref{lem:op-A-wm}. 

%We have a pointwise inequality $\Big|\frac{1}{r^2}\big(f({-}Q_\mu + Q_\lambda) + f(Q_\mu) - f(Q_\lambda)\big)\Big| \lesssim \frac{1}{r^2}|Q_\mu| \lesssim 1$.
%Moreover, $\A_0(\lambda)g$ is supported in a ball of radius $\wt R\lambda$
%and $\|\A_0(\lambda)g\|_{L^2} \lesssim \|\bs g\|_\cE$,
%with a constant depending on $R$ and $c$ put independent of $\lambda$.
%This and the Cauchy-Schwarz inequality yield
%\begin{equation}
%\Big\langle \A_0(\lambda)g, \frac{1}{r^2}\big(f({-}Q_\mu + Q_\lambda) + f(Q_\mu) - f(Q_\lambda)\big)\Big\rangle \ll \lambda^k.
%\end{equation}
Putting together the estimates from Claim~\ref{c:lead},~\eqref{eq:2term},~\eqref{eq:34terms}, and~\eqref{eq:34terms-error} we obtain the estimate 
\EQ{
b'(t)  &\ge 8k^2 \frac{\la^{k-1}}{\mu^k}  + o(\la^{k-1}) \\
& \quad + \frac{1}{\lambda}\int_0^{R\lambda}\Big((\partial_r g)^2 + \frac{k^2}{r^2}g^2\Big)r\ud r  + \frac{1}{\lambda}\int_0^{+\infty} \frac{1}{r^2}\big(f'(Q_\lambda)-k^2\big)g^2 \udr  
}
Finally we conclude by using the following localized coercivity estimate, 
\EQ{
\frac{1}{\lambda}\int_0^{R\lambda}\Big((\partial_r g)^2 + \frac{k^2}{r^2}g^2\Big)r\ud r  + \frac{1}{\lambda}\int_0^{+\infty} \frac{1}{r^2}\big(f'(Q_\lambda)-k^2\big)g^2 \udr   \ge - \frac{c_1}{\la} \| g\|_H^2  
}
where we use again crucially here that $\ang{ \ZZ_{\ula} \mid g} = 0$; see~\cite[Lemma 5.4, eq. (5.28)]{JJ-AJM} for the proof. Above the constant $c_1>0$ can be made as small as we like by taking $R>0$ large enough, which we are free to do. This completes the proof. 
\end{proof} 
%Thus, putting together \eqref{eq:dtb}, \eqref{eq:mod-dtb-1st-line-2} and \eqref{eq:mod-dtb-2nd-line}
%we obtain
%\begin{equation}
%\label{eq:dtb-almost-final}
%\begin{aligned}
%b'(t) &\geq \frac{32\lambda}{\mu} + \frac{1}{\lambda}\int_0^{R\lambda}\Big((\partial_r g)^2 + \frac{4}{r^2}g^2\Big)r\ud r \\
%&+ \Big\langle \A_0(\lambda) - \A(\lambda), \frac{1}{r^2}\big(f({-}Q_\mu + Q_\lambda + g) - f({-}Q_\mu+Q_\lambda)- 4g\big)\Big\rangle + O(c_0\lambda).
%\end{aligned}
%\end{equation}
%Since $\A_0(\lambda) - \A(\lambda)$ is the operator of multiplication by $\frac{1}{2\lambda} \Big(q''\big(\frac{r}{\lambda}\big) + \frac{\lambda}{r}q'\big(\frac{r}{\lambda}\big)\Big)$,
%we can use \eqref{eq:approx-potential-ym} and get
%\begin{equation}
%\begin{aligned}
%b'(t) &\geq \frac{32\lambda}{\mu} + \frac{1}{\lambda}\int_0^{R\lambda}\Big((\partial_r g)^2 + \frac{4}{r^2}g^2\Big)r\ud r
%+ \frac{1}{\lambda}\int_0^{+\infty} \frac{1}{r^2}\big(f'(Q_\lambda)-4\big)g^2 \udr + O(c_0\lambda) \\
%&= \frac{32\lambda}{\mu} + \frac{1}{\lambda}\int_0^{R\lambda}\Big((\partial_r g)^2 + \frac{4}{r^2}g^2\Big)r\ud r- \frac{1}{\lambda}\int_0^{+\infty} \frac{6}{r^2}(\Lambda Q_\lambda) g^2 \udr + O(c_0\lambda).
%\end{aligned}
%\end{equation}
%From this, \eqref{eq:dtb} follows by localized coercivity estimates,
%see Lemma 4.4 in the paper on construction.

%\begin{multline} 
%\abs{  \ang{ \La Q_{\ula} \mid \frac{1}{r^2} \Big( f(Q_\la - Q_\mu + g) - f(Q_\la - Q_\mu) - f'(Q_\la - Q_\mu) g \Big)} } \\
%\lesssim 
%\end{multline} 
%\end{proof} 
  
\section{Dynamics of Non-Scattering Threshold Solutions} \label{s:dynamics}
\subsection{Overall scheme}
In this section we prove the Main Theorem.
We deduce it from the following proposition, whose proof will be split into several lemmas. 

%\begin{prop} \label{p:2bub}Let $\vec \psi(t) : [T_0, T_+) \to \HH_0$ be a solution to~\eqref{eq:wmk} with $\E(\vec \psi) = 2 \E(\vec Q)$ that does not scatter in forward time. Then $T_+ = +\infty$ and there exist $\mu_0 > 0$, a continuous function $\la(t)>0$ and a sign $\iota \in \{1, -1\}$ such that
%\EQ{
%\lim_{t \to +\infty} \| (\psi(t) - \iota(Q_{\la(t)} - Q_{\mu_0}), \psi_t(t))\|_{\HH_0} = 0, \quad \lim_{t \to +\infty}\lambda(t) = 0.
%}
%An analogous statement holds if $\vec \psi(t)$ does not scatter in backwards time. 
%\end{prop}  

%We will deduce Theorem~\ref{t:2bub} from the following proposition, whose proof will be split into several lemmas. 

\begin{prop}\label{p:psi_t} 
Let $\psi(t) :(T_-, T_+) \to \HH_0$ be a solution to~\eqref{eq:wmk} with $\E(\vec \psi) = 2 \E(\vec Q)$ which does not scatter in forward time. Then
\EQ{ 
\lim_{t \to T_+} \bfd(\vec \psi(t)) = 0.\label{eq:psi_t}
}
\end{prop}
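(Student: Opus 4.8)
The strategy is a contradiction argument combining the sequential two-bubble decomposition (Step~1 of the overall scheme), the modulation analysis of Section~\ref{s:mod}, and a localized virial/convexity argument on the ``good'' intervals where the solution is far from a two-bubble. Suppose \eqref{eq:psi_t} fails. Since $\vec\psi(t)$ does not scatter forward in time, by Lemma~\ref{l:scattering} we have $\|\psi/r\|_{L^3_t L^6_x([0,T_+)\times\R^4)}=+\infty$; in particular, if $T_+<\infty$ then by the Cote--Jia--Kenig decomposition (Theorem~\ref{t:cjk}, which in the threshold case $\E(\vec\psi)=2\E(\vec Q)$ forces exactly two bubbles and no radiation) there is a sequence $t_n\to T_+$ with $\bfd(\vec\psi(t_n))\to 0$. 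If $T_+=\infty$, the same conclusion holds after using Lemma~\ref{l:1profile} together with Theorem~\ref{t:cjk} to produce such a sequence. So in all cases there is a sequence $t_n\to T_+$ with $\bfd(\vec\psi(t_n))\to 0$, and by the failure of \eqref{eq:psi_t} there is $\eps_0>0$ and a sequence $s_n\to T_+$ with $\bfd(\vec\psi(s_n))\geq 2\eps_0$. Fix $\eps_0$ small enough and $\eps>0$ much smaller (to be chosen), and choose $\eps<\eta_1$ from Proposition~\ref{p:modp2} so that the modulation parameters $\lambda(t),\mu(t),\zeta(t)$ are well-defined $C^1$ functions whenever $\bfd(\vec\psi(t))\leq\eta_0$.

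\textbf{Decomposition into good and bad intervals.} First I would fix a large time $T\in(0,T_+)$ with $\bfd(\vec\psi(T))\leq\eps$ and $\tfrac{d}{dt}(\zeta(t)/\mu(t))|_{t=T}\geq 0$ (such $T$ exists arbitrarily close to $T_+$ since $\bfd(\vec\psi(t_n))\to 0$, after possibly perturbing $t_n$ slightly to arrange the monotonicity condition; one can also simply take a local minimum of $\zeta/\mu$). Then I apply Proposition~\ref{prop:modulation} with this $\eps,\eps_0$ to produce $t_0=T\leq t_1\leq t_2$ with $\bfd(\vec\psi(t))\leq\tfrac14\eps_0$ on $[t_0,t_1]$, $\bfd(\vec\psi(t))\geq 2\eps$ on $[t_1,t_2]$, $\bfd(\vec\psi(t_2))\geq 2\eps_0$, and the key absorption bound \eqref{eq:err-absorb}: $\int_{t_1}^{t_2}\|\partial_t\psi\|_{L^2}^2\,dt\geq C\int_{t_0}^{t_1}\sqrt{\bfd(\vec\psi(t))}\,dt$. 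Iterating this over the regions between consecutive near-two-bubble times, the time axis $[T,T_+)$ is partitioned into \emph{bad intervals} $[a_m,b_m]$ (where $\vec\psi$ stays within $\eta_0$ of a two-bubble, so modulation applies and $\int_{a_m}^{b_m}(\zeta/\mu)^{k/2}\,dt\leq C_k$ by \eqref{eq:uni-borne}) and \emph{good intervals} $[b_m,a_{m+1}]$ (where $\bfd(\vec\psi(t))\geq$ a fixed positive constant, hence by Lemma~\ref{l:d-size} $\|\vec\psi(t)\|_{\HH_0}$ is bounded and, by the concentration-compactness argument of Step~4 together with Theorem~\ref{t:2EQ} and Lemma~\ref{l:1profile}, the solution has the compactness property --- modulo scaling --- on the union of the good intervals).

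\textbf{The virial/convexity argument.} Next I run the truncated virial identity of Lemma~\ref{l:vir}: for $R$ large, $\frac{d}{dt}\langle\psi_t\mid\chi_R\,r\partial_r\psi\rangle = -\int_0^\infty\psi_t^2\,rdr + \Omega_R(\vec\psi(t))$, with $|\Omega_R(\vec\psi)|\lesssim\E_R^\infty(\vec\psi)$ and, near a two-bubble, $|\Omega_R(\vec\psi)|\lesssim\sqrt{\bfd(\vec\psi)}$ by Lemma~\ref{l:error-estim}. Integrate this between two times $\tau<\tau'$ (near $T$ and near $T_+$, respectively) where $\vec\psi$ is close to a two-bubble. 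On the \emph{good} intervals the error term $\Omega_R$ is controlled using the compactness property (the energy outside a compact set, in the appropriate scaling, is small --- this is where we need the $R$ cutoff adapted to the compactness modulus, as in Kenig--Merle), so it is dominated by $\tfrac12\int\psi_t^2$ there. On the \emph{bad} intervals, the error is bounded by $\int_{a_m}^{b_m}\sqrt{\bfd(\vec\psi(t))}\,dt\lesssim\int_{a_m}^{b_m}(\zeta(t)/\mu(t))^{k/2}\,dt\leq C_k$ (using \eqref{eq:lamud1} and \eqref{eq:bound-on-l}); crucially, by \eqref{eq:err-absorb} this bad-interval error is absorbed into the positive term $\int\psi_t^2$ collected on the \emph{adjacent} intervals $[t_1,t_2]$ (the portions where $\bfd\geq 2\eps$) coming from Proposition~\ref{prop:modulation}. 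Summing, one obtains a lower bound $\int_\tau^{\tau'}\int_0^\infty\psi_t^2\,rdr\,dt\gtrsim$ (a positive quantity that does not vanish as $\tau'\to T_+$, since there are genuine exits of the two-bubble neighborhood), while the left side $\langle\psi_t\mid\chi_R r\partial_r\psi\rangle|_\tau^{\tau'}$ is bounded by $C_0 R\sqrt{\bfd(\vec\psi)}$ at each endpoint via \eqref{eq:virial-end}, hence can be made arbitrarily small by taking the endpoints close to two-bubble configurations. This contradiction --- a large coercive lower bound versus a small boundary term --- forces $\bfd(\vec\psi(t))\to 0$ as $t\to T_+$, proving \eqref{eq:psi_t}.

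\textbf{Main obstacle.} The delicate point is the bookkeeping that makes the bad-interval virial errors get absorbed by the positive $\int\psi_t^2$ contributions from the ejection intervals: one must carefully match each bad interval $[a_m,b_m]$ with the preceding and following ejection regions and invoke \eqref{eq:err-absorb} with the constant $C$ chosen large enough (depending on the implied constants in Lemma~\ref{l:error-estim} and the virial estimate) \emph{before} fixing $\eps$. A secondary technical difficulty is the choice of the cutoff scale $R$ in the virial functional so that the good-interval error is controlled by compactness while the boundary terms $C_0R\sqrt{\bfd}$ remain small --- since $R$ must be taken large (depending on the compactness modulus on the good intervals) but $\bfd$ at the endpoints can be taken arbitrarily small, this is consistent, but it requires choosing the endpoints \emph{after} $R$. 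Finally, one must rule out the possibility that $T_+<\infty$ with $\lambda(t)\to 0$ in finite time while $\bfd(\vec\psi(t))\not\to 0$: this is handled because finite-time blow-up at the threshold, by Theorem~\ref{t:cjk}, still produces the two-bubble sequence and a nonzero regular limit, and the same virial argument applies on the interval up to $T_+$.
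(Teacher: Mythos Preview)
Your proposal is correct and follows essentially the same approach as the paper's proof: assume \eqref{eq:psi_t} fails, split the time axis into bad intervals (near a two-bubble, handled by the modulation analysis and Proposition~\ref{prop:modulation}) and good intervals (bounded away from a two-bubble, handled by a compactness argument based on Lemma~\ref{l:1profile} and Theorem~\ref{t:2EQ}), then run the truncated virial identity of Lemma~\ref{l:vir} between two near-two-bubble times, using Lemma~\ref{l:error-estim} and \eqref{eq:err-absorb} to absorb all error terms into a fraction of $\int\|\psi_t\|_{L^2}^2$, yielding a contradiction. The paper organizes the ``bookkeeping'' you flag as the main obstacle via an explicit refined splitting $a_m<c_m<b_m$ with $c_m$ a local minimum of $\zeta/\mu$, and chooses $R=R_0\max_m\nu_m$ with $\nu_m$ the good-interval lengths; your anticipated difficulties (matching bad intervals to adjacent ejections, choosing $R$ after compactness but endpoints after $R$) are exactly the issues the paper's lemmas resolve.
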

To begin with, note the following special case of Theorem~\ref{t:cjk}.
\begin{prop}\label{p:cjk}
Let $\vec \psi(t) : (T_-, T_+) \to \HH_0$ be a solution to \eqref{eq:wmk} with $\E(\vec\psi) = 2 \E( \vec Q)$
which does not scatter in forward time. Then
\EQ{ \label{eq:seqbub} 
\liminf_{t\to T_+} \bfd(\vec \psi(t)) = 0.
}
An analogous statement holds if $\vec \psi(t)$ does not scatter in backwards time.\qed
\end{prop}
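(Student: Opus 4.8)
The plan is to unpack Theorem~\ref{t:cjk} and exploit the two rigid constraints available here — the exact energy $\E(\vec\psi)=2\E(\vec Q)$ and the topological normalization $\vec\psi(t)\in\HH_0$ — to pin down the number of bubbles in the sequential decomposition.

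First I would apply Theorem~\ref{t:cjk} to the non-scattering solution $\vec\psi$: there exist a sequence $t_n\to T_+$, an integer $J\in\N$, a regular map $\vec\fy\in\HH_0$, scales $\la_{n,j}$ and signs $\iota_j\in\{-1,1\}$ with
\EQ{
\vec\psi(t_n) = \sum_{j=1}^J \iota_j \vec Q_{\la_{n,j}} + \vec\fy(t_n) + o_{\HH_0}(1),\qquad n\to\infty,
}
the scales $\la_{n,j}$ being pairwise asymptotically orthogonal. Three observations then force $J=2$. (i) $J\le 2$: by the energy decoupling of the bubble decomposition (the identity $\E(\vec\fy(1))=\E(\vec\psi)-J\E(\vec Q)$ in the finite‑time case, and Lemma~\ref{l:enorth} in the global case), $J\E(\vec Q)\le \E(\vec\psi)=2\E(\vec Q)$. (ii) $J\ge 1$: in the finite‑time case this is part of Theorem~\ref{t:cjk} (there is a scale $\la_{n,1}\to 0$); in the global case, if $J=0$ then $\vec\psi(t_n)=\vec\fy(t_n)+o_{\HH_0}(1)$ with $\vec\fy$ a solution of~\eqref{eq:2dlin}, and since $\|\fy/r\|_{L^3_tL^6_x([t_n,\infty)\times\R^4)}\to 0$ and $\|(\vec\psi-\vec\fy)(t_n)\|_{\HH_0}\to 0$, the Perturbation Lemma~\ref{l:pert} (combined with the standard continuity bootstrap that yields the a priori $\HH_0$‑bound on $[t_n,\infty)$) gives $\|\psi/r\|_{L^3_tL^6_x([t_n,\infty)\times\R^4)}<\infty$, contradicting that $\vec\psi$ does not scatter forward; in the finite‑time case $J=0$ would give $\vec\psi(t_n)\to\vec\fy(T_+)\in\HH_0$, contradicting maximality of $T_+$ by Lemma~\ref{l:scattering}. (iii) $J$ is even: in the global case this is exactly the sign‑matching condition $\sum_j\iota_j\pi=\lim_{r\to\infty}\sum_j\iota_j Q_{\la_{n,j}}(r)=\lim_{r\to\infty}\psi(0,r)=0$ of Theorem~\ref{t:cjk}; in the finite‑time case it follows because $\psi(t_n)-\fy(t_n)=\sum_j\iota_j Q_{\la_{n,j}}+o_H(1)$ lies in $H$, hence vanishes as $r\to\infty$, whereas $\sum_j\iota_j Q_{\la_{n,j}}(r)\to\sum_j\iota_j\pi$, so $\sum_j\iota_j=0$ once $n$ is large (using $H\hookrightarrow L^\infty$).

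With $J=2$ the energy identity gives $\E(\vec\fy)=0$, hence $\vec\fy\equiv 0$ (a zero‑energy free wave, resp. a zero‑energy element of $\HH_0$, is trivial), and (iii) gives $\iota_2=-\iota_1=:-\iota$. Relabelling so that $\la_{n,1}/\la_{n,2}\to 0$ — possible by the pairwise orthogonality of the scales, since comparable scales would make the configuration, and hence energetically $\vec\psi(t_n)$, converge to $0$, contradicting $\E(\vec\psi)=2\E(\vec Q)$ — we obtain
\EQ{
\vec\psi(t_n) = \iota\big(\vec Q_{\la_{n,1}} - \vec Q_{\la_{n,2}}\big) + o_{\HH_0}(1),\qquad \la_{n,1}/\la_{n,2}\to 0.
}
Substituting $\la=\la_{n,1}$, $\mu=\la_{n,2}$ into the infimum defining $\bfd$ in~\eqref{eq:ddef} yields $\bfd(\vec\psi(t_n))\le\|o_{\HH_0}(1)\|_{\HH_0}^2+(\la_{n,1}/\la_{n,2})^k\to 0$, so $\liminf_{t\to T_+}\bfd(\vec\psi(t))=0$. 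The backward statement is proved identically, running the same argument towards $T_-$.

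The only genuinely delicate point in this scheme is ruling out $J=0$ in the globally defined, non‑scattering case, where one must secure an a priori $\HH_0$‑bound along $[t_n,\infty)$ before invoking the Perturbation Lemma; everything else is bookkeeping layered on top of Theorem~\ref{t:cjk}, the energy decoupling of bubbles, and the energy/topology rigidity.
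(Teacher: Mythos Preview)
Your argument is correct and is exactly how one unpacks the statement; the paper simply records Proposition~\ref{p:cjk} as an immediate special case of Theorem~\ref{t:cjk} without spelling out the steps $J\le 2$ (energy), $J$ even (topology), $J\ge 1$ (non-scattering), and $\E(\vec\fy)=0$ that you supply. One minor remark: the asymptotic orthogonality $\la_{n,1}/\la_{n,2}+\la_{n,2}/\la_{n,1}\to\infty$ is already part of the bubble decomposition in Theorem~\ref{t:cjk}, so you can invoke it directly rather than arguing by contradiction that comparable scales would collapse the energy.
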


Let us summarize the main idea of the proof of Proposition~\ref{p:psi_t}. We know from Proposition~\ref{p:cjk} that
\eqref{eq:psi_t} holds \emph{for a sequence of times}.
Thus, in order to obtain \eqref{eq:psi_t}, we should prove that $\vec\psi(t)$, after exiting a small neighborhood of a two-bubble configuration,
can never approach a two-bubble again. Such a result is similar in nature to the~\emph{no-return lemma} proved by Krieger, Nakanishi and Schlag \cite{KNS15} in their study of the dynamics near the ground state stationary solution for the energy critical NLW. 
Such results are usually obtained by means of a convexity argument based on monotonicity formulas, which is also the scheme that we adopt here.

Until the end of this section, $\vec \psi(t)$ always denotes a solution to~\eqref{eq:wmk}, $\vec\psi(t):(T_-, T_+) \to \HH_0$,
such that $\E(\vec \psi) = 2 \E(\vec Q)$
and $\vec\psi(t)$ does not scatter in forward time.
Let $T_- < \tau_1 \leq \tau_2 < T_+$. Integrating the virial identity from Lemma~\ref{l:vir} for $t \in [\tau_1, \tau_2]$ yields 
\EQ{ \label{eq:virtau}
\int_{\tau_1}^{\tau_2} \| \p_t \psi (t) \|_{L^2}^2 \,  \ud t  &\le  \abs{ \ang{ \p_t\psi \mid \chi_R r \p_r \psi}(\tau_1)}  + \abs{ \ang{ \p_t\psi \mid   \chi_R r \p_r \psi}(\tau_2)} \\
 &\quad +  \int_{\tau_1}^{\tau_2} \abs{\Om_{R}( \vec \psi(t))} \, \ud t
 }
 where $\Om_{R}(\vec \psi(t))$ is defined in~\eqref{eq:OmRdef}.
  Note that for any $R>0$ we can use Lemma~\ref{l:error-estim} to bound the first two terms on the right-hand-side above and obtain
 \EQ{ \label{eq:virR} 
 \begin{aligned}
 \int_{\tau_1}^{\tau_2} \| \p_t \psi (t) \|_{L^2}^2 \,  \ud t  &\leq C_0\Big(  R \sqrt{\bfd(\vec\psi(\tau_1))} +  R \sqrt{\bfd(\vec\psi(\tau_2))}\Big) \\ &+  \int_{\tau_1}^{\tau_2} \abs{\Om_{R}( \vec \psi(t))} \, \ud t.
 \end{aligned}
 }

 Our goal is to show that with a good choice of $R$, $\tau_1$ and $\tau_2$
 the right hand side can be absorbed into the left hand side.
 As mentioned in the Introduction, we use different arguments
 depending whether $\bfd(\vec\psi(t))$ is small or not.
  
\subsection{Splitting of the time axis}
We would like to divide the time axis into \emph{good intervals} where $\bfd(\vec\psi(t))$ is large
and \emph{bad intervals} where it is small.
We begin with a preliminary splitting, which will then need to be refined.
\begin{claim}\label{cl:pre-split}
Suppose that~\eqref{eq:psi_t} fails. Then for any $\eps_0 > 0$ sufficiently small there exist
sequences $p_n$, $q_n$ such that
\EQ{
T_- < p_0 < q_0 < p_1 < q_1 < \dots < p_{n-1} < q_{n-1} < p_n < q_n < \dots 
}
such that the following holds for all $n \in \{0, 1, 2, 3, \ldots\}$:
\begin{gather}
\forall t \in [p_n, q_n]: \bfd(\vec\psi(t)) \leq \eps_0, \label{eq:d-small-all-pq} \\
\forall t \in [q_n, p_{n+1}]: \bfd(\vec\psi(t)) \geq \frac 12\eps_0, \label{eq:d-large-all-pq} \\
\lim_{n \to +\infty}p_n = \lim_{n\to+\infty}q_n = T_+. \label{eq:pq-lim}
\end{gather}
\end{claim}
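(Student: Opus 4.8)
The plan is to construct the sequences $p_n, q_n$ inductively using the hypothesis that \eqref{eq:psi_t} fails together with the sequential information from Proposition~\ref{p:cjk}. Since \eqref{eq:psi_t} fails, there is $\eps_1 > 0$ and a sequence $s_m \to T_+$ with $\bfd(\vec\psi(s_m)) \geq \eps_1$. On the other hand, Proposition~\ref{p:cjk} gives a sequence $t_m \to T_+$ with $\bfd(\vec\psi(t_m)) \to 0$, in particular $\bfd(\vec\psi(t_m)) < \tfrac14\eps_0$ for $m$ large, provided we fix $\eps_0 \leq \eps_1$. Thus $\bfd(\vec\psi(t))$, which is a continuous function of $t$ (continuity of $\bfd \circ \vec\psi$ follows from continuity of $\vec\psi: (T_-,T_+) \to \HH_0$ and the fact that $\bfd$ is an infimum of continuous functions, hence upper semicontinuous, combined with Lemma~\ref{l:d-size} to rule out jumps — more simply, $\bfd$ is manifestly continuous on $\HH_0$ since it is $1$-Lipschitz-like in the $\HH_0$ metric up to the $(\la/\mu)^k$ term, which only ever helps), oscillates infinitely often between values below $\tfrac14\eps_0$ and values above $\eps_0$ as $t \to T_+$.

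First I would fix $\eps_0 > 0$ small (smaller than $\eps_1$ and than any threshold needed later, e.g. $\eta_0$ from Lemma~\ref{l:modeq} and $\al_0$ from Lemma~\ref{l:dpm}). Then I would pick a starting time $\tau_0 \in (T_-, T_+)$ with $\bfd(\vec\psi(\tau_0)) \leq \tfrac14 \eps_0$ — such a time exists by Proposition~\ref{p:cjk}. Set $p_0 := \tau_0$. Having defined $p_n$ with $\bfd(\vec\psi(p_n)) \leq \eps_0$, define
\[
q_n := \sup\{ t \geq p_n : \bfd(\vec\psi(s)) \leq \eps_0 \text{ for all } s \in [p_n, t] \}.
\]
Because $\bfd(\vec\psi(s_m)) \geq \eps_1 > \eps_0$ for $s_m \to T_+$, this supremum is finite, i.e. $q_n < T_+$, and by continuity $\bfd(\vec\psi(q_n)) = \eps_0$ while \eqref{eq:d-small-all-pq} holds by construction. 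Next define
\[
p_{n+1} := \inf\{ t \geq q_n : \bfd(\vec\psi(t)) \leq \tfrac14 \eps_0 \}.
\]
Since $\bfd(\vec\psi(t_m)) \to 0$ along $t_m \to T_+$, the set on the right is nonempty and its infimum is finite; by continuity $\bfd(\vec\psi(p_{n+1})) = \tfrac14 \eps_0 \leq \eps_0$ (so the induction continues) and $\bfd(\vec\psi(t)) > \tfrac14\eps_0 \geq \tfrac12\eps_0$... wait — here I need \eqref{eq:d-large-all-pq} with the constant $\tfrac12\eps_0$, so I should instead take $p_{n+1} := \inf\{t \geq q_n : \bfd(\vec\psi(t)) \leq \tfrac12\eps_0\}$; then on $[q_n, p_{n+1})$ we have $\bfd(\vec\psi(t)) > \tfrac12\eps_0$, and by continuity $\bfd(\vec\psi(p_{n+1})) = \tfrac12\eps_0$, giving \eqref{eq:d-large-all-pq} on the closed interval $[q_n, p_{n+1}]$ (the value at $q_n$ is $\eps_0 \geq \tfrac12\eps_0$ and at $p_{n+1}$ is exactly $\tfrac12\eps_0$). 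This is consistent: $\bfd(\vec\psi(q_n)) = \eps_0 \geq \tfrac12\eps_0$ so $q_n$ is not in the defining set, hence $p_{n+1} > q_n$; and $p_n < q_n$ strictly because $\bfd(\vec\psi(p_n)) \leq \tfrac14\eps_0 < \eps_0$ forces the set defining $q_n$ to contain a neighborhood of $p_n$.

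Finally, for \eqref{eq:pq-lim}: the construction produces a strictly increasing sequence, so $\ell := \lim p_n = \lim q_n$ exists in $(T_-, T_+]$. If $\ell < T_+$ then by continuity of $\bfd\circ\vec\psi$ on the compact interval $[\tau_0, \ell]$ this function would be uniformly continuous there, so the oscillation of $\bfd(\vec\psi(t))$ between $\tfrac14\eps_0$ and $\eps_0$ on the shrinking intervals $[p_n, q_n], [q_n, p_{n+1}]$ would force $q_n - p_n$ and $p_{n+1} - q_n$ not to shrink to zero near $\ell$ — more precisely, on each $[p_n, p_{n+1}]$ the function $\bfd\circ\vec\psi$ changes by at least $\tfrac14\eps_0$, which by uniform continuity requires $p_{n+1} - p_n \geq \delta$ for some fixed $\delta > 0$, contradicting $p_{n+1} - p_n \to 0$. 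Hence $\ell = T_+$, which is \eqref{eq:pq-lim}. I do not anticipate a serious obstacle here; the only point requiring a little care is the continuity of $t \mapsto \bfd(\vec\psi(t))$, which I would justify by noting that $(\phi_0,\phi_1) \mapsto \|(\phi_0 - \iota(Q_\la - Q_\mu), \phi_1)\|_{\HH_0}^2 + (\la/\mu)^k$ is continuous on $\HH_0$ for each fixed $(\la,\mu,\iota)$, so $\bfd$ is upper semicontinuous as an infimum, and lower semicontinuity follows from the triangle inequality in $\HH_0$ together with the observation that near-minimizing $(\la,\mu)$ for a given $\vec\phi$ remain near-minimizing for nearby $\vec\phi'$ (using Lemma~\ref{l:d-size} to keep the parameters in a compact range when $\bfd$ is bounded). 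Since $\vec\psi \in C^0((T_-,T_+);\HH_0)$, the composition is continuous.
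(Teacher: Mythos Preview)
Your argument is correct and follows essentially the same approach as the paper: both construct $p_n$ and $q_n$ inductively by taking the last time $\bfd(\vec\psi(\cdot))$ stays below $\eps_0$ (for $q_n$) and the last time it stays above $\tfrac12\eps_0$ (for $p_{n+1}$), then rule out $\lim p_n < T_+$ by continuity of $t\mapsto\bfd(\vec\psi(t))$. The only cosmetic differences are that the paper starts from a time where $\bfd>\eps_0$ rather than where $\bfd$ is small, phrases both definitions as suprema, and for \eqref{eq:pq-lim} simply notes that $\bfd(\vec\psi(t))$ would have a limit at $\ell<T_+$, contradicting $\bfd(\vec\psi(p_n))=\tfrac12\eps_0$ and $\bfd(\vec\psi(q_n))=\eps_0$; your uniform-continuity version of this step is equivalent. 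One small slip: after your correction to $p_{n+1}:=\inf\{t\ge q_n:\bfd(\vec\psi(t))\le\tfrac12\eps_0\}$ you have $\bfd(\vec\psi(p_n))=\tfrac12\eps_0$ for $n\ge1$, not $\le\tfrac14\eps_0$, but since $\tfrac12\eps_0<\eps_0$ your conclusion $p_n<q_n$ still holds.
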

\begin{proof}
Suppose that \eqref{eq:psi_t} fails and let $\eps_0$ be any number such that
\EQ{\label{eq:eps-limsup}
0 < \eps_0 < \min(\limsup_{t\to T_+}\bfd(\vec\psi(t)), \eta_1)
}
(recall that $\bfd(\vec\psi(t_0)) < \eta_1$ guarantees that the modulation
estimates hold for $t$ in some neighborhood of $t_0$).
Let $T_0 \in (T_-, T_+)$ be such that $\bfd(\vec\psi(T_0)) > \eps_0$.
We set
\EQ{
p_0 := \sup\Big\{t: \bfd(\vec\psi(\tau)) \geq \frac 12 \eps_0, \forall \tau\in[T_0, t]\Big\}.
}
Proposition~\ref{p:psi_t} implies that $p_0 < T_+$ and $\bfd(\vec\psi(p_0)) = \frac 12 \eps_0$.
Then we define inductively for $n \geq 1$:
\begin{align}
q_{n-1} := \sup\Big\{t: \bfd(\vec\psi(\tau)) \leq \eps_0, \forall \tau\in[p_{n-1}, t]\Big\}, \\
p_n := \sup\Big\{t: \bfd(\vec\psi(\tau)) \geq \frac 12 \eps_0, \forall \tau\in[q_{n-1}, t]\Big\}.
\end{align}
By a simple inductive argument using \eqref{eq:eps-limsup} and Proposition~\ref{p:cjk}
we can show that for $n \in \{1, 2, \ldots\}$ there holds
\begin{gather}
p_{n-1} < q_{n-1} < T_+, \\
q_{n-1} < p_n < T_+, \\
\bfd(\vec\psi(p_n)) = \frac 12\eps_0, \label{eq:d-val-at-p} \\
\bfd(\vec\psi(q_n)) = \eps_0 \label{eq:d-val-at-q}.
\end{gather}
Bounds \eqref{eq:d-large-all-pq} and \eqref{eq:d-small-all-pq} follow directly from the definitions
of $p_n$ and $q_n$. Suppose that \eqref{eq:pq-lim} does not hold. Then, by monotonicity,
\EQ{
\lim_{n \to +\infty}p_n = \lim_{n\to+\infty}q_n = T_1 < T_+.
}
By the local well-posedness $\bfd(\vec\psi(t))$ has a limit as $t \to T_1$,
which is in contradiction with \eqref{eq:d-val-at-p} and \eqref{eq:d-val-at-q}.
\end{proof}
\begin{claim}\label{cl:split}
Let $\eps > 0$. There exist $\lambda_0, \eps' > 0$ having the following property.
Assume that $\bfd(\vec\psi(t)) < \eta_1$, with $\eta_1$ as in Proposition~\ref{p:modp2}, and let $\lambda(t)$, $\mu(t)$ be the modulation parameters
given by Lemma~\ref{l:modeq} and let $\zeta(t)$ be the correction to $\la(t)$ defined in~\eqref{eq:zetadef}. Then
\begin{align}
\frac{\zeta(t)}{\mu(t)} \geq \lambda_0 &\Rightarrow \bfd(\vec\psi(t)) > \eps', \label{eq:d-geq-epsp} \\
\frac{\zeta(t)}{\mu(t)} \leq \lambda_0 &\Rightarrow \bfd(\vec\psi(t)) < \eps \label{eq:d-leq-eps}.
\end{align}
\end{claim}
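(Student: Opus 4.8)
The plan is to read the claim off directly from the modulation analysis of Section~\ref{s:mod}: once the modulation parameters are well defined, \eqref{eq:lamud1} together with \eqref{eq:bound-on-l} shows that $\bfd(\vec\psi(t))$ is comparable, with constants depending only on $k$, to $\big(\zeta(t)/\mu(t)\big)^k$, and the two implications then follow by choosing $\lambda_0$ (and hence $\eps'$) small enough in terms of $\eps$.

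First I would make precise the reduction to the $\bfd_+$-small regime in which Lemma~\ref{l:modeq} applies. Shrinking $\eta_1$ once and for all so that $\eta_1 < \alpha_0$, the hypothesis $\bfd(\vec\psi(t)) < \eta_1$ together with $\bfd = \min(\bfd_+,\bfd_-)$ forces at least one of $\bfd_\pm(\vec\psi(t))$ to be $< \eta_1$, while Lemma~\ref{l:dpm} forbids both; after possibly replacing $\psi$ by $-\psi$ --- which solves the same equation, interchanges $\bfd_+$ and $\bfd_-$, and changes neither $\bfd(\vec\psi(t))$ nor the ratio $\zeta(t)/\mu(t)$ --- we may assume $\bfd_+(\vec\psi(t)) < \eta_1$, so that in fact $\bfd(\vec\psi(t)) = \bfd_+(\vec\psi(t))$. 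Then Lemma~\ref{l:modeq} produces the parameters $\lambda(t),\mu(t)$ and the correction $\zeta(t)$ of \eqref{eq:zetadef}, and \eqref{eq:lamud1} gives $\bfd(\vec\psi(t)) \simeq \big(\lambda(t)/\mu(t)\big)^k$ with constants depending only on $k$.

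Next I would combine this with \eqref{eq:bound-on-l}, which yields $|\zeta(t)/\lambda(t) - 1| \le \delta < 1$ and hence $\tfrac12\lambda(t) \le \zeta(t) \le \tfrac32\lambda(t)$; substituting into the previous comparison gives
\EQ{
\frac{1}{c_k}\Big(\frac{\zeta(t)}{\mu(t)}\Big)^k \le \bfd(\vec\psi(t)) \le c_k\Big(\frac{\zeta(t)}{\mu(t)}\Big)^k
}
for a constant $c_k \ge 1$ depending only on $k$. It then suffices to choose $\lambda_0 = \lambda_0(\eps) > 0$ so small that $c_k\lambda_0^k < \min(\eps,\eta_1)$ and to set $\eps' := c_k^{-1}\lambda_0^k$: if $\zeta(t)/\mu(t) \le \lambda_0$ the upper bound gives $\bfd(\vec\psi(t)) \le c_k\lambda_0^k < \eps$, which is \eqref{eq:d-leq-eps}, and if $\zeta(t)/\mu(t) \ge \lambda_0$ the lower bound gives $\bfd(\vec\psi(t)) \ge c_k^{-1}\lambda_0^k = \eps'$, which is \eqref{eq:d-geq-epsp}.

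I do not expect a genuine obstacle here; the entire content is packaged in Lemmas~\ref{l:modeq} and~\ref{l:dpm} and Proposition~\ref{p:modp2}. The only points requiring a little care are the bookkeeping in the first step (Lemma~\ref{l:dpm} is precisely what lets one pass from $\bfd(\vec\psi(t)) < \eta_1$ to $\bfd_+(\vec\psi(t)) < \eta_1$, so that ``the modulation parameters given by Lemma~\ref{l:modeq}'' are indeed defined), and checking that the thresholds are compatible --- $\eta_1 < \alpha_0$, and $\eps' = c_k^{-1}\lambda_0^k \le c_k\lambda_0^k < \eta_1$, so that \eqref{eq:d-geq-epsp} is consistent with the standing hypothesis $\bfd(\vec\psi(t)) < \eta_1$ --- together with the observation that all implicit constants, hence $c_k$, depend only on $k$.
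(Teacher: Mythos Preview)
Your proof is correct and follows essentially the same approach as the paper: both arguments use \eqref{eq:lamud1} from Lemma~\ref{l:modeq} together with \eqref{eq:bound-on-l} to obtain the two-sided comparison $\bfd(\vec\psi(t)) \simeq (\zeta(t)/\mu(t))^k$, and then simply choose $\lambda_0$ and $\eps'$ accordingly. Your additional first step, invoking Lemma~\ref{l:dpm} to reduce from $\bfd < \eta_1$ to $\bfd_+ < \eta_1$ so that Lemma~\ref{l:modeq} genuinely applies, is a point the paper leaves implicit; it is a sensible clarification but not a different idea.
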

\begin{rem}
Note that $\eps' < \eps$.
\end{rem}
\begin{proof}

Lemma~\ref{l:modeq} yields $\bfd(\vec\psi(t)) \leq (C^2+1)\big(\lambda(t)/\mu(t))^k \le 2(C^2+1)\big(\zeta(t)/\mu(t))^k$,
so we get \eqref{eq:d-leq-eps} with any $\lambda_0 < \big(\eps/2(C^2+1)\big)^{\frac 1k}$.

In order to prove \eqref{eq:d-geq-epsp}, we notice that
from Lemma~\ref{l:modeq} and~\eqref{eq:bound-on-l} we get $\bfd(\vec\psi(t)) \geq \frac{1}{C}\big(\zeta(t)/\mu(t)\big)^k$,
hence it suffices to take $\eps' < \frac 1C \lambda_0^k$.
\end{proof}
\begin{lem}
Suppose that~\eqref{eq:psi_t} fails. Let $\eps_0>0$ be small enough so that Claim~\ref{cl:pre-split}
and Proposition~\ref{prop:modulation} hold. Then there exist $\eps, \eps' > 0$ with $\eps' < \eps$  and $\eps< \frac{1}{10} \eps_0$ as in Proposition~\ref{prop:modulation}, 
and a splitting of the time axis
\EQ{
T_- < a_1 < c_1 < b_1 < \dots < a_m < c_m < b_m < a_{m+1} < \dots 
}
such that the following holds for all $m \in \{2, 3, 4, \ldots\}$:
\begin{gather}
\forall t \in [b_{m}, a_{m+1}]: \bfd(\vec\psi(t)) \geq \eps', \label{eq:d-large-all} \\
\exists t \in [b_{m}, a_{m+1}]: \bfd(\vec\psi(t)) \geq 2\eps, \label{eq:d-large-some} \\
\bfd(\vec\psi(a_m)) = \bfd(\vec\psi(b_{m})) = \eps, \label{eq:d-small-some} \\
C_0 \int_{a_{m+1}}^{c_{m+1}}\sqrt{\bfd(\vec\psi(t))}\,\ud t \leq \frac{1}{10}\int_{b_{m}}^{a_{m+1}}\|\partial_t\psi(t)\|_{L^2}^2\,\ud t \label{eq:err-mod-contr-1} \\
C_0 \int_{c_{m}}^{b_{m}}\sqrt{\bfd(\vec\psi(t))}\,\ud t \leq \frac{1}{10}\int_{b_{m}}^{a_{m+1}}\|\partial_t\psi(t)\|_{L^2}^2\,\ud t \label{eq:err-mod-contr-2}
\end{gather}
and
\EQ{
\liminf_{m \to+\infty} \bfd(\vec\psi(c_m)) = 0. \label{eq:d-conv-0}
}
\end{lem}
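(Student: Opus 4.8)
The plan is to combine the preliminary splitting of Claim~\ref{cl:pre-split}, the dictionary between $\bfd(\vec\psi(t))$ and $\zeta(t)/\mu(t)$ supplied by Claim~\ref{cl:split} and Lemma~\ref{l:modeq}, and repeated applications of Proposition~\ref{prop:modulation} at well-chosen \emph{centers}. First I would fix the constants: let $C_0$ be the constant in \eqref{eq:err-mod-contr-1}--\eqref{eq:err-mod-contr-2} (the virial error constant of Lemma~\ref{l:error-estim}), set $C := 10\,C_0$, let $\eps_0 > 0$ be small enough that Claims~\ref{cl:pre-split} and~\ref{cl:split} and Proposition~\ref{prop:modulation} (with this $C$) all apply, then pick $\eps \in (0, \frac{1}{10}\eps_0)$ small enough for Proposition~\ref{prop:modulation}, let $\lambda_0, \eps'$ be produced by Claim~\ref{cl:split} for this $\eps$, and reserve the right to shrink $\eps'$ once more below. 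Since \eqref{eq:psi_t} is assumed to fail, $\limsup_{t\to T_+}\bfd(\vec\psi(t)) > 0$, while Proposition~\ref{p:cjk} gives $\liminf_{t\to T_+}\bfd(\vec\psi(t)) = 0$; hence $\bfd(\vec\psi(t)) < \eps'$ at times arbitrarily close to $T_+$, and $\bfd(\vec\psi(t)) > \eps_0$ at other such times.

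Next I would define the \textbf{bad intervals}. Let $[a_m, b_m]$, $m \ge 1$, enumerate in increasing order the closures of the connected components of $\{t \in (T_-, T_+) : \bfd(\vec\psi(t)) < \eps\}$ that meet $\{\bfd(\vec\psi(\cdot)) < \eps'\}$; by local well-posedness and the oscillation of $\bfd$ these accumulate only at $T_+$, so they are genuinely countably many, and by construction $\bfd(\vec\psi(a_m)) = \bfd(\vec\psi(b_m)) = \eps$ and $b_m < a_{m+1}$, which is \eqref{eq:d-small-some}. On $[a_m, b_m]$ the modulation parameters $\lambda, \mu, \zeta$ are defined (since $\eps$ is chosen so that $\bfd \le \eps$ suffices; the sign $\iota$ is constant on $[a_m,b_m]$ by Lemma~\ref{l:dpm}). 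At $a_m, b_m$ we have $\bfd = \eps$, hence $\zeta/\mu > \lambda_0$ there by the contrapositive of \eqref{eq:d-leq-eps}, whereas at some interior point $s$ with $\bfd(\vec\psi(s)) < \eps'$ we have $\zeta(s)/\mu(s) < \lambda_0$ by the contrapositive of \eqref{eq:d-geq-epsp}. So $\zeta/\mu$ attains its minimum over $[a_m, b_m]$ at an interior point $c_m$, where $\dd t(\zeta(t)/\mu(t))\big|_{t=c_m} = 0$ because $\zeta, \mu \in C^1$. This also yields \eqref{eq:d-conv-0}: for any $\delta \in (0, \eps')$ there is, arbitrarily far out, a time $s$ with $\bfd(\vec\psi(s)) < \delta$, lying in some $[a_m, b_m]$, and then by \eqref{eq:lamud1}, $\bfd(\vec\psi(c_m)) \simeq \big(\zeta(c_m)/\mu(c_m)\big)^k \le \big(\zeta(s)/\mu(s)\big)^k \simeq \bfd(\vec\psi(s)) < \delta$ up to constants, so $\liminf_m \bfd(\vec\psi(c_m)) = 0$.

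The heart of the proof is to \textbf{run Proposition~\ref{prop:modulation} at each $c_m$, in both time directions}, which is legitimate because $\dd t(\zeta/\mu)$ vanishes at $c_m$. The forward application gives $c_m \le t_1^{+,m} \le t_2^{+,m} < T_+$ with $\bfd \le \frac14\eps_0$ on $[c_m, t_1^{+,m}]$, $\bfd \ge 2\eps$ on $[t_1^{+,m}, t_2^{+,m}]$, $\bfd(\vec\psi(t_2^{+,m})) \ge 2\eps_0$, and $\int_{t_1^{+,m}}^{t_2^{+,m}} \|\partial_t\psi\|_{L^2}^2 \ge C \int_{c_m}^{t_1^{+,m}} \sqrt{\bfd(\vec\psi)}$; the backward application gives the time-reversed times $t_2^{-,m} \le t_1^{-,m} \le c_m$ with analogous properties. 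Since $\bfd < \eps < 2\eps$ on $[c_m, b_m)$ while $\bfd \ge 2\eps$ at $t_1^{+,m}$, one gets $b_m \le t_1^{+,m}$ (symmetrically $t_1^{-,m} \le a_m$). The crucial extra fact, which I would read off from the \emph{proof} of Proposition~\ref{prop:modulation} rather than its statement, is that $t \mapsto \bfd(\vec\psi(t))$ is comparable to a non-decreasing function on $[c_m, t_2^{+,m}]$ — precisely the monotonicity of the function $\xi(t)$ together with $\xi(t) \simeq \zeta(t)^{k/2}$ established there — so that $\bfd(\vec\psi(t)) \ge c_k\,\bfd(\vec\psi(b_m)) = c_k \eps$ for $t \in [b_m, t_2^{+,m}]$, with $c_k > 0$ depending only on $k$. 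Imposing finally $\eps' \le c_k\eps$, this forces: there is no bad interval inside $(b_m, t_2^{+,m})$, hence $a_{m+1} > t_2^{+,m}$; symmetrically $b_m < t_2^{-,m+1}$; and on $[b_m, a_{m+1}]$ we have $\bfd \ge \eps'$, which is \eqref{eq:d-large-all}, while $t_2^{+,m} \in (b_m, a_{m+1})$ with $\bfd(\vec\psi(t_2^{+,m})) \ge 2\eps_0 > 2\eps$ is \eqref{eq:d-large-some}. Using $[c_m, b_m] \subseteq [c_m, t_1^{+,m}]$ and $[t_1^{+,m}, t_2^{+,m}] \subseteq [b_m, a_{m+1}]$, the forward inequality gives $C_0 \int_{c_m}^{b_m} \sqrt{\bfd(\vec\psi)} \le \frac{C_0}{C}\int_{b_m}^{a_{m+1}} \|\partial_t\psi\|_{L^2}^2 = \frac{1}{10}\int_{b_m}^{a_{m+1}} \|\partial_t\psi\|_{L^2}^2$, i.e. \eqref{eq:err-mod-contr-2}; applying the backward inequality at $c_{m+1}$ with $[a_{m+1}, c_{m+1}] \subseteq [t_1^{-,m+1}, c_{m+1}]$ and $[t_2^{-,m+1}, t_1^{-,m+1}] \subseteq [b_m, a_{m+1}]$ gives \eqref{eq:err-mod-contr-1}. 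After discarding the first interval or two (to avoid interference with the left endpoint of the construction), all of \eqref{eq:d-large-all}--\eqref{eq:d-conv-0} hold for $m \ge 2$.

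The main obstacle is the combinatorial bookkeeping in the last step: one must check that the intervals produced by Proposition~\ref{prop:modulation} at consecutive centers $c_m$ sit correctly inside the skeleton given by the components of $\{\bfd < \eps\}$, and in particular that no \emph{hidden deep excursion} of $\bfd$ can occur inside a supposed good interval $[b_m, a_{m+1}]$. Excluding such excursions is exactly where the \emph{statement} of Proposition~\ref{prop:modulation} is insufficient and its internal quasi-monotonicity (the increasing function $\xi$) must be invoked; once that is granted, the remaining verifications are routine.
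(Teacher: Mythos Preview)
Your proof is correct and follows the same overall strategy as the paper: locate centers $c_m$ at interior minima of $\zeta/\mu$ and apply Proposition~\ref{prop:modulation} in both time directions. The organizational difference is that you define the bad intervals $[a_m,b_m]$ \emph{directly} as the connected components of $\{\bfd<\eps\}$ that meet $\{\bfd<\eps'\}$, whereas the paper first selects the relevant pre-split intervals $[p_{n_m},q_{n_m}]$ from Claim~\ref{cl:pre-split}, chooses $c_m$ there, and only then defines $a_m,b_m$ via the ejection times $t_1^{\pm}$ produced by Proposition~\ref{prop:modulation} (for instance $b_m:=\inf\{t\le t_1^{+}:\bfd\ge\eps\text{ on }[t,t_1^{+}]\}$). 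With that choice of $b_m$ one has $\bfd\ge\eps$ on $[b_m,\tau_m]$ by construction and \eqref{eq:d-t1-t2}, so the paper can show $\tau_m<a_{m+1}$ by a pure contradiction with the pre-split structure, using only the black-box statement of Proposition~\ref{prop:modulation}. Your route is more direct (it dispenses with the pre-split scaffolding) but, as you correctly flag, it requires the monotonicity of $\xi$ from inside the proof of Proposition~\ref{prop:modulation} to rule out a hidden deep excursion on $(b_m,t_2^{+,m})$; that step is legitimate since $\xi$ is strictly increasing on $[c_m,t_2^{+,m}]$ and $\xi^2\simeq\bfd$ with constants depending only on $k$, and your final shrinking $\eps'\le c_k\eps$ is harmless because only the implication \eqref{eq:d-geq-epsp} is used and it persists under decreasing $\eps'$.
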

\begin{proof}
We choose $\eps, \eps_0 > 0$ such that Claim~\ref{cl:pre-split}
and Proposition~\ref{prop:modulation} hold,
where the constant $C_0$ in Proposition~\ref{prop:modulation}
is given by Lemma~\ref{l:error-estim}.
We can assume that $\eps < \frac{1}{10}\eps_0$.
Let $\lambda_0$ and $\eps'$ be given by Claim~\ref{cl:split}.

We begin by defining the times $c_m$. Let $0 \leq n_1 < n_2 < ...$ be the sequence of
these indices $n_m$ for which
\EQ{\label{eq:inf-leq-eps}
\inf_{t\in[p_{n_m}, q_{n_m}]} \bfd(\vec\psi(t)) \leq \eps'.
}
Recall that the modulation parameters $\lambda(t)$, $\mu(t)$, and $\zeta(t) \simeq \la(t)$
are well defined on $[p_{n_m}, q_{n_m}]$.
Let $c_m \in [p_{n_m}, q_{n_m}]$ be such that
\EQ{
\zeta(c_m)/\mu(c_m) = \inf_{t \in [p_{n_m}, q_{n_m}]}\zeta(t)/\mu(t).
}
Claim~\ref{cl:split} and \eqref{eq:inf-leq-eps} imply that $\zeta(c_m) / \mu(c_m) < \lambda_0$,
which implies again by Claim~\ref{cl:split}
that $\bfd(\vec\psi(c_m)) < \eps < \frac{1}{10}\eps_0$.
Hence $c_m \in (p_{n_m}, q_{n_m})$ and
\EQ{
\dd{t}\Big|_{t = c_m}\Big(\frac{\zeta(t)}{\mu(t)}\Big) = 0.
}

We will use Proposition~\ref{prop:modulation} with various $t_0$, in forward and backward direction.
Thus the meaning of $t_0$, $t_1$ and $t_2$ will change depending on the context.

Using Proposition~\ref{prop:modulation} with $t_0$ = $c_m$ in the backward time direction
we obtain times $t_1 \leq c_m$ and $t_2 \leq t_1$.
Note that \eqref{eq:d-t0-t1} and \eqref{eq:d-large-all-pq} imply that $t_1 \in (p_{n_m}, c_m]$.
We set
\EQ{
a_m := \sup\{t \geq t_1: \bfd(\vec\psi(t)) \geq \eps,\ \forall \tau\in [t_1, t]\}.
}
By \eqref{eq:d-t1-t2} we have $\bfd(\vec\psi(t_1)) > \eps$.
Since $\bfd(\vec\psi(c_m)) < \eps$, we have $a_m \in (p_{n_m}, c_m)$
and $\bfd(\vec\psi(a_m)) = \eps$.

Denote $\sigma_m := t_2$. Then \eqref{eq:d-t1-t2} yields
$\bfd(\vec\psi(t)) \geq \eps$ for $t \in [\sigma_m, t_1]$.
By definition of $a_m$ we also have $\bfd(\vec\psi(t)) \geq \eps$ for $t \in [t_1, a_m]$, hence
\EQ{\label{eq:d-sigm-am}
\bfd(\vec\psi(t)) \geq \eps,\ \forall t \in [\sigma_m, a_m].
}
Bound \eqref{eq:d-at-t2} together with \eqref{eq:d-small-all-pq} yields $\sigma_m < p_{n_m}$,
so \eqref{eq:d-sigm-am} implies that
\EQ{\label{eq:d-pnm-am}
\bfd(\vec\psi(t)) \geq \eps,\ \forall t \in [p_{n_m}, a_m].
}
Finally, \eqref{eq:err-absorb} yields
\EQ{\label{eq:err-absorb-sig}
\int_{\sigma_m}^{a_m} \|\partial_t\psi(t)\|_{L^2}^2\ud t \geq C\int_{a_m}^{c_m}\sqrt{\bfd(\vec \psi(t))}\ud t.
}

Now using Proposition~\ref{prop:modulation} with $t_0$ = $c_m$ in the forward time direction
we obtain times $t_1 \geq c_m$ and $t_2 \geq t_1$.
Note that \eqref{eq:d-t0-t1} and \eqref{eq:d-large-all-pq} imply that $t_1 \in [c_m, q_{n_m})$.
We set
\EQ{
b_m := \inf\{t \leq t_1: \bfd(\vec\psi(t)) \geq \eps,\ \forall \tau\in[t, t_1]\}.
}
As in the case of $a_m$, we obtain $b_m \in (c_m, q_{n_m})$ and $\bfd(\vec\psi(b_m)) = \eps$.
Denote $\tau_m := t_2$. Adapting the proofs of \eqref{eq:d-sigm-am} and \eqref{eq:d-pnm-am} we get
\begin{gather}
\label{eq:d-bm-taum}
\bfd(\vec\psi(t)) \geq \eps,\ \forall t \in [b_m, \tau_m], \\
\label{eq:d-bm-qnm}
\bfd(\vec\psi(t)) \geq \eps,\ \forall t \in [b_m, q_{n_m}], \\
\label{eq:err-absorb-tau}
\int_{b_m}^{\tau_m} \|\partial_t\psi(t)\|_{L^2}^2\ud t \geq C\int_{c_m}^{b_m}\sqrt{\bfd(\vec \psi(t))}\ud t.
\end{gather}
We will prove that $\tau_m < a_{m+1}$. Suppose not. Since $\bfd(\vec\psi(\tau_m)) \geq 2\eps_0$,
see \eqref{eq:d-at-t2}, the fact that $a_{m+1} \in [p_{n_{m+1}}, q_{n_{m+1}}]$ would imply
that $\tau_m > q_{n_{m+1}}$. Thus by \eqref{eq:d-bm-taum} we would have
$\bfd(\vec\psi(t)) \geq \eps,\ \forall t \in [b_m, q_{n_{m+1}}]$. But $b_m < q_{n_m} < p_{n_{m+1}}$,
so we obtain
\EQ{
\bfd(\vec\psi(t)) \geq \eps,\ \forall t \in [p_{n_{m+1}}, q_{n_{m+1}}].
}
Since $\eps' < \eps$, this contradicts the definition of $n_{m+1}$.
Thus $\tau_m < a_{m+1}$, so \eqref{eq:err-absorb-tau} implies \eqref{eq:err-mod-contr-2}.
Also $\bfd(\vec\psi(\tau_m)) \geq 2\eps_0$ yields \eqref{eq:d-large-some}, since $\eps_0 \geq \eps$.

Analogously, we have $\sigma_{m+1} > b_m$, so \eqref{eq:err-absorb-sig} yields \eqref{eq:err-mod-contr-1}.

It remains to prove \eqref{eq:d-large-all}.
Take $t$ such that $\bfd(\vec\psi(t)) < \eps'$.
Then $t \in [p_{n_m}, q_{n_m}]$ for some $m$,
so \eqref{eq:d-pnm-am} and \eqref{eq:d-bm-qnm} yield $t\in[a_m, b_m]$,
which is exactly \eqref{eq:d-large-all}.

Finally, \eqref{eq:d-conv-0} follows from Proposition~\ref{p:cjk},~\eqref{eq:lamud1}, and~\eqref{eq:bound-on-l}.% Lemma~\ref{l:modeq}. 
\end{proof}
\begin{rem}\label{rem:eps-small}
It follows from the proof that $\eps$ can be taken as small as we wish.
\end{rem}

Until the end of the proof of Proposition~\ref{p:psi_t},
we fix $\eps, \eps' > 0$ and a partition of the time axis
given by the last lemma. In particular, all the constants are allowed to depend
on $\eps$ and $\eps'$. We denote
\EQ{
I_m := [b_{m-1}, a_m], \qquad I := \bigcup_{m\geq 1} I_m.
}
Then \eqref{eq:d-large-all} is equivalent to
\begin{equation}
\forall t\in I: \bfd(\vec\psi(t)) \geq \eps'. \label{eq:d-large-I}
\end{equation}

We will see that $\vec\psi(t)$ has a \emph{compactness property} for $t\in I$,
which allows to deal with the right hand side of \eqref{eq:virR} for $t \in I$. For $t \notin I$ we will rely on \eqref{eq:err-mod-contr-1}
and \eqref{eq:err-mod-contr-2}.

\subsection{Compactness on $I$}
The objective of this step is to deduce a compactness statement on $I$ that will allow us to obtain a lower bound for the left-hand-side of~\eqref{eq:virR} restricted to $I$ and to uniformly control the errors $\Om_R(\vec \psi(t))$ on  $I$, by choosing $R$ large enough.
\begin{lem} \label{l:Icompact} 
There exists a continuous function $\nu: I  \to (0, +\infty)$ such that the set 
\EQ{
\calK:=  \{  \vec \psi(t)_{1/\nu(t)} \mid t \in I \} \subset \HH_0 
}
is pre-compact in $\HH_0$. 
\end{lem}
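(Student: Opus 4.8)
The plan is to establish the compactness property on $I$ by a contradiction argument using the profile decomposition, exactly in the spirit of the rigidity theorems of Kenig--Merle and the compactness lemma (Lemma~\ref{l:1profile}) already proved above.

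First, I would argue that $\vec\psi(t)$ is uniformly bounded in $\HH_0$ for $t\in I$. Indeed, on $I$ we have $\bfd(\vec\psi(t))\ge \eps'$ by \eqref{eq:d-large-I}, so \eqref{eq:d-big} in Lemma~\ref{l:d-size} gives $\|\vec\psi(t)\|_{\HH_0}\le C(\eps')$ for all $t\in I$. This is the crucial structural input: the good intervals are precisely where $\vec\psi$ stays away from the two-bubble configuration, hence has controlled $\HH_0$-norm.

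Next, suppose for contradiction that no such rescaling function $\nu$ exists, i.e.\ the set $\{\vec\psi(t)_{1/\nu} : t\in I,\ \nu>0\}$ is not pre-compact modulo scaling. By a standard extraction argument (see \cite{KM06, KM08, DKM1}) this means there is a sequence $t_n\in I$ such that for every choice of scales $\{\nu_n\}$, the sequence $\vec\psi(t_n)_{1/\nu_n}$ has no convergent subsequence in $\HH_0$. Since $\vec\psi$ does not scatter in forward time and $\E(\vec\psi)=2\E(\vec Q)$, and since $\sup_n\|\vec\psi(t_n)\|_{\HH_0}\le C(\eps')<\infty$, Lemma~\ref{l:1profile} applies: up to a subsequence there exist scales $\nu_n>0$ and a nonzero $\vec\fy\in\HH_0$ with $\vec\psi(t_n)_{1/\nu_n}\to\vec\fy$ strongly in $\HH_0$, $\E(\vec\fy)=2\E(Q_k)$, and the nonlinear evolution of $\vec\fy$ is non-scattering in both time directions. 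The strong convergence $\vec\psi(t_n)_{1/\nu_n}\to\vec\fy$ directly contradicts the assumed non-precompactness. Hence the set $\calK_0:=\{\vec\psi(t)_{1/\nu} : t\in I, \nu>0\}$ is pre-compact in $\HH_0$, and the existence of a continuous selection $\nu:I\to(0,\infty)$ follows: for each $t\in I$ the infimum defining a ``best scale'' is attained (or approximately attained up to a fixed multiplicative constant), and continuity of $\nu$ follows from continuity of $t\mapsto\vec\psi(t)$ in $\HH_0$ together with the pre-compactness, by the usual argument (e.g.\ as in \cite[Section 3]{CKLS1} or \cite{DKM1}); one may need to pass to a locally constant-in-scale modification, but this does not affect the statement.

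The main obstacle — and the only point requiring genuine care — is ensuring the extraction of a single limiting profile $\vec\fy$ works \emph{uniformly over} $t_n\in I$ rather than along a single a priori fixed sequence: one must rule out that different subsequences of times in $I$ force wildly different concentration scales in an incompatible way. This is handled precisely as in the Kenig--Merle framework: if pre-compactness failed one could select a sequence witnessing the failure, and then Lemma~\ref{l:1profile} produces the desired strong limit, a contradiction. The continuity of $\nu$ is then routine: if $t_n\to t_\infty$ in $I$ then $\vec\psi(t_n)\to\vec\psi(t_\infty)$ strongly, and pre-compactness forces the scales $\nu(t_n)$ to stay in a compact subset of $(0,\infty)$ and converge to $\nu(t_\infty)$. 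I would include a short remark that, although $I$ is a disjoint union of closed intervals $I_m=[b_{m-1},a_m]$, the function $\nu$ need not be globally continuous across the gaps — continuity on each $I_m$ suffices for the subsequent virial/monotonicity argument — but in fact one gets a well-defined function on all of $I$ whose image lies in the compact set of scales, which is all that is used to bound $\int_I|\Omega_R(\vec\psi(t))|\,\ud t$ by choosing $R$ large relative to that compact range.
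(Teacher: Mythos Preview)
Your approach is essentially the same as the paper's: both use the uniform $\HH_0$-bound on $I$ from Lemma~\ref{l:d-size} together with Lemma~\ref{l:1profile} to get sequential pre-compactness modulo scaling, then pass to a specific continuous $\nu(t)$. Two small points: first, the set $\calK_0=\{\vec\psi(t)_{1/\nu}: t\in I,\ \nu>0\}$ as you wrote it contains full scaling orbits and is never pre-compact---what you have actually shown (and what is needed) is that every sequence $t_n\in I$ admits scales $\nu_n$ with a convergent subsequence; second, where you defer the construction of $\nu(t)$ to references, the paper gives a clean explicit choice: $\nu(t)$ is the unique scale at which a weighted energy $\int_0^\infty e^{-r}(\,\cdot\,)\,r\,\ud r$ of $\vec\psi(t)_{1/\nu(t)}$ equals $\tfrac12\|\vec\psi(t)\|_{\HH_0}^2$, which makes both well-definedness and continuity immediate and then reduces pre-compactness of $\calK$ to boundedness of $\nu_n/\nu(t_n)$.
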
 

\begin{proof}%[Proof of Lemma~\ref{l:Icompact}]
We will first prove that for any sequence $\{t_n \} \in I$ there exists a subsequence (still denoted by $t_n$) and a sequence of scales $\nu_n$,  so that 
\EQ{ 
\vec \psi(t_n)_{1/\nu_n} \to \vec \fy \in \HH_0 \label{eq:comp-seq}
} 
for some $\vec\fy \in \HH_0$. 

We observe that by Lemma~\ref{l:d-size} and \eqref{eq:d-large-I}  we have the uniform bound
\EQ{ \label{eq:IepsH} 
\| \vec \psi(t) \|_{\HH_0} \le C(\eps') \quad \forall t \in I,
}
which means in particular that 
\EQ{ \label{eq:tmH} 
\| \vec \psi(t_n) \|_{\HH_0} \le C(\eps') < \infty.
}
Thus \eqref{eq:comp-seq} follows from Lemma~\ref{l:1profile}.  

We are now ready to construct the function $\nu(t)$.
For each $t \in I$ let $\nu(t)$ be the unique number such that
\begin{equation}
\begin{gathered}
\int_0^\infty  \mathrm e^{-r}\left(( \p_t \psi_{1/\nu(t)}(t, r))^2+( \p_r \psi_{1/\nu(t)}(t, r))^2 +  k^2  \frac{(\psi_{1/\nu(t)}(t, r))^2}{r^2} \right) \rdr \\ = \frac 12 \| \vec \psi(t) \|_{\HH_0}^2
\end{gathered}
\end{equation}
(the function $\mathrm e^{-r}$ could be replaced by any continuous strictly decreasing function
whose value is $1$ for $r = 0$ and tending to $0$ as $r \to +\infty$).
We see that $\nu(t)$ is a continuous function.

Suppose that $\vec \psi(t)_{1/\nu(t)}$ is not pre-compact in $\HH_0$.
Thus there exists a sequence $\vec \psi(t_n)_{1/\nu(t_n)}$ which has no convergent subsequence.
But we know (by assumption) that there exist a subsequence (still denoted $t_n$)
and numbers $\nu_n$ such that $\vec \psi(t_n)_{1/\nu_n}$ converges in $\HH_0$
to some $\vec\fy = (\fy_0, \fy_1)$.
This implies that 
\begin{equation}
\int_0^\infty  \mathrm e^{-r}\left(( \p_t \psi_{1/\nu_n}(t_n, r))^2+( \p_r \psi_{1/\nu_n}(t_n, r))^2 +  k^2  \frac{(\psi_{1/\nu_n}(t_n, r))^2}{r^2} \right) \rdr
\end{equation}
converges to
\begin{equation}
\int_0^\infty  \mathrm e^{-r}\left(\fy_1(r)^2+( \p_r \fy_0(r))^2 +  k^2  \frac{\fy_0(r)^2}{r^2} \right) \rdr \in (0, \|\vec \fy\|_{\HH_0}).
\end{equation}
Since $\|\vec\psi(t_n)\|_{\HH_0}$ converges to $\|\vec\fy\|_{\HH_0}$,
we deduce that $\nu_n / \nu(t_n)$ is bounded.
This implies that $\nu_n / \nu(t_n)$ has a convergent subsequence,
hence $\vec \psi(t_n)_{1/\nu(t_n)}$ has a convergent subsequence,
so we have a contradiction. This completes the proof.
\end{proof}

For $m \in \{1, 2, 3, \ldots\}$ we define
\EQ{
\nu_m := |I_m| = a_m - b_{m-1}.
}
\begin{lem}\label{l:interval-bd}
There exists $C_1 > 0$ such that for all $m \geq 1$ and all $t \in I_m$ there holds
\EQ{
\frac{1}{C_1} \nu(t) \leq \nu_m \leq C_1 \nu(t).
}
\end{lem}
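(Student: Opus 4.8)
The plan is to first extract two soft consequences of the compactness property on $I$, and then combine them with the virial identity to get the quantitative comparison. Throughout we use that $\bfd$ is invariant under the rescaling $\vec\phi \mapsto \vec\phi_{1/\nu}$, so that, $\mathcal{K}$ being precompact, $D := \sup_{\vec\phi\in\overline{\mathcal{K}}}\bfd(\vec\phi) < \infty$ and hence $\eps' \le \bfd(\vec\psi(t)) \le D$ for all $t\in I$ by~\eqref{eq:d-large-I}. In particular every element of $\overline{\mathcal{K}}$ has energy $2\E(\vec Q)$, and Lemma~\ref{l:error-estim} gives the uniform bound $|\ang{\partial_t\psi(t) \mid \chi_R r\partial_r\psi(t)}| \le C_0 R\sqrt D$ for all $t\in I$, $R>0$. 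By the local Cauchy theory applied to the compact set $\overline{\mathcal{K}}$, there are $\tau_0 > 0$ and a compact $\mathcal{K}_1 \supset \overline{\mathcal{K}}$ such that the solution through any $\vec\phi\in\overline{\mathcal{K}}$ exists and stays in $\mathcal{K}_1$ for rescaled time $|s|\le \tau_0$, with the flow uniformly continuous there. After rescaling by $\nu(t)$, this says that for $t\in I_m$ the interval $[t-\tau_0\nu(t), t+\tau_0\nu(t)]\cap I_m$ is carried into $\mathcal{K}_1$ by $\vec\psi(\cdot)_{1/\nu(t)}$.

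\emph{Step 1 (local constancy of $\nu$).} Since the value $\nu(s)$ is characterized by the strictly-decreasing-weight normalization in the proof of Lemma~\ref{l:Icompact}, and that normalization depends continuously on, and detects the scale of, the element of the compact set $\mathcal{K}_1$, there is a constant $C_0'>1$ such that whenever $t,s$ lie in the same good interval $I_m$ with $|s-t|\le\tau_0\nu(t)$ we have $(C_0')^{-1}\nu(t) \le \nu(s) \le C_0'\nu(t)$.

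\emph{Step 2 ($\nu(t)\le C_1\nu_m$ on $I_m$).} Suppose not: there are $m_j$ and $t_j\in I_{m_j}$ with $\nu(t_j)/\nu_{m_j}\to\infty$. Then $\nu_{m_j}=|I_{m_j}| \ll \tau_0\nu(b_{m_j-1})$, so by Step~1 $\nu(s)\sim\nu(b_{m_j-1})$ for all $s\in I_{m_j}$ and $|I_{m_j}|/\nu(b_{m_j-1})\to 0$. Passing to a subsequence, $\vec\psi(b_{m_j-1})_{1/\nu(b_{m_j-1})}\to\vec\fy\in\overline{\mathcal{K}}$; by uniform continuity of the flow on $\mathcal{K}_1$, $\vec\psi(s)_{1/\nu(b_{m_j-1})}\to\vec\fy$ uniformly for $s\in I_{m_j}$. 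Continuity and scale-invariance of $\bfd$ then force $\bfd(\vec\fy)=\lim_j\bfd(\vec\psi(b_{m_j-1}))=\eps$ but also $\bfd(\vec\fy)=\lim_j\bfd(\vec\psi(t'_{m_j}))\ge 2\eps$, where $t'_{m_j}\in I_{m_j}$ is the time given by~\eqref{eq:d-large-some}; contradiction.

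\emph{Step 3 ($\nu_m\le C_1\nu(t)$ on $I_m$) --- the main obstacle.} We first record a rigidity fact: there is $\eta>0$ such that for every $\vec\phi\in\overline{\mathcal{K}}$, the solution $\vec u$ with $\vec u(0)=\vec\phi$ satisfies $\int_{-\tau_0}^{\tau_0}\|\partial_s u(s)\|_{L^2}^2\,\ud s \ge \eta$. Otherwise, compactness of $\overline{\mathcal{K}}$ and uniform continuity of the flow produce $\vec\phi\in\overline{\mathcal{K}}$ whose evolution has $\partial_s u\equiv 0$, so $\phi_0$ is a finite-energy $k$-equivariant harmonic map in $\HH_0$, hence (its degree being $0$) the constant $0$, of energy $0\neq 2\E(\vec Q)$; contradiction. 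Rescaling, on each window $[t,t+\tau_0\nu(t)]\subset I_m$ we get $\int_t^{t+\tau_0\nu(t)}\|\partial_t\psi(s)\|_{L^2}^2\,\ud s \ge \eta/\nu(t)$, whence, partitioning $I_m$ into such windows, $\int_{I_m}\|\partial_t\psi(t)\|_{L^2}^2\,\ud t \gtrsim \eta\int_{I_m}\ud t/\nu(t)$. On the other hand, integrating the virial identity~\eqref{eq:vir} over $I_m=[b_{m-1},a_m]$, bounding the boundary terms by $C_0R\sqrt D$ and using $|\Omega_R(\vec\psi(t))|\lesssim \E_R^\infty(\vec\psi(t))$, and then choosing the truncation radius $R$ large relative to the scales $\nu(t)$, $t\in I_m$, so that the exterior energy $\E_R^\infty(\vec\psi(t))$ is uniformly small on $I_m$ by compactness of $\mathcal{K}$, gives $\int_{I_m}\|\partial_t\psi(t)\|_{L^2}^2\,\ud t \le 2C_0R\sqrt D + \delta|I_m|$ with $\delta$ as small as we wish. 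Comparing the two estimates and using Step~1 to control the oscillation of $\nu$ across $I_m$ forces $\int_{I_m}\ud t/\nu(t) = O(1)$, which is incompatible with $|I_m|\gg\nu(t)$ for some $t\in I_m$; this contradiction establishes the bound. The delicate point, and the one I expect to require the most care, is the last one: matching the virial cut-off radius $R$ to the (a priori varying and possibly unbounded) scales present on a single long interval $I_m$ so that the error term $\int_{I_m}|\Omega_R|$ is genuinely absorbed by the lower bound coming from the rigidity fact; this may have to be carried out on sub-pieces of $I_m$ on which $\nu$ is comparable, glued via Step~1. Together, Steps~2 and~3 give the claim with $C_1$ depending only on $C_0'$ and the constants above.
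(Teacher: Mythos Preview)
Your Steps~1 and~2 are fine and Step~2 coincides with the paper's argument for the inequality $\nu_m \gtrsim \nu(t)$: if $\nu_{m_\ell}/\nu(t_\ell)\to 0$, the whole interval $I_{m_\ell}$ shrinks to a point in rescaled time, forcing $\bfd(\vec\psi(\cdot))$ to be asymptotically constant on $I_{m_\ell}$, which is incompatible with \eqref{eq:d-small-some} and \eqref{eq:d-large-some}.

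Step~3, however, contains a scaling error that is fatal for the approach. If $\vec u(s)=\vec\psi(t+\nu(t)s)_{1/\nu(t)}$, then $\|\partial_s u(s)\|_{L^2}^2=\|\partial_t\psi(t+\nu(t)s)\|_{L^2}^2$ (the $L^2$ norm of the time derivative is scale invariant here, as in the proof of Lemma~\ref{l:comp-vm}), so your rigidity fact yields
\[
\int_{t-\tau_0\nu(t)}^{t+\tau_0\nu(t)}\|\partial_t\psi(s)\|_{L^2}^2\,\ud s \;\ge\; \eta\,\nu(t),
\]
not $\eta/\nu(t)$. Summing over a greedy partition of $I_m$ into such windows then gives only $\int_{I_m}\|\partial_t\psi\|_{L^2}^2 \gtrsim \nu_m$, which is exactly Lemma~\ref{l:comp-vm} and carries no information about the smallness of $\nu(t)$. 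On the other side, the virial upper bound with $R\simeq R_0\sup_{s\in I_m}\nu(s)\le R_0 C_1\nu_m$ (using Step~2) also scales like $\nu_m$. Thus both sides of your comparison are of size $\nu_m$, and no choice of $R$ or of sub-pieces produces a contradiction when $\nu(t_\ell)/\nu_{m_\ell}\to 0$; the ``delicate point'' you flag is not a technicality but a genuine obstruction.

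The paper proves the bound $\nu_m \lesssim \nu(t)$ by a different, purely soft, argument: assuming $\nu(t_\ell)/\nu_{m_\ell}\to 0$ (say $\nu(t_\ell)\ll a_{m_\ell}-t_\ell$), one applies Lemma~\ref{l:1profile} to $\vec\psi(t_\ell)$ to obtain a limit $\vec\fy_0$ whose evolution $\vec\fy(s)$ is non-scattering in both time directions, then invokes Proposition~\ref{p:cjk} to find $\sigma>0$ with $\bfd(\vec\fy(\sigma))\le \eps'/2$. Transferring this back via the Cauchy theory gives $\bfd(\vec\psi(t_\ell+\nu(t_\ell)\sigma))\to \bfd(\vec\fy(\sigma))\le\eps'/2$, while $t_\ell+\nu(t_\ell)\sigma\in I_{m_\ell}$ for $\ell$ large, contradicting \eqref{eq:d-large-all}. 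No virial is needed for this step.
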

\begin{rem}
The last lemma tells us that $\nu(t)$ is comparable to $\nu_m$ for $t \in I_m$.
In particular, the set
\EQ{
\calK_1:=  \bigcup_{m \geq 1}\{  \vec \psi(t)_{1/\nu_m} \mid t \in I_m \} \label{eq:K1-comp}
}
is pre-compact in $\HH_0$. 
\end{rem}
\begin{proof}[Proof of Lemma~\ref{l:interval-bd}]
Suppose that there exists a sequence $m_\ell$ and times $t_\ell \in I_{m_\ell}$ such that
\EQ{ \label{eq:int-bd-part-1}
\lim_{\ell\to+\infty}\frac{\nu_{m_\ell}}{\nu(t_\ell)} = 0.
}
Let $\vec\psi_\ell(s)$ be the solution of \eqref{eq:wmk} with initial data
$
\vec\psi_\ell(0) = \vec\psi(t_\ell)_{1/\nu(t_\ell)}.
$
After extracting a subsequence, $\vec\psi_\ell(0)$ converges in $\HH_0$ to some $\vec\fy_0$.
Let $\vec\fy(s):[{-}s_0, s_0] \to \HH_0$ be the solution of \eqref{eq:wmk} with initial data
$\vec\fy(0) = \vec\fy_0$ (where $s_0 > 0$). By the standard Cauchy theory, for sufficiently large $\ell$
the solution $\vec\psi_\ell(s)$ is defined for $s \in [{-}s_0, s_0]$ and $\vec\psi_\ell(s) \to \vec\fy(s)$
in $\HH_0$, uniformly for $s \in [{-}s_0, s_0]$.

Let $t_\ell' \in I_{m_\ell}$ be any sequence. Let $s_\ell = \frac{t_\ell' - t_\ell}{\nu(t_\ell)}$,
Then \eqref{eq:int-bd-part-1} implies that $\lim_{\ell \to +\infty} s_\ell = 0$.
Thus $s_\ell \in [{-}s_0, s_0]$ for large $\ell$ and we deduce that
\EQ{
\lim_{\ell\to+\infty}\|\vec\psi_\ell(s_\ell) - \vec\fy(s_\ell)\|_{\HH_0} = 0.
}
But of course $\lim_{\ell\to+\infty}\|\vec\fy(s_\ell) - \vec\fy_0\|_{\HH_0} = 0$,
so the triangle inequality yields
\EQ{
\lim_{\ell\to+\infty}\|\vec\psi_\ell(s_\ell) - \vec\fy_0\|_{\HH_0} = 0.
}
In particular, $\lim_{\ell\to+\infty} \bfd(\vec\psi_\ell(s_\ell)) = \bfd(\vec\fy_0)$.
We have $\vec\psi_\ell(s_\ell) = \vec\psi(t_\ell')_{1/\nu(t_\ell)}$, thus $\bfd(\vec\psi(t_\ell')) = \bfd(\vec\psi_\ell(s_\ell))$
and we obtain
\EQ{
\lim_{\ell\to+\infty} \bfd(\vec\psi(t_\ell')) = \bfd(\vec\fy_0),
}
for any sequence $t_\ell' \in I_{m_\ell}$. This is impossible, because we know that $\bfd(\vec\psi(a_{m_\ell})) = \eps$
and on the other hand for each $\ell$ we have $\sup_{t\in I_{m_\ell}} \bfd(\vec\psi(t)) \geq 2\eps$.

Now suppose that there exist a sequence $m_\ell$ and times $t_\ell \in I_{m_\ell}$ such that
\EQ{
\lim_{\ell\to+\infty}\frac{\nu(t_\ell)}{\nu_{m_\ell}} = 0.
}
Without loss of generality we can assume that
\EQ{\label{eq:int-bd-part-2}
\lim_{\ell\to+\infty}\frac{\nu(t_\ell)}{a_{m_\ell} - t_\ell} = 0
}
(the case $\nu(t_\ell)/(t_\ell - b_{m_\ell-1}) \to 0$ can be treated similarly).

Again, let $\vec\psi_\ell(s)$ be the solution of \eqref{eq:wmk} with initial data
$\vec\psi_\ell(0) = \vec\psi(t_\ell)_{1/\nu(t_\ell)}$,
and let $\vec\psi_\ell(0) \to \vec\fy_0 \in \HH_0$.
Let $\vec\fy(s): (-T_-(\vec \fy_0), T_+(\vec \fy_0)) \to \HH_0$ be the solution of \eqref{eq:wmk} with initial data
$\vec\fy(0) = \vec\fy_0$.
By Lemma~\ref{l:1profile} we know that $\vec \fy(s)$ is non-scattering in both time directions and satisfies 
\EQ{
 \E( \vec \fy) = \E( \vec \psi) = 2 \E(\vec Q).
} 
Thus Proposition~\ref{p:cjk} implies that there exists $\sigma \in[0, T_+(\vec\fy_0))$ such that
$ \bfd(\vec\fy(\sigma)) \leq \frac 12 \eps'$.
By Cauchy theory, for $\ell$ large enough $\vec\psi_\ell(s)$ is defined for $s \in [0, \sigma]$
and $\vec\psi_\ell(\sigma) \to \vec\phi(\sigma)$ in $\HH_0$,
in particular $\bfd(\vec\psi_\ell(\sigma)) \to \bfd(\vec\phi(\sigma)) \leq \frac 12 \eps'$.

Let $t_\ell' := t_\ell + \nu(t_\ell)\sigma$. Then $\vec\psi(t_\ell') = \vec\psi_\ell(\sigma)_{\nu(t_\ell)}$, so we have
\EQ{
\lim_{\ell \to +\infty} \bfd(\vec\psi(t_\ell')) = \lim_{\ell\to+\infty} \bfd(\vec\psi_\ell(\sigma)) \leq \frac 12 \eps'.
}
However, \eqref{eq:int-bd-part-1} implies that for $\ell$ large enough there holds
$t_\ell \leq t_\ell' \leq a_{m_\ell}$, thus \eqref{eq:d-large-all} yields
$\bfd(\vec\psi(t_\ell')) \geq \eps'$. The contradiction finishes the proof.
\end{proof}

\begin{lem}\label{l:comp-vm}
There exists $\delta_1 > 0$ such that for all $m$ there holds
\EQ{
\int_{I_m} \|\partial_t \psi(t)\|_{L^2}^2\ud t \geq \delta_1^2 \nu_m.
}
\end{lem}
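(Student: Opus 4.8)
The plan is to argue by contradiction using the compactness property established in Lemmas~\ref{l:Icompact} and~\ref{l:interval-bd}, together with the fact that the solution does not scatter. Suppose the conclusion fails; then after extracting a subsequence there are indices $m_\ell$ such that
\EQ{
\frac{1}{\nu_{m_\ell}}\int_{I_{m_\ell}} \|\partial_t\psi(t)\|_{L^2}^2\,\ud t \to 0 \mas \ell \to \infty.
}
Rescale the solution on the interval $I_{m_\ell}$ by setting $\vec\psi_\ell(s) := \vec\psi(b_{m_\ell-1} + \nu_{m_\ell} s)_{1/\nu_{m_\ell}}$, so that $\vec\psi_\ell$ is a solution of \eqref{eq:wmk} defined for $s \in [0,1]$. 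By the change of variables $t = b_{m_\ell-1}+\nu_{m_\ell}s$ one has $\int_0^1 \|\partial_s \psi_\ell(s)\|_{L^2}^2\,\ud s = \frac{1}{\nu_{m_\ell}}\int_{I_{m_\ell}}\|\partial_t\psi(t)\|_{L^2}^2\,\ud t \to 0$. By \eqref{eq:K1-comp} the family $\{\vec\psi_\ell(s): s\in[0,1]\}$ is contained in a fixed compact subset of $\HH_0$ (using Lemma~\ref{l:interval-bd} to pass from $\nu(t)$ to $\nu_{m_\ell}$), in particular $\sup_\ell\sup_{s\in[0,1]}\|\vec\psi_\ell(s)\|_{\HH_0} < \infty$.

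First I would pick $s_\ell \in [0,1]$ with $\|\partial_s\psi_\ell(s_\ell)\|_{L^2}^2 \le \int_0^1\|\partial_s\psi_\ell(s)\|_{L^2}^2\,\ud s \to 0$, and after a further subsequence assume $s_\ell \to s_\infty \in [0,1]$. By compactness of $\calK_1$, after extracting again, $\vec\psi_\ell(s_\ell) \to \vec\fy$ strongly in $\HH_0$ for some $\vec\fy = (\fy_0,\fy_1) \in \HH_0$, and the convergence $\|\partial_s\psi_\ell(s_\ell)\|_{L^2}\to 0$ forces $\fy_1 = 0$. Since $\E(\vec\psi_\ell) = \E(\vec\psi) = 2\E(\vec Q)$ for every $\ell$ and the energy is continuous on $\HH_0$, we get $\E(\fy_0, 0) = 2\E(\vec Q)$. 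Moreover, since each $\vec\psi_\ell(s_\ell) = \vec\psi(t_\ell')_{1/\nu_{m_\ell}}$ for the corresponding $t_\ell' \in I_{m_\ell}$, scaling-invariance of $\bfd$ gives $\bfd(\vec\fy) = \lim_\ell \bfd(\vec\psi(t_\ell')) \geq \eps'$ by \eqref{eq:d-large-I}; in particular $\vec\fy$ is not a rescaled two-bubble, and $\fy_0$ is not identically a rescaled harmonic map.

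The key point is then to rule out the existence of such a $\vec\fy$. Running the wave map flow from $\vec\fy$, we use the local Cauchy theory of Lemma~\ref{l:scattering} and convergence $\vec\psi_\ell(s) \to \vec\fy(s)$ in $\HH_0$, uniformly on a fixed interval $[s_\infty - s_0, s_\infty+s_0]\cap[0,1]$; letting $\ell\to\infty$ in $\int \|\partial_s\psi_\ell(s)\|_{L^2}^2\,\ud s \to 0$ shows $\partial_s\fy(s) \equiv 0$ on that interval, hence $\vec\fy(s) \equiv \vec\fy$ is a \emph{static} solution, i.e. $\fy_0$ solves the harmonic map equation with $\fy_1 = 0$. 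But any finite-energy $k$-equivariant static solution in $\HH_0$ must satisfy the Bogomol'nyi-type bound \eqref{var char}, which with $\E(\fy_0,0) = 2\E(\vec Q) = 8\pi k$ is incompatible with $\fy_0$ lying in the degree-zero class $\HH_0$: a static solution in $\HH_0$ has $\lim_{r\to 0}\fy_0 = \lim_{r\to\infty}\fy_0 = 0$, and there is no such nonconstant harmonic map (the only nonconstant ones are $\pm Q_{k,\lambda}$ up to reflection, which lie in $\HH_{\pm\pi}$), while the constant map has energy $0 \neq 8\pi k$. This contradiction establishes the lemma, with $\delta_1$ obtained by a standard compactness/contradiction packaging of the above. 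The main obstacle is making rigorous the step passing from the vanishing of the time-integrated kinetic energy on intervals of length $\nu_m$ to the conclusion that the limiting object is genuinely static: one must be careful that $s_\infty$ could be an endpoint of $[0,1]$, so one should either run the flow one-sidedly or, more robustly, observe that the vanishing $L^2_s L^2_x$ bound for $\partial_s\psi_\ell$ together with \eqref{eq:ptg}–\eqref{eq:ptgdot} and compactness yields that \emph{every} subsequential limit of $\vec\psi_\ell(s)$ (along any $s_\ell\in[0,1]$) has zero time-derivative, so the limit solution is static on its whole interval of existence, after which the variational rigidity above applies.
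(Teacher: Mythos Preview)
Your proof is correct and follows essentially the same approach as the paper: argue by contradiction, rescale the solution on $I_{m_\ell}$ to unit length, use pre-compactness of $\calK_1$ to extract a limit, use the local Cauchy theory to upgrade to uniform convergence of the flow on a small time interval, pass to the limit in the vanishing kinetic-energy integral to conclude the limit is a static solution in $\HH_0$, and then obtain a contradiction since the only harmonic map in $\HH_0$ is the constant map (energy $0 \neq 2\E(\vec Q)$). The paper's version is slightly leaner: it centers the rescaling at the midpoint $t_m = \tfrac12(b_{m-1}+a_m)$, which sidesteps your endpoint worry about $s_\infty$, and it skips the intermediate step of selecting $s_\ell$ with small $\|\partial_s\psi_\ell(s_\ell)\|_{L^2}$ (as well as the observation $\bfd(\vec\fy)\ge\eps'$), since staticity already follows from the integrated kinetic energy tending to zero.
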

\begin{proof}
Let $t_m := \frac 12(b_{m-1} + a_m)$ and recall that $\nu_m := a_m - b_{m-1}$.  %and $s_1 := \frac{1}{2C_1}$.
Then for any $0<s_1 \le 1/2$, %Lemma~\ref{l:interval-bd} 
\EQ{
b_{m-1} \le  t_m - \nu_m s_1  \leq t_m + \nu_m s_1  \le   a_m. \label{eq:tm-dist}
}
We consider the following sequence of solutions of \eqref{eq:wmk}:
\EQ{
\vec \psi_m(s) := \vec\psi(t_m + \nu_m s)_{1/\nu_m}\qquad \text{for }s \in [-s_1, s_1].
}
Then \eqref{eq:tm-dist} implies that
\EQ{
\int_{I_m} \|\partial_t \psi(t)\|_{L^2}^2\ud t \geq \nu_m \int_{-s_1}^{s_1} \|\partial_s \psi_m(s)\|_{L^2}^2\ud s.
}
Suppose that the conclusion fails. Then there exists a sequence $m_1, m_2, \ldots$ such that
\EQ{
\lim_{l\to+\infty}\int_{-s_1}^{s_1}\|\partial_s \psi_{m_l}(s)\|_{L^2}^2\ud s = 0. \label{eq:dt-conv-0}
}
After extraction of a subsequence, $\vec \psi_{m_l}(0) \to \vec\fy_0 \in \HH_0$.
Let $\vec\fy(s)$ be the solution of \eqref{eq:wmk}
such that $\vec\fy(0) = \vec\fy_0$. Then by the standard Cauchy theory $\vec\psi_{m_l}(s) \to \vec\fy(s)$
in $\HH_0$, uniformly for $s \in [-s_1, s_1]$ for $s_1>0$ small enough.
In particular, \eqref{eq:dt-conv-0} yields
\EQ{
\int_{-s_1}^{s_1}\|\partial_s \fy(s)\|_{L^2}^2\ud s = 0,
}
so the limiting wave map $\vec \fy(s)$ must be time-independent.
Hence $\fy(s) \in H$ is a harmonic map. But then $\fy(s) \equiv 0$ since the constant map is the unique harmonic map with topological degree $0$. However, we also have $\E(\vec \fy) = 2 \E(\vec Q) >0$, which gives a contradiction. 
\end{proof}

\begin{lem}\label{l:err-on-I}
There exists $R_0 > 0$ such that if $R_1 \geq R_0$, then for all $m \in \{2, 3, \ldots\}$ there holds
\EQ{
\int_{I_m}\Omega_{\nu_m R_1}(\vec\psi(t))\ud t \leq \frac{\delta_1^2}{10}\nu_m.
}
\end{lem}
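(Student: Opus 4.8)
The plan is to reduce the statement, via scaling, to a \emph{uniform} smallness of the tail energy $\E_{R}^{\infty}$ over the pre-compact family $\calK_1$ from \eqref{eq:K1-comp}. First I would record an elementary scaling identity for $\Omega_R$. Since the density appearing in $\Omega_R(\vec\phi)$ in \eqref{eq:OmRdef} is the (scale-invariant) energy density integrated against the weights $(1-\chi_R)$ and $\frac{r}{R}\chi'(r/R)$, the change of variables $r\mapsto r/\nu$ shows that for every $\nu>0$ and every $\vec\phi\in\HH_0$,
\EQ{
\Omega_{\nu R}(\vec\phi)=\Omega_R\big(\vec\phi_{1/\nu}\big),
}
where $\vec\phi_{1/\nu}=(\phi_0(\nu\,\cdot),\,\nu\phi_1(\nu\,\cdot))$ is precisely the rescaling used to define $\calK_1$. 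Taking $\nu=\nu_m$ and $R=R_1$ and combining this with the bound \eqref{eq:OmRest} of Lemma~\ref{l:vir}, I obtain, for every $m$ and every $t\in I_m$,
\EQ{
\big|\Omega_{\nu_m R_1}(\vec\psi(t))\big|=\big|\Omega_{R_1}\big(\vec\psi(t)_{1/\nu_m}\big)\big|\lesssim \E_{R_1}^{\infty}\big(\vec\psi(t)_{1/\nu_m}\big),
}
with an implicit constant depending only on $k$.

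Next I would invoke the pre-compactness of $\calK_1=\bigcup_{m\geq1}\{\vec\psi(t)_{1/\nu_m}:t\in I_m\}$ in $\HH_0$, established in the Remark following Lemma~\ref{l:interval-bd}. For a fixed $\vec\phi\in\HH_0$ one has $\E_R^{\infty}(\vec\phi)\to0$ as $R\to\infty$ by monotone convergence. A standard finite-net argument then upgrades this to uniform convergence over $\overline{\calK_1}$: cover $\overline{\calK_1}$ by finitely many balls $B(\vec\phi_i,\eta')$, use the uniform bound $\sup_{\calK_1}\|\cdot\|_{\HH_0}<\infty$ together with the $\HH_0$-continuity of $\vec\phi\mapsto\E_R^{\infty}(\vec\phi)$ (uniformly in $R$), and choose $R_0$ so large that $\E_{R_0}^{\infty}(\vec\phi_i)\leq\eta'$ for each $i$. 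Consequently, given any $\eta>0$ there is $R_0>0$ with $\E_{R_1}^{\infty}(\vec v)\leq\eta$ for all $\vec v\in\calK_1$ and all $R_1\geq R_0$.

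Combining the two steps: for $R_1\geq R_0$, all $m\in\{2,3,\ldots\}$ and all $t\in I_m$ one gets $|\Omega_{\nu_m R_1}(\vec\psi(t))|\leq C\eta$ with $C=C(k)$, whence
\EQ{
\int_{I_m}\Omega_{\nu_m R_1}(\vec\psi(t))\,\ud t\leq C\eta\,|I_m|=C\eta\,\nu_m.
}
It then suffices to fix $\eta\leq\delta_1^2/(10C)$, with $\delta_1$ the constant from Lemma~\ref{l:comp-vm} — which is chosen before the present lemma, so the order of quantifiers is consistent — to obtain the desired bound $\int_{I_m}\Omega_{\nu_m R_1}(\vec\psi(t))\,\ud t\leq\frac{\delta_1^2}{10}\nu_m$.

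The only step requiring genuine care is the \emph{uniformity} of the tail-energy smallness over the pre-compact family $\calK_1$; the scaling identity and the inequality \eqref{eq:OmRest} are routine. One should also double-check that the scaling convention under which $\Omega_{\nu R}(\vec\phi)=\Omega_R(\vec\phi_{1/\nu})$ coincides with the one appearing in \eqref{eq:K1-comp}, which it does, because the Dirichlet-type energy density is invariant under exactly this rescaling.
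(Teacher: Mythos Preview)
Your proof is correct and follows essentially the same route as the paper: reduce via the scaling identity $\Omega_{\nu R}(\vec\phi)=\Omega_R(\vec\phi_{1/\nu})$ to a pointwise bound on $\Omega_{R_1}$ over the pre-compact family $\calK_1$, and then use pre-compactness to make the tail energy uniformly small. The paper's proof is just a terse two-line version of exactly what you wrote out.
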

\begin{proof}
With a change of variables, it suffices to prove that for all $t \in I$ there holds
\EQ{
\Omega_{R_1}(\vec\psi(t)_{1/\nu_m}) \leq \frac{\delta_1^2}{10}.
}
This is a standard consequence of the pre-compactness of the set $\cK_1$ defined in \eqref{eq:K1-comp}.
\end{proof}

\subsection{Conclusions}
\begin{proof}[Proof of Proposition~\ref{p:psi_t}]
Choose $1\leq m_1 < m_2$ such that
\EQ{
\sqrt{\bfd(\vec\psi(c_{m_1}))} + \sqrt{\bfd(\vec\psi(c_{m_2}))} \leq \frac{\delta_1^2}{10C_0R_0}.
\label{eq:endpoints-small}
}
This is possible thanks to \eqref{eq:d-conv-0}.
Let
\EQ{
R := R_0 \max_{m_1 < m \leq m_2} \nu_m.
}

Inequalities \eqref{eq:err-mod-contr-1} and \eqref{eq:virial-err} yield
\EQ{
\begin{aligned}
\frac 15 \int_{c_{m_1}}^{c_{m_2}} \|\partial_t\psi(t)\|_{L^2}^2\ud t &\geq
\sum_{m = m_1+1}^{m_2}\frac 15\int_{b_{m-1}}^{a_m} \|\partial_t\psi(t)\|_{L^2}^2\ud t \\
&\geq \sum_{m=m_1+1}^{m_2}2C_0 \int_{a_m}^{c_m}\sqrt{\bfd(\vec\psi(t))}\ud t \\
&\geq 2\sum_{m=m_1+1}^{m_2}\int_{a_m}^{c_m}\Omega_R(\vec\psi(t))\ud t.
\end{aligned}
}
Similarly, \eqref{eq:err-mod-contr-2} and \eqref{eq:virial-err} yield
\EQ{
\begin{aligned}
\frac 15 \int_{c_{m_1}}^{c_{m_2}} \|\partial_t\psi(t)\|_{L^2}^2\ud t &\geq
\sum_{m = m_1}^{m_2-1}\frac 15\int_{b_{m}}^{a_{m+1}} \|\partial_t\psi(t)\|_{L^2}^2\ud t \\
&\geq \sum_{m=m_1}^{m_2-1}2C_0 \int_{c_m}^{b_m}\sqrt{\bfd(\vec\psi(t))}\ud t \\
&\geq 2\sum_{m=m_1}^{m_2-1}\int_{c_m}^{b_m}\Omega_R(\vec\psi(t))\ud t.
\end{aligned}
}
Next, from Lemma~\ref{l:comp-vm} we have
\EQ{ \label{eq:good-int}
\begin{aligned}
\frac 15 \int_{c_{m_1}}^{c_{m_2}} \|\partial_t\psi(t)\|_{L^2}^2\ud t &\geq
\sum_{m=m_1+1}^{m_2}\frac 15\int_{b_{m-1}}^{a_m}\|\partial_t\psi(t)\|_{L^2}^2\ud t
\geq \frac{\delta_1^2}{5}\sum_{m=m_1+1}^{m_2}\nu_m.
\end{aligned}
}
By the definition of $R$, for each $m \in \{m_1+1, m_1+2, \ldots, m_2\}$
we have $R = R_1\nu_m$ with $R_1 \geq R_0$. Thus Lemma~\ref{l:err-on-I} gives
\EQ{
\frac 15 \int_{c_{m_1}}^{c_{m_2}} \|\partial_t\psi(t)\|_{L^2}^2\ud t \geq 2\sum_{m=m_1+1}^{m_2}\int_{b_{m-1}}^{a_m}\Omega_R(\vec\psi(t))\ud t.
}
Finally, \eqref{eq:good-int} and \eqref{eq:endpoints-small} imply
\EQ{
\begin{aligned}
\frac 15 \int_{c_{m_1}}^{c_{m_2}} \|\partial_t\psi(t)\|_{L^2}^2\ud t &\geq \frac{\delta_1^2}{5}\max_{m_1 < m \leq m_2} \nu_m \\
&\geq 2C_0 R\Big(\sqrt{\bfd(\vec\psi(c_{m_1}))} + \sqrt{\bfd(\vec\psi(c_{m_2}))}\Big).
\end{aligned}
}
Summing the four inequalities above and using \eqref{eq:virR} for $(\tau_1, \tau_2) = (c_{m_1}, c_{m_2})$
we get
\EQ{
\frac 45 \int_{c_{m_1}}^{c_{m_2}} \|\partial_t\psi(t)\|_{L^2}^2\ud t \geq 2\int_{c_{m_1}}^{c_{m_2}} \|\partial_t\psi(t)\|_{L^2}^2\ud t.
}
This contradiction finishes the proof.
\end{proof}

\begin{proof}[Proof of the Main Theorem~\ref{t:main}]
~

\noindent
\textbf{Step 1.}
Let $\eps> 0$ be such that Proposition~\ref{prop:modulation} holds
with some $\eps_0 \geq 10\eps$. Define
\EQ{
T_1 := \sup\{t: \exists t' \geq t\text{ such that }\bfd(\vec\psi(t')) \geq \eps\}.
}
By Proposition \ref{p:psi_t} we have $T_1 < T_+$,
and we know that the modulation parameters $\lambda(t)$ and $\mu(t)$
are well-defined for $t \in [T_1, T_+)$. Define $\zeta(t)$ as in~\eqref{eq:zetadef}. 
Assume without loss of generality that $\mu(T_1) = 1$.

There exists a sequence $\tau_n \to T_+$ such that
\EQ{
\dd t\Big|_{t = \tau_n}\Big(\frac{\zeta(t)}{\mu(t)}\Big) \leq 0.
}
For any such $t_0 = \tau_n$ we are in the setting of Proposition~\ref{prop:modulation}
in the backward time direction, so we obtain times $t_1 \leq \tau_n$ and $t_2 \leq t_1$.
By the definition of $T_1$ and \eqref{eq:d-t1-t2} we have $t_1 \leq T_1$,
so the proof of Proposition~\ref{prop:modulation}
yields in particular $\frac 12 \leq \mu(t) \leq 2$ for $t \in [t_1, \tau_n]$,
thus $\frac 12 \leq \mu(t) \leq 2$ for $t \in [T_1, T_+)$.
Furthermore, \eqref{eq:uni-borne}
implies that $\int_{T_1}^{\tau_n}\sqrt{\bfd(\vec\psi(t))}\,\ud t$ is bounded as $\tau_n \to T_+$.
Thus \eqref{eq:mu'} implies that $\int_{T_1}^{T_+} |\mu'(t)|\ud t < +\infty$, hence $\mu(t)$ converges to some $\mu_0 \in [\frac 12, 2]$. Eventually rescaling again, we can assume that $\mu_0 = 1$.

As in the proof of Proposition~\ref{prop:modulation},
we consider $\xi(t) := b(t) + \kappa_1 \zeta(t)^\frac k2$ and we 
find that it is strictly decreasing on $[T_1, \tau_n]$ and satisfies
\EQ{
\xi'(t) \leq -\kappa_2 \xi(t)^\frac{2k-2}{k}. \label{eq:xi'-lbound}
}
Hence $\xi(t)$ is strictly decreasing on $[T_1, T_+)$ and satisfies \eqref{eq:xi'-lbound}
for $t \in [T_1, T_+)$. From the modulation equations we also obtain
\EQ{
\xi'(t) \geq -\kappa_3 \xi(t)^\frac{2k-2}{k}. \label{eq:xi'-ubound}
}
for some $\kappa_3$ depending only on $k$.
Indeed, \eqref{eq:la'} and \eqref{eq:b'} yield
\EQ{
|\xi'(t)| \lesssim |b'(t)| + |\zeta'(t)|\zeta(t)^\frac{k-2}{2} \lesssim \zeta(t)^{k-1} \lesssim \xi(t)^\frac{2k-2}{k},
}
Since $\lim_{t\to T_+} \xi(t) = 0$ and $\frac{2k-2}{k} \geq 1$,
\eqref{eq:xi'-ubound} implies that $T_+ = +\infty$.

Showing that the sign $\iota$ is constant is standard.
Lemma~\ref{l:dpm} implies that $\bfd_+(\vec\psi(t)) \leq \eps$
for $t \in [T_1, +\infty)$ or $\bfd_-(\vec\psi(t)) \leq \eps$ for $t \in [T_1, +\infty)$.
Indeed, suppose that $t_1, t_2 \geq T_1$, $t_1 \leq t_2$ are such that
$\bfd_+(\vec\psi(t_1)) \leq \eps$ and $\bfd_-(\vec\psi(t_2)) \leq \eps$.
Without loss of generality we can assume that $\eps < \frac 12 \alpha_0$,
where $\alpha_0$ is the constant from Lemma~\ref{l:dpm}.
Then Lemma~\ref{l:dpm} yields $\bfd_+(\vec\psi(t_2)) \geq \alpha_0$,
hence there exists $t_0 \in [t_1, t_2]$ such that $\bfd_+(\vec\psi(t_0)) = \frac 12 \alpha_0 > \eps$.
But Lemma~\ref{l:dpm} gives that also $\bfd_-(\vec(\psi(t_0)) \geq \alpha_0 > \eps$,
which contradicts the choice of $T_1$.

\noindent
\textbf{Step 2.}
We now deduce the rate of decay of $\lambda(t)$ as $t \to +\infty$.
Bounds \eqref{eq:psi-bound},  \eqref{eq:psi-lbound}, and~\eqref{eq:bound-on-l} imply that $\xi(t)$
is comparable to $\lambda(t)^\frac k2$.
Rewrite \eqref{eq:xi'-lbound} and \eqref{eq:xi'-ubound} as follows:
\EQ{
-\kappa_3 \leq \frac{\xi'(t)}{\xi(t)^\frac{2k-2}{k}} \leq -\kappa_2, \qquad \kappa_2, \kappa_3 > 0,
}
In the case $k = 2$, after integrating and possibly changing the values of the constants, we obtain
\EQ{
\mathrm e^{-\kappa_3 t} \leq \xi(t) \leq \mathrm e^{-\kappa_2 t},
}
which implies that there exists a constant $C$ such that
\EQ{\label{eq:lambda-asym-2}
\mathrm e^{-C t} \leq \lambda(t) \leq \mathrm e^{-\frac{1}{C} t}\quad \text{as }t \to +\infty.
}
(recall that we rescale the solution so that $\mu_0 = \lim_{t\to +\infty}\mu(t) = 1$).

Similarly, for $k > 2$ we obtain
\EQ{\label{eq:lambda-asym-k}
\frac 1C t^{-\frac{2}{k-2}} \leq \lambda(t) \leq C t^{-\frac{2}{k-2}}\quad \text{as }t \to +\infty,
}
with a constant depending on $k$.

\noindent
\textbf{Step 3.}
Suppose that $\vec\psi$ does not scatter in either time direction.
Take any $\delta > 0$.
Bounds \eqref{eq:lambda-asym-2} and \eqref{eq:lambda-asym-k} imply that
\EQ{\label{eq:d-integrable}
\int_{-\infty}^{+\infty}\sqrt{\bfd(\vec\psi(t))}\,\ud t < +\infty.
}
Indeed, it suffices to consider the behavior as $t \to \pm\infty$.
In this situation we have well-defined modulation parameters $\lambda(t)$, $\mu(t)$
and \eqref{eq:gH} together with the fact that $\mu(t)\to \mu_0 > 0$ imply that
$\bfd(\vec\psi(t)) \lesssim \lambda(t)^k$,
so \eqref{eq:d-integrable} follows from time integrability of $\lambda(t)^{\frac k2}$.

Thus \eqref{eq:virial-err} implies that there exist $T_1$, $T_2$ such that for all $R > 0$
\begin{align}
  \int_{-\infty}^{T_1} \Omega_R(\vec\psi(t))\,\ud t &\leq \frac 13\delta, \\
  \int_{T_2}^{+\infty} \Omega_R(\vec\psi(t))\,\ud t &\leq \frac 13\delta.
\end{align}
Since $[T_1, T_2]$ is a finite time interval, there exists $R > 0$ such that
\begin{equation}
  \int_{T_1}^{T_2}\Omega_R(\vec\psi(t))\,\ud t \leq \frac 13\delta.
\end{equation}
But $\bfd(\vec\psi(t)) \to 0$ as $t \to \pm\infty$, hence \eqref{eq:virR} yields
\begin{equation}
  \int_{-\infty}^{+\infty}\|\partial_t \psi(t)\|_{L^2}^2\,\ud t \leq \delta.
\end{equation}
This would imply that $\vec\psi(t)$ is a constant in time solution
(because $\delta$ was any strictly positive number), which is impossible.
\end{proof}
\begin{rem}
Suppose that $\vec\psi(t)$ does not scatter in the forward time direction.
From the modulation equations we know that
\EQ{
b'(t) \leq -\kappa_6 \lambda(t)^{k-1}, \qquad \kappa_6 > 0,
}
so integration yields
\EQ{
b(t) \geq \kappa_7 \int_t^\infty \lambda(s)^{k-1}\,\mathrm{d}s.
}
But, as noticed in Step 1. above,
we also have $|(\lambda(t)^\frac k2)'| \lesssim \lambda(t)^{k-1}$,
which yields $\lambda(t)^\frac k2 \lesssim \int_t^\infty \lambda(s)^{k-1}\,\mathrm{d}s$,
so we obtain
\EQ{
b(t) \geq \kappa_8 \lambda(t)^\frac k2 \quad \Rightarrow \quad b(t)^2 \geq \kappa_9 \bfd(\vec\psi(t)),
}
which in turn implies
\EQ{
\Big|\Big\langle\frac{1}{\lambda(t)}\Lambda Q_{\lambda(t)}, \partial_t \psi\Big\rangle\Big|^2 \geq c\,\bfd(\vec\psi(t)),\qquad \text{as }t \to +\infty,
}
where $c > 0$ is a constant depending only on $k$.
Thus the projection of the time derivative of the solution constitutes
at least a fixed fraction of the total distance from a two-bubble.
In fact, if we were more precise in our computations, we could probably obtain that
this projection is the leading term of the error.
\end{rem}

\bibliographystyle{plain}
\bibliography{researchbib}
\bigskip
\centerline{\scshape Jacek Jendrej}
\smallskip
{\footnotesize
  \centerline{CNRS and Universit\'e Paris 13, LAGA, UMR 7539}
  \centerline{99 av J.-B.~Cl\'ement, 93430 Villetaneuse, France}
  \centerline{\email{jendrej@math.univ-paris13.fr}}
}

\bigskip

\centerline{\scshape Andrew Lawrie}
\smallskip
{\footnotesize
 \centerline{Department of Mathematics, Massachusetts Institute of Technology}
\centerline{77 Massachusetts Ave, 2-267, Cambriedge, MA 02139, U.S.A.}
\centerline{\email{ alawrie@mit.edu}}
}

\end{document}